\newtheorem{theorem}{Theorem}[section]
\newtheorem{lemma}[theorem]{Lemma}
\newtheorem{proposition}[theorem]{Proposition}
\newtheorem{definition}[theorem]{Definition}
\newtheorem{remark}[theorem]{Remark}
\newtheorem{corollary}[theorem]{Corollary}
\newtheorem{defn}[theorem]{Definition}
\numberwithin{equation}{section}
\newcommand{\nc}{\normalcolor}
\newcommand{\dif}{\mathrm{d}}
\newcommand{\E}{\mathbb{E}}
\newcommand{\R}{\mathbb{R}}
\newcommand{\C}{\mathbb{C}}
\newcommand{\PP}{\mathbb{P}}
\newcommand{\N}{\mathbb{N}}
\newcommand{\D}{\mathbb{D}}
\newcommand{\ii}{\mathrm{i}}
\newcommand{\del}{\partial}
\def\XX{\mathcal{X}}
\numberwithin{equation}{section}
\def\beq{\begin{equation}}
\def\eeq{\end{equation}}
\def\eps{\varepsilon}
\def\bx{\mathbf{x}}
\def\by{\mathbf{y}}
\def\Tr{\mathrm{Tr}}
\def\tilsumij{\tilde{\sum}_{ij}}
\def\bet{\begin{theorem}}
\def\eet{\end{theorem}}
\def\bel{\begin{lemma}}
\def\eel{\end{lemma}}
\def\bas{\begin{ass}}
\def\eas{\end{ass}}
\def\bec{\begin{cor}}
\def\eec{\end{cor}}
\def\bed{\begin{defn}}
\def\eed{\end{defn}}
\def\bep{\begin{proposition}}
\def\eep{\end{proposition}}
\def\beq{\begin{equation}}
\def\eeq{\end{equation}}
\def\proof{\noindent {\bf Proof.}\ \ }
\def\bea{\begin{equation*}}
\def\eea{\end{equation*}}
\def\tr{\mathrm{Tr}}
\def\bex{\begin{ex}}
\def\eex{\end{ex}}
\def\rr{\R}
\def\cc{\C}
\def\nn{\mathbb{N}}
\def\d{\mathrm{d}}
\def\i{\mathrm{i}}
\def\O{\mathcal{O}}
\def\pp{\mathbb{P}}
\def\ee{\mathbb{E}}
\def\e{\mathrm{e}}
\def\mfa{\mathfrak{a}}
\def\mfb{\mathfrak{b}}
\def\mfc{\mathfrak{c}}
\def\mft{\mathfrak{t}}
\def\mfd{\mathfrak{d}}
\def\mfe{\mathfrak{e}}
\def\GG{\mathcal{G}}
\def\sumab{\sum_{ab}}
\let\Im\relax
\DeclareMathOperator{\Im}{Im} 
\let\Re\relax
\DeclareMathOperator{\Re}{Re} 
\DeclareMathOperator{\Var}{Var}
\def\P{\mathcal{P}}
\def\CC{\mathcal{C}}
\def\MM{\mathcal{M}}
\def\EE{\mathcal{E}}
\def\FF{\mathcal{F}}
\def\pp{\mathbb{P}}
\def\tilf{\tilde{f}}
\def\1{\boldsymbol{1}}
\def\ZZ{\mathcal{Z}}
\title{Maximum of the Characteristic Polynomial of I.I.D. Matrices}
\begin{document}





\begin{table}
\centering

\begin{tabular}{c}

\multicolumn{1}{c}{\parbox{12cm}{\begin{center}\Large{\bf Maximum of the Characteristic Polynomial of I.I.D. Matrices }\end{center}}}\\
\\
\end{tabular}
\begin{tabular}{ c c c  }
Giorgio Cipolloni
& \phantom{blah} & 
Benjamin Landon
 \\
 & & \\  
 \small{Princeton University} & & \small{ University of Toronto } \\
 \small{Mathematics and Physics Departments} & & \small{Department of Mathematics} \\
 \small{\texttt{gc4233@princeton.edu}} & & \small{\texttt{blandon@math.toronto.edu}} \\
  & & \\
\end{tabular}
\\
\begin{tabular}{c}
\multicolumn{1}{c}{\today}\\
\\
\end{tabular}

\begin{tabular}{p{15 cm}}
\small{{\bf Abstract:}  We compute the leading order asymptotic of the maximum of the characteristic polynomial for i.i.d. matrices with real or complex entries. In particular, this result is new even for real Ginibre matrices, which was left as an open problem in \cite{lambert2024law}; the complex Ginibre case was covered in \cite{lambert2020maximum}. These are the first universality results for the non--Hermitian analog of the first order term of the Fyodorov--Hiary--Keating conjecture. Our methods are based on constructing a coupling to the branching random walk via Dyson Brownian motion. In particular, we find a new connection between real i.i.d. matrices and inhomogeneous branching random walk.}
\end{tabular}
\end{table}

\tableofcontents

\section{Introduction}

Let $X$ be an $n \times n$ matrix of i.i.d. centered real or complex random variables, scaled  so that $\ee |X_{ij} |^2 = \frac{1}{n}$. It is natural to investigate the size of the fluctuations of the characteristic polynomial,
\beq
P_n (z) := \log | \det (X -z ) | ,
\eeq
where the logarithmic scaling turns out to best capture the interesting behavior of $P_n (z)$. 
A remarkable property of the field $\{ P_n (z) \}_z$ is that it is expected to asymptotically exhibit Gaussian fluctuations with covariance structure,\footnote{There is an important additional term in the case that $X$ is a matrix of real random variables which we will comment on later but neglect for now for the purposes of exposition.}
\beq \label{eqn:cov-struct}
\mathrm{Cov} (P_n (z ), P_n (w) ) \approx - \frac{1}{4} \log \big( |z-w|^2 + n^{-1} \big).
\eeq
The field $ \{ P_n (z) \}_z$ is ill-behaved as a function of $z$, as indicated by the fact that variance is diverging when $z=w$. In the case that $X$ is drawn from the complex Ginibre ensemble (in which the entries of $X$ are standard complex Gaussians), Rider and Virag showed that $\{ P_n (z) \}_z$ converges in distribution to a generalized function known as the Gaussian free field \cite{rider2007noise}. This was extended to more general classes of normal matrices in \cite{ameur2011random, ameur2015random} and i.i.d. matrices in \cite{cipolloni2021fluctuation,cipolloni2023central}, as well as to certain space--time correlations \cite{cipolloni2024}.

In fact, the covariance structure \eqref{eqn:cov-struct} indicates that $P_n(z)$ is an example of a \emph{logarithmically correlated field}, objects which are ubiquitous in probability theory and statistical mechanics. They emerge from any model where randomness contributes equally at all length scales. Two of the most prominent examples are the two-dimensional Gaussian free field and branching random walk.

The central quantity of interest in our work will be the \emph{maximum} of the field $\{P_n(z)\}_z$ in the argument $z$ (after a re-centering of $P_n (z)$ by its large $n$ limit). The study of the extreme values of logarithmically correlated fields has received significant attention in recent years, especially within the context of random matrix theory. Of course, the extrema of stochastic processes is a classical subject in probability theory dating to the early twentieth century. On the other hand, the classical theory does not apply to stochastic processes exhibiting strong correlations, and  logarithmically correlated fields are a natural candidate for the development of a new extreme value theory.

Investigation of the extreme values of random matrix characteristic polynomials is motivated in part by the seminal work of Fyodorov, Hiary and Keating who conjectured that the extremal statistics of the Riemann zeta function are well modeled by random matrix eigenvalues \cite{fyodorov2012freezing}. There has since been tremendous progress on both the number theoretic and random matrix sides of the problem. The breakthrough works of Arguin, Bourgade and Radziwi{\l}{\l} verified the conjectured asymptotics for the local max of the Riemann zeta function (at a random point high on its critical axis), by showing that the maximum is tight after accounting for leading and subleading deterministic contributions \cite{arguin2020fyodorov,arguin2023fyodorov}. Studies on the random matrix side have focused on the Circular Unitary Ensemble and its generalization, the Circular $\beta$-Ensemble (C$\beta$E), with the work of Paquette and Zeitouni \cite{paquette2022extremal} proving convergence in distribution of the centered maximum of the log-characteristic polynomial. Prior contributions in this direction include \cite{arguin2017maximum,paquette2018maximum,chhaibi2018maximum}. These developments and related literature will be discussed in greater detail in  Section~\ref{sec:rmlog} below. 

However, in the context of general i.i.d. matrices, progress has so far been sparse. In particular, the eigenvalues of the C$\beta$E are one-dimensional, being distributed on the unit circle, whereas the eigenvalues of i.i.d. matrices asymptotically fill the unit disc. The work \cite{lambert2020maximum} of Lambert finds the first-order asymptotics for the maximum of the log-characteristic polynomial of the complex Ginibre ensemble. The other paper on $2$-d random matrix models is that of Lambert, Lebl\'e and Zeitouni \cite{lambert2024law} which extends this result to the $2$-d Gaussian $\beta$-ensemble, an explicit measure on $\cc^n$ (see \eqref{eq:gibbsmeas} below) generalizing the complex Ginibre ensemble to other inverse temperatures (and extended to more general $2$-d $\beta$-ensembles in the recent work of Peilen \cite{peilen2024maximum}). Other than the complex Ginibre ensemble, these models are unrelated to the general non-invariant ensembles that we consider, whose eigenvalue distributions do not have explicit forms.  Significantly, the only work on the maximum of the characteristic polynomial of non-invariant random matrices is that of Bourgade-Lopatto-Zeitouni which studies general Hermitian random matrices (the Wigner ensembles) \cite{bourgade2023optimal}  (see Section~\ref{sec:rmlog} for further detailed discussion). Wigner matrices are ultimately a  simpler object than the ensembles we consider here, owing to the well developed universality theory available in the Hermitian case.

The methods developed in these works, while powerful, nonetheless do not apply to the general, non-invariant and non-Hermitian random matrix ensembles that we study, and so new approaches are required.  Our work provides the first treatment of the characteristic polynomial for general i.i.d. matrices by proving that for any real or complex i.i.d. matrix and any $0 < r < 1$, 
\beq \label{eqn:intro-result}
\max_{ |z| \leq r} \big[ P_n (z) - E_n (z) \big] = \frac{\log n}{ \sqrt{2}} (1 + o (1) )
\eeq
with probability tending to $1$ as $n \to \infty$. Here, $ n^{-1} E_n(z)$ is the a.s. limit of $n^{-1} P_n (z)$. 

Of particular interest is that we can also handle real i.i.d. matrices; indeed, before our work, the result \eqref{eqn:intro-result} was not available even for the real Ginibre ensemble, despite the fact that this ensemble also enjoys explicit formulas for its eigenvalue density. This is partly due to the fact that the real i.i.d. case exhibits a much richer structure than the complex case. In fact, for real i.i.d. matrices \eqref{eqn:cov-struct} is not exactly correct. There is a second term on the RHS of the form $\frac{1}{4}\log ( |z-\bar{w}|^2 + n^{-1})$ reflecting the symmetry of the eigenvalues about the real axis. Of course, away from the real axis this term is subleading and so one expects the same behavior as in the complex case; however, if $z$ approaches the real axis as $n \to \infty$, say $\Im[z] = n^{-\alpha}$, then this term matters. Moreover, $E_n(z)$ in the real case has an additional correction of order $\log n$, the same order as the maximum. Due to this, it is not a priori clear what occurs near the real axis - whether the maximum is the same or not. In order to explain what occurs, we briefly discuss our methods.

Many tools have been developed to study the extremal values of logarthimcally correlated fields, such as the second moment method, barriers, and convergence to Gaussian Multiplicative chaos; see, e.g., \cite{arguin2016extrema,biskup2020extrema} for reviews. Works  \cite{lambert2020maximum,lambert2024law} rely on computations of joint Laplace transforms of $P_n (z)$ in order to carry out union bounds, as well as prove convergence to the Gaussian multiplicative chaos. However, such computations are not available in the general models we consider here.

Instead, we use Dyson Brownian motion (DBM) to exhibit a coupling of the characteristic polynomial of our random matrix models to the branching random walk, one of the central objects of the universality class of logarithmically correlated fields. This relies on recent advances in the understanding of multi-resolvent local laws \cite{cipolloni2023optimal, cipolloni2023mesoscopic} in order to compute the joint distribution of the evolution of the characteristic polynomial at different $z$ under the DBM.  

The salient feature in the  real i.i.d. case close to the real axis is that the branching random walk (BRW) is \emph{inhomogeneous}. That is, the effective rate of branching as well as the step size changes abruptly at a macroscopic time part of the way through the walk. To our knowledge, this is the first appearance of an inhomogeneous branching random walk in random matrix theory. Apart from the change in the branching structure this is almost precisely the BRW considered in \cite{fang2012branching} (this aspect of our methods is discussed in further detail in  Section~\ref{sec:lbrep} below).

In the case of homogeneous branching random walk, the leading order of the, say, $n$ final values is the same as if the $n$ walkers were independent and there was no correlation structure. The logarithmic correlation only shows up at subleading order. However, in the case of inhomogeneous branching random walk, there is a distinction, as discovered in \cite{fang2012branching}. If the initial step size is smaller, then the leading order does coincide with the independent case. However, if the initial step size is larger, then the leading order is different and is strictly smaller.

Remarkably, our inhomogeneous branching random walk is an example of the latter; the initial step size is larger and so the leading order does not coincide with the ansatz of the random variables being independent. In fact, if one tries to do a naive union bound for $\Im[z] \approx n^{-\alpha}$, modelling it as the maximum of $n^{1-\alpha}$ Gaussians with variance $\frac{1+2 \alpha}{4} \log n$ (i.e., one Gaussian per radius $n^{-1/2}$ disc, the scale on which \eqref{eqn:cov-struct} decorrelates), then one will arrive at the incorrect answer for \eqref{eqn:intro-result}.\footnote{That is, $ \sqrt{  \frac{(1- \alpha)(1+2\alpha)}{2}} \log n \gg \frac{1}{ \sqrt{2}} \log n$ for $ \alpha \in (0, \frac{1}{2})$.}

By implementing our strategy of coupling the real i.i.d. case to an inhomogeneous branching random walk, we will in fact prove the stronger result,
\beq
\label{eq:diffrealcase}
\max_{ \Im[z] \approx n^{-\alpha} } \big[P_n (z) -E_n (z)\big] = \frac{ \log n}{ \sqrt{2}} ( 1 + o ( 1) ),
\eeq
with probability tending to $1$ as $n \to \infty$ and any $0 < \alpha < \frac{1}{2}$. It appears to be a coincidence that the parameters in our inhomogeneous BRW are set up so that the RHS above is independent of $\alpha$. On the contrary, the $\log n$ correction to $E_n(z)$ in the real case depends on $\alpha$ (see \eqref{eq:centerintrealcase} below).  Moreover, \eqref{eq:diffrealcase} distinguishes the real i.i.d. case from the complex one; in the latter, the above max would be smaller than the max over the entire unit disc (it would be $\sqrt{\frac{1-\alpha}{2}} \log n$), whereas in the real case they are the same.

We turn now to a more detailed discussion of the literature and our methods before stating our main results in Section \ref{sec:main}.

\subsection{Logarithmic correlated fields in random matrices}
\label{sec:rmlog}

We now review in greater detail the literature on logarithmically correlated fields. Aside from the two-dimensional GFF and the BRW, the celebrated work \cite{fyodorov2012freezing} of Fyodorov, Hiary and Keating (FHK) uncovered yet another instance in which logarithmically correlated fields naturally appear. They conjectured that the extremal statistics of characteristic polynomials of Hermitian random matrices and of the Riemann zeta function on the critical line are identical, and coincide with those of logarithmically correlated fields. More precisely, let $U_n$ be an $n\times n$ Haar--distributed unitary matrix, then the FHK conjecture states
\begin{equation}
\label{eq:FHK}
\max_{|z|=1}\log\big|\mathrm{det}(z-U_n)\big|=\log n-\frac{3}{4}\log \log n+X_n,
\end{equation}
with $X_n$ being an order one random variable that converges, as $n\to\infty$, to the sum of two independent Gumbel random variables.

The last decade has seen enormous progress towards the proof of \eqref{eq:FHK}, both for the Riemann zeta function and for unitary random matrices. Initial progress on the number theoretic side appeared in \cite{arguin2019maximum, harper2019partition, najnudel2018extreme}. The contribution of the works of Arguin, Bourgade, and Radziwill \cite{arguin2020fyodorov, arguin2023fyodorov} is to show that  \eqref{eq:FHK} holds for the Riemann zeta function up to tightness, i.e., that $X_n$ is a tight random variable when the LHS is replaced by the local max near a random point high up on the critical axis of the Riemann zeta function. Remarkably, they were also able to compute the (lower and upper) tail behavior of the random variable $X_n$, finding estimates in agreement with the predictions of \cite{fyodorov2012freezing}.  For further references we refer the interested reader to the survey \cite{harper2019riemann} (see also \cite{bailey2022maxima}). On the random matrix side, there have been a series of works proving \eqref{eq:FHK} term by term. The leading and second order terms were computed in \cite{arguin2017maximum} and \cite{paquette2018maximum}, respectively. Then, \eqref{eq:FHK} was proven up to tightness in \cite{chhaibi2018maximum}, even for the more general class of circular $\beta$--ensembles (C$\beta$E). In this case \eqref{eq:FHK} holds after rescaling its left--hand side by $\sqrt{\beta/2}$ (the Haar unitary case corresponds to $\beta=2$). Very recently, this progression of works culminated in the work of Paquette and Zeitouni \cite{paquette2022extremal}, where they proved the convergence in distribution of $X_n$. 

Progress for Hermitian random matrix ensembles has occurred only recently (see \cite{fyodorov2016distribution} for various predictions). In \cite{bourgade2023optimal} Bourgade, Lopatto, and Zeitouni study a similar question to \eqref{eq:FHK}, but for Wigner matrices and $\beta$--ensembles instead of C$\beta$E. More precisely, let $\lambda_i$ denote the eigenvalues of a Wigner matrix or the particles of a $\beta$--ensemble, then \cite{bourgade2023optimal} for any $\beta>0$ shows
\begin{equation}
\label{eq:wignermax}
\max_{E\in \mathrm{bulk}}\left(\log\left|\prod_{i=i}^n(\lambda_i-E)\right|-\E\left[\log\left|\prod_{i=i}^n(\lambda_i-E)\right|\right]\right)=\sqrt{\frac{2}{\beta}}\log n\big(1+o(1)\big).
\end{equation}
The case $\beta=2$ of \eqref{eq:wignermax} was already proven in \cite{claeys2021much, lambert2019law}. Additionally, \cite{bourgade2023optimal} proves that for some Wigner matrices there is universality of the left--hand side of \eqref{eq:wignermax} up to tightness. This means that if the analog of \eqref{eq:FHK} is proven for the GUE/GOE ensembles up to tightness, then \cite{bourgade2023optimal} implies the same result for some more general classes of Wigner matrices (i.e. with entries not necessarily Gaussian). We also mention that in \cite{bourgade2023optimal} the authors prove optimal rigidity estimates (with Gaussian tail) for the eigenvalues of such matrices.

Much less is known in the two dimensional case. The exact distribution of the maximum $X_n$ in \eqref{eq:FHK} was first conjectured in \cite{fyodorov2012freezing}, and then  identified as the sum of two independent Gumbel random variables in \cite{kundu2013}. 
However, at the moment, there is no conjecture about the analog of $X_n$ for any $2$--d ensemble. The only known results are the leading order asymptotic for two dimensional Coulomb gases. These gases are comprised of $n$ interacting particles ${\bm x}_n=(x_1,\dots, x_n)\in (\R^2)^n$ distributed according to the Gibbs measure
\begin{equation}
\label{eq:gibbsmeas}
\dif \mathbb{P}_{n,\beta}=\frac{1}{Z_{n,\beta}}e^{-\beta H_n({\bm x}_n)}\, \dif {\bm x}_n.
\end{equation}
Here $Z_{n,\beta}$ is a normalization constant, and
\begin{equation}
H_n({\bm x}_n):=-\frac{1}{2}\sum_{1\le i\ne j\le n}\log|x_i-x_j|+n\sum_{i=1}^n V(x_i),
\end{equation}
for some potential $V$ growing sufficiently fast at infinity. For these models it is known that for any $\beta >0$ that 
\begin{equation}
\label{eq:2Dresult}
\max_{z\in D_r}\left(\log\left|\prod_{i=i}^n(x_i-z)\right|-\E\left[ \log\left|\prod_{i=i}^n(x_i-z)\right|\right]\right)=\frac{1}{\sqrt{\beta}}\log n\big(1+o(1)\big),
\end{equation}
with $D_r$ being a disk of radius $r$ contained in the bulk of the limiting empirical measure 
of $\mathbb{P}_{n,\beta}$. The Gaussian case $V(x)=|x|^2/2$ was proven in \cite{lambert2024law}, and this result was recently extended to a more general class of potentials in \cite{peilen2024maximum}. Prior to these two results, only the case $\beta=2$ and $V(x)=|x|^2/2$ was known  \cite{lambert2020maximum}, as a consequence of the fact that the Gibbs measure \eqref{eq:gibbsmeas} coincides with the eigenvalue density of the complex Ginibre ensemble. Unlike in the one dimensional case, the case $\beta=1$ in \eqref{eq:gibbsmeas} does not correspond to the real Ginibre ensemble; in fact, the spectrum of real Ginibre matrices is symmetric with respect to the real axis, unlike \eqref{eq:gibbsmeas}. Hence, \eqref{eqn:intro-result} for real Ginibre matrices does not follow from the case $\beta=1$ of \eqref{eq:2Dresult}.

We now comment on the relation between \eqref{eq:2Dresult} and our result \eqref{eqn:intro-result}. Similarly to the one dimensional case \eqref{eq:wignermax}, for two dimensional Coulomb gases, \eqref{eq:2Dresult} depends on the values of $\beta$. In contrast, quite surprisingly, the leading order asymptotic of \eqref{eqn:intro-result} is exactly the same for both real and complex matrices $X$. At first, one may think that this is a consequence of the fact that the local statistics of the eigenvalues of real Ginibre away from the real axis are (asymptotically) the same as those of the complex Ginibre. However, as indicated above, the underlying reason for the same asymptotics is more subtle.  In fact, in \eqref{eq:diffrealcase} (and in Theorem~\ref{theo:realmaxlog} below) we clearly see the difference between the log--correlation structure in the complex and the real Ginibre ensemble: the maximum over any mesoscopic band $\Im z\asymp n^{-\alpha}$, with $\alpha\in (0,1/2)$, of the left--hand side of \eqref{eqn:intro-result} is given by $\log n$ in the real case, while it depends on $\alpha$ in the complex case. This shows that even though the local statistics of the eigenvalues of real and complex $X$ are the same for $|\Im z|\gg n^{-1/2}$, their contribution to the extremal values of this $\log$--correlated field is different, showing the cumulative effect of the randomness at each scale.

We conclude this section by mentioning that recently there has been great interest and progress in studying many other aspects of the connection between extremal value theory and spectral statistics of random matrices. See, e.g., \cite{cipolloni2023universality} for a recent example not falling in the log-correlated universality class.

\subsubsection{Emergence of log--correlated fields in other models}

Asymptotics of the form \eqref{eq:FHK} are very well understood for Gaussian logarithmically correlated fields. In fact, in this case it is known that the fluctuation of $X_N$ is always given by the sum of two independent random variables, one which is universally Gumbel distributed and one which depends on the long--range behavior of the covariance (i.e., it is model dependent). We refer the interested reader to \cite{biskup2020extrema, bolthausen2001entropic, ding2017convergence, zeitouni2016branching}, and references therein. Extending this theory beyond the Gaussian case has been a major challenge, which has recently attracted lots of activity. Beyond the models discussed in Section~\ref{sec:rmlog}, we will now briefly mention other models that fall in the universality class of logarithmically correlated fields. Some examples are: the sine--Gordon model \cite{bauerschmidt2022maximum} (which can be coupled directly with the GFF), the cover time for planar random walks \cite{belius2017subleading, belius2020tightness, dembo2004cover} (where tightness is known), the maximum of Ginzburg--Landau fields where very recently tightness was proven \cite{schweiger2024tightness} (see also \cite{belius2020maximum} for a previous result about the leading order asymptotics).  We also mention the maximum of permutation matrices \cite{cook2020maximum} (where only the leading order is known), and the model of two dimensional polymers \cite{caravenna2020two, cosco2023moments, cosco2023momentsII} (where not even the leading order asymptotic is known).

\subsection{Methods}

Girko's Hermitization formula is one of the most important backbones in the study of non-Hermitian spectral statistics. In particular, it enables one to express statistics of non--Hermitian eigenvalues in terms of joint statistics of a certain family of Hermitian matrices. More precisely, for $z\in\C$ we define the family of Hermitian matrices
\begin{equation}
H^z:=\left(\begin{matrix}
0 & X-z \\
(X-z)^* & 0
\end{matrix}\right).
\end{equation}
The $2n\times 2n$ matrix $H^z$ is called the \emph{Hermitization} of $X-z$. The spectrum of $H^z$ is symmetric with respect to zero, and its positive eigenvalues $\{\lambda_i^z\}_{i=1}^n$ coincide with the singular values of $X-z$. Then, Girko's formula states
\begin{equation}
\label{eq:logcharpol}
\log\big|\mathrm{det}(X-z)\big|=\frac{1}{2}\log\big|\mathrm{det} H^z\big|.
\end{equation}

After reducing the study of non--Hermitian characteristic polynomials to the Hermitized ones \eqref{eq:logcharpol}, the proof of \eqref{eqn:intro-result} consists of two main parts, an upper bound and a lower bound. In both cases we first show that the maximum over $|z|<1$ can be expressed as the maximum over a mesh $\P$ of $\asymp n^{-1}$ equidistant points on the unit disk. This follows by the Lipschitz continuity of the logarithm of the characteristic polynomial, which is a consequence of  recent \emph{multi--resolvent} local laws from \cite{cipolloni2023optimal}.  The key observation in \cite{cipolloni2023optimal} is that the fluctuations of the resolvent of $H^z$ are much smaller when tested against certain observable matrices, an effect  first observed in the context of Wigner matrices in \cite{cipolloni2021eigenstate}. 

A common difficulty in the analysis of the maximum of \eqref{eq:logcharpol} is that this quantity can be very singular if $H^z$ has eigenvalues close to zero. We thus regularize
\begin{equation}
\label{eq:reg}
\log\big|\mathrm{det} H^z\big|=\sum_{i=1}^n\log\lambda_i^z\approx \frac{1}{2}\sum_{i=1}^n\log\big[(\lambda_i^z)^2+\eta^2\big], \qquad \eta\asymp n^{-1}.
\end{equation}
While this regularization can easily be achieved in the proof of the upper bound (due to simple monotonicity), for the lower bound we need to ensure that for each $z\in\P$ the corresponding smallest singular value $\lambda_1^z$ is not too small; this is a well known difficult problem in the analysis of the spectrum of non--Hermitian matrices, even for a single fixed $z$. We achieve this using the smallest singular value estimate \cite{cipolloni2020optimal} (see also \cite{cipolloni2022condition, erdHos2023wegner, shcherbina2022least}) together with the asymptotic independence result from \cite[Section 7]{cipolloni2023central} for $\lambda_1^{z_1}, \lambda_1^{z_2}$, as long as $|z_1-z_2|\gg n^{-1/2}$. Essentially, the asymptotic independence allows us to find a sufficiently large (random) collection of points $\{ w_i \}_i$ where $\lambda_1^{w_i}$ is not too small; this allows us to achieve some amount of regularization and then in turn apply the Lipschitz continuity mentioned above to return to a \emph{deterministic} collection of points to which we will apply the second moment method.

We now explain the main differences and the common features of the proofs of the upper and lower bounds in \eqref{eqn:intro-result}. Our main tool, used in both parts of the proof, is  a new branching random walk representation for $\log|\mathrm{det}(X-z)|$ which we discuss in Section~\ref{sec:lbrep} below. One of the main advantages of this representation is that, to prove universality of \eqref{eqn:intro-result}, we do not need to compare $\log|\mathrm{det}(X-z)|$ with its Ginibre counterpart, but we can instead estimate it directly even for general i.i.d. matrices. This also allows us to prove \eqref{eqn:intro-result} in the real case; here,  a direct comparison to the real Ginibre ensemble would not work, as \eqref{eqn:intro-result} was not known for the real Ginibre ensemble prior to our work (partly as a consequence of the very complicated $k$--point correlation functions \cite{borodin2009ginibre,forrester2007eigenvalue}). Our methods are likely to be useful in further studies of the extremal statistics of the log--characteristic polynomial, such as determining lower order terms. Investigating subleading terms in the real case would be of particular interest, as they have a different character in the inhomogeneous and homogenous BRW; see \cite{fang2012branching}.

\subsubsection{Branching random walk structure and lower bound}
\label{sec:lbrep}

In this section we explain our coupling to the BRW and our proof of the lower bound. Traditionally, proving a lower bound for the leading order asymptotic of characteristic polynomials is closely related to proving the convergence of powers of the characteristic polynomials to the Gaussiam Multiplicative Chaos (GMC) measure (see e.g. \cite{bourgade2023optimal, claeys2021much, lambert2020maximum, lambert2024law}). We refer the reader to \cite{bourgade2206liouville, lambert2021mesoscopic, nikula2020multiplicative, webb2015characteristic, remy2020fyodorov} for  other works concerning the emergence of the GMC measure from spectral statistics of random matrix ensembles. Instead, we take a completely different route, and  extract the underlying branching structure using \emph{Dyson Brownian motion (DBM)}. The branching structure of $\log|\mathrm{det}(X-z)|$ is different in the complex and in the real case. In fact, one can think of $\log|\mathrm{det}(X-z)|$ as a Gaussian field on the disk with correlation kernel (here we omit a $1/n$ regularization) 
\begin{equation}
\label{eq:corrker}
K(z_1,z_2)=\begin{cases}
-\frac{1}{4}\log|z_1-z_2|^2 &\mathrm{if}\quad X\in \C^{n\times n}, \\
-\frac{1}{4}\log|z_1-z_2|^2-\frac{1}{4}\log|z_1-\overline{z_2}|^2 &\mathrm{if}\quad X\in \R^{n\times n}. \\
\end{cases}  
\end{equation}
This shows that heuristically in the real case $\log|\mathrm{det}(X-z)|$ can be thought as the cumulative effect of two different fields: one that has the same singularity as in the complex case, and one that is smoother on the disc. The fact that $\log|\mathrm{det}(X-z)|$ has a different behavior in the real and complex cases naturally emerges by the following branching random walk representation using DBM. 

We embed $X$ in a flow
\begin{equation}
\label{eq:flowintro}
\dif X_t=-\frac{1}{2}X_t\dif t+\frac{\dif B_t}{\sqrt{N}}, \qquad\quad X_0=X,
\end{equation}
which will asymptotically not affect the distribution of the maximum, due to moment matching arguments based on \cite{landon2020comparison}. Here $B_t$ is a  matrix valued standard i.i.d. Brownian motion.

Fix a final time $T = o (1)$. Associated with this flow are \emph{characteristics} $\eta_t \approx n^{-1} + (T-t) $.  Eigenvalue statistics such as $\log | \det (X_t - z)|$, evaluated along characteristics  obey particularly nice equations under the stochastic dynamics \eqref{eq:flowintro} and so we may approximate,
\begin{align}
\label{eq:brrw1}
    \log | \det (X_T - z) | \approx \frac{1}{2}  \log \det \left[ |X_T -z |^2 + \eta_T^2\right] \approx  \frac{1}{2} \int_0^T \d \left(  \log \det \left[ |X_t -z |^2 + \eta_t^2\right]  \right)
\end{align}
with the integral in the It\^{o} sense. The last approximation on the RHS is our BRW representation for the log-characteristic polynomial (we omit the other endpoint of the integral at $t=0$ for brevity as it plays a less important role). We want to think of the RHS of \eqref{eq:brrw1} as the different branches of a BRW for different $z$. Indeed, this can be seen as a BRW  due to the fact that the increments $ \d \left(  \log \det \left[ |X_T -z |^2 + \eta_T^2\right]  \right) $  are almost perfectly correlated for $|z_1 - z_2|^2 \ll \eta_t$ and decorrelated for $|z_1 - z_2 |^2 \gg \eta_t$.  Moreover, the fact that the BRW is inhomogeneous in the real case is immediate: in the real case the quadratic variation process of $\d \left[ \log \det \left( |X_t -z |^2 + \eta_t^2\right)  \right]$ is roughly
\beq \label{eqn:intro-qv}
 \frac{ \sigma_1}{ \eta_t} \1_{ \{ \eta_t > \Im[z]^2 \}} +   \frac{ \sigma_2}{ \eta_t} \1_{ \{ \eta_t < \Im[z]^2 \}},
\eeq
for some $\sigma_1 > \sigma_2$ (note that the logarithmic behavior comes from $\int_0^T \eta_t^{-1} \d t \approx \log n$ up to appropriate constants).  In particular, the step size of our random walk decreases abruptly at the time $t$ such that $\eta_t  = \Im[z]^2$.  After an exponential time change, this corresponds roughly to the model of an inhomogeneous BRW considered by Fang and Zeitouni in \cite{fang2012branching}, where the step size changes at a macroscopic time  part of the way along the walk. 

Beyond the work of \cite{fang2012branching} which partially inspires our analysis, there has been significant recent attention given to inhomogeneous BRWs. A sampling of these works includes \cite{arguin2024maxima,arguin2015extremes,berestycki2022simple,bovier2014extremal,fels2019extremes,mallein2019maximal,ouimet2017geometry}. In particular, many of these works study the $2$d discrete GFF with scale-dependent variance, and the work \cite{arguin2024maxima} finds an inhomogeneous structure in a randomized model of the Riemann zeta function. An attractive feature of our paper is then showing that real i.i.d. matrices are another setting in which inhomogeneous models arise naturally. 
In particular, this is the first instance of an inhomogeneous BRW emerging in the context of random matrix theory.

The analysis of the covariation process of the Martingale increments on the RHS of \eqref{eq:brrw1} (which then leads to the important decorrelation/correlation dichotomy depending on the distance $|z_1-z_2|$ as well as the inhomogeneous structure \eqref{eqn:intro-qv}) is based on state-of-the-art multi--resolvent local laws for i.i.d. matrices \cite{cipolloni2023mesoscopic}. More precisely, we use that the size of the product of the resolvents of $H^{z_1}$ and $H^{z_2}$ gets smaller as $|z_1-z_2|$ becomes larger, an  effect  first observed in \cite{cipolloni2023central}. For matrices with complex entries this is proven in \cite[Theorem 3.3]{cipolloni2023mesoscopic} using the method of characteristics. Here, following the lines of \cite{cipolloni2023mesoscopic}, we extend this result to matrices with real entries (with a large Gaussian component) as well as to certain matrices of mixed symmetry (see Appendix~\ref{a:small-complex-comp} for more details). Furthermore, we point out that in Corollary~\ref{cor:nozreal} we also show that the $|z_1-z_2|$--gain persists below the typical fluctuation scale of the eigenvalues of $H^{z_1},H^{z_2}$.

The characteristic method has been used previously in \cite{bourgade2021extreme, huang2019rigidity}. A similar idea was used earlier in the context of edge universality for Hermitian matrices \cite{lee2015edge}. Since these results, the characteristic flow has been very widely used in the context of single resolvent observables \cite{adhikari2020dyson, adhikari2023local, landon2024single, landon2022almost, campbell2024spectral} as well as to prove multi--resolvent local laws \cite{bourgade2206liouville, cipolloni2023eigenstate, cipolloni2023mesoscopic, cipolloni2023universality, cipolloni2024out, riabov2024eigenstate, stone2023random} for various models.

Finally, given \eqref{eq:brrw1}, we use a (modified) \emph{second moment method} to obtain the desired lower bound. Here, we follow the presentation of \cite{arguin2016extrema} of Kistler's multiscale refinement of the second moment method \cite{kistler2014derrida}. As a consequence of \eqref{eq:corrker}, the second moment method needs to be performed  differently in the real and complex cases.

\subsubsection{Upper bound}

In the complex case, the correlation structure \eqref{eq:corrker} suggests that, to leading order, the maximum of $\log|\mathrm{det}(X-z)|$ can be modelled by the maximum of $n$ independent Gaussians with variance $\frac{1}{4} \log n$. This ansatz motivates the proof of the upper bound in the complex case. Indeed, the Lipschitz continuity mentioned above implies that we can consider the maximum over $n$ points, after which the upper bound essentially follows from the fact that $\log|\mathrm{det}(X-z)|$ is approximately Gaussian and a union bound. We point out that this argument actually applies to establish the upper bound of the leading order asymptotic of logarithm of characteristic polynomials in all models discussed in Section~\ref{sec:rmlog}. In our case, after the regularization \eqref{eq:reg}, the Gaussianity of $\log|\mathrm{det}(X-z)|$ is proven using again the representation \eqref{eq:brrw1}, obtained using DBM.

The situation in the real case is more complicated. If $\Im[z] \asymp n^{-\alpha}$ with $\alpha \in [0, 1/2]$, then the asymptotic variance of $\log | \det (X -z )|$ implied by \eqref{eq:corrker} is $\frac{1+2\alpha}{4} \log n$. In each mesoscopic slice $\Im[z] \asymp n^{-\alpha}$, there are fewer points if $\alpha$ is larger, but the fluctuation itself grows.

At first, one may hope that in order to obtain \eqref{eqn:intro-result}, one could compute the maximum over each of these slices using a union bound (as in the complex case), and then maximize in $\alpha$. However, this does not give the correct answer (see \eqref{eq:diffrealcase} and the discussion below it). Instead, to obtain \eqref{eq:corrker}, we need to use again the representation \eqref{eq:brrw1} as an inhomogeneous BRW. This allows us to decompose $\log|\mathrm{det}(X-z)|$ as the sum of two independent Gaussian fields according to the covariance structure \eqref{eq:corrker}. For one of these two fields we can estimate its maximum using a union bound, and then, given this information as an input, we compute the maximum of the sum. This is partly inspired by \cite{fang2012branching}.

\subsection*{Notations and conventions}

For integers $k\in \nn$  we use the notation $[k]:=\{1, 2,\dots, k\}$. We write $\D\subset\C$ to denote the open unit disc, and for any $\sigma\in \C$ we use the notation $\dif^2\sigma := 2^{-1}\ii (\dif \sigma\wedge\dif \overline{\sigma})$ to denote the two dimensional volume form on $\C$. For positive quantities $f, g$ we write $f\lesssim g$ and $f\asymp g$ if $f\le C g$ or $cg\le f\le Cg$, respectively, for some $n$-independent constants $c, C > 0$ which depend only on the constants appearing in \eqref{eq:moments}. We denote vectors by bold-faced lower case Roman letters ${\bm x}, {\bm y}  \in\C^d$ , for some $d\in \nn$, and their scalar product by
\[
\langle {\bm x}, {\bm y}\rangle:=\sum_{i=1}^d \overline{x_i}y_i.
\]
For any $d \times d$ matrix $A$ we use the notation $\langle A\rangle:= d^{-1}\mathrm{Tr}[A]$ to denote the normalized trace of $A$, and $A^\mathfrak{t}$ denotes the transpose of $A$. We denote the $d$--dimensional identity matrix by $I=I_d$. Furthermore, we define the $2\times 2$ block matrices
\begin{equation}
\label{eq:defE1E2}
E_1:=\left(\begin{matrix}
1 & 0 \\
0 & 0
\end{matrix}\right),
\qquad\quad
E_2:=\left(\begin{matrix}
0 & 0 \\
0 & 1
\end{matrix}\right).
\end{equation}
We also use the notation
\[
\tilde{\sum}_{ij}:=\sum_{(i,j)\in \{(1,2),(2,1)\}},
\]
to denote sums over matrices $E_1,E_2$.

Throughout the paper we will use the notion of a \emph{set of well spaced points} $P$ within another set $\Omega$ to denote a mesh of $|P|$ equidistant points contained in the set $\Omega$.

We will use the concept of “with overwhelming probability” meaning that for any fixed $D > 0$ the probability of the event is bigger than $1=n^{-D}$ if $n\ge n_0(D)$, with $n_0(D)$ possibly depending on the constants appearing in  \eqref{eq:moments} of the definition of our model, Definition \ref{def:model} below. Moreover, we use the convention that $\xi>0$ denotes an arbitrary small constant which is independent of $n$.

Throughout the paper various estimates will hold for $n$ ``sufficiently large.'' Here, sufficiently large can depend on the constants in the definition of our model in Definition \ref{def:model} below, as well as on parameters introduced before the phrase ``sufficiently large'' in the various statements of lemmas, propositions, and theorems below. For clarity, we will usually state this dependence below; however, we will not explicitly state the dependence on the model parameters in Definition \ref{def:model} as all estimates involving our i.i.d. matrices are assumed to depend on these parameters. Note also that the ``$n$ sufficiently large'' in the statement of overwhelming probability will also depend on these model parameters, as well as parameters introduced in the statements of lemmas, propositions, and theorems.

For real-valued martingales $M_t, N_t$, we denote the covariation process by $\d [ M_t, N_t]$. For complex valued martingales $M_t = X_t+ \i Y_t, N_t = P_t + \i Q_t$ the covariation process is defined by, $\d [ M_t, N_t] := \d [X_t, P_t] - \d [Y_t, Q_t] + \i ( \d [Y_t, P_t] + \d [ X_t, Q_t])$. The total variation process of a real-valued martingale is denoted by $[M_t] := \d [ M_t, M_t]$. 

\vspace{5 pt}

\noindent{\bf Acknowledgements.} B.L. heartily thanks Jiaoyang Huang and Paul Bourgade for lengthy discussions about the logarithmic polynomial of random matrices and the characteristic method. The research of B.L. is partially supported by an NSERC Discovery Grant and a Connaught New Researcher award.

\section{Main result} \label{sec:main}

We consider the following model of i.i.d. matrices:

\bed \label{def:model} An i.i.d. matrix is an $n \times n$ matrix $X$ whose entries are all independent, identically distributed (i.i.d.) random variables, $X_{ab} \stackrel{\dif}{=} n^{-1/2} \chi$. We always assume that $\ee[\chi] = 0$ and $\ee[ |\chi|^2]=1$. We will consider two classes of i.i.d. matrices, real i.i.d. matrices and complex i.i.d. matrices. In the real case $\chi \in \rr$ and in the complex case $\chi \in \cc$ and we further assume that $\ee[ \chi^2]=0$. We will always assume that for all $p \in \mathbb{N}$ there exists a $C_p >0$ so that,
\begin{equation}
\label{eq:moments}
\E |\chi|^p \le C_p.
\end{equation}
Throughout, we will use the parameter $\beta$ to unify formulas that hold in the real and complex cases. Specifically, in the real case $\beta=1$ and in the complex case $\beta=2$. 

We will also say that $X$ has a Gaussian component of size $a >0$ if $\chi = (1- a)^{1/2} \chi' + a^{1/2} g$ where $g$ is a standard real or complex Gaussian (matching the symmetry of $X$) and $\chi'$ is independent of $g$ and also obeys \eqref{eq:moments}. Throughout the paper, we will also use the abbreviation \emph{GDE} to denote the Gaussian divisible ensemble, i.e. i.i.d. matrices having a nonzero Gaussian component.
\eed
Our main observable of interest is the logarithm of the characteristic polynomial of $X$,
\begin{equation}
P_n(z):=\log |\mathrm{det}(X-z)|.
\end{equation}
The leading order asymptotics of the characteristic polynomial are given by,
\begin{equation}
\varphi(z):=\int_\D \log|z-\sigma|\,\dif^2\sigma=(\log |z|)_+-\frac{(1-|z|^2)_+}{2}.
\end{equation}
Note that for $|z|<1$ we have $\varphi(z)=(|z|^2-1)/2$.

Our main result for complex i.i.d. matrices is the following. 
\bet \label{theo:main} Let $X$ be a complex i.i.d. matrix as in Definition \ref{def:model}. Then for any $\eps >0$ and $ 0 < r < 1$ we have that
\begin{equation}
\label{eq:desb}
\lim_{n\to \infty}\PP\left( \left(\frac{1}{\sqrt{2}}-\eps\right)\log n\le\max_{|z|\le r} \big[ P_n(z) - n \varphi (z) \big] \le  \left(\frac{1}{\sqrt{2}}+\eps\right)\log n\right)=1.
\end{equation}
\eet
We remark that by inspecting the proof one finds that the probability of the event in \eqref{eq:desb} is bounded below by $1 - n^{-c_\eps}$ for some $c_\eps >0$ depending on $\eps >0$.

In the real case, there is an additional subleading deterministic contribution to the log characteristic polynomial. That is, if $X$ is a real i.i.d. matrix, one expects that for $|z| <1$ the random variable,
\beq
\label{eq:centerintrealcase}
\frac{P_n(z) - E_n(z)}{\sqrt{ \frac{1}{4} \log n  +\frac{1}{4} \left|  \log \left( |z-\bar{z} |^2 + n^{-1} \right)\right| }}, \qquad\quad E_n(z):= n \varphi (z) - \frac{1}{4} \log \left( |z-\bar{z} |^2 + n^{-1}\right)
\eeq
converges to a standard normal random variable, even if $\Im[z] \to 0$ as $n \to \infty$. This would not be hard to show with our techniques, but given the length of the current paper we leave this to future work.

\bet
\label{theo:realmaxlog}
Let $X$ be a real i.i.d. matrix. Then for any $\eps >0$ and $0 < r < 1$ we have that,
\beq \label{eqn:realmaxlog-1}
\lim_{n \to \infty} \pp\left[ \left( \frac{1}{ \sqrt{2}} - \eps \right) \log n\le\max_{ |z| \leq r } \big[P_n (z) - E_n(z)\big] \le \left( \frac{1}{ \sqrt{2}} + \eps \right) \log n \right] = 1.
\eeq
In addition, for any $ 0 < \alpha < \frac{1}{2}$ and $0 < r < 1$ we have that 
\beq \label{eqn:realmaxlog-2}
\lim_{n \to \infty} \pp\left[ \left( \frac{1}{ \sqrt{2}} - \eps \right) \log n\le\max_{ \substack{ |z| \leq r \\ n^{-\alpha} \leq \Im[z] \leq 2 n^{-\alpha} }}  \big[P_n (z) -E_n(z)\big]
\le \left( \frac{1}{ \sqrt{2}} + \eps \right) \log n\right] = 1.
\eeq
\eet

\

\begin{remark}[Comparison with the complex case]
\emph{We point out that the first order asymptotic of the maximum over mesoscopic slices $n^{-\alpha} \leq \Im[z] \leq 2 n^{-\alpha}$ for the real case in \eqref{eqn:realmaxlog-2} substantially differs from the answer one would obtain in the complex case. In fact, by following the proof of Theorem~\ref{theo:main} one can see that in the complex case we would have}
\[
\max_{ \substack{ |z| \leq r \\ n^{-\alpha} \leq \Im[z] \leq 2 n^{-\alpha} }} \big[P_n (z) -n\varphi(z)\big]\approx \sqrt{\frac{1-\alpha}{2}}\log n,
\]
\emph{In particular, unlike in the real case, the first order asymptotic would depend on $\alpha$. The maximum over this mesoscopic band is strictly smaller in the complex case than it is in the real case for any $\alpha \in (0, \frac{1}{2})$.}
\end{remark}

\

\begin{remark}[Maximum over the real axis]
\emph{We point out that using techniques similar to the proof of Theorems~\ref{theo:main}--\ref{theo:realmaxlog} we can also prove}
\begin{equation}
\label{eq:maxrealline}
\pp\left(\max_{x\in [-1+r,1-r]}\left[P_n(x)-n\varphi(x)+\frac{\log n}{4}\right]=\frac{\log n}{\sqrt{2}}\big(1+\mathcal{O}(\eps)\big)\right)=1,
\end{equation}
\emph{for any small $r>0$. We do not present its proof here for brevity.}
\end{remark}

\

\begin{remark}
\emph{In Theorems~\ref{theo:main}--\ref{theo:realmaxlog} and \eqref{eq:maxrealline} we gave a leading order asymptotic for the maximum of the characteristic polynomial over the domain $|z|\le r$, for some $0<r<1$. We expect that the same proof works for the maximum over $|z|\le 1$ (giving the same answer), but this would require significant rewriting of technical inputs to our work and so  we omit this for brevity.}
\end{remark}

\

We now present some technical results that will be used throughout the paper.

\subsection{Preliminaries}

Given $z \in \cc$ and a matrix $X \in \cc^{n \times n}$, the \emph{Hermitization} of $X-z$ is
\begin{equation}
\label{eq:herm}
H^z(X) = H^z:=\left(\begin{matrix}
0 & X-z \\
(X-z)^* & 0
\end{matrix}\right).
\end{equation}
Notice that $H^z\in\C^{2n\times 2n}$ has a $2\times 2$ block structure, i.e. it consists of four $n\times n$ blocks. This structure (known as \emph{chiral symmetry}) induces a spectrum symmetric around zero, i.e., denoting the eigenvalues of $H^z$ by $\{\lambda_{\pm i}^z\}_{i\in [n]}$, we have $\lambda_{-i}^z=-\lambda_i^z$, for $i\in [n]$. Furthermore, we point out that that $\{\lambda_i^z\}_{i\in [n]}$ are exactly the singular values of $X-z$.

In this context, Girko's Hermitization formula is the identity,
\begin{equation}
\label{eq:relnohermherm}
\log |\mathrm{det}(X-z)|=\frac{1}{2}\log |\mathrm{det} H^z|.
\end{equation}
This relation reduces the analysis of the non--Hermitian eigenvalues to study the eigenvalues of  the Hermitian matrix $H^z$. In particular, we can write
\begin{equation} \label{eqn:girko}
P_n (z) - n \varphi (z) = \frac{1}{2} \left( \sum_{i=-n}^n \log | \lambda_i^z| - 2 n\int_\R \log |x| \rho^z(x)\, \dif x \right)
\end{equation}
We point out that with the notation $\sum_{i=-n}^n$ we denote a summation where the index $i$ runs from $-n$ to $-1$ and from $1$ to $n$, i.e. the term $i=0$ is omitted. This notation will be used throughout the paper. Here $\rho^z(x)$, denoting the limiting eigenvalues distribution of $H^z$, is defined by
\begin{equation}
\rho^z(x):=\lim_{\eta\to 0^+}\frac{1}{\pi}\Im m^z(\ii \eta),
\end{equation}
with $m^z(w)$, for $w\in\C\setminus\R$, being the unique solution of the cubic equation (see e.g. \cite[Eqs. (2.4a)--(2.4b)]{alt2018local})
\begin{equation}
\label{eq:mde}
-\frac{1}{m^z(w)}=w+m^z(w)-\frac{|z|^2}{w+m^z(w)}, \qquad\quad \Im [ w ] \Im [ m^z(w) ]>0.
\end{equation}We point out that \eqref{eq:mde} consists of only one equation unlike \cite[Eqs. (2.4a)--(2.4b)]{alt2018local} since in our case, using the notation therein, the variance matrix $S$ is such that $S_{ij}=n^{-1}$ for all $i,j\in [n]$. In particular, the identity $\varphi (z) = \int_\rr \log |x| \rho^z (x) \d x $ follows from either the fact that they must both be the a.s.--limits of $n^{-1} \log | \det X|$ or, when $|z|\leq 1$, a direct calculation using \eqref{eq:evscfl} below (with initial data being a delta function). We now summarize various properties of the density $\rho^z(x)$ that we will use in the remainder of the paper. The proof of this lemma is presented in Appendix~\ref{sec:miscres}.
\begin{lemma}
\label{lem:proprho} Fix $0 < r < 1$. Let $\rho^z(x)$ be the density defined in \eqref{eq:behrho}. Uniformly in $z$ satisfying $|z| \leq r$ we have, 
\begin{itemize}

    \item[(i)] The density $\rho^z$ is symmetric, and its support is given by $[-\mathfrak{e}_z, \mathfrak{e}_z]$ for an explicit $\mathfrak{e}_z>0$. In particular,  it consists of a single interval.

    \item[(ii)] The edge $\mathfrak{e}_z$ satisfies the bound $C^{-1}\le\mathfrak{e}_z\le C$, for some $C>0$.

    \item[(iii)] The density $\rho^z(x)$ has square root behavior close to $\mathfrak{e}_z$:
    \[
    \rho^z(\mathfrak{e}_z\pm \lambda)=\begin{cases}
    \gamma\sqrt{\lambda}\big(1+\mathcal{O}(\sqrt{\lambda})\big) &\mathrm{if} \quad\lambda \le 0 \\
    0 &\mathrm{if} \quad \lambda>0,
    \end{cases}
    \]
    for an explicit $\gamma>0$, with $C^{-1}\le \gamma\le C$.

    \item[(iv)] Fix any small $\delta>0$, then for $|x|\le \mathfrak{e}_z-\delta$ we have $\rho^z(x)\asymp 1$.

    \item[(v)] Fix any small $\delta,c>0$, and let $m^z$ be the solution of \eqref{eq:mde}. Then, for $|x|\le \mathfrak{e}_z-\delta$ and $0<\eta\le c$ we have $\Im m^z(x+\ii \eta)\asymp 1$.
    
\end{itemize}
\end{lemma}

We define the $n$--quantiles $\gamma_i^z$ of $\rho^z$ implicitly by
\begin{equation}
\label{eq:defquantz}
\int_0^{\gamma_i^z}\rho^z(x)\, \dif x=\frac{i}{2n}, \qquad\quad \mathrm{for}\quad i\in [n],
\end{equation}
and $\gamma_{-i}^z=-\gamma_i^z$ for $i\in [n]$.  For $ w \in \cc \backslash \rr$, we denote the resolvent by $G^z(w):=(H^z-w)^{-1}$. The local law (see Theorem~\ref{thm:ll} below) states that in the large $n$--limit the resolvent $G^z$ becomes approximately deterministic, i.e. that $G^z\approx M^z$ with
\begin{equation}
\label{eq:defM}
M^z=M^z(w):=\left(\begin{matrix}
m^z(w) & -zu^z(w) \\
-\overline{z}u^z(w) & m^z(w)
\end{matrix}\right), \qquad\quad u^z(w):=\frac{m^z(w)}{w+m^z(w)}.
\end{equation}
Here $m^z(w)$ is the unique solution of \eqref{eq:mde}. Additionally, the equation \eqref{eq:mde} (see also \cite[Proposition 2.1]{ajanki2019stability}) implies that $M^z(w)$ is the unique solution of
\begin{equation}
\label{eq:bigMDE}
-\frac{1}{M^z(w)}=w+Z+\langle M^z(w)\rangle, \qquad\quad Z:=\left(\begin{matrix}
0 & z \\
\overline{z} & 0
\end{matrix}\right),
\end{equation}
satisfying $\Im [w ]\Im [ M^z(w) ]>0$.

The following local laws and rigidity estimates may be found in \cite[Theorem 3.1]{cipolloni2021fluctuation}. 
\bet \label{thm:ll}  Fix $ 0 < r < 1$, $C>0$, and any small $\xi>0$. Uniformly in $ \Im [w] \geq n^{-1}$, $|w| \leq C$, $|z| \le r$, matrices $A \in \cc^{2n \times 2n}$ and vectors $\bx, \by \in \cc^{2n}$, we have with overwhelming probability,
\beq \label{eqn:entrywise-ll}
\left| \langle \bx , (G^{z}(w) - M^z(w) ) \by \rangle \right| \leq \| \bx \| \| \by \| \frac{ n^\xi}{\sqrt{n \Im[w]}}
\eeq
and
\beq \label{eqn:ll-1}
\left| \langle A (G^z (w) - M^z (w) ) \rangle \right| \leq \frac{ n^\xi \| A \|}{n \Im[w] }.
\eeq
Additionally, we have with overwhelming probability that,
\beq \label{eqn:usual-rigidity}
| \lambda_i^z - \gamma_i^z| \leq \frac{ n^\xi}{n^{2/3} ( 1+n - |i| )^{1/3} }
\eeq
\eet

Due to the importance of the quantity on the RHS of \eqref{eqn:girko} we will denote,
\beq
\Psi_n (z) := \sum_{i=-n}^n \log | \lambda_i^z| - 2 n \int \log |x| \rho^z (x) \d x + \1_{\{ \beta=1\}} \frac{1}{2} \log\left( |z-\bar{z}|^2 + 2 n^{-1} \sqrt{1-|z|^2} \right).
\eeq
Note that, in the complex case, $\Psi_n(z)$ differs from $P_n(z)-n\varphi(z)$ only by a factor of $2$, while in the real case there is an additional subleading order correction. We will need to consider a more general quantity. Throughout our work the matrix $X$ will be allowed to depend on time $t$, and we will denote the eigenvalues of the Hermitization of $X_t -z$ (as in \eqref{eq:herm}) by $\lambda_i (t)^z$. Furthermore, for any $\eta >0$ we denote, 
\begin{align} \label{eqn:psi-def}
    \Psi_n (z, t, \eta) &:= \Re \left( \sum_{i=-n}^n \log ( \lambda_i^z (t) - \i \eta ) - 2 n \int_{\rr} \log ( x - \i \eta ) \rho^z_t (x) \d x \right)  \notag \\
    &\quad+ \1_{\{ \beta=1\}} \frac{1}{2} \log\left( |z-\bar{z}|^2 + ( n^{-1} \vee \eta)  \right) \notag\\
    &=\frac{1}{2} \left( \sum_{i=-n}^n \log ( \lambda_i^z (t)^2 + \eta^2 ) - 2 n \int_{\rr} \log ( x^2 + \eta^2) \rho^z_t (x) \d x \right) \notag\\
    &\quad+\1_{\{ \beta=1\}} \frac{1}{2} \log\left( |z-\bar{z}|^2 + ( n^{-1} \vee \eta)  \right).
\end{align}
In the case that $X$ does not depend on time we will denote the above observable by $\Psi_n(z,\eta)$.  
Above, $\rho^z_t$ will be a possibly time-dependent limiting spectral distribution of $X_t$; whenever we introduce a time-dependent models of $X_t$, we will also introduce $\rho_t^z$ at the same time. Due to taking the real part, the choice of branch cut of the logarithm is immaterial, but for definiteness we will take the branch cut along the positive imaginary axis. Note that in principle, the additional term present in the real case  should also have some time dependence, but since we will always have $t \leq n^{-c}$, for some possibly very small fixed $c>0$, this will turn out to be lower order. 

The following is a consequence of \cite[Theorems 4.4--4.5]{cipolloni2023optimal}, and we provide the proof in Appendix~\ref{sec:der-bd}. 
\bep \label{prop:der-bd}
Let $0 < r < 1$, and fix any small $\xi >0$. For $X$ a real or complex i.i.d. matrix we have
\beq \label{eqn:der-bd}
\left| \Psi_n (z_1, \eta) - \Psi_n (z_2, \eta) \right| \leq \frac{ n^\xi |z_1 -z_2|}{ \sqrt{\eta}}
\eeq
with overwhelming probability uniformly in $z_1,z_2$ satisfying $|z_i | < r$ and $1/n\le\eta\le 1$
\eep

\section{Fine rigidity estimates for the Hermitization of $X-z$}
\label{sec:finrigest}

In this section we will derive a very precise bound on the eigenvalues $\lambda_i^z$ for small $i$. That is, we will show that $| \lambda_i^z  - \gamma_i^z| \ll \log n/ n$ for small $i$. The first step towards this estimate is the following improvement on the averaged local laws of Theorem \ref{thm:ll}, which replaces the $n^\xi$ error term with a correction sub-logarithmic in $n$ (observe that the results hold only for small $\Im w \ll 1$).

\begin{definition}
For $|z| \le r<1$ and $\kappa >0$ we define the bulk interval $I_z (\kappa)$ by 
\beq
I_z (\kappa):=\{ x : |x|\le \mathfrak{e}_z-\kappa \},
\eeq
with $\mathfrak{e}_z$ denoting the edge of $\rho^z$ (see Lemma~\ref{lem:proprho}).
\end{definition}

\begin{proposition}
\label{lem:precllaw}
Let $X$ be a real or complex i.i.d. matrix. Then, for any sufficiently small $\delta >0$ and $ \kappa>0$ it holds
\begin{equation}
\label{eq:goodll}
|\langle G^z(w)-M^z(w)\rangle|\lesssim \frac{(\log n)^{1/2+\delta}}{n|\Im w|},
\end{equation}
with overwhelming probability uniformly in $n^{-\delta}\ge|\Im w|\ge (\log n)^{1/2+10\delta}/n$ and $\Re w \in I_z ( \kappa)$.
\end{proposition}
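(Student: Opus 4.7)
The plan is to improve the $n^\xi$ factor in the standard averaged local law (Theorem \ref{thm:ll}) to the sub-logarithmic precision $(\log n)^{1/2+\delta}$. Since the stated bound combined with the overwhelming probability requirement is intrinsically Gaussian in nature (the object $n|\Im w| \cdot \langle G^z - M^z\rangle$ should have sub-Gaussian tails of $O(1)$ variance), the natural route is through a martingale representation of this rescaled quantity, followed by an exponential concentration inequality. This is most naturally achieved via the method of characteristics along the Ornstein--Uhlenbeck flow, following \cite{cipolloni2023mesoscopic}.

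First, using the invariance of the OU flow $\dif X_t = -\tfrac{1}{2}X_t\, \dif t + \tfrac{1}{\sqrt{n}} \dif B_t$ on the class of i.i.d. matrices (in distribution), I would reduce the statement to the case of a Gaussian divisible ensemble with a Gaussian component of size $t_0 = n^{-c}$ for some small $c > 0$. The Gaussian component is then removed at the end via a moment-matching Green's function comparison argument in the spirit of \cite{landon2020comparison}; such arguments introduce errors that are polynomially small in $n$, and hence do not affect the stated bound.

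For the GDE, I would set up the characteristic flow $t \mapsto w_t$ for the Hermitization (chosen so that $M^z(w_t)$ is preserved), and apply It\^o's formula to $g_t := \langle G^z_t(w_t) - M^z(w_t)\rangle$. Using the standard local law of Theorem~\ref{thm:ll} as an input, this yields an SDE of the form
\[
\dif g_t = L_t[g_t]\, \dif t + \dif \mathcal{M}_t + \mathcal{E}_t\, \dif t,
\]
where $L_t$ is the linearized MDE operator (contractive on the bulk interval $I_z(\kappa)$), $\mathcal{E}_t$ collects lower-order errors, and $\mathcal{M}_t$ is a martingale whose quadratic variation is computed through a quadratic (two-resolvent) local law to be of order $t/(n|\Im w_t|)^2$ along characteristics with $|\Im w_t| \asymp |\Im w| + t$. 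The stability of $L_t$ on $I_z(\kappa)$ then reduces the control of $g_t$ to that of $\mathcal{M}_t$. By a Dubins--Schwarz time change, $n|\Im w| \cdot \mathcal{M}_t$ is essentially a Brownian motion run up to a random time of order $1$; setting the Gaussian tail parameter to $(\log n)^{1/2+\delta}$ gives a probability bound of $e^{-c(\log n)^{1+2\delta}}$, which is overwhelming. A final union bound via Lipschitz continuity of the resolvent in $w$ on a polynomial lattice in $I_z(\kappa) \times [(\log n)^{1/2+10\delta}/n,\, n^{-\delta}]$ yields the uniform statement.

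The main obstacle is the sharpness of the quadratic variation bound for $\mathcal{M}_t$: obtaining the right leading constant requires a genuinely two-resolvent local law of the form $\langle G^z(w)^* G^z(w)\rangle \approx \Im m^z(w)/|\Im w|$ (itself consequential of Theorem~\ref{thm:ll} through the Ward identity), and it is essential that $\Re w$ remain in the bulk $I_z(\kappa)$ so that the MDE linearization $L_t$ stays contractive, and that $|\Im w| \le n^{-\delta}$ so that the lower-order deterministic corrections in the characteristic flow remain negligible. A secondary technical difficulty is to ensure that the logarithmic factors arising from the martingale maximal inequality and from the union bound combine to produce exactly the exponent $1/2+\delta$, rather than a cruder integer power.
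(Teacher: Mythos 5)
Your proposal tracks the paper's proof of Proposition~\ref{lem:precllaw} closely: the paper also argues dynamically for a Gaussian-divisible matrix along the OU/characteristic flow, controls the martingale increment through its quadratic variation, the martingale representation theorem, and Brownian maximal estimates, and removes the Gaussian component at the end by an iterated Green's function comparison (Proposition~\ref{prop:ll-gfct}). The one step you gloss over is that the coefficient of $g_t$ in the characteristic flow is the \emph{random} quantity $\tfrac{1}{2} + \langle G_t(w_t)^2\rangle$ rather than a deterministic linearized operator $L_t$, and the fluctuation $\langle G_t^2 - \partial_w M\rangle$ --- a priori only $O(n^\xi/(n\eta^2))$ --- would spoil the Gronwall step once $\eta$ drops below $n^{c-1}$ for a fixed $c>0$, so a single Gronwall pass cannot reach the required scale $(\log n)^{1/2+10\delta}/n$. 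The paper circumvents this with a two-stage bootstrap (Lemmas~\ref{lem:ll-1}--\ref{lem:ll-2}): first improve $n^\xi$ to $(\log n)^{1/2+\delta}$ down to $\eta\gtrsim n^{10\xi-1}$ where the derivative fluctuation is negligible, then restart and run a stopping-time argument that self-consistently controls $\langle G-M\rangle$ and $\langle G^2-\partial_w M\rangle$ jointly all the way down to $\eta \gtrsim (\log n)^{1/2+10\delta}/n$.
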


\begin{remark}
\emph{In Proposition~\ref{lem:precllaw} we prove an averaged local law for  sub--logarithmic scales. We expect that the same proof should give a similar bound for the isotropic law without any additional effort. This means that for any deterministic unit vectors ${\bm x}, {\bm y}$, with overwhelming probability, we have
\begin{equation}
\label{eq:goodlliso}
\big|\langle {\bm x}, (G^z(w)-M^z(w)){\bm y}\rangle\big|\lesssim \frac{(\log n)^{1/2+\delta}}{\sqrt{n|\Im w|}}.
\end{equation}
We do not present its proof here for brevity. We also expect to be possible to choose $\delta=0$ in \eqref{eq:goodll}--\eqref{eq:goodlliso}, giving an optimal bound in terms of $n$. This would also give an optimal delocalization bound on the eigenvectors of $H^z$.}
\end{remark}

The proof of Proposition~\ref{lem:precllaw} is deferred to Section \ref{sec:dynamical-local-law}. The estimate \eqref{eq:goodll} implies the following rigidity estimate via the Helffer-Sj{\"o}strand formula. The proof is standard and deferred to Section \ref{a:goodrig-proof}. 
\begin{corollary}
\label{lem:goodrig}
Let $X$ be a real or complex i.i.d. matrix and fix $|z|\le r<1$. Then for any large $C>0$ and small $\delta >0$ we have that
\begin{equation}
\label{eq:vergoodrig}
|\lambda_i^z-\gamma_i^z|\le \frac{\log n^{1/2+\delta}}{n},
\end{equation}
for $|i|\le (\log n)^C$ with overwhelming probability. Furthermore, for any $\delta, \eps >0$ we have that,
\begin{equation}
\label{eq:goodrig}
|\lambda_i^z-\gamma_i^z|\le \frac{(\log n)^{3/2+\delta}}{n}
\end{equation}
for $|i| < n^{1-\eps}$ 
with overwhelming probability.
\end{corollary}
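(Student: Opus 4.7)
The plan is to combine Proposition~\ref{lem:precllaw} with the Helffer-Sjöstrand (HS) formula to control a smoothed eigenvalue counting function, and then invert the resulting bound using the regularity of the quantiles in the bulk. Write $\mathcal{N}(E) := \#\{i \in [-n,n] : \lambda_i^z \le E\}$ and $\mathcal{N}_{\det}(E) := 2n \int_{-\infty}^E \rho^z(x)\,dx$, and set $\eta_\star := (\log n)^{1/2+10\delta}/n$, the smallest scale at which Proposition~\ref{lem:precllaw} applies.

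For the weaker bound \eqref{eq:goodrig}, I would apply HS to a smoothed indicator of $(-\infty, E]$ at scale $\eta_\star$ for $E \in I_z(\kappa)$, which expresses $\mathcal{N}(E) - \mathcal{N}_{\det}(E)$ as an integral of $\langle G^z(w) - M^z(w)\rangle$ against an almost-analytic extension of the test function. The standard dyadic decomposition in the imaginary part $y \in [\eta_\star, 1]$, combined with the bound $n y |\langle G^z(E+iy) - M^z\rangle| \lesssim (\log n)^{1/2+\delta}$ from Proposition~\ref{lem:precllaw}, yields
\[
|\mathcal{N}(E) - \mathcal{N}_{\det}(E)| \lesssim \log n \cdot (\log n)^{1/2+\delta} \lesssim (\log n)^{3/2+\delta}
\]
uniformly in $E \in I_z(\kappa)$, with overwhelming probability. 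Since $\rho^z \ge c_\kappa > 0$ on $I_z(\kappa)$, the quantiles are spaced like $1/n$ there, and a standard sandwich argument (if $\lambda_i^z > \gamma_i^z + \Delta$ then $\mathcal{N}(\gamma_i^z + \Delta/2) < i$, which contradicts the counting estimate once $\Delta \gg (\log n)^{3/2+\delta}/n$) gives \eqref{eq:goodrig}.

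For the sharper near-zero estimate \eqref{eq:vergoodrig}, I would exploit the chiral symmetry of $H^z$: the invertibility of $X - z$ (which holds with overwhelming probability for $|z| \le r < 1$ by the smallest-singular-value estimate) together with the spectral symmetry pins $\mathcal{N}(0) = n = \mathcal{N}_{\det}(0)$ exactly. Hence for $0 < E \le (\log n)^C/n$,
\[
\mathcal{N}(E) - \mathcal{N}_{\det}(E) = \bigl[\mathcal{N}(E) - \mathcal{N}(0)\bigr] - 2n\int_0^E \rho^z(x)\,dx,
\]
and the natural HS test function is the smoothed indicator of the short interval $[0,E]$ at scale $\eta_\star$. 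The two transition regions of $f_E''$ are then separated by only $|E| \lesssim (\log n)^C/n$, and for dyadic scales $y \gg |E|$ the two transitions produce nearly cancelling contributions to the HS integral (via Taylor expansion of $\langle G^z - M^z\rangle(\cdot + iy)$ in $x$ across the short interval $[0,E]$). This restricts the effective $y$-range to $[\eta_\star, |E|]$, replacing the $\log n$ factor of the previous step by $\log(|E|/\eta_\star) = O(\log\log n)$, whence $|\mathcal{N}(E) - \mathcal{N}_{\det}(E)| \lesssim (\log n)^{1/2+\delta'}$ for any $\delta' > \delta$. The same quantile inversion then yields \eqref{eq:vergoodrig}.

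The main technical obstacle is making the cancellation argument precise in the near-zero case: one must derive a bound of the form $|\partial_x \langle G^z(x+iy) - M^z(x+iy)\rangle| \lesssim (\log n)^{1/2+\delta}/(n y^2)$, so that the contributions of $f_E''$ near $x=0$ and $x=E$ genuinely cancel for $y \gg |E|$. Such a derivative bound follows from Proposition~\ref{lem:precllaw} together with a Cauchy-type integral argument exploiting the analyticity of $G^z - M^z$ in the upper half-plane. The rest---the HS bookkeeping, dyadic decomposition in $y$, and the quantile-to-eigenvalue inversion---is textbook and does not present further difficulties.
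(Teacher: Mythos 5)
Your overall architecture (Helffer--Sj\"ostrand with the sub-logarithmic local law of Proposition~\ref{lem:precllaw}, followed by quantile inversion) is the same as the paper's, and your plan for \eqref{eq:goodrig} is essentially the paper's proof with $\mathfrak{a}=n^{-\eps}$. The chiral-symmetry reduction $\mathcal{N}(0)=n$ for \eqref{eq:vergoodrig} is also fine. However, there is a concrete gap in the cancellation step for $y\gg|E|$.

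You claim that a bound $|\partial_x\langle G^z(x+\i y)-M^z(x+\i y)\rangle|\lesssim (\log n)^{1/2+\delta}/(ny^2)$ together with ``Taylor expansion of $\langle G-M\rangle$ in $x$'' makes the two transitions of $f_E''$ cancel. This is not enough, because after subtracting $\langle G-M\rangle(\i y)$ you are still integrating against $f_E''$, whose $L^1$ norm is $\asymp \eta_\star^{-1}$. Quantitatively: on the bump near $x=E$ the Taylor remainder is of size $|E|\,\sup|\partial_x\langle G-M\rangle|$, so for fixed $y$ the integral is $\lesssim y\,|E|\,\tfrac{(\log n)^{1/2+\delta}}{ny^2}\cdot\tfrac{1}{\eta_\star}$, and integrating $y$ over $[|E|,1]$ yields $\tfrac{|E|\,(\log n)^{1/2+\delta}\log n}{n\eta_\star}$. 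For $|E|\asymp(\log n)^C/n$ and $\eta_\star\asymp(\log n)^{1/2+10\delta}/n$ this is of order $(\log n)^{C+1}/n$, far larger than $(\log n)^{1/2+\delta}/n$. The $\eta_\star^{-1}$ factor defeats the argument; Taylor expanding to higher order does not help, because on the bump at $x=E$ the quadratic remainder is of size $|E|^2\sup|\partial_x^2\langle G-M\rangle|$, which reproduces the same loss.

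The fix is to integrate by parts \emph{first}, so that one is working with $f_E'$ (whose $L^1$ norm is $O(1)$, not $O(\eta_\star^{-1})$) against $\partial_x\langle G-M\rangle$, and then exploit $\int f_E'=0$ to subtract $\partial_x\langle G-M\rangle(\i y)$. The cancellation now lives one derivative higher: one needs $|\partial_x^2\langle G-M\rangle|\lesssim (\log n)^{1/2+\delta}/(ny^3)$ (again by Cauchy), and the contribution for fixed $y\gg|E|$ becomes $\lesssim y\cdot|E|\cdot\tfrac{(\log n)^{1/2+\delta}}{ny^3}$, integrating to $O((\log n)^{1/2+\delta}/n)$. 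So the idea survives, but the derivative you identified is the wrong one. The paper sidesteps this entirely by cutting off the almost-analytic extension at height $\mathfrak{a}\asymp|\gamma_i^z|$: the region $y\gtrsim\mathfrak{a}$ then contributes only through the boundary term $\chi_\mathfrak{a}'$, which is controlled directly from the local law (since $(A+B)/\mathfrak{a}=O(1)$), and the IBP with a single $\partial_w$ is applied only on $y\in[y_0,2\mathfrak{a}]$ where $\log(\mathfrak{a}/y_0)=O(\log\log n)$. That route needs no second derivative and uses one symmetric test function for both claims, with only the cutoff scale $\mathfrak{a}$ changing.
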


\subsection{Proof of Proposition~\ref{lem:precllaw}} \label{sec:dynamical-local-law}

We start this section by defining the concept of matrices with a Gaussian component:
\bed
\label{def:GDE}
We will say that a matrix $X$, as in Definition~\ref{def:model}, has a Gaussian component of size $a >0$ if $\chi = (1- a)^{1/2} \chi' + a^{1/2} g$ where $g$ is a standard real or complex Gaussian (matching the symmetry of $X$) and $\chi'$ is independent of $g$ and also obeys \eqref{eq:moments}. Throughout the paper, we will also use the abbreviation \emph{GDE} to denote the Gaussian divisible ensemble, i.e. i.i.d.\ matrices having a nonzero Gaussian component.
\eed

We prove the local law in Proposition~\ref{lem:precllaw} dynamically. That is, we will first prove \eqref{eq:goodll} for matrices with a fairly large Gaussian component, using Dyson Brownian motion and then remove this Gaussian component by a standard Green's function comparison (GFT) argument.

First, note that the local law \eqref{eqn:ll-1} with $A = I$ implies that,
\begin{equation}
\label{eq:incond}
\big|\langle G^z(w)-M^z(w)\rangle\big|\le \frac{n^\xi}{n|\Im w|},
\end{equation}
with overwhelming probability for any small $\xi>0$, uniformly in $|\Im w|\ge n^{-1+\xi}$. We will show that along the flow \eqref{eq:OU} below we can improve this bound in two directions. First, that the $n^\xi$ in the RHS of \eqref{eq:incond} can be replaced by $(\log n)^{1/2+\delta_1}$, for some small fixed $\delta_1>0$; and then second, that this bound will hold uniformly in $|\Im w|\ge n^{-1}(\log n)^{1/2+10\delta_1}$.

Consider the Ornstein-Uhlenbeck flow
\begin{equation}
\label{eq:OU}
\dif X_t=-\frac{1}{2}X_t\dif t+\frac{\dif B_t}{\sqrt{n}}, \qquad\quad X_0=X,
\end{equation}
where we consider two cases. First, if $X$ is a complex i.i.d. matrix, then $B_t$ is a matrix of i.i.d. standard complex Brownian motions. If $X$ is a real i.i.d. matrix, then $B_t$ is a matrix of i.i.d. standard real Brownian motions. As indicated in Definition \ref{def:model}, we will use the parameter $\beta=1, 2$ to denote the real and complex cases, respectively.

Let $H_t^z = H^z (X_t)$ be the Hermitization of $X_t-z$ defined as in \eqref{eq:herm} with $X$ replaced by $X_t$, and define its resolvent by $G_t^z(w):=(H_t^z-w)^{-1}$, with $w\in\C\setminus\R$. In particular, along \eqref{eq:OU} the first two moments of $H_t^z$ are preserved and so $\rho^z (x)$ will continue to be a good approximation to its empirical eigenvalue distribution for any $t\ge 0$. 

Recall the definition of $E_1, E_2$ in \eqref{eq:defE1E2} as well as $\tilsumij$ directly below that equation. 
Then, using It\^{o}'s formula we obtain (recall that $A^\mft$ denotes the transpose of $A$)
\begin{equation}
\label{eq:flownochar}
\begin{split}
\dif \langle G^z_t(w)\rangle&=\dif N^z_t(w)+\frac{1}{2}\langle G^z_t(w)\rangle \dif t+\frac{1}{2}\langle (Z+w)G^z_t(w)^2\rangle \dif t+2\tilde{\sum}_{ij}\langle G^z_t(w)E_i\rangle\langle G^z_t(w)^2E_j\rangle \dif t \\
&\quad+ \frac{\1_{ \{ \beta=1\}}}{n} \tilde{\sum}_{ij} \langle G^z_t(w)^2 E_i G^z_t(w)^\mft E_j \rangle \d t
\end{split}
\end{equation}
where
\begin{equation}
\label{eq:defbigBM}
\dif N^z_t(w):=-  \frac{1}{n^{1/2}}\langle (G^z_t)^2\dif \mathfrak{B}_t \rangle, \qquad\quad \mathfrak{B}_t = \left( \begin{matrix} 0 & B_t \\ B_t^* & 0 \end{matrix}\right).
\end{equation}

Associated with \eqref{eq:flownochar} are the characteristics,
\begin{equation}
\label{eq:charnew}
\partial_t w_t=-m^{z_t}(w_t)-\frac{w_t}{2}, \qquad\quad \partial_t z_t=-\frac{z_t}{2},
\end{equation}
with initial conditions $\eta_0=n^{-\xi}$ and $|z_0|\le r<1$.  By implicitly differentiating \eqref{eq:bigMDE} with respect to $t$ one finds that,
\beq
\label{eq:derMt}
\frac{\d}{\d t} M^{z_t} (w_t) = \frac{1}{2} M^{z_t} (w_t).
\eeq
Computing the flow \eqref{eq:flownochar} along the characteristics \eqref{eq:charnew}, using \eqref{eq:derMt}, we find that,
\begin{align}
\label{eq:flowchar}
\dif \langle G^{z_t}_t(w_t)-M^{z_t}(w_t)\rangle&=\dif N^{z_t}_t(w_t)+\left(\frac{1}{2}+\langle G^{z_t}_t(w_t)^2\rangle\right)\langle G^{z_t}_t(w_t)-M^{z_t}(w_t)\rangle\dif t \notag\\
&\quad+ \frac{\1_{ \{ \beta=1\}}}{n} \tilde{\sum}_{ij} \langle G^{z_t}_t(w_t)^2 E_i G^{z_t}_t(w_t)^\mft E_j \rangle \d t
\end{align}
where we used that $2\langle G^{z_t}_t(w)E_i\rangle=\langle G^{z_t}_t(w)\rangle$ and $2\langle G^{z_t}_t(w)^2 E_i \rangle = 2 \del_w \langle G^{z_t}_t(w) E_i \rangle = \del_w \langle G^{z_t}_t (w) \rangle = \langle G^{z_t}_t(w)^2 \rangle$.  For notational simplicity we will drop the superscript and denote $G_t = G_t^{z_t}(w_t)$ and $N_t = N^{z_t}_t$. We remark also that if either $\Re[w_t] \in I_{z_t} (\kappa)$ or $\Re[w_0] \in I_{z_0} (\kappa)$ for some $\kappa >0$, then, if $t$ is sufficiently small, we have that $\Re[w_s] \in I_{z_s} (\frac{1}{2} \kappa)$ for all $0 \leq s \leq t$. 

In the remainder of the section we will denote $\eta_s := \Im[ w_s]$. In several places we will use the fact that if $\Re[w_s] \in I_{z_s} ( \kappa)$ then $ - \del_s \eta_s \asymp 1$, which follows from the last point of Lemma~\ref{lem:proprho}.

\bel \label{lem:ll-1} Let $\xi, \kappa >0$ and $\delta >0$. 
Let $(z_s, w_s)$ denote a characteristic with $\Re[w_0] \in I_{z_0} ( \kappa)$, $\Im[w_0] = n^{-\xi}$ and $n^{-5 \xi } \geq \Im[w_t] \geq n^{-1+10 \xi}/n$. Then with overwhelming probability,
\beq
\label{eq:firstimprov}
\left| \langle G_t^{z_t} (w_t) - M^{z_t} (w_t) \rangle \right| \leq \frac{ (\log n)^{1/2+\delta}}{ n \Im[w_t] }.
\eeq
\eel
\proof First note that \eqref{eqn:ll-1} with $A = I$ implies that with overwhelming probability we have
\beq \label{eqn:ll-initial}
\left| \langle G_s(w) - M^{z_s} (w) \rangle \right| \leq \frac{ n^{\xi}}{n \Im[w] } , \qquad \left| \langle \partial_w( G_s (w) - M^{z_s} (w)) \rangle \right| \leq \frac{ n^{\xi}}{n \Im[w]^2 } ,
\eeq
uniformly in $0 \leq s \leq t$ and $\Im[w] \geq n^{-1+\xi}$ (with the second inequality following from the Cauchy integral formula).

Integrating \eqref{eq:flowchar} in time, using \eqref{eqn:ll-initial} and $|\langle \partial_w M^z(w)\rangle|\lesssim 1$ for $\Re[w] \in I_z (\kappa)$, we conclude (recall  $\eta_0=n^{-\xi}$)
\begin{equation}
\begin{split}
\label{eq:almthere}
\langle G_t(w_t)-M^{z_t}(w_t)\rangle&= \int_0^t \dif N_s +\int_0^t\left(\frac{1}{2}+\langle \partial_{w_s} M^{z_s}(w_s)\rangle\right)\langle G_s(w_s)-M^{z_s}(w_s)\rangle\,\dif s\\
&\quad+\mathcal{O}\left(\frac{n^{2\xi}}{n}+\frac{n^{2\xi}}{(n\eta_t)^2} +\frac{ \1_{\{ \beta=1\}}}{ n \eta_t}\right) \\
&= \int_0^t \dif N_s + \mathcal{O}\left(\frac{n^{2\xi}}{n}+\frac{n^{2\xi}}{(n\eta_t)^2} +\frac{ \1_{\{ \beta=1\}}}{ n \eta_t} \right)
\end{split}
\end{equation}
In order to estimate the term on the second line of \eqref{eq:flowchar} we used,
\beq
\frac{1}{n} \left|  \langle G_t(w_t)^2 E_i G_t(w_t)^\mft E_j \rangle \right| \lesssim \frac{1}{n} \langle |G_t (w_t)|^4 \rangle^{1/2} \langle |G_t(w_t)|^2 \rangle^{1/2} \lesssim \frac{1}{n \eta_t^2} \langle \Im[G_t (w_t)] \rangle \lesssim \frac{1}{n \eta_t^2}
\eeq
with overwhelming probability. We are thus left only with the estimate of the martingale term. Let $\tau>0$ be the stopping time,
\beq
\tau :=\inf \{ 0 < s < t : \langle |G_s (w_s) - M^{z_s} (w_s)| \rangle > 1 \} 
\eeq
Note that $\tau = t$ with overwhelming probability. For $t < \tau$ we have for the quadratic variation of $N_s$ (by direct computation),
\begin{equation}
\label{eq:quadvarN}
\dif [N_s,\bar{N}_s] = \tilde{\sum}_{ij} \left( \frac{1}{n^2}\langle G_s(w_s)^2E_iG_s(\overline{w_s})^2E_j\rangle + \frac{\1_{ \{ \beta=1\}} }{n^2} \langle G_s(w_s)^2 E_i [ G_s (\bar{w}_s)^2]^\mft E_j \rangle \right) \dif s  \le\frac{C}{n^2\eta_s^3}\dif s.
\end{equation}
The inequality follows by Cauchy-Schwarz and the fact that 
\beq
\langle E_i G_s(w_s)^2 (G_s(w_s)^* )^2 \rangle \leq C \eta_s^{-3} \langle \Im [ G_t (w_t) ] \rangle \leq C \eta_s^{-3} . \eeq
By the martingale representation theorem, the real and imaginary parts of the stopped process $N^\tau_s$ are each equal in distribution to processes $X_s, Y_s$ that satisfy $X_s = \tilde{b}_{[X_s]}$, $Y_s = \tilde{b}_{[Y_s]}$, where $\tilde{b}$ is a standard Brownian motion. Since the total variation processes of the real and imaginary parts of $N^{\tau}_s$ are bounded above by $[N^{\tau}_s, \bar{N}^{\tau}_s]$ and by definition of $\tau$, $[N^{\tau}_s, \bar{N}^{\tau}_s] \lesssim ( n \eta_s)^{-2}$, we obtain
\begin{equation} \label{eqn:mart-rep-1}
\begin{split}
\PP\left(\sup_{0\le s\le t} |N^\tau_s|> \frac{u}{n\eta_t}\right)\lesssim \PP\left((n\eta_t)\sup_{0\le s\le C/(n\eta_t)^2} |b_s|> u\right)\lesssim e^{-cu^2},
\end{split}
\end{equation}
for some small constant $c>0$. This implies that
\begin{equation}
\label{eq:finmartb}
\PP\left(\exists s\in [0,t]: |N_s|\ge \frac{(\log n)^{1/2+\delta}}{n\eta_t} \right)\lesssim n^{-D},
\end{equation}
for any $D>0$, which together with \eqref{eq:almthere} completes the proof (here we use the fact that $n^{2 \xi -1} + n^{2 \xi} / (n \eta_t)^2 \leq n^{-\xi} / (n \eta_t)$ by our assumptions on $\eta_t$). \qed

We now propagate the above estimate to shorter scales. 
\bel \label{lem:ll-2}
Let $\xi, \kappa, \delta >0$ be sufficiently small. Let $(z_s, w_s)$ denote a characteristic with $\Re[w_0] \in I_{z_0} ( \kappa)$, $\Im[w_0] = n^{-\xi}$ and $\Im[w_t] \geq (\log n)^{1/2+10\delta} / n$. For all $n$ sufficiently large, depending on $\xi, \kappa, \delta$, the following holds. Assume that with overwhelming probability,
\beq
\label{eq:incondnec}
\left| \langle G_0^{z_0} (w_0) - M^{z_0} (w_0) \rangle \right| \leq \frac{ (\log n)^{1/2+\delta}}{n \Im[w_0] }, \qquad \left| \langle  \partial_w( G_0^{z_0} (w_0) - M^{z_0} (w_0) ) \rangle \right| \leq \frac{ (\log n)^{1/2+\delta}}{n \Im[w_0]^2 } .
\eeq
Then with overwhelming probability we have that
\beq
\left| \langle G_t^{z_t} (w_t) - M^{z_t} (w_t) \rangle \right| \leq \frac{ (\log n)^{1/2+2\delta}}{n \Im[w_t] } .
\eeq
\eel
\proof  Note that the term $\langle G_t(w_t)^2\rangle$ appears in the flow \eqref{eq:flowchar}. For this purpose we study the evolution of this term along the characteristics (see e.g. \cite[Eq. (5.7)]{cipolloni2023mesoscopic} for $A=B=I$):
\begin{equation}
\label{eq:evG2}
\begin{split}
\dif \langle G_s(w_s)^2-\partial_w M^{z_s}(w_s)\rangle&=\dif \widehat{N}_s+\big(1+ 2 \langle \partial_w M^{z_s}(w_s)\rangle\big)\langle G_s(w_s)^2-\partial_w M^{z_s}(w_t)\rangle \\
&\quad+ \langle G_s(w_s)^2-\partial_w M^{z_s}(w_s)\rangle^2 \\
&\quad+2\langle G_s(w_s)-M^{z_s}(w_s)\rangle\langle G_s(w_s)^3\rangle \\
&\quad +  \frac{\1_{\{ \beta=1\}}}{n} \tilde{\sum}_{ij} \left( \langle G_t(w_t)^3 E_i G_t(w_t)^\mft E_j \rangle + \langle G_t(w_t)^2 E_i [ G_t(w_t)^2]^\mft E_j \rangle \right) \d t
\end{split}
\end{equation}
with (recall the definition of $\mathfrak{B}_t$ from \eqref{eq:defbigBM})
\begin{equation}
\dif \widehat{N}_s :=- \frac{1}{n^{1/2}}\langle (G^{z_s}_s)^3 \dif \mathfrak{B}_s\rangle .
\end{equation}
We remark that in \eqref{eq:evG2} we used  \cite[Lemma 5.5]{cipolloni2023mesoscopic} in the form
\beq
\partial_s \langle \partial_w M^{z_s}(w_s)\rangle=\langle \partial_w M^{z_s}(w_s)\rangle+ \langle \partial_w M^{z_s}(w_s)\rangle^2,
\eeq
and that $ \langle G_s(w_s)^2(E_1-E_2)\rangle=\langle\partial_w M^{z_s}(w_t)(E_1-E_2)\rangle=0$ by spectral symmetry. Here by spectral symmetry we refer to the symmetry of the spectrum around zero as discussed below \eqref{eq:herm}.

Define
\beq
X_s:=\langle G_s(w_s)-M^{z_s}(w_s)\rangle, \qquad\quad Y_s:=\langle G_s(w_s)^2-\partial_{w} M^{z_s}(w_s)\rangle,
\eeq
and the stopping time,
\begin{equation}
\label{eq:stoptime}
\tau:=\inf\left\{s\ge 0:\,|X_s|= \frac{(\log n)^{1/2+2\delta}}{n \eta_s}, \quad |Y_s\big|=\frac{(\log n)^{1/2+3\delta}}{n \eta_s^2}\right\} \wedge t,
\end{equation}
where $t$ is as in the statement of the lemma and $\eta_t\ge (\log n)^{1/2+10\delta}/n$. Necessarily, $t \asymp \Im[w_0]$.  Note that by our assumptions on the initial conditions we have that $\tau>0$ with overwhelming probability. Then, by \eqref{eq:flowchar} and \eqref{eq:evG2}, we have with overwhelming probability, for any $0 < s < \tau$,
\begin{equation}
\label{eq:almtherenew}
X_s= \int_0^s \dif N_u +\int_0^s\left(\frac{1}{2}+\langle \partial_{w_u} M^{z_u}(w_u)\rangle\right)X_u\,\dif u+\mathcal{O}\left(\frac{(\log n)^{1/2+\delta}}{n\eta_0}+\frac{(\log n)^{1+5\delta}}{(n\eta_s)^2}\right),
\end{equation}
and
\begin{equation}
\label{eq:boundG2}
Y_s=\int_0^s\dif \widehat{N}_u+\int_0^t\big(1+2\langle\partial_{w} M^{z_u}(w_u) \rangle\big) Y_u\,\dif u+\mathcal{O}\left(\frac{(\log n)^{1/2+\delta}}{n\eta_0^2}+\frac{(\log n)^{1+6\delta}}{(n\eta_s)n\eta_s^2} + \frac{ (\log  n)^{1/2+2\delta}}{n \eta_s^2}\right).
\end{equation}
We point out that to estimate the error in \eqref{eq:boundG2} we used
\beq
\label{eq:w}
\big|\langle G_t(w_t)^3\rangle\big| \le\langle |G_t(w_t)|^2\rangle^{1/2}\langle |G_t(w_t)|^4\rangle^{1/2}=\frac{\sqrt{\langle \Im G_t(w_t)\rangle \langle \Im G_t(w_t)^2\rangle}}{\eta_t^{3/2}}\le \frac{\langle \Im G_t(w_t)\rangle}{\eta_t^2}\lesssim \frac{1}{\eta_t^2},
\eeq
where in the middle equality we used the Ward (resolvent) identity $G_t(w_t)G_t(w_t)^*=\Im G_t(w_t)/\eta_t$, in the penultimate inequality we used $\lVert \Im G_t(w_t)\rVert\le 1/\eta_t$, and in the last inequality we used the definition of $\tau$ in \eqref{eq:stoptime} and the fact that $(\log n)^{1/2+\delta} / (n \eta_s) \lesssim 1$. We point out that in the remainder of the proof we will often use similar bounds to \eqref{eq:w} even if we do not say it explicitly. For the terms when $\beta=1$ on the last line of \eqref{eq:evG2} we used,
\begin{align}
 & \left| \langle G_t(w_t)^3 E_i G_t(w_t)^\mft E_j \rangle + \langle G_t(w_t)^2 E_i [ G_t(w_t)^2]^\mft E_j \rangle \right| \notag \\
&\qquad\qquad\qquad\quad\lesssim  \langle |G_t(w_t)|^6 \rangle^{1/2} \langle |G_t(w_t)|^2 \rangle^{1/2} + \langle |G_t(w_t)|^4 \rangle \lesssim \frac{1}{\eta_t^3}
\end{align}
for $t < \tau$. 
For the martingale terms, for $s < \tau$, we have
\begin{align}
\d [ N_s, \bar{N}_s] &=    \frac{1}{n^2} \tilde{\sum}_{ij} \left( \langle G_t(w_t)^2 E_i G_t (\bar{w}_t)^2 E_j \rangle + \1_{\{ \beta =1 \}} \langle G_t (w_t)^2 E_i [G_t (\bar{w}_t)^2]^\mft E_j \rangle \right)  \notag \\
& \lesssim n^{-2} \langle |G_s(z_s)|^4 \rangle \lesssim n^{-2} \eta_s^{-3} \langle \Im[G_s (z_s)] \rangle   \lesssim \frac{1}{ n^2 \eta_s^3}, 
\end{align}
and
\begin{align}
\d [ \widehat{N}_s, \widehat{\bar{N}}_s] &= \frac{1}{n^2} \tilde{\sum}_{ij} \left( \langle G_t(w_t)^3 E_i G_t (\bar{w}_t)^3 E_j \rangle + \1_{\{ \beta =1 \}} \langle G_t (w_t)^3 E_i [G_t (\bar{w}_t)^3]^\mft E_j \rangle \right)  \notag\\
&\lesssim n^{-2} \langle |G_s(z_s)|^6 \rangle \lesssim \frac{1}{ n^2 \eta_s^5}.
\end{align}
Therefore, by the same argument that uses the martingale representation theorem in the proof of Lemma \ref{lem:ll-1}, for any $0 < s < t $, we have
\beq \label{eqn:aaa-3}
\pp\left[ \sup_{ 0 < u < s } | N_{u\wedge \tau }| >  \frac{ (\log n)^{1/2+\delta/2}}{ n \eta_s } \right] +\pp\left[ \sup_{ 0 < u < s } | \widehat{N}_{u\wedge \tau }| >  \frac{  (\log n)^{1/2+\delta/2}}{ n \eta_s^2 } \right] \leq n^{-D}
\eeq
for any $D>0$.  We now claim that in fact the stronger estimate,
\beq \label{eqn:aaa-2}
\pp\left[ \sup_{ 0 < s<t }  \eta_s n| N_{s\wedge \tau }| >  (\log n)^{1/2+\delta} \right] +\pp\left[ \sup_{ 0 < s<t } \eta_s^2 n | \widehat{N}_{s\wedge \tau }| >  (\log n)^{1/2+\delta} \right] \leq n^{-D}.
\eeq
holds.  To prove this, take a sequence of times $s_i$ such that $\eta_{s_i} = \frac{1}{2} \eta_{s_{i-1}}$, with $s_0=0$. There are at most $\O ( \log n)$ such times until $\eta_{s_i} < \eta_t$. For $s \in [s_{i-1}, s_i]$ we have that $\eta_s \asymp \eta_{s_i}$. Therefore, \eqref{eqn:aaa-3} implies
\beq
\pp\left[ \sup_{ u \in [s_{i-1}, s_i] } |(n \eta_u) N_{u\wedge \tau} | >  ( \log n) ^{1/2+\delta} \right] +\pp\left[ \sup_{ u \in [s_{i-1}, s_i] }  | n \eta_u^2 \widehat{N}_{u\wedge \tau }| >  ( \log n)^{1/2+\delta} \right] \leq n^{-D}
\eeq
From a union bound we therefore conclude \eqref{eqn:aaa-2}. 

Therefore, with overwhelming probability we have for all $0 < s < \tau$ that,
\beq
X_s = \int_0^s\left(\frac{1}{2}+\langle \partial_{w_u} M^{z_u}(w_u)\rangle\right)X_u\,\dif u + \O \left( \frac{ (\log n)^{1/2+\delta}}{n \eta_s } \right)
\eeq
and
\beq
Y_s = \int_0^s\left(1+2\langle \partial_{w_u} M^{z_u}(w_u)\rangle\right)Y_u\,\dif u + \O \left( \frac{ (\log n)^{1/2+2\delta}}{n \eta_s^2 } \right) .
\eeq
Note that in order to simplify the errors in \eqref{eq:almtherenew} and in \eqref{eq:boundG2} we used the fact that $n \eta_s \geq (\log n)^{1/2+10 \delta}$ by assumption. From the integral form of Gronwall inequality, using $|\langle \partial_{w_u} M(w_u)\rangle|\le C$, we then see that with overwhelming probability for any $0 < s < \tau$ we have that,
\beq
|X_s| \leq C \frac{ (\log n)^{1/2+\delta}}{n \eta_s}, \qquad |Y_s| \leq C \frac{(\log n)^{1/2+2\delta}}{n \eta_s^2} .
\eeq
Since $X_s$ and $Y_s$ are continuous, we cannot have that $\tau < t$, and so the claim follows. \qed

\

The above two lemmas easily imply the following. In particular, the assumption \eqref{eq:incondnec} of Lemma~\ref{lem:ll-2} is satisfied as a consequence of \eqref{eq:firstimprov} and Cauchy integral fromula.
\bep \label{prop:gde-ll}
Let $\xi, \kappa$ and $\delta >0$. Let $X$ be a real or complex i.i.d. matrix with Gaussian component of size at least $n^{-\xi/10}$. Then with overwhelming probability we have for all $w$ satisfying $(\log n)^{1/2+\delta} \leq \Im[w] \leq n^{-\xi}$ and $\Re[w] \in I_z ( \kappa)$ that
\beq
\left| \langle G^z (w) - M^z (w) \rangle \right| \leq \frac{(\log n)^{1/2+\delta}}{n \Im[w] }.
\eeq
\eep

\qed

Strictly speaking, our methods do not require us to prove the above estimates for matrices with no Gaussian component, as this local law is only used to analyze the dynamics. However, for notational simplicity, and because the results may be of use in other problems, in the next section we use a Green's function comparison argument to extend the local law to all matrices.

\subsubsection{Removal of Gaussian divisible component} \label{sec:gde-removal}

In this section we extend Proposition \ref{prop:gde-ll} to general i.i.d. matrices. We just present the proof in the complex i.i.d. case, the other case being analogous. 
Let $Z(X)$ be the function on the space of $n \times n$ matrices given by,
\beq
Z(X) := n \Im[w] \left| \langle G^z (w) - M^z (w) \rangle \right|.
\eeq
Let $X$ and $Y$ be two $n \times n$ matrices such that
\beq
\ee[ X_{ij}^a \bar{X}_{ij}^b] = \ee[ Y_{ij}^a \bar{Y}_{ij}^b] 
\eeq
for $0 \leq a + b \leq 3$ and 
\beq
\left| \ee[ X_{ij}^a \bar{X}_{ij}^b] - \ee[ Y_{ij}^a \bar{Y}_{ij}^b] \right| \leq T n^{-2}
\eeq
for $a+b = 4$. Here $T = n^{-\eps}$ for some $\eps >0$. Let $W^{(ab)}$ be the matrix obtained by replacing all the entries $(i, j)$ of $X$ with $i \leq a$ or $j \leq b$ with those of $Y$. Define now,
\beq
p(k) := \sup_{0 \leq a, b \leq n } \pp\left[ Z (W^{(ab)} ) > k (\log n)^{1/2+\delta} \right].
\eeq
Then we have the following which is proven in Appendix \ref{a:ll-gfct}. 
\bep \label{prop:ll-gfct} Assume that $\Re[w] \in I_z ( \kappa)$ and that $n^{-1} \leq \Im[w] \leq 1$. Then,
there is a constant $C>0$ so that for $k \geq 2$ we have,
\beq 
p(k) \leq C  \pp\left[ Z(Y) \geq (\log n)^{1/2+\delta} \right] + C T^{1/2} p(k-1) + n^{-D}.
\eeq
\eep

\noindent{\bf Proof of Proposition~\ref{lem:precllaw}}. For any fixed $\xi >0$, Proposition \ref{prop:gde-ll} implies that Proposition \ref{lem:precllaw} holds for matrices with Gaussian component of size at least $T := n^{-\xi/10}$. For any given ensemble $X$ we may find another ensemble $Y$ so that the first three moments of $Y$ match those of $X$ and the fourth moments differ by $\O ( Tn^{-2})$ and $Y$ has Gaussian component of size least $T$ (see, e.g., Lemma 3.4 of \cite{erdos2010universality}). Therefore, iterating the estimate of Proposition \ref{prop:ll-gfct} $k$ times, we get 
\beq
p(k) \leq C n^{-D} + C T^{k/2}.
\eeq
Taking $k$ sufficiently large, depending on $\xi >0$ yields the claim. \qed

\subsection{Local laws for matrices of mixed symmetry}

In this section we introduce matrices which have a mixed symmetry class. More precisely, they consist of the sum of two independent matrices, one being a real i.i.d. matrix and one being a (small) complex Ginibre matrix. This class of matrices will appear at a certain point in the proof of the lower bound for real matrices (see Lemma~\ref{lem:ind-prod} below) for purely technical reasons.

\bed \label{def:type-M} We say that $X$ is a matrix of type M if it can be written in the form $X = (1 - t)^{1/2} Y + \sqrt{t} G$ where $Y$ is a real i.i.d. matrix, $G$ is a complex Ginibre matrix, and $t \leq n^{-\eps}$ for some $\eps >0$.
\eed

We now claim that the local law and rigidity estimates from Proposition~\ref{lem:precllaw} and Corollary~\ref{lem:goodrig} still hold for this class of matrices. The proof of this lemma is postponed to Appendix~\ref{sec:m-laws-proof}.  

\begin{lemma} \label{prop:m-laws}
If $X$ is a matrix of type M, then the local law and rigidity \eqref{eqn:ll-1}--\eqref{eqn:usual-rigidity}, the estimate \eqref{eq:goodll}, and the results of Corollary \ref{lem:goodrig} hold. For other eigenvalues, for any $c>0$ and all $ 1 \leq i \leq (1-c) n$, we have
\beq \label{eqn:type-m-rig}
 | \lambda_i^z - \gamma_i^z| \leq \frac{ n^\xi}{n}
\eeq
and that $| \lambda_n^z  - \gamma_n^z| \leq n^\xi \sqrt{t}$ with overwhelming probability for any small $\xi>0$. 
\end{lemma}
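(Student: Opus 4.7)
The assertion splits into three groups of estimates, each handled by a direct adaptation of an earlier argument in the paper.

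First, the local laws \eqref{eqn:entrywise-ll}--\eqref{eqn:ll-1} and the standard rigidity \eqref{eqn:usual-rigidity} rely only on $X$ having independent entries with mean zero, variance $1/n$, and uniformly bounded higher moments; these properties are preserved by the type M construction. The limiting density $\rho^z$ depends on the entry distribution only through $\ee|X_{ij}|^2 = 1/n$ via \eqref{eq:mde}, so the self-consistent equation is identical for all three symmetry classes and these estimates apply verbatim. The bulk rigidity \eqref{eqn:type-m-rig} follows from \eqref{eqn:ll-1} at scale $\eta = n^{-1+\xi}$ via a standard Helffer-Sj{\"o}strand argument, using that the density is bounded below in the bulk.

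Second, for the sub-logarithmic local law \eqref{eq:goodll} and the resulting rigidity bounds \eqref{eq:vergoodrig}--\eqref{eq:goodrig}, I re-run the dynamical argument of Section~\ref{sec:dynamical-local-law}. Run the OU flow \eqref{eq:OU} on $X_0 = X$ with, say, real driving Brownian noise. The Ito computation of the resolvent flow is insensitive to the second-moment structure of $X_0$ itself: the correction term involves only the brackets $[\d B_{ij}, \d B_{kl}]$ and $[\d B_{ij}, \d \bar B_{kl}]$, which are fixed by the symmetry of the noise. Consequently one obtains the flow equation \eqref{eq:flownochar} with $\1_{\{\beta=1\}}$ simply equal to $1$, exactly as in the pure real case. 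All estimates in Lemmas~\ref{lem:ll-1} and~\ref{lem:ll-2} used only $\1_{\{\beta=1\}} \le 1$ and transplant word-for-word, yielding \eqref{eq:goodll} for any type M matrix whose flow has had time $T \ge n^{-\xi/10}$ to build up a real Gaussian component of that size. The Green's function comparison of Proposition~\ref{prop:ll-gfct} then removes this restriction: for a general type M matrix $X$, one constructs an auxiliary type M matrix $\widetilde{X}$ whose underlying real i.i.d. part has a Gaussian component of size $\ge n^{-\xi/10}$ and whose entries match those of $X$ in the first three moments and differ in the fourth by $\O(n^{-\eps'}/n^2)$, via the standard construction (cf. Lemma~3.4 of \cite{erdos2010universality}) applied to the real part. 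The Lindeberg exchange of Proposition~\ref{prop:ll-gfct} is insensitive to the real/complex distinction since it only uses moment matching, and iterating it transfers \eqref{eq:goodll} from $\widetilde{X}$ to $X$. The rigidity of Corollary~\ref{lem:goodrig} then follows from \eqref{eq:goodll} via the same Helffer-Sj{\"o}strand argument.

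Third, the edge bound $|\lambda_n^z - \gamma_n^z| \le n^\xi \sqrt{t}$ is a straightforward perturbation estimate. Setting $z' := z/(1-t)^{1/2}$ and $\mathcal{G} := \begin{pmatrix} 0 & G \\ G^* & 0 \end{pmatrix}$, the decomposition $X = (1-t)^{1/2} Y + \sqrt{t}\, G$ gives
\[
H^z(X) = (1-t)^{1/2} H^{z'}(Y) + \sqrt{t}\, \mathcal{G}.
\]
Since $\|\mathcal{G}\|_{\mathrm{op}} \le n^\xi$ with overwhelming probability, Weyl's inequality and the standard edge rigidity $|\lambda_n(H^{z'}(Y)) - \gamma_n^{z'}| \le n^\xi n^{-2/3}$ for the real i.i.d. matrix $Y$, together with the smooth dependence $|\gamma_n^{z'} - \gamma_n^z| = \O(t)$, yield the claim (the $n^{-2/3}$ baseline is absorbed into $n^\xi \sqrt{t}$ once $t \gtrsim n^{-4/3}$).

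The main technical point requiring care is the verification that the Ito expansion of Section~\ref{sec:dynamical-local-law} genuinely depends only on the symmetry of the noise rather than the second-moment structure of the matrix being evolved, so that the pure real case argument transplants without further modification to a type M starting point.
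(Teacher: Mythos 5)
Your proposal is correct and takes essentially the same route as the paper: establish the order-one local law for the type M matrix, run the OU flow forward from $X$ (observing, as you correctly do, that the $\1_{\{\beta=1\}}$ terms in \eqref{eq:flownochar} are dictated by the noise symmetry and not by the second-moment structure of the initial matrix), then apply the arguments of Lemmas~\ref{lem:ll-1}--\ref{lem:ll-2} together with the Green's function comparison, and finish the edge bound via Weyl. The one place you and the paper diverge is the very first step: you assert that the weak local law \eqref{eqn:ll-1} holds for the mixed second-moment structure $\ee[X_{ij}^2]=(1-t)/n$ of a type M matrix because "only $\ee|X_{ij}|^2$ enters," whereas the paper sidesteps this by deriving the analogous bound \eqref{eqn:type-m-1} dynamically---viewing $X$ as the time-$t$ endpoint of a \emph{complex} OU flow started from the real i.i.d. matrix $Y$ (whose local law \emph{is} covered by \cite{cipolloni2021fluctuation}) and modifying the proof of Lemma~\ref{lem:ll-2}. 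Your assertion is almost certainly true, but the cited reference is stated only for real or complex i.i.d. matrices, so the dynamical derivation is the safer route to the same intermediate bound; otherwise the two proofs coincide, with your edge argument being a mild repackaging (via the rescaling $z\mapsto z/(1-t)^{1/2}$) of the paper's direct Weyl comparison $\|H^z(X)-H^z(Y)\|\lesssim\sqrt{t}$.
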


\section{Maximum on almost-global scales}

In this section we present a bound for the regularized characteristic polynomial $\Psi_n (z, \eta)$ (recall the definition \eqref{eqn:psi-def}) when $\eta = n^{-\gamma}$ for small $\gamma>0$. This will be used to truncate various large scale contributions throughout our proofs. 
\bep \label{prop:global} Let $0 < r < 1$. There is a $C_1 >0$ so that the following holds. Let $\gamma >0$, $ C>0$, and define $\eta_*:=n^{-\gamma}$. Then,
 for any real or complex i.i.d. matrix we have,
\beq
\pp\left[ \max_{ |z|\le r, (\log n)^{-C} \eta_* \leq \eta \leq (\log n)^C \eta_* } | \Psi_n (z, \eta) | > C_1 \gamma \log n \right] \leq n^{-3 \gamma}
\eeq
for all sufficiently small $\gamma >0$, and all $n$ sufficiently large, depending on $\gamma, r, C$. 
\eep

The main ingredient to prove Proposition~\ref{prop:global} is the following estimate of the characteristic function of a linear statistic, whose proof is postponed to Appendix~\ref{app:addtechres}.

\begin{proposition}  \label{prop:CLT-1}

Fix any sufficiently small $\gamma>0$, and let $f:\R\to\R$ be in $C_0^\infty([-5,5])$ and such that $\lVert f\rVert_{C^k}\lesssim n^{k \gamma}$ for all sufficiently large $k$, depending on $\gamma$. Then for $\lambda \in \rr$ satisfying $| \lambda| \leq n^{1/100}$ we have
\begin{equation}
\label{eq:steinexpl}
\E\left[\exp\left(\ii\lambda \big(\mathrm{Tr} f(H^z)-\E \mathrm{Tr} f(H^z)\big)\right)\right]=\exp\left(-\frac{\lambda^2}{2}V(f)\right)+\mathcal{O}\left(\frac{n^{200\gamma}}{n^{1/4}}\right),
\end{equation}
for some explicit $V(f)\ge -n^{-1/5}$. Additionally, if $f(x) = \Re \log ( x - \i \eta)$, with $\eta = n^{-\gamma}$, then 
\beq
\label{eq:explvarlog}
\begin{split}
\E \mathrm{Tr}f(H^z)&=n\int\log(x^2+\eta^2)\rho^z (x) \d x +\frac{\1_{ \{ \beta =1 \}}}{2}\log\big[|z-\overline{z}|^2+\eta\big]+\O(1), \\
V(f) &= - \log \eta -\bm1_{\{\beta=1\}}\log[|z-\overline{z}|^2+\eta]+ \O ( (\log n)^{1/2}).
\end{split}
\eeq
\end{proposition}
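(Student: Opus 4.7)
The plan is to derive the Gaussian form of the characteristic function via a first-order ODE in $\lambda$ whose coefficient is the asymptotic variance, following the broad strategy used for CLTs of linear statistics of i.i.d. matrices in \cite{cipolloni2021fluctuation, cipolloni2023central}. First I would apply the Helffer--Sj\"ostrand formula to write
\[
Y := \mathrm{Tr}\, f(H^z) - \E\, \mathrm{Tr}\, f(H^z) = \frac{1}{\pi}\int_{\C} \partial_{\bar w} \tilde f(w)\, \bigl(\mathrm{Tr}\, G^z(w) - \E\, \mathrm{Tr}\, G^z(w)\bigr)\, \dif^2 w,
\]
for a quasi-analytic extension $\tilde f$ of $f$. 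The $C^k$ bound $\lVert f\rVert_{C^k}\lesssim n^{k\gamma}$ ensures $\partial_{\bar w}\tilde f$ is supported essentially in the strip $|\Im w|\gtrsim n^{-\gamma}$, so the regime of relevant spectral parameter is exactly the mesoscopic scale where Proposition~\ref{lem:precllaw} (and the standard multi-resolvent local laws from \cite{cipolloni2023optimal, cipolloni2023mesoscopic}) apply.

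Next I would set $\phi(\lambda)=\E[e^{\ii\lambda Y}]$ and derive a closed ODE. Differentiating gives $\phi'(\lambda)=\ii\,\E[Y e^{\ii\lambda Y}]$, and substituting the HS representation of $Y$ reduces the problem to expanding $\E[(\mathrm{Tr}\, G^z(w)-\E\,\mathrm{Tr}\, G^z(w))\,e^{\ii\lambda Y}]$ via the cumulant (Stein) expansion in the entries of $X$. The second-cumulant term produces a double $w$--integral which, after using the local law to replace resolvent entries by $M^z$ and noting that $\partial_w Y$ reintroduces a factor coming from $\tilde f$, collapses to $-\lambda V(f)\phi(\lambda)$. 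The third-and-higher cumulants, together with the fluctuation errors in the local law, are bounded using the improved averaged law from Proposition~\ref{lem:precllaw} and the multi-resolvent bounds; a routine power-counting in $(\eta,\lambda)$ subject to $\eta\geq n^{-\gamma-\epsilon}$ and $|\lambda|\le n^{1/100}$ yields the stated error $n^{200\gamma}/n^{1/4}$. Integrating the ODE $\phi'(\lambda)=-\lambda V(f)\phi(\lambda)+O(\cdots)$ from $0$ to $\lambda$ then gives \eqref{eq:steinexpl}, and the lower bound $V(f)\ge -n^{-1/5}$ is automatic because $V(f)$ is the variance of an approximately Gaussian random variable up to negligible errors.

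For the explicit formulas \eqref{eq:explvarlog}, I would specialize the double integral expression for $V(f)$ to $f(x)=\Re\log(x-\ii\eta)$. The kernel appearing in the second-cumulant term is the asymptotic covariance of $\mathrm{Tr}\, G^z(w_1)$ and $\mathrm{Tr}\, G^z(w_2)$; in the complex case it has a logarithmic singularity on the diagonal which, after integrating $\partial_{\bar w}\tilde f$ against itself, produces the leading $-\log\eta$ together with $O((\log n)^{1/2})$ corrections from smooth parts of the kernel. In the real case, the extra transpose-type terms (the $\1_{\{\beta=1\}}$ contributions visible in \eqref{eq:flownochar}) contribute an additional piece of the covariance supported near the real axis in the $z$--variable; this produces the $-\log[|z-\bar z|^2+\eta]$ term. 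For the mean, writing
\[
\E\,\mathrm{Tr}\, f(H^z)=\frac{1}{\pi}\int \partial_{\bar w}\tilde f(w)\,\E\,\mathrm{Tr}\, G^z(w)\,\dif^2 w
\]
and inserting the sharpened local law $\langle G^z(w)\rangle = \langle M^z(w)\rangle + O(n^{-1}\eta^{-1}(\log n)^{1/2+\delta})$ together with the next-order deterministic correction (an $O(n^{-1})$ term of $\beta=1$ type coming from the symmetry), yields the leading $n\int\log(x^2+\eta^2)\rho^z(x)\dif x$ plus the explicit $\frac{1}{2}\log[|z-\bar z|^2+\eta]$ in the real case.

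The main obstacle is \emph{precisely} computing the real-case correction: the $\beta=1$ contributions to both $\E\,\mathrm{Tr}\, f(H^z)$ and $V(f)$ come from transpose-type resolvent observables (cf.\ the terms with $G^z(w)^{\mathfrak t}$ in \eqref{eq:flownochar}--\eqref{eq:quadvarN}) that are not accessible from the single-resolvent local law and require a separate, sharper multi-resolvent estimate. One has to extract the logarithmic singularity of $\langle G^z(w_1)^2 E_i [G^z(w_2)^2]^{\mathfrak t} E_j\rangle$ as $z$ approaches the real axis, which is the mechanism producing the $|z-\bar z|^2+\eta$ scale that distinguishes the real case. A secondary difficulty is keeping the cumulant expansion errors uniformly small in $\lambda$ up to the allowed range $|\lambda|\le n^{1/100}$; this forces the error accounting to track at least three layers of terms (resolvent fluctuations, higher cumulants, and HS--integration over the mesoscopic strip), and is the reason a polynomial loss $n^{200\gamma}$ is unavoidable in the statement.
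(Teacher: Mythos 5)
Your plan follows essentially the same route as the paper: Helffer--Sj\"ostrand representation, then a first-order ODE in $\lambda$ for $\psi(\lambda) = \E[e^{\ii\lambda Y}]$ obtained via cumulant (Stein) expansion, with the local law from Proposition~\ref{lem:precllaw} and the multi-resolvent law \eqref{eq:llawtd} controlling the errors, and finally specialization to $f(x)=\Re\log(x-\ii\eta)$ for the explicit mean and variance. The structure and the key tools are all correctly identified.

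There is, however, a genuine gap in your treatment of the lower bound $V(f)\ge -n^{-1/5}$, which you dismiss as ``automatic because $V(f)$ is the variance of an approximately Gaussian random variable.'' That is circular as stated: $V(f)$ is \emph{defined} as the coefficient appearing in the ODE $\psi_\mathfrak{a}'(\lambda)=-\lambda V_\mathfrak{a}(f)\psi_\mathfrak{a}(\lambda)+O(\cdot)$, and is an explicit (but complicated) double $w$-integral against deterministic multi-resolvent objects; it is not a priori a variance, and in fact the lower bound is exactly what one needs in order to justify integrating the ODE (otherwise $\exp(-\lambda^2 V(f)/2)$ could blow up in the allowed range $|\lambda|\le n^{1/100}$). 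The paper closes this with a separate trick (equations \eqref{eq:trick1}--\eqref{eq:usvarrel}): evaluate the ODE at the specific value $\lambda=n^{-1/4}$ and compare it to a direct second-order Taylor expansion $\E[\ii Ze^{\ii\lambda Z}] = -\lambda \Var(Z) + O(\lambda^2 n^{200\gamma})$ (which uses only the crude a priori bound $|Z|\le n^{200\gamma}$). Matching the two identifies $V(f) = \Var(Z) + O(n^{-1/4+200\gamma})$ and the lower bound follows. Your proposal does not have an independent argument for this identification, so the integration of the ODE is not justified as written.

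A secondary point on the explicit computation of \eqref{eq:explvarlog}: you propose extracting the logarithmic singularity of the two-resolvent observable $\langle G^z(w_1)^2 E_i [G^z(w_2)^2]^{\mathfrak{t}} E_j\rangle$ near the real axis from scratch. The paper instead decomposes $Z = Z_1 + Z_2$ with $Z_2$ a macroscopic piece shown to have $O(1)$ variance (Lemma~\ref{lem:estvarO1}) and $Z_1$ computed by inserting the explicit two-point covariance and one-point expectation formulas already proven in \cite{cipolloni2023central,cipolloni2021fluctuation} (their Propositions 3.3 and Eqs. (3.11)--(3.12)). Since these contain the exact $|z-\bar z|^2+\eta$ dependence in the $\beta=1$ case, integrating the total derivatives in $\tau$ and expanding $m^z(\ii\eta),u^z(\ii\eta)$ to first order gives the stated formulas directly. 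Your route would work in principle but re-derives precisely what is already available, and you would still need to verify that the error terms carried through the HS integration are $O((\log n)^{1/2})$, which is not obvious from ``smooth parts of the kernel'' alone.
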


The above is readily seen to imply the following via Fourier duality.

\bel \label{lem:global-tail}
There is a $C_1 >0$ so that the following holds. Let $f = \Re \log(x - \i \eta )$ with $ \eta = n^{-\gamma}$, for $ \gamma >0$ sufficiently small. Then, 
\beq
\pp\left[ \left| \Tr f (H^z) - 2n \int \Re \log(x- \i \eta ) \rho^z (x) \d x \right| > C_1 \gamma \log n \right] \leq n^{-5 \gamma} .
\eeq
\eel
\proof Let $0 \leq F(x) \leq 1$ be a smooth function with bounded derivatives such that $F(x) =1 $ for $|x| \leq C_1 \gamma \log n$ and $F(x) = 0$ for $|x| > C_1 \gamma \log n +1 $. Let $\hat{F} ( \lambda)$ denote its Fourier transform. Then,
\beq \label{eqn:F-decay}
| \hat{F} ( \lambda ) | \leq  \frac{ (\log n)^2 }{ 1 + |\lambda|^M }
\eeq
for any $M >0$ and $n$ large enough. Let $Y:= \Tr f (H^z) - \ee[ \Tr f (H^z)]$. We know that $\ee[ \Tr f(H^z) ] =  2 n \int \Re \log(x- \i \eta ) \rho^z (x) \d x + \O ( \gamma \log n )$ by \eqref{eq:explvarlog}, and so it suffices to prove the estimate for $Y$. For $Y$, we have
\begin{align} \label{eqn:fourier-1}
\ee[ F( Y) ] = \int_{\rr} \hat{F} ( \lambda) \ee \e^{ \i \lambda Y } \d \lambda =& \int_{ |\lambda| \leq n^{1/100} } \hat{F} ( \lambda) \ee \e^{ \i \lambda Y }  \d \lambda + \O ( n^{-2} ) \notag \\
= & \int_{ |\lambda| \leq n^{1/100} } \hat{F} ( \lambda) \e^{ - \frac{ \lambda^2}{2} V(f) } \d \lambda + \O ( n^{-1/5} ) \notag \\
= & \int_{ \rr } \hat{F} ( \lambda) \e^{ - \frac{ \lambda^2}{2} V(f) } \d \lambda + \O ( n^{-1/5} ).
\end{align}
The first and third lines use \eqref{eqn:F-decay} as this estimate implies,
\beq
\int_{ | \lambda| > n^{1/100}} | \hat{F} ( \lambda ) | \left( \left|  \ee[\e^{ \i \lambda Y} ] \right| + \e^{ - \frac{ \lambda^2}{2} V(f)} \right) \d \lambda \leq n^{-2} ,
\eeq
where we used the fact that $V (f) \geq 0$ by \eqref{eq:explvarlog}. The second line of \eqref{eqn:fourier-1} is a direct application of \eqref{eq:steinexpl} using that $\gamma$ is so small that $n^{200\gamma-1/4}\le n^{-1/5}$. The integral in the last line of \eqref{eqn:fourier-1} equals $\ee[ F(Z)]$ for a centered Gaussian random variable with variance $V(f) \asymp \gamma \log n$. In particular, 
\beq
\pp\left[ |Y| > C_1 \gamma \log n + 1 \right] \leq \ee[ (1- F) (Y)] = \ee[ (1- F) (Z)] + \O ( n^{-1/5} ).
\eeq
On the other hand,
\beq
 \left|  \ee[ (1- F) (Z) ]  \right| \leq \pp\left[ |Z| > C_1 \gamma \log n \right] \leq n^{ - 10 \gamma}
\eeq
if $C_1$ is taken sufficiently large. Above, the first inequality follows because $F(x) =1 $ for $|x| \leq C_1 \gamma \log n$. This yields the claim. \qed

\bel \label{lem:eta-inc} Let $\delta >0, \eps >0$ and $C_* >0$. 
Let $\eta_1 \leq \eta_2$ satisfy $\frac{  \log (n)^{1/2+\delta} }{n}\le \eta_1\le n^{-\eps}$ and $\eta_2 \leq ( \log n )^{C_*} \eta_1$. We have with overwhelming probability that,
\beq
\sup_{ \eta \in [\eta_1, \eta_2] } | \Psi_n (z, \eta) - \Psi_n (z, \eta_2 ) | \leq ( \log n )^{1/2+\delta} .
\eeq
\eel
\proof  In the complex i.i.d. case we have, 
\beq
\Psi_n(z, \eta) - \Psi_n (z, \eta_2) =2n  \int_{\eta}^{\eta_2} \Im \langle G^z ( \i u ) - M^z ( \i u ) \rangle \d u.
\eeq
By Lemma \ref{lem:precllaw}, the integral on the RHS is $\O ( (\log n)^{1/2+\delta/2 } )$ with overwhelming probability.

In the real $\beta=1$ case, there is an additional term in \eqref{eqn:psi-def} that is bounded by,
\beq \label{eqn:eta-dif-real-case}
\left| \log ( |z-\bar{z}|^2 + \eta  ) - \log( |z - \bar{z}|^2 +  \eta_2 )\right| \leq C \int_{\eta_1}^{\eta_2} \frac{1}{u} \d u \leq C \log \log n.
\eeq
The claim now follows. \qed

\vspace{3 pt}

\noindent{\bf Proof of Proposition \ref{prop:global}.} Recall $\eta_*=n^{-\gamma}$. By Lemma \ref{lem:eta-inc} it suffices to bound the max over $z$ with $\eta = \eta_*$ fixed. For an $\eps_1 >0$ we fix a set $P_1$ of $n^{\gamma+\eps_1}$-well spaced points of the disc $\{ z : |z| < r \}$. From Proposition \ref{prop:der-bd} we have that
\beq
\max_{ |z| < r } \Psi_n (z, \eta_* ) = \max_{ z \in P_1 } \Psi_n (z, \eta_*) + \O ( n^{-\eps_1/2} )
\eeq
with overwhelming probability. The claim now follows from a union bound, Lemma \ref{lem:global-tail} and taking $\eps_1 >0$ sufficiently small in terms of $\gamma$. \qed

\section{Upper bound of $\Psi_n (z)$ for complex i.i.d. matrices with Gaussian component} \label{sec:upper-1}

In this section we will prove the upper bound for complex i.i.d. matrices with a Gaussian component. The degree of precision in our upper bound will depend on the size of the Gaussian component.

\bep \label{prop:upper-gde-1} Let $0 < r < 1$. There are constants $c_1,C_1 >0$ so that the following holds. 
Let $\eps >0$,  and let $X$ be a complex i.i.d. matrix with Gaussian component of size $T = n^{-\eps}$. Then, for $n$ sufficiently large depending on $\eps$ and $r$, we have
\beq
\pp\left[ \max_{ |z| < r } \Psi_n (z, n^{-1}) \geq \left( \sqrt{2} + C_1 \eps \right) \log n \right] \leq n^{-c_1 \eps}.
\eeq
\eep
The proof of the above appears below in Section \ref{sec:upper-gde-proof}. We realize the matrix $X$ as the solution at time $T$ of the flow, 
\begin{equation}
\label{eq:matDBM}
\dif X_t=\frac{\dif B_t}{\sqrt{n}}, \qquad\quad X_0= (1- T)^{1/2} Y
\end{equation}
with $B_t$ being a matrix of i.i.d. standard complex Brownian motions, and $Y$ being a complex i.i.d matrix as in Definition \ref{def:model}. With this scaling the entries of $X_T$ have variance $1/n$.

Let $H_t^z = H^z (X_t)$ be the Hermitization of $X_t-z$ defined as in \eqref{eq:herm} with $X$ replaced with $X_t$, and define its resolvent by $G_t^z(w):=(H_t^z-w)^{-1}$, with $w\in\C\setminus \R$. By simple second order perturbation theory and the It{\^{o}} lemma (see e.g. \cite[Eq. (5.8)]{erdHos2012local}, \cite[Appendix B]{cipolloni2023central}), one can see that the eigenvalues of $H_t^z$, denoted by $\lambda_i^z=\lambda_i^z(t)$, are the solution of
\begin{equation}
\label{eq:DBMcompl}
\dif \lambda_i^z=\frac{\dif b_i^z}{\sqrt{2n}}+\frac{1}{2n}\sum_{j\ne i} \frac{1}{\lambda_i^z-\lambda_j^z}\dif t,
\end{equation}
with $b_{-i}^z=-b_i^z$ and $\lambda_{-i}^z=-\lambda_i^z$ as a consequence of the chiral symmetry of $H_t^z$.  Here, $b_i^z=b_i^z(t)$, with $i \in [n]$, is a family of independent standard Brownian motions.  Let $c_* (t) := \sqrt{1 + (t-T)}$. Since $X_t$ is a rescaling of an i.i.d. matrix, the limiting Stieltjes transform for $H_t^z$, denoted by $m_t^z$, is found by rescaling the function in \eqref{eq:mde} as,
\beq \label{eqn:mt-def}
m_t^z(w) :=\frac{1}{c_* (t)} m^{z/c_* (t)} (w/c_* (t)).
\eeq
We denote $\rho^z_t$ to be the measure associated to $m_t^z (w)$. With this definition we see that,
\beq \label{eq:evscfl}
\del_t m_t^z (w) = m_t^z(w) \del_w m_t^z (w).
\eeq
We now consider the evolution of $\sum_i \log (\lambda_i-w_t)$ along the characteristics of the above equation,
\begin{equation}
\label{eq:char}
\partial_t w_t=-m_t^{z_0}(w_t), \qquad  z_t = z_0,
\end{equation}
i.e., unlike in Section~\ref{sec:dynamical-local-law}, we now move only $w_t$ and not $z_t$. Note that along the characteristics \eqref{eq:char} we have
\beq
\label{eq:consttime}
\partial_t m_t^z(w_t)=(\partial_t m_t^z)(w_t)+(\partial_w m_t^z)(w_t)\partial_t w_t=0,
\eeq
which follows from \eqref{eq:evscfl}--\eqref{eq:char}. We point out that, by standard ODE theory (see the proof of \cite[Lemma 5.2]{cipolloni2023mesoscopic}), if we fix $w\in \C, T>0$, then there exists $w_0$ such that $|\Im w_0|\asymp T$ and the solution $w_t$ of \eqref{eq:char}, with initial condition $w_0$, is such that $w_T=w$. In this section we will only consider characteristics of the form $w_s = \i \eta_s$, and we use this notation extensively. Note that by the last point in Lemma~\ref{lem:proprho}, together with $T\ll 1$, we have $- \del_s \eta_s \asymp 1$. 

\begin{lemma}
\label{lem:charup}
Let $\lambda_i^z(t)$ be the eigenvalues of $H_t^z$. Let $\xi >0$ and let $T = n^{-\xi}$. Consider a characteristic $w_s = \i \eta_s$ such that $\eta_T = (\log n)^{C_*} /n$, for some  $C_* \geq 10$. We have with overwhelming probability that,
\begin{equation}
\begin{split} \label{eqn:aa-2}
&\sum_i \log (\lambda_i^z (T)-w_T)-2n\int_\R  \log (x-w_T)\rho_T^z(x) \\
&\qquad\quad=\sum_i \log (\lambda_i^z(0)-w_0)-2 n\int_\R  \log (x-w_0)\rho_0^z(x)\, \dif x+\xi_{n,T}+\mathcal{O}\left(\frac{(\log n)^{5}}{n|\eta_T|}\right),
\end{split}
\end{equation}
for a complex Gaussian random variable $\xi_{n,T}$. Furthermore, we have,
\begin{equation}
\label{eq:varxinT}
\mathrm{Var}(\Re \xi_{n,T})=\log\big|\eta_0/\eta_T\big|+\mathcal{O}\left(T+\frac{\log n}{n|\eta_T|}\right).
\end{equation}
\end{lemma}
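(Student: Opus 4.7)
The plan is to apply It\^o's formula to the random statistic $S_t := \sum_i \log(\lambda_i^z(t)-w_t)$ along the characteristic \eqref{eq:char}, subtract the deterministic contribution $F_t(w_t) := 2n\int_{\mathbb{R}}\log(x-w_t)\rho_t^z(x)\,dx$, and identify the resulting increment as a martingale $\xi_{n,T}$ plus an error controlled by the local law Proposition~\ref{lem:precllaw}. Since $w_t=\i\eta_t$ and the chiral symmetry $\lambda_{-i}^z=-\lambda_i^z$, $db_{-i}^z=-db_i^z$ makes the martingale part real-valued, the ``complex'' Gaussian $\xi_{n,T}$ will in fact be supported on the real axis.

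Using \eqref{eq:DBMcompl} (with $d[\lambda_i^z]_t = dt/(2n)$) together with $dw_t = -m_t^z(w_t)\,dt$, It\^o and summation over $i$ give
\begin{equation*}
dS_t = dM_t + \left[\frac{1}{2n}\sum_{i\ne j}\frac{1}{(\lambda_i^z-w_t)(\lambda_i^z-\lambda_j^z)} + 2n\langle G^z(w_t)\rangle m_t^z(w_t) - \frac{1}{2}\partial_w\langle G^z(w_t)\rangle\right]dt,
\end{equation*}
where $dM_t := \sum_i (\lambda_i^z-w_t)^{-1}\,db_i^z/\sqrt{2n}$. The partial-fraction symmetrization
\begin{equation*}
\sum_{i\ne j}\frac{1}{(\lambda_i^z-w)(\lambda_i^z-\lambda_j^z)} = -\frac{1}{2}\left[(2n\langle G^z(w)\rangle)^2 - 2n\partial_w\langle G^z(w)\rangle\right]
\end{equation*}
cancels the $\partial_w\langle G^z\rangle$ piece of the drift, collapsing it to $-n\langle G^z(w_t)\rangle^2+2n\langle G^z(w_t)\rangle m_t^z(w_t)$. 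On the deterministic side, Burgers' equation \eqref{eq:evscfl} combined with the $w\to\infty$ normalization $\int x\rho_t^z(x)\,dx=0$ yields $\partial_t F_t(w)=-n(m_t^z(w))^2$, so $\frac{d}{dt}F_t(w_t)=n(m_t^z(w_t))^2$ along the characteristic. Subtracting gives the clean identity
\begin{equation*}
d\left[S_t - F_t(w_t)\right] = dM_t - n\bigl(\langle G^z(w_t)\rangle - m_t^z(w_t)\bigr)^2\,dt.
\end{equation*}

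Integrating from $0$ to $T$ (with $\eta_0\asymp T=n^{-\xi}$ determined by the characteristic), Proposition~\ref{lem:precllaw} bounds the drift error by
\begin{equation*}
n\int_0^T \frac{(\log n)^{1+2\delta}}{(n\eta_t)^2}\,dt \lesssim \frac{(\log n)^{1+2\delta}}{n|\eta_T|} \le \frac{(\log n)^5}{n|\eta_T|},
\end{equation*}
using $\int_0^T dt/\eta_t^2\asymp 1/\eta_T$ from $-\partial_t\eta_t\asymp 1$. Setting $\xi_{n,T}:=M_T-M_0$, the $\pm$-symmetrization gives $dM_t=\sum_{i=1}^n 2\lambda_i^z/((\lambda_i^z)^2+\eta_t^2)\cdot db_i^z/\sqrt{2n}$, whose quadratic variation satisfies
\begin{equation*}
[M]_T = \int_0^T\!\left[\Re\partial_w\langle G^z(\i\eta_t)\rangle+\frac{\Im\langle G^z(\i\eta_t)\rangle}{\eta_t}\right]dt
\end{equation*}
via the identity $n^{-1}\sum_i (\lambda_i^z)^2/((\lambda_i^z)^2+\eta^2)^2 = \Re\partial_w\langle G^z(\i\eta)\rangle + \Im\langle G^z(\i\eta)\rangle/\eta$. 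Replacing $\langle G^z\rangle$ by $m_t^z$ via Proposition~\ref{lem:precllaw} introduces an $O(\log n/(n|\eta_T|))$ error; the surviving leading term $\int_0^T\Im m_t^z(\i\eta_t)/\eta_t\,dt=\log|\eta_0/\eta_T|$ follows from the characteristic ODE $\partial_t\eta_t=-\Im m_t^z(\i\eta_t)$, while the smooth $\Re\partial_w m_t^z$ contribution is $O(T)$, yielding \eqref{eq:varxinT}. Approximate Gaussianity of $\xi_{n,T}$ follows from the Dambis--Dubins--Schwarz representation combined with this bracket concentration.

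The key algebraic step is the drift cancellation driven by the specific characteristic \eqref{eq:char}; the principal technical obstacle is that both the drift error and the bracket computation require the sub-polynomial-precision local law of Proposition~\ref{lem:precllaw}, since the naive $n^\xi$-type bounds would inflate the error beyond the claimed $(\log n)^5/(n|\eta_T|)$ threshold.
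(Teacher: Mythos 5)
Your drift computation, Burgers-equation cancellation, error bound from Proposition~\ref{lem:precllaw}, and quadratic-variation/variance calculation all track the paper's argument correctly. However, you have a genuine gap at the key step: you set $\xi_{n,T}:=M_T-M_0$ where $dM_t = \frac{1}{\sqrt{2n}}\sum_i\frac{db_i^z}{\lambda_i^z(t)-w_t}$. Because the $\lambda_i^z(t)$ are random and adapted, this martingale is \emph{not} a Gaussian random variable, whereas the lemma asserts $\xi_{n,T}$ \emph{is} Gaussian. The paper resolves this by replacing the random $\lambda_i^z(t)$ in the integrand by the deterministic time-dependent quantiles $\gamma_i^z(t)$: the resulting integral $\frac{1}{\sqrt{2n}}\int_0^T\sum_i\frac{db_i(t)}{\gamma_i^z(t)-w_t}$ is a deterministic kernel integrated against Brownian motion, hence exactly Gaussian. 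The cost of the replacement is controlled via the fine rigidity estimate $|\lambda_i^z-\gamma_i^z|\le (\log n)^{3/2+\delta}/n$ from Corollary~\ref{lem:goodrig} (you never invoke this result): one bounds the quadratic variation of the difference martingale by $(\log n)^4/(n\eta_T)^2$, and BDG yields a contribution of size $(\log n)^5/(n\eta_T)$, which is exactly what the error term in \eqref{eqn:aa-2} is designed to absorb.

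Your Dambis--Dubins--Schwarz remark is a plausible alternative route — one could define $\xi_{n,T}:=B_V$ for a Brownian motion $B$ and deterministic $V=\log|\eta_0/\eta_T|+O(T)$, then bound $M_T-B_V$ using the modulus of continuity of $B$ together with the bracket concentration $|[M]_T - V|\lesssim T+(\log n)/(n\eta_T)$. But you state this in one sentence without checking that it yields an error within the claimed $(\log n)^5/(n\eta_T)$, and more importantly your sentence ``approximate Gaussianity of $\xi_{n,T}$'' contradicts your own definition of $\xi_{n,T}$ as the non-Gaussian object $M_T-M_0$. To make the DDS route correct you would have to redefine $\xi_{n,T}$ as the genuinely Gaussian increment $B_V$ and carry the modulus-of-continuity error into \eqref{eqn:aa-2}; as written, the construction of the Gaussian random variable is not actually present.
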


The proof of the above lemma appears below in Section \ref{sec:upper-lemma}. 
Recall now our three-parameter version of $\Psi_n (z,  t, \eta)$ given by \eqref{eqn:psi-def}. The above quickly implies the following.

\bep \label{prop:upper-gde-2}
Let $\{ X_t \}_{ 0 \leq t \leq T}$ be as in \eqref{eq:matDBM}, let $\eta_1 = (\log n)^{C_*} / n$, and let $T = \eta_2 = n^{-\gamma}$ for some $\gamma  < 1/10$. There is a $c>0$ so that the following holds. Let $0 < r < 1$ and $\eps >0$. Then,
\begin{align}
\max_{ |z| \le r } \Psi_n (z, T, \eta_1) \leq \max_{ |z| \le r, (\log n)^{-1} \eta_2 \leq \eta \leq \eta_2 \log n } \Psi_n (z, 0 , \eta) + \left( \sqrt{2} + \eps \right) \log n.
\end{align}
with probability at least $ 1 - n^{-c \eps}$, for all $n$ sufficiently large depending on $r, \gamma$, and $\eps$. 
\eep
\proof Let $P_1$ be a grid of $n^{1+\eps}$ well-spaced points of $\{ z : |z| < r \}$. By Proposition \ref{prop:der-bd} we have that,
\beq
\max_{ |z| < r } \Psi_n (z, T, \eta_1)   = \max_{ z \in P_1} \Psi_n (z, T ,\eta_1) + \O (n^{-\eps/2} )
\eeq
with overwhelming probability. For any $z \in P_1$ we consider the characteristic $w_t = \i \eta_t$ with $\eta_T = \eta_1$. Then $\eta_0 \asymp T$. Let $Y_z = \Re[ \xi_{n, T} ]$ where $\xi_{n, T}$ is the Gaussian random variable from Lemma  \ref{lem:charup}. We therefore have,
\beq
\max_{ z \in P_1} \Psi (z, \eta_1, T) \leq \max_{ |z| < r , (\log n)^{-1} \eta_2 \leq \eta \leq \eta_2 \log n } \Psi_n (z, \eta, 0)  + \max_{ z \in P_1} Y_z + \O (1)
\eeq
with overwhelming probability. 
But since the variance of each $Y_z$ is bounded by $\log n+C$ we see that by a union bound,
\beq
\pp\left[ \max_{z \in P_1} Y_z > \left( \sqrt{2} + 10 \eps \right) \log n \right] \leq n^{-\eps}.
\eeq
for all $n$ sufficiently large. 
The claim now follows. \qed

\subsection{Proof of Lemma \ref{lem:charup}}

\label{sec:upper-lemma}

Let $w_t = \i \eta_t$ be as in the statement of  the lemma. We first compute the evolution of the $\log$--determinant for  fixed $w$ using It\^{o}'s formula,
\begin{equation}
\label{eq:1complrel}
\dif \sum_i \log (\lambda_i^z-w)=\frac{1}{\sqrt{2n}}\sum_i \frac{\dif b_i^z}{\lambda_i^z-w}+\frac{1}{2n}\sum_{j\ne i}\frac{1}{(\lambda_i^z-w)(\lambda_i^z-\lambda_j^z)}\d t-\frac{1}{4n}\sum_i \frac{1}{(\lambda_i^z-w)^2}\d t.
\end{equation}
Symmetrizing the $i,j$--summation, we get
\begin{equation}
\begin{split}
\label{eq:formnochar}
\dif \sum_i \log (\lambda_i^z-w)&=\frac{1}{\sqrt{2n}}\sum_i \frac{\dif b_i^z}{\lambda_i^z-w}-\frac{1}{4n}\left(\sum_i\frac{1}{\lambda_i^z-w}\right)^2\d t.
\end{split}
\end{equation}

Next, using \eqref{eq:formnochar} as an input, we consider the evolution of the $\log$--determinant along the characteristics $w_t$ from \eqref{eq:char}: 
\begin{equation}
\label{eq:charappr}
\dif \sum_i \log (\lambda_i^z-w_t)=\frac{1}{\sqrt{2n}}\sum_i \frac{\dif b_i^z}{\lambda_i^z-w_t}-\sum_i\frac{1}{\lambda_i^z-w_t}\left(\frac{1}{4n}\sum_i\frac{1}{\lambda_i^z-w_t}-m_t^z(w_t)\right)\d t.
\end{equation}
Then, subtracting the deterministic approximation in the LHS of \eqref{eq:charappr}, we get
\begin{equation}
\label{eq:somestep}
\begin{split}
\dif \left[\sum_i \log (\lambda_i^z-w_t)-2n\int_\R  \log (x-w_t)\rho_t^z(x)\,\dif x\right] &=\frac{1}{\sqrt{2n}}\sum_i \frac{\dif b_i^z}{\lambda_i^z-w_t}-n\big[\langle G_t^z(w_t)\rangle-m_t^z(w_t)\big]^2\d t \\
&\quad-2n\int_\R\log(x-w_t)\partial_t\rho_t^z(x)\, \dif x-nm_t(w_t)^2.
\end{split}
\end{equation}
We now show that the last line of \eqref{eq:somestep} is equal to zero. By \eqref{eq:evscfl} we have that $\pi\partial_t \rho_t(x)=\partial_x\Im [m_t(x)^2]/2$. Then, using integration by parts, we have
\begin{equation}
\begin{split}
-2\int_\R\log(x-w_t)\partial_t\rho_t^z(x)\, \dif x&=\frac{1}{\pi}\int_\R \frac{\Im[m_t(x)^2]}{x-w_t}\,\dif x \\
&=\frac{1}{2\pi\ii}\int_\R\left[\frac{m_t(x)^2}{x-w_t}-\frac{\overline{m_t}(x)^2}{x-w_t}\right]\, \dif x=m_t(w_t)^2,
\end{split}
\end{equation}
where in the last equality we used residue theorem together with $\Im w_t>0$ (which makes the integral with $\overline{m_t}(x)$ equal to zero). This shows that the last line in \eqref{eq:somestep} is equal to zero and so,
\beq \label{eqn:somestep-2}
\dif \left[\sum_i \log (\lambda_i^z-w_t)-2n\int_\R  \log (x-w_t)\rho_t^z(x)\,\dif x\right] =\frac{1}{\sqrt{2n}}\sum_i \frac{\dif b_i^z}{\lambda_i^z-w_t}-n\big[\langle G_t^z(w_t)\rangle-m_t^z(w_t)\big]^2 \d t.
\eeq
We rewrite the martingale term in \eqref{eqn:somestep-2} as
\beq
\frac{1}{\sqrt{2n}}\sum_i \frac{\dif b_i^z}{\lambda_i^z(t)-w_t} = \frac{1}{\sqrt{2n}}\sum_i \frac{\dif b_i^z}{\gamma_i^z(t)-w_t} + \left( \frac{1}{\sqrt{2n}}\sum_i \frac{\dif b_i^z}{\lambda_i^z(t)-w_t} - \frac{1}{\sqrt{2n}}\sum_i \frac{\dif b_i^z}{\gamma_i^z(t)-w_t} \right)
\eeq
Here, $\gamma_i^z(t)$ are the $n$-quantiles of the measure $\rho_t^z$ (see e.g. the definition in \eqref{eq:defquantz}). 
We now bound the quadratic variation of the second term. With $ c= (100)^{-1}$, using the rigidity esimates in \eqref{eq:goodrig} for $|i| \leq n^{1-c}$ and the estimates \eqref{eqn:usual-rigidity} for the other terms, we have,
\begin{equation}
\begin{split}
\label{eq:estimatequantevalues}
\int_0^T \frac{1}{n} \sum_{i} \left| \frac{1}{ \lambda_i^z(t) - w_t} - \frac{1}{ \gamma_i^z(t) - w_t } \right|^2 \d t &\leq C \int_0^T \frac{1}{n} \sum_{ |i| < n^{1-c} } \frac{ | \lambda_i^z(t) - \gamma_i^z(t)|^2}{ | \gamma_i^z(t) - w_t |^4} \d t + n^{-3/2} \\
&\leq C  \int_0^T \frac{(\log n)^{4}}{n^3} \sum_{|i| < n^{1-c} } \frac{1}{ | \gamma_i^z(t) - w_t|^4} \d t + n^{-3/2} \\
&\leq C \int_0^T \frac{(\log n)^{4}}{ n^2 \eta_t^3} \d t + n^{-3/2} \leq C \frac{(\log n)^{4}}{ (n \eta_T)^2} 
\end{split}
\end{equation}
with overwhelming probability. Therefore, by the Burkholder-Davis-Gundy (BDG) inequality,
\begin{equation}
\label{eq:appr}
\sup_{0\le t\le T}\left|\int_0^t\left[\frac{1}{\sqrt{2n}}\sum_i \frac{\dif b_i(t)}{\lambda_i-w}-\frac{1}{\sqrt{2n}}\sum_i \frac{\dif b_i(t)}{\gamma_i-w}\right]\right|\lesssim  \frac{(\log n)^{5}}{n \eta_T} 
\end{equation}
with overwhelming probability.

Applying the local law \eqref{eq:goodll} to the second term in \eqref{eq:somestep} we have,
\beq \label{eqn:char-inter-1}
n \int_0^T \left[ \langle G^z (w_s) \rangle - m^z (w_s) \right]^2 \d s = \O \left( \frac{ (\log n)^4}{n \eta_T} \right)
\eeq
with overwhelming probability. Therefore, we conclude, 
\begin{equation}
\begin{split}
\label{eq:apprgaussbi}
\Psi_n (z, T, \eta_T) - \Psi_n (z, 0, \eta_0) = \frac{1}{\sqrt{2n}}\int_0^T\sum_i \frac{\dif b_i(t)}{\gamma_i^z(t)-w_t}\,+\mathcal{O}\left(\frac{  ( \log n)^{5}}{n \eta_T} \right),
\end{split}
\end{equation}
with overwhelming probability. This proves \eqref{eqn:aa-2}, after defining,
\begin{equation}
\label{eq:gauss}
\xi_n:=\frac{1}{\sqrt{2n}}\int_0^T\sum_i \frac{\dif b_i(t)}{\gamma_i-w_t}.
\end{equation}
We now compute the variance of the real part of $\xi_n$,
\begin{equation}
\label{eq:varcomp}
\begin{split}
\mathrm{Var}(\Re \xi_n)&=\frac{1}{n}\int_0^T\sum_i \frac{(\gamma_i^z(t))^2}{|\gamma_i^z(t)-\ii\eta_t|^4}\, \dif t =\frac{1}{2n}\int_0^T\sum_i \frac{1}{|\gamma_i^z(t)-\ii\eta_t|^2}\, \dif t + \Re \frac{1}{2n} \int_0^T \sum_i\frac{1}{ ( \gamma_i^z(t) - \i \eta_t )^2} \d t \\
&=\int_0^T\frac{\Im m^{z}_t(\ii\eta_t)}{\eta_t}\, \dif t+\mathcal{O}\left(T+\frac{\log n}{n\eta_T}\right) =-\int_0^T\frac{1}{\eta_t}\, \dif \eta_t+\mathcal{O}\left(T+\frac{\log n}{n\eta_T}\right) \\
&=\log\left(\frac{\eta_0}{\eta_T}\right)+\mathcal{O}\left(T+\frac{\log n}{n\eta_T}\right)
\end{split}
\end{equation}
In the third equality we replaced the sum over the quantiles with an integral against $\rho^z (x)$, at the price of a negligible error, and used the fact that $\partial_w m_t^z (w)$ is bounded near the imaginary axis. This completes the proof. \qed

\subsection{Proof of Proposition \ref{prop:upper-gde-1} } \label{sec:upper-gde-proof}

Before proving Proposition \ref{prop:upper-gde-1} we first prove the following preliminary lemma.

\begin{lemma}
\label{lem:reglog} For both real or complex i.i.d. matrices the following holds. 
For any  $C_* \geq 1$ and any $\delta >0$  it holds that with overwhelming probability,
\beq \label{eq:linmeslog}
\Psi_n (z, n^{-1} ) \leq \Psi_n (z, (\log n)^{C_*} n^{-1} ) + (\log n)^{1/2+\delta}.
\eeq
\end{lemma}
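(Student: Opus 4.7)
My plan is to split the $\eta$--interval at an intermediate scale $\eta_3 := (\log n)^{1/2 + 10 \delta'}/n$, for a small $\delta' > 0$ to be fixed later (with $\delta' \ll \delta$), and to bound
\[
\Psi_n(z, n^{-1}) - \Psi_n(z, (\log n)^{C_*}/n) \;=\; \bigl[\Psi_n(z, n^{-1}) - \Psi_n(z, \eta_3)\bigr] \;+\; \bigl[\Psi_n(z, \eta_3) - \Psi_n(z, (\log n)^{C_*}/n)\bigr]
\]
on each sub--interval separately. For the upper sub--interval, I would apply Lemma \ref{lem:eta-inc} with its $\delta$--parameter taken to be $\delta'$: its hypotheses are satisfied because $\eta_3 \geq (\log n)^{1/2+\delta'}/n$ and $(\log n)^{C_*}/n \leq (\log n)^{C_*}\eta_3$, and so the lemma yields $|\Psi_n(z, \eta_3) - \Psi_n(z, (\log n)^{C_*}/n)| \leq (\log n)^{1/2 + \delta'}$ with overwhelming probability.

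The lower sub--interval $[n^{-1}, \eta_3]$ lies below the smallest scale at which the refined averaged local law of Proposition \ref{lem:precllaw} is available, so I would instead argue deterministically. Using $\Re \log(\lambda - \ii \eta) = \tfrac12 \log(\lambda^2 + \eta^2)$ in \eqref{eqn:psi-def}, I split
\begin{align*}
\Psi_n(z, n^{-1}) - \Psi_n(z, \eta_3) \;=\;& \frac{1}{2} \sum_{i} \log \frac{(\lambda_i^z)^2 + n^{-2}}{(\lambda_i^z)^2 + \eta_3^2} \;-\; n \int_\R \log \frac{x^2 + n^{-2}}{x^2 + \eta_3^2}\, \rho^z(x)\, \dif x \\
& + \frac{\1_{\{\beta = 1\}}}{2} \Bigl[\log\bigl(|z-\overline z|^2 + n^{-1}\bigr) - \log\bigl(|z-\overline z|^2 + \eta_3\bigr)\Bigr].
\end{align*}
Since $n^{-2} \leq \eta_3^2$, every summand in the eigenvalue sum is non--positive, so that term is $\leq 0$, and for the same monotonicity reason the real--case correction is $\leq 0$. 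Thus only the integral term requires bounding from above. Differentiating under the integral, $\frac{\dif}{\dif \eta} \bigl[- n \int \log(x^2 + \eta^2)\, \rho^z(x)\, \dif x\bigr] = -2 n \, \Im m^z(\ii \eta)$, and therefore
\[
- n \int_\R \log \frac{x^2 + n^{-2}}{x^2 + \eta_3^2}\, \rho^z(x)\, \dif x \;=\; 2 n \int_{n^{-1}}^{\eta_3} \Im m^z(\ii \eta)\, \dif \eta \;\leq\; C n \eta_3 \;=\; C(\log n)^{1/2 + 10 \delta'},
\]
where I use that $|\Im m^z(\ii \eta)| \leq C$ uniformly in $|z| \leq r < 1$ and $\eta \in (0, 1]$ (a direct consequence of \eqref{eq:mde} and the fact that $\rho^z$ has a bounded density on a neighbourhood of $0$ whenever $|z| < 1$).

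Combining the two sub--interval estimates and choosing $\delta'$ small enough in terms of $\delta$ (e.g.\ $\delta' < \delta / 100$, so that the constant $C$ above and the $(\log n)^{10 \delta'}$ growth can be absorbed into $(\log n)^\delta$ for $n$ large) yields the claim with overwhelming probability, in both the real and complex cases. I do not foresee any serious obstacle: the key observation is the pointwise monotonicity of both $\log((\lambda_i^z)^2 + \eta^2)$ and of the real--case correction in $\eta$, which lets the very short range $[n^{-1}, \eta_3]$—exactly the regime where a local law on $\langle G^z - M^z \rangle$ is unavailable—be controlled using only the sign of the individual eigenvalue contributions, so that the sole probabilistic input needed is Lemma \ref{lem:eta-inc} on the upper range.
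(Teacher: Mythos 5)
Your proof is correct and arrives at the same estimate, but the treatment of the lower range $[n^{-1},\eta_3]$ is genuinely different from — and cleaner than — the paper's. The paper bounds $|\Psi_n(z,n^{-1})-\Psi_n(z,\eta_c)|$ (two-sidedly) via the derivative formula $\partial_\eta\Psi_n \asymp n\,\Im\langle G^z(\i\eta)-M^z(\i\eta)\rangle$, and then uses the monotonicity of $y\mapsto y\,\Im\langle G^z(\i y)\rangle$ together with the averaged local law \eqref{eq:goodll} evaluated at the splitting scale $\eta_c=(\log n)^{1/2+\delta}/n$ to control $\Im\langle G^z(\i\eta)\rangle$ for $\eta<\eta_c$; this yields the $\O((\log n)^{1/2+2\delta})$ bound but still needs the local law as input even on the small-$\eta$ stretch. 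You exploit instead that the lemma is \emph{one-sided}: by the elementary monotonicity of $\eta\mapsto\log((\lambda_i^z)^2+\eta^2)$ (and likewise of the $\beta=1$ correction), the eigenvalue sum and the real-case deterministic term both decrease as $\eta$ shrinks, so their contribution to $\Psi_n(z,n^{-1})-\Psi_n(z,\eta_3)$ has the favorable sign and can simply be discarded, leaving only the deterministic $\rho^z$-integral, bounded by $2n\int_{n^{-1}}^{\eta_3}\Im m^z(\i\eta)\,\dif\eta\lesssim n\eta_3$. This makes the entire range below $\eta_3$ free of any probabilistic input, with Lemma \ref{lem:eta-inc} (applied on $[\eta_3,(\log n)^{C_*}/n]$, with the parameters you chose) the sole stochastic ingredient. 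Both approaches split at the same critical mesoscopic scale $(\log n)^{1/2+o(1)}/n$; yours buys a purely deterministic lower-range argument at the negligible cost of having to invoke Lemma \ref{lem:eta-inc} with a smaller parameter $\delta'$ and then absorb the $(\log n)^{10\delta'}$ factor, which you handle correctly.
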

\proof We first consider the complex $\beta=2$ case. For any $\eta_1 < \eta_2$ we have that,
\begin{align} \label{eqn:a-1}
\left| \Psi_n (z, \eta_2 ) - \Psi_n (z, \eta_1) \right| \leq  \int_{\eta_1}^{\eta_2} n | \Im \langle G^z ( \i \eta ) - M^z ( \i \eta ) \rangle | \d \eta.
\end{align}
We first apply this with $\eta_1 = n^{-1}$ and $\eta_2 = (\log n)^{1/2+\delta} n^{-1}$. Since $y \to y \Im \langle G(\i y) \rangle$ is an increasing function, we see that, by applying \eqref{eq:goodll} at $\eta = \eta_2$, we have with overwhelming probability,
\beq
\int_{n^{-1}}^{(\log n)^{1/2+\delta} n^{-1}} n | \Im \langle G^z ( \i \eta ) - M^z ( \i \eta ) \rangle | \d \eta \leq (\log n)^{1/2+\delta} \int_{n^{-1}}^{(\log n)^{1/2+\delta} n^{-1}} C \eta^{-1} \d \eta \leq C (\log n)^{1/2+2\delta}.
\eeq
Then, applying \eqref{eqn:a-1} with $\eta_1 = (\log n)^{1/2+\delta} n^{-1}$ and $\eta_2 = (\log n)^{C_*} n^{-1}$ and using \eqref{eq:goodll} we have,
\beq
\int^{n^{-1} (\log n)^{C_*}}_{(\log n)^{1/2+\delta} n^{-1}} n | \Im \langle G^z ( \i \eta ) - M^z ( \i \eta ) \rangle | \d \eta \leq (\log n)^{1/2+\delta} \int^{n^{-1} (\log n)^{C_*}}_{(\log n)^{1/2+\delta} n^{-1}}  \eta^{-1} \d \eta \leq (\log n)^{1/2+2 \delta}
\eeq
and the claim follows in the complex case.  In the real case, there is an additional deterministic term which is bounded by $C \log \log n$ using the exact same argument as in \eqref{eqn:eta-dif-real-case}.  \qed

\vspace{5 pt}

\noindent{\bf Proof of Proposition \ref{prop:upper-gde-1}}. Lemma \ref{lem:reglog} reduces this to proving the upper bound at $\eta = (\log n)^{C_*} / n$. The upper bound for this quantity is then an immediate consequence of Proposition \ref{prop:upper-gde-2}, with $\gamma=\eps$, and Proposition \ref{prop:global}. \qed

\section{Upper bound in the GDE real case}

In this section we prove the upper bounds of Theorem~\ref{theo:realmaxlog} for real i.i.d. matrices having an almost order one Gaussian component. The following is the analog of Proposition \ref{prop:upper-gde-1}. 
\bep
\label{prop:upper-real-gde} Let $0 < r < 1$. There are constants $C_1, c_1 >0$ so that the following holds. Let $\eps >0$ and let $X_T$ be a real i.i.d. GDE matrix with Gaussian component of size at least $T = n^{-\eps}$. Then,
\beq
\pp\left[ \max_{ |z| \le r } \Psi_n (z, n^{-1} ) \geq \left( \sqrt{2} + C_1 \eps \right) \log n \right] \leq n^{-c_1 \eps}
\eeq
for $n$ sufficiently large, depending on $\eps$ and $r$.
\eep
We let $X_t$ solve \eqref{eq:matDBM}, but now $B_t$ is a matrix of i.i.d. standard real Brownian motions, and $X = (1- T)^{1/2} Y$ for a real i.i.d. matrix $Y$. We continue to denote $H_t^z$ the Hermitization of $X_t -z$, and use the notation $m_t^z(w)$, etc., as defined at the beginning of Section \ref{sec:upper-1}. We also recall our three parameter function $\Psi_n (z, t, \eta)$ as in \eqref{eqn:psi-def}, which now has the additional deterministic term compared to the complex case. 

The first step in Section~\ref{sec:upper-1} for the complex case was to write the log--determinant on small scales as the one on (almost) global scales plus a Gaussian term using the characteristics method (see Lemma~\ref{lem:charup}). We now replace this step with the following lemma. Notice that compared to \eqref{eqn:aa-2} we now write \eqref{eqn:real-char-main-est} for all intermediate mesoscopic scales; this will be useful in the analysis of Section~\ref{sec:mesoreal} below.

\bel \label{lem:real-char}
Let $\lambda_i^z(t)$ be the eigenvalues of $H_t^z$. Let $\xi, \eps_1, \eps_2>0$ be sufficiently small. Let $T = n^{-\eps_1}$. Consider a characteristic $w_s = \i \eta_s$ such that $\eta_T \geq  n^{\eps_2-1}$. Let $S_1,S_2$ satisfy $0\leq S_1< S_2 \leq T$. Then, uniformly in $S_1, S_2$ we have, with overwhelming probability,
\begin{align} \label{eqn:real-char-main-est}
& \sum_i \log ( \lambda_i^z (S_2) - w_{S_2} ) - 2 n \int_{\rr} \log (x - w_{S_2}) \rho^z_{S_2} (x) -  E_n (S_1,S_2) \notag \\
&\qquad\quad= \sum_i \log ( \lambda_i^z (S_1) - w_{S_1} ) - 2 n \int_{\rr} \log (x - w_{S_1}) \rho^z_{S_1} (x) + \xi_n(S_1,S_2) + \O \left(\frac{n^{\xi}}{n \eta_{S_2}} \right)
\end{align} 
where for fixed $S_1$, the process  $\{ \xi_n (S_1, S_2) \}_{S_2 \in [S_1, T]}$ is a complex martingale, and
\beq
\label{eq:compes1s2}
E_n(S_1,S_2)=\frac{1}{2}\log\left(\frac{|z-\overline{z}|^2+2\eta_{S_1}\sqrt{1-|z|^2}}{ |z-\overline{z}|^2+2\eta_{S_2}\sqrt{1-|z|^2}}\right)+\O\big(S_2\log n\big).
\eeq
Furthermore, there exists a coupling such that with overwhelming probability we have for all $s$ satisfying $S_1 \leq s \leq T$ that
\begin{equation}
\label{eq:martrepxis1s2}
\Re \left[ \xi_n(S_1,s) \right] =\int_{S_1}^{s} V_u^{1/2}\dif \tilde{b}_u+\O\left(\frac{n^\xi}{\sqrt{n\eta_{s}}}\right),
\end{equation}
for a standard Brownian motion $\tilde{b}_u$ and some explicit deterministic $V_u$ such that
\beq
\label{eq:Vtexplicit}
\int_{S_1}^{s} V_u\,\dif u=\log\left(\frac{\eta_{s}}{\eta_{S_1}}\right)+\log\left(\frac{|z-\overline{z}|^2+2\eta_{S_1}\sqrt{1-|z|^2}}{ |z-\overline{z}|^2+2\eta_{s}\sqrt{1-|z|^2}}\right)+\O\left(S_2\log n+\frac{\log n}{n\eta_{s}}\right).
\eeq
\eel

\proof  \, Some parts of this proof are similar to the proof of Lemma~\ref{lem:charup}, and for this reason we focus on the main differences.
 
By Proposition 7.6 and Appendix B of \cite{cipolloni2021fluctuation} the eigenvalues $\lambda_i^z(t)$ of $H_t^z$ are the unique strong solution of
\begin{equation}
\label{eq:realDBM}
\dif \lambda_i^z(t)=\frac{\dif b_i^z(t)}{\sqrt{2n}}+\frac{1}{2n}\sum_{j\ne i}\frac{1+\Lambda_{ij}^z(t)}{\lambda_j^z(t)-\lambda_i^z(t)}\,\dif t.
\end{equation}
The driving martingales $b_i^z(t)$ have the following covariation process
\begin{equation}
\dif [b_i^z(t),b_j^z(t)]=\big[\delta_{ij}-\delta_{i,-j}+\Lambda_{ij}^z(t)\big]\dif t, \qquad\quad  \Lambda_{ij}^z(t):=4\Re\big[\langle \bm{w}_i^z(t),E_1\bm{w}_j^{\overline{z}}(t)\rangle\langle \bm{w}_j^{\overline{z}}(t),E_2\bm{w}_i^z(t)\rangle\big].
\end{equation}
Here $\{{\bm w}_i^z(t)\}_i$ denote the eigenvectors of the Hermitization of $X_t-z$. Note that $\Lambda_{ij}^z(t)=\Lambda_{ji}^z(t)$. We also point out that if $z\in \R$, then $\Lambda_{i,j}^z(t)=0$, for $j\ne\pm i$, and $\Lambda_{i,\pm i}^z(t)=\pm 1$, i.e. for $z\in \R$ there is no repulsion from zero in \eqref{eq:realDBM}.

Proceeding similarly to \eqref{eq:1complrel}--\eqref{eq:apprgaussbi}, using \eqref{eq:realDBM} instead of \eqref{eq:DBMcompl}, for any $0\le S_1<S_2\le T$, we obtain
\begin{equation} \label{eqn:real-upper-intermediate}
\begin{split}
&\int_{S_1}^{S_2} \dif   \left[\sum_i \log (\lambda_i^z-w_t)-2n\int_\R  \log (x-w_t)\rho_t^z(x)\,\dif x\right] \\
&\qquad\quad =\frac{1}{\sqrt{2n}}\int_{S_1}^{S_2}\sum_i \frac{\dif b_i^z(t)}{\lambda_i^z(t)-w_t}+\tilde{\sum}_{ij}\int_{S_1}^{S_2}\langle G_t^z(w_t)E_i G_t^{\overline{z}}(w_t) E_j\rangle \dif t+\mathcal{O}\left(\frac{\log n}{n\eta_{S_2}}\right).
\end{split}
\end{equation}
Here $\tilde{\sum}_{ij}$ is defined below \eqref{eq:defE1E2}, and we recall that it denotes a summation over $(i,j)\in \{(1,2),(2,1)\}$. Next, we use that by rescaling Corollary~\ref{cor:nozreal} (i.e. the entries have variance $c_* (t)/n$ instead of $1/n$) we have
\begin{equation}
\label{eq:usenow}
\big|\langle \big(G_t^z(\ii\eta_t)E_i G_t^{\overline{z}}(\ii\eta_t)-M_t^{z,\overline{z}}(\ii\eta_t,E_i,\ii\eta_t)\big) E_j\rangle\big|\lesssim \frac{n^\xi}{n\eta_t^2}.
\end{equation}
Here for any $z_1,z_2\in \C$ and any matrix $A\in\C^{2n\times 2n}$, we defined
\begin{equation}
\label{eq:realdefm12}
M_t^{z_1,z_2}(\ii\eta_1,A,\ii\eta_2):=\big(1-c_*(t)^2M_t^{z_1}(\ii\eta_1)\mathcal{S}[\cdot]M_t^{z_2}(\ii\eta_2)\big)^{-1}\big[M_t^{z_1}(\ii\eta_1)A_1M_t^{z_2}(\ii\eta_2)\big]
\end{equation}
with the \emph{covariance operator} $\mathcal{S}:\C^{2n\times 2n}\times \C^{2n \times 2n}$ defined by
\[
\mathcal{S}[\cdot]:=2\langle \cdot E_1\rangle E_2+2\langle \cdot E_2\rangle E_1 .
\]
The constant $c_*(t)$ is defined below \eqref{eq:DBMcompl}, and
\begin{equation}
M_t^z(w):=\left(\begin{matrix}
m_t^z(w) & -zu_t^z(w) \\
-\overline{z}u_t^z(w) & m_t^z(w)
\end{matrix}\right), \qquad\quad u_t^z(w):=\frac{m_t^z(w)}{w+c_*(t)^2m_t(w)}.
\end{equation}
By \eqref{eq:usenow}, we thus get
\begin{equation}
\begin{split}
\label{eq:goodpoint}
&\int_{S_1}^{S_2}  \dif \left[\sum_i \log (\lambda_i^z-w_t)-2n\int_\R  \log (x-w_t)\rho_t^z(x)\,\dif x\right] \\
&\qquad\quad=\frac{1}{\sqrt{2n}}\int_{S_1}^{S_2}\sum_i \frac{\dif b_i^z(t)}{\lambda_i^z(t)-w_t}+\tilde{\sum}_{ij}\int_{S_1}^{S_2}\langle M_t^{z,\overline{z}}(\ii\eta_t, E_i, \ii\eta_t) E_j\rangle \dif t+\mathcal{O}\left(\frac{n^{\xi}}{n\eta_{S_2}}\right).
\end{split}
\end{equation}
To conclude the proof, we need to compute the leading order approximation of the deterministic term in the RHS of \eqref{eq:goodpoint} and write the martingale term (at leading order) as an integral of a rescaled Brownian motion using the martingale representation theorem.

We start with the computation of the deterministic term. By \eqref{eq:evscfl}--\eqref{eq:char} it follows that $m_t^z(w_t)$ and $u_t^z(w_t)$ are constant in time. Define the short--hand notations $c_t:=c_*(t)^2=1+(t-T)$, $m:=m_0(\ii \eta_0)=m_t(\ii\eta_t)$, and $u:=u_0^z(\ii\eta_0)=u_t^z(\ii\eta_t)$. By an explicit computation we obtain
\beq \label{eqn:real-det-1}
\tilde{\sum}_{ij} \int_{S_1}^{S_2}\langle M_t^{z,\overline{z}}(\ii\eta_t,E_i,\ii\eta_t)E_j\rangle \dif t = \int_{S_1}^{S_2}\frac{c_tu^2\Re[z^2]-c_t^2|z|^4u^4+c_t^2m^4}{1+c_t^2|z|^4u^4-c_t^2m^4-2c_tu^2\Re[z^2]} \dif t 
\eeq

Then, a straightforward but somewhat tedious computation shows that,
\beq \label{eqn:real-det-2}
1 + c_t^2 |z|^4 u^4 - c_t^2 m^4 - 2 c_t u^2 \Re[z^2] = ( 1 + \O ( \eta_t) ) |z-\bar{z}|^2 + 2 \eta_t (1 + \O ( \eta_t) ) \sqrt{1-|z|^2}
\eeq
using
\begin{equation}
\label{eq:exapnsmu}
u=1-\frac{\eta_T}{\sqrt{1-|z|^2}}+\mathcal{O}(\eta_T^2), \qquad\quad m=\ii\sqrt{1-|z|^2}+\ii\frac{\eta_T(2|z|^2-1)}{2(1-|z|^2)}+\mathcal{O}(\eta_T^2),
\end{equation}
and that $\eta_t=\eta_T+(T-t)\Im m$, for any $0\le t\le T$. With this, we then have that,
\begin{equation}
\begin{split}
\label{eq:impexpcomp}
&\tilde{\sum}_{ij} \int_{S_1}^{S_2}\langle M_t^{z,\overline{z}}(\ii\eta_t,E_i,\ii\eta_t)E_j\rangle \dif t \\
&\qquad\qquad\qquad\quad=\int_{S_1}^{S_2}\frac{c_tu^2\Re[z^2]-c_t^2|z|^4u^4+c_t^2m^4}{1+c_t^2|z|^4u^4-c_t^2m^4-2c_tu^2\Re[z^2]} \dif t \\
&\qquad\qquad\qquad\quad= -\frac{1}{2}\int_{S_1}^{S_2} \partial_t \log\big[1+c_t^2|z|^4u^4-c_t^2m^4-2c_tu^2\Re[z^2]\big]\,\dif t+\mathcal{O}(S_2 \log n)\\
&\qquad\qquad\qquad\quad=-\frac{1}{2}\log\left[\frac{1+c_{S_2}^2|z|^4u^4-c_{S_2}^2m^4-2c_{S_2}u^2\Re[z^2]}{1+c_{S_1}^2|z|^4u^4-c_{S_1}^2m^4-2c_{S_1}u^2\Re[z^2]}\right]+\mathcal{O}(S_2 \log n)\\
&\qquad\qquad\qquad\quad=\frac{1}{2}\log\left(\frac{[1+\mathcal{O}(\eta_{S_1})]\cdot|z-\overline{z}|^2+2\eta_{S_1}\cdot[1+\mathcal{O}(\eta_{S_1})]\sqrt{1-|z|^2}}{[1+\mathcal{O}(\eta_{S_2})]\cdot |z-\overline{z}|^2+2\eta_{S_2}\cdot [1+\mathcal{O}(\eta_{S_2})]\sqrt{1-|z|^2}}\right)+\mathcal{O}(S_2\log n) \\
&\qquad\qquad\qquad\quad=\frac{1}{2}\log\left(\frac{|z-\overline{z}|^2+2\eta_{S_1}\sqrt{1-|z|^2}}{ |z-\overline{z}|^2+2\eta_{S_2}\sqrt{1-|z|^2}}\right)+\mathcal{O}(S_2\log n),
\end{split}
\end{equation}
This concludes the computation of $E_n(S_1,S_2)$ in \eqref{eq:compes1s2}.

Next, we consider the martingale term in the RHS of \eqref{eq:goodpoint}. The martingale $\xi_n (S_1, s)$ is defined by,
\[
\xi_n(S_1,s):=\frac{1}{\sqrt{2n}} \int_{S_1}^{s} \sum_i\frac{\dif b_i^z(t)}{\lambda_i^z(t)-w_t}.
\]
We now compute the quadratic variation process of $\Re[ \xi_n (S_1, s) ]$:
\begin{equation}
\begin{split}
\label{eq:quadvarcomp}
& \dif \left[\Re\frac{1}{\sqrt{2n}}\sum_i\frac{\dif b_i^z(t)}{\lambda_i^z(t)-w_t}, \Re\frac{1}{\sqrt{2n}}\sum_i \frac{\dif b_i^z(t)}{\lambda_i^z(t)-w_t}\right]\\
=& \bigg[\frac{1}{n}\sum_i \frac{(\lambda_i^z)^2}{|\lambda_i^z-\ii\eta_t|^4}+2\tilde{\sum}_{ij}\langle \Re G_t^z(\ii\eta_t)E_i\Re G_t^{\overline{z}}(\ii\eta_t)E_j\rangle\bigg]\, \dif t \\
= &\bigg[\frac{1}{n}\sum_i \frac{(\lambda_i^z)^2}{|\lambda_i^z-\ii\eta_t|^4}+2\tilde{\sum}_{ij}\langle G_t^z(\ii\eta_t)E_i G_t^{\overline{z}}(\ii\eta_t)E_j\rangle\bigg]\, \dif t \\
 = &\bigg[\frac{1}{n}\sum_i \frac{(\gamma_i^z)^2}{|\gamma_i^z-\ii\eta_t|^4}+2\tilde{\sum}_{ij}\langle M_t^{z,\overline{z}}(\ii\eta_t,E_i,\ii\eta_t)E_j\rangle\bigg]\, \dif t +\O\left(\frac{n^\xi}{n\eta_t^2}\right)\,\dif t,
\end{split}
\end{equation}
where to go from the first to the second line we used that, by the symmetry of the spectrum of $H^z$, $\Im G^z$ is diagonal, $\Re G^z$ is off--diagonal, and $E_i\Im G^zE_j=0$ for $i\ne j$. Additionally, in the last equality we used similar estimates to \eqref{eq:estimatequantevalues} to compute the deterministic approximation of the first term and \eqref{eq:usenow} to compute the deterministic approximation of the second term.

Denote
\begin{equation}
\label{eq:defvt}
V_t:=\frac{1}{n}\sum_i \frac{(\gamma_i^z)^2}{|\gamma_i^z-\ii\eta_t|^4}+2\tilde{\sum}_{ij}\langle M^{z,\overline{z}}(\ii\eta_t,E_i,\ii\eta_t)E_j\rangle \asymp \frac{1}{ \eta_t} .
\end{equation}
Here, the estimate follows in a straightforward manner from the computations in \eqref{eq:varcomp}, \eqref{eqn:real-det-1} and \eqref{eqn:real-det-2}. Then, by the martingale representation theorem together with \eqref{eq:quadvarcomp}, we write (after passing to possibly a larger probability space),
\begin{equation}
\Re\frac{1}{\sqrt{2n}}\sum_i\frac{\dif b_i^z(t)}{\lambda_i^z(t)-w_t}=\left[V_t+\O\left(\frac{n^\xi}{n\eta_t^2}\right)\right]^{1/2}\dif \tilde{b}_t,
\end{equation}
with $\tilde{b}_t$ being a standard real Brownian motion. We now define
\begin{equation}
\dif \mathcal{X}_t:=V_t^{1/2}\dif \tilde{b}_t.
\end{equation}
By the BDG inquality we then have with overwhelming probability,
\begin{equation}
\Re\xi_n(S_1,s)=\Re\frac{1}{\sqrt{2n}} \int_{S_1}^{s} \sum_i\frac{\dif b_i^z(t)}{\lambda_i^z(t)-w_t}=\int_{S_1}^{s}\dif \mathcal{X}_t+\O\left(\frac{n^\xi}{\sqrt{n\eta_{s}}}\right),
\end{equation}
which shows \eqref{eq:martrepxis1s2} for $V_t$ being defined as in \eqref{eq:defvt}. Finally, we conclude the proof noticing that by \eqref{eq:varcomp} and \eqref{eq:impexpcomp} we compute the leading order approximation of the first and second term in the definition of $V_t$, respectively, and obtain \eqref{eq:Vtexplicit}.

\qed

Notice that, unlike in the complex case (cf. Lemma~\ref{lem:charup}), the variance of the martingale term in Lemma~\ref{lem:real-char} (see \eqref{eq:martrepxis1s2}--\eqref{eq:Vtexplicit}) depends on  the size of $|z-\overline{z}|=2|\Im z|$. For this reason, in the reminder of this section we divided the analysis of $\Psi_n(z)$ into three regimes according to the size of $|\Im z|$. We will first record an estimate that will be useful both here and later in the paper.

\subsection{Short time increment bound}

The following is a sub-optimal estimate controlling the process $\Psi_n (z, t, \eta_t)$ over short time intervals. It will be mostly used for passing from $(\log n)^C/n$ scales to $n^{\eps}/n$ scales.  In the following, we let $X_s$ solve $\d X_s = \d B_s/\sqrt{n}$ where $B_s$ is a matrix of either complex or real standard Brownian motions, with $X_0 = (1-T)^{1/2} Y$ for $Y$ an i.i.d. matrix, and $T \leq n^{-c}$ for some $c>0$. If $B_s$ is complex, we will allow $Y$ to be either a real or complex i.i.d. matrix. If $B_s$ is real then we will assume that $Y$ is real. We let $\Psi_n (z, s, \eta)$ be as in \eqref{eqn:psi-def} where $\beta=1, 2$ corresponds to the real or complex dynamics driving by $B_s$, respectively. 

\bep \label{prop:short-time-int} 
There is a $c_1>0$ so that the following holds.  Fix $T$ and $X_s$ as above, and let $0 \leq t \leq T$. Let $\eps_1 >0$ be sufficiently small. Let $ \eta_s >0$ be a characteristic, i.e. a solution of \eqref{eq:char} for $w_s=\ii\eta_s$, such that $(\log n)^{10} / n \leq \eta_t \leq n^{-\eps_1}$ and,
\beq
\log ( \eta_0 / \eta_t ) \leq \eps_1 \log n.
\eeq
Then, for all $n$ sufficiently large depending on $\eps_1$, we have
\beq
\pp\left[ | \Psi_n (z, t, \eta_t ) - \Psi_n (z, 0, \eta_0) | > \eps_1^{1/3} \log n \right] \leq \e^{ - c_1 \eps_1^{-1/3} \log n } + n^{-100}.
\eeq
\eep
\proof We first consider the case of complex dynamics. Integrating \eqref{eqn:somestep-2} in time, and applying \eqref{eq:goodll} or Proposition \ref{prop:m-laws} (for $Y$ complex or real, respectively) we see that,
\beq
\Psi_n (z, t, \eta_t ) - \Psi_n (z, 0, \eta_0 ) = \Re\left[ \frac{1}{ \sqrt{2n}} \int_0^t \frac{ \d b_i^z (s)}{ \lambda_i^z (s) - \i \eta_s } \right] + \O \left( ( \log n )^{-9} \right)
\eeq
with overwhelming probability. The quadratic variation process of the martingale term is bounded by,
\beq
\sum_{i=-n}^n \frac{1}{ | \lambda_i^z (s) - \i \eta_s |^2} \d s \leq \frac{ \Im \langle G_s ( \i \eta_s) \rangle}{ \eta_s} \d s \leq \frac{C}{ \eta_s} \d s
\eeq
again by \eqref{eq:goodll} or Lemma \ref{prop:m-laws} (in the case of complex or real initial data respectively) 
with overwhelming probability. Therefore, by the martingale representation theorem (as in \eqref{eqn:mart-rep-1}), we have that
\beq
\pp\left[ \left|  \frac{1}{ \sqrt{2n}} \int_0^t \frac{ \d b_i^z (s)}{ \lambda_i^z (s) - w_s} \right| > ( \eps_1)^{1/3} \log n \right] \leq C \e^{ -c \eps_1^{-1/3} \log n} + n^{-1000}
\eeq
with overwhelming probability; this completes the proof in the case of complex dynamics. 

In the real case, we first note that the proof of Lemma \ref{lem:real-char} up to and including the estimate \eqref{eqn:real-upper-intermediate} holds even if we only assume that $\eta_t \geq ( \log n)^{10} /n$. Then, integrating \eqref{eqn:real-upper-intermediate}, and applying an estimate similar to \eqref{eqn:eta-dif-real-case}  for the deterministic contribution to $\Psi_n (z, s, \eta_s)$, we find that,
\beq \label{eqn:real-short-time}
\Psi_n (z, t, \eta_t) - \Psi_n (z, 0, \eta_0) = \Re\left[  \frac{1}{ \sqrt{2n}} \int_0^t \frac{ \d b_i^z (s)}{ \lambda_i^z (s) - w_s} + \tilsumij \int_0^t \langle G_s^z (w_s) E_i G_s^{\bar{z}} (w_s) E_j \rangle \d s\right] + \O ( \eps_1 \log n ) ,
\eeq
with overwhelming probability. 
By Cauchy-Schwarz and \eqref{eq:goodll} we have with overwhelming probability,
\beq
\left| \int_0^t \langle G_s^z (w_s) E_i G_s^{\bar{z}} (w_s) E_j \rangle \d s \right| \leq C\int_0^t \eta_s^{-1} \langle \Im[ G_s^{z} (w_s)] \rangle \d s \leq C \eps_1 \log n
\eeq
because $\log (\eta_0 / \eta_t) \leq \eps_1 \log n$ by assumption. Similarly, using \eqref{eq:goodll} and the computations around \eqref{eq:quadvarcomp} we see that the quadratic variation process of the Martingale term in \eqref{eqn:real-short-time} is bounded by $C/\eta_s$ with overwhelming probability. The proof is then completed in the same fashion as in the case of complex dynamics above. 

\qed

\

We now divide the remainder of this section into three parts. In Section~\ref{sec:largeup} we prove an upper bound for the (regularized) maximum of the log--characteristic polynomial in the regime $\Im z> n^{-\eps}$, for some small fixed $\eps>0$. Then, in Section~\ref{sec:smallup} we consider the regime $\Im z\le n^{-1/2-\eps}$ and, finally, in Section~\ref{sec:mesoreal} we conside the regime $\Im z\asymp n^{-\alpha}$ for some intermediate $\alpha\in (0,1/2)$.

\subsection{Real case: upper bound for $\Im[z] \geq n^{-\eps}$.}
\label{sec:largeup}

We first prove the upper bound for points $z$ that have relatively large imaginary part. This proof is almost the same as in the complex i.i.d. case. 

\bep \label{prop:upper-real-1} There is a $C_1 >0$ so that for all sufficiently small $\eps >0$ the following holds. Let $X$ be a real i.i.d. matrix with Gaussian component of size at least $n^{-\eps}$. Let $0 < r <1$. Then
\beq
\pp\left[ \max_{ |z| \leq r, \Im[z] \geq n^{-\eps}} \Psi_n (z, n^{-1} ) \geq \left( \sqrt{2} + C_1 \eps \right) \log n \right] \leq n^{-c \eps}
\eeq 
\eep
\proof The proof of this statement is similar to Proposition~\ref{prop:upper-gde-1}. First, by using Lemma \ref{lem:reglog} and Proposition \ref{prop:der-bd}, it suffices to bound the maximum over a set of $n^{1+\eps}$ well-spaced points $P_1$ of 
\beq
\max_{ z \in P_1} \Psi_n(z, T, (\log n)^{C_*}/n )
\eeq
for any sufficiently large $C_*>0$, with all $z \in P_1$ satisfying $\Im[z] \geq n^{-\eps}$ and $|z| \leq r$.  For each $z$ we let $\eta(s,z)$ be a characteristic ending at $( \log n)^{C_*} / n$ at time $s = T$. Next, letting $S_2 = T - n^{\eps^3-1}$ we have by Proposition \ref{prop:short-time-int} that with probability at least $1 - n^{-10}$ that,
\beq
\max_{z \in P_1} \Psi_n(z, T, (\log n)^{C_*}/n ) \leq \max_{z \in P_1} \Psi_n(z, S_2, \eta(S_2,z)) + C \eps \log n,
\eeq
for some constant $C>0$. At this point we can directly apply Lemma \ref{lem:real-char}, with $T= n^{-\eps}$, in an analogous fashion to the proof of Proposition \ref{prop:upper-gde-2} (which uses Lemma \ref{lem:charup}) using \eqref{eq:martrepxis1s2}  in place of \eqref{eq:varxinT}. Observe also that the term $E_n (S_1, S_2)$ in \eqref{eqn:real-char-main-est} equals the deterministic correction in \eqref{eqn:psi-def} up to $\O (1)$. We have by estimates \eqref{eqn:real-char-main-est} and \eqref{eq:martrepxis1s2} of Lemma \ref{lem:real-char} that, 
\begin{align}
     & \pp\left[ | \Psi_n (z, S_2, \eta(S_2,z) ) - \Psi_n (z, 0, \eta(0,z) ) | > ( \sqrt{2} + C_1 \eps )\log n \right] \notag\\
    \leq & \pp\left[ \left| \int_{0}^{S_2} (V_u)^{1/2} \d \tilde{b}_u \right| > ( \sqrt{2} + (C_1 - 1) \eps ) \log n \right] + n^{-10} \leq n^{-1-10 \eps}
\end{align}
if $C_1 >0$ is sufficiently large. In the last inequality we used the fact that 
\beq
\int_0^{S_2} V_u \d u \leq \log n + C \eps \log n
\eeq
by our assumption that $\Im[z] \geq n^{-\eps}$ from \eqref{eq:Vtexplicit}. Therefore, by a union bound over $P_1$,
\beq \label{eqn:upper-real-intermediate}
\max_{z \in P_1}  \Psi_n(z, S_2, \eta(S_2,z)) \leq \max_{z \in P_1} \Psi_n (z, 0, \eta(0,z) ) + ( \sqrt{2} + C \eps ) \log n
\eeq
with probability at least $1 - n^{-\eps/2}$, for some sufficiently large $C>0$. Now, the first quantity on the RHS of the above estimate is bounded by Proposition \ref{prop:global}. This finishes the proof. \qed

\subsection{Real case: upper bound for $\Im[z] \leq n^{-1/2+\eps}$.}
\label{sec:smallup}

\bep \label{prop:upper-real-2} Let $0 < r < 1$. 
There is a $C_1 >0$ and a $c>0$ so that for all sufficiently small $\eps >0$, the following holds. Let $X$ be a real i.i.d. matrix with Gaussian component of size at least $n^{-\eps}$. Then, for all $n$ sufficiently large depending on $r$ and $\eps$,
\beq
\pp\left[ \max_{ |z| \leq r, \Im[z] \leq n^{\eps-1/2}} \Psi_n (z, n^{-1} ) \geq \left( \sqrt{2} + C_1 \eps \right) \log n \right] \leq n^{-c \eps}  
\eeq
\eep
\proof This proof follows identically to Proposition \ref{prop:upper-real-1} except for the fact that one chooses a grid $P_1$ of $n^{1/2+2 \eps}$ well-spaced points. Furthermore, the intermediate inequality \eqref{eqn:upper-real-intermediate} still holds, but this is due to the fact that Gaussian random variable on the RHS of \eqref{eq:martrepxis1s2} has variance of no more than $(2+ C \eps) \log n$ which is compensated by the fact that we are taking a union bound over only $n^{1/2+2 \eps}$ points. \qed

\subsection{Real case: upper bound for $\Im[z] \approx n^{-\alpha}$.}
\label{sec:mesoreal}

The main result of this section is the following:

\bep
\label{prop:upper-real-3} Let $0 < r < 1$. There are $C_1, c_1, c_* >0$ so that the following holds. Fix $0 < \alpha < \frac{1}{2}$ and $\eps >0$ satisfying $\eps < c_* \min \{ \alpha, 1/2 - \alpha \}$. \nc Let $X$ be a real GDE with Gaussian component of size at least $n^{-\eps}$. Then, for all $n$ sufficiently large depending on $\eps, r$,
\beq
\pp\left[ \max_{ |z| < r, n^{-\alpha - \eps} < \Im[z] < n^{-\alpha + \eps}} \Psi_n (z, n^{-1} ) > ( \sqrt{2} + C_1 \eps ) \log n \right] \leq n^{- c_1\eps}.
\eeq
\eep

The proof of this proposition will require a few intermediate results. Let $T = n^{-\eps}$, and consider $X_s$ as above. We consider the characteristic $\eta (s, z)$ that end at $\eta (T, z) = (\log n)^{10} /n$. Fix
\beq
t_1 := T - n^{-2\alpha}, \qquad t_2 := T - n^{-\eps^3-1} 
\eeq
and decompose
\begin{align}
\label{eq:decomp}
\Psi_n (z, \eta (T, z), T) &= \big[ \Psi_n (z, \eta (T, z), T) - \Psi_n (z, \eta (t_2, z), t_2 ) \big] + \big[\Psi_n (z, \eta (t_2, z), t_2 ) - \Psi_n (z, \eta (t_1, z) , t_1 ) \big] \notag \\
&\quad+\big[\Psi_n (z, \eta (t_1, z), t_1) - \Psi_n (z, \eta (0, z), 0)\big]+ \Psi_n (z, \eta (0, z), 0) \notag \\
&\quad =: F(z) + X_2 (z)+ X_1 (z) + Y (z)
\end{align}
First, $Y(z)$ is an initial step that is very small and so can be neglected by Proposition \ref{prop:global}, using the fact that $\eta(0,z)\asymp t_f$. Similarly, $F(z)$ is a short time increment and can be neglected by Proposition \ref{prop:short-time-int}. The component $X_1 (z)$ is then the part of the random walk bringing us down to the intermediate scale $\eta \approx n^{-2 \alpha}$, and then $X_2 (z)$ goes down to the microscopic scales. As will be seen below, the equation \eqref{eq:Vtexplicit} implies that the quadratic variation process of the main Gaussian contributions to $X_1(z)$ and $X_2(z)$ behave differently.

We now control the maximum of $X_1(z)$ by a union bound, and then use this a priori information as an input to give an upper bound on $X_1(z)+X_2(z)$. This part of the argument is inspired by \cite{fang2012branching}. 

\bel
\label{lem:halmax}
There exists $C_1, c_1 >0$ so that for all $\eps>0$ sufficiently small, and all $n$ sufficiently large depending on $\eps$,
\beq
\label{eq:unionbalpha}
\pp\left[ \max_{ |z| \leq r , n^{-\eps - \alpha} \leq  \Im[z] \leq n^{\eps-\alpha} } |X_1 (z) | > (2 \sqrt{2} \alpha + C_1 \eps ) \log n  \right] \leq n^{ -c_1 \eps}
\eeq
\eel
\proof The proof will be by a union bound. First we have by definition, $\eta (t_1, z) \asymp n^{-2 \alpha}$ and that $\eta(t_1, z) \leq \eta(0, z) \leq n^{-\eps}$. Therefore by Lemma \ref{lem:eta-inc} and Proposition \ref{prop:der-bd}, the max of $X_1(z)$ over the set $\mathcal{E} := \{ z : |z| < r , n^{-\alpha - \eps } < \Im[z] < n^{-\alpha + \eps} \}$ is approximated by the max over a subset of $\mathcal{E}$  points  of cardinality at most $n^{\alpha+3\eps}$, up to an error of size $\O ( ( \log n)^{3/4} )$, with overwhelming probability. We denote this subset of points by $P_1$. 

For each $z \in P_1$ the estimates of Lemma \ref{lem:real-char} imply that
\beq
\pp\left[ |X_1 (z) | > (2 \sqrt{2} \alpha +C_1 \eps ) \log n  \right] \leq \pp\left[ |Z_1 (z) | > ( 2 \sqrt{2} \alpha + (C_1-1) \eps ) \log n \right] + n^{-10}
\eeq
where $Z_1(z)$ is a centered Gaussian random variable with variance bounded by $(4 \alpha + C \eps ) \log n$. Then for $C_1$ sufficiently large we have,
\beq
\pp\left[ |Z_1 (z) | > ( 2 \sqrt{2} \alpha + (C_1-1) \eps ) \log n \right]  \leq n^{-\alpha - 10 \eps}
\eeq
for all $\eps >0$ sufficiently small. This yields the claim. \qed 

\

Using Lemma~\ref{lem:halmax}, we now obtain the desired bound on $X_1(z)+X_2(z)$:

\bel \label{lem:inter-2} There exists $C_2, c_2, c_* >0$ so that the following holds. 
Let $P$ be a set of $n^{1-\alpha + 2\eps}$ well-spaced points of the strip $\{ z : n^{-\eps - \alpha } < \Im[z] < n^{\eps-\alpha}, |z| \leq  r\}$. For all $\eps >0$ satisfying $\eps < c_* \min \{ \alpha, (1- 2 \alpha ) \}$ we have that for $n$ sufficiently large depending on $\eps, r$,
\beq
\label{eq:unionbalpha2}
\pp\left[ \exists z \in P : X_1(z) + X_2 (z) > ( \sqrt{2} + C_2 \eps ) \log n\right] \leq n^{ - c_2 \eps}
\eeq
\eel
\proof We have by \eqref{eq:unionbalpha} and a union bound that,
\begin{align}
     & \pp\left[ \exists z \in P : X_1(z) + X_2 (z) > ( \sqrt{2} + C_2 \eps ) \log n\right] \notag\\
    \leq &  \pp\left[ \left\{ \exists z \in P : X_1(z) + X_2 (z) > ( \sqrt{2} + C_2 \eps ) \log n \right\} \cap \{ \max_{ z \in P } |X_1 (z) | \leq (2 \sqrt{2} \alpha + C_1 \eps ) \log n \} \right] + n^{-c_1 \eps} \notag \\
    \leq & \sum_{ z \in P } \pp\left[ \{ X_1 (z) + X_2 (z) > ( \sqrt{2} +C_2 \eps ) \log n \} \cap \{ |X_1 (z) | \leq ( 2 \sqrt{2} \alpha + C_1 \eps ) \log n \} \right] + n^{-c_1 \eps }. \label{eqn:upper-inter-1} 
\end{align}
Note that we can take $C_1 >0$ larger if necessary and the above estimate still holds. 
By applying Lemma \ref{lem:real-char} twice, once to $X_1(z)$ and once to $X_1(z) + X_2 (z)$ we see that with overwhelming probability,
\beq
X_1 (z) = Z_1 (z) + \O ( 1), \qquad X_2 (z) = Z_2 (z) + \O (1)
\eeq
where $Z_1(z)$ and $Z_2(z)$ are centered independent Gaussian random variables, by passing to possibly a larger probability space. As long as $\eps >0$ is sufficiently small depending on $\alpha >0$ we see that,
\beq
 4 \alpha \log n \asymp \Var (Z_1 (z) ) \leq (4 \alpha + C_* \eps ) \log n, \qquad (1- 2 \alpha ) \log n \asymp \Var (Z_2 (z) ) \leq (1-2 \alpha + C_* \eps ) \log n
\eeq
for some $C_* >0$ that will be fixed until the end of the proof. Assuming that $C_2 \geq C_1$, we have that the probability in the sum on the last line of \eqref{eqn:upper-inter-1} is bounded by (in the second line we write $Z_i = Z_i (z)$)
\begin{align}
 & \pp\left[ \{ Z_1 (z) + Z_2 (z) > ( \sqrt{2} +C_2 \eps ) \log n \} \cap \{ |Z_1 (z) | \leq ( 2 \sqrt{2} \alpha + C_1 \eps ) \log n \} \right] \notag \\
 = & \int_0^{\infty} \pp\left[ Z_2 > x + ( \sqrt{2} (1-2 \alpha) + (C_2 -C_1) \eps \log n ) \right] \pp\left[ Z_1 = (2\sqrt{2}  \alpha +C_1 \eps) \log n - x \right] \d x \notag \\
 \leq & n^{\eps}  \int_0^{\infty} \exp \left( - \frac{ \big(x + ( \sqrt{2} (1- 2\alpha ) + (C_2 -C_1) \eps )\log n \big)^2}{( 2(1-2 \alpha ) +C_* \eps ) \log n} - \frac{ \big( (2\sqrt{2}  \alpha +C_1 \eps) \log n - x \big)^2}{( 8 \alpha + C_*\eps ) \log n }  \right) \d x \notag \\
 \leq & n^{\eps} \exp \left( - \log n \left( \frac{2(1- 2 \alpha)^2 + (1- 2 \alpha)(C_2 -C_1) \eps }{2(1-2 \alpha) +C_* \eps}  + \frac{ 8 \alpha^2 + C_1 \alpha \eps}{8 \alpha + C_* \eps} \right)   \right) \notag \\
 &\times \int_0^\infty \exp \left( -x \left( \frac{ 2^{3/2} ( 1 - 2 \alpha)}{2(1-2\alpha) + C_* \eps} - \frac{ 4 \sqrt{2} \alpha }{8 \alpha + C_* \eps } - \frac{2 C_1 \eps}{8 \alpha +C_* \eps}  \right) \right) \d x \label{eqn:upper-inter-2}
\end{align} 
As long as $\eps >0$ satisfies $\eps < (C_*)^{-1} \min\{ 1- 2 \alpha, \alpha \}$ we see that
\beq
 \frac{2(1- 2 \alpha)^2 + (1- 2 \alpha)(C_2 -C_1) \eps }{2(1-2 \alpha) +C_* \eps}  + \frac{ 8 \alpha^2 + C_1 \alpha \eps}{8 \alpha + C_* \eps}  \geq 1 - \alpha + \frac{ C_2 -C_1}{10} \eps  \geq 1 - \alpha + 100 \eps 
\eeq
as long as $C_1 \geq C_*$ and $C_2 \geq C_1 + C_* + 10^3$.  Fix $C_1, C_2 >0$ until the end of the proof. 

Assuming further that $\eps < 10^{-6} (C_*)^{-1} \min \{ \alpha, 1 - 2 \alpha \}$ and also $\eps < C_1 \alpha 10^{-6}$ we see that,
\beq
\frac{ 2^{3/2} ( 1 - 2 \alpha)}{2(1-2\alpha) + C_* \eps} - \frac{ 4 \sqrt{2} \alpha }{8 \alpha + C_* \eps } - \frac{2 C_1 \eps}{8 \alpha +C_* \eps} \geq \sqrt{2} - 2^{1/2} - \frac{1}{100} \geq \frac{1}{100}. 
\eeq
Therefore the integral on the last line of \eqref{eqn:upper-inter-2} converges, and is bounded by a constant. The claim now follows. \qed

\vspace{3 pt} 

\

We are now ready to conclude the estimate of the maximum of $\Psi_n(z,n^{-1})$ for $\Im z\asymp n^{-\alpha}$.

\noindent{\bf Proof of Proposition \ref{prop:upper-real-3}}. By Lemma \ref{lem:reglog} and Proposition \ref{prop:der-bd} it suffices to bound
\beq
\max_{z \in P_1} \Psi_n (z, (\log n )^{10}/n )
\eeq
for $P_1$ being a set of $n^{1-\alpha+2\eps}$ points in the set $\{ z : |z| < r , n^{-\eps - \alpha } \leq \Im [z ] \leq n^{\eps - \alpha} \}$. For each such $z$ we use the decomposition in \eqref{eq:decomp}. By Proposition \ref{prop:global} and Proposition \ref{prop:short-time-int} we have that
\beq
\pp\left[ \max_{ z \in P_1}  |Y(z)| > C \eps \log n \right] \leq n^{-\eps} , \qquad \pp\left[ \max_{ z \in P_1}  |F(z)| > C \eps \log n \right] \leq n^{-10}
\eeq
for some sufficiently large $C>0$ and all $\eps >0$ sufficiently small. The estimate for $X_1(z) + X_2 (z)$ follows from Lemma \ref{lem:inter-2}. \qed

\subsection{Proof of Proposition \ref{prop:upper-real-gde}}

The upper bound now follows in a straightforward manner from Propositions \ref{prop:upper-real-1}, \ref{prop:upper-real-2}, and \ref{prop:upper-real-3}. Fixing an $\eps_1 >0$, we apply Proposition \ref{prop:upper-real-1} and \ref{prop:upper-real-2} to control the maximum for $z$ s.t. $\Im[z] \leq n^{-1/2+\eps_1}$ or $\Im[z] \geq n^{-\eps_1}$. Proposition \ref{prop:upper-real-3} then applies for all $ \alpha \in (\eps_1/2, 1/2 - \eps_1/2)$, with the $\eps $ in the statement of Proposition \ref{prop:upper-real-3} equalling $\eps_2 := c_* \eps_1$ for the $c_* >0$ coming from that Proposition.  We then apply Proposition \ref{prop:upper-real-3} finitely many times (where finitely many depends on $\eps_1$ and $c_*$) to conclude that,
\beq
\pp\left[ \max_{ z : |z| \leq r } \Psi (z, n^{-1} ) \geq (\sqrt{2} + C \eps_1 ) \log n \right] \leq n^{ -c \eps_1}
\eeq
for all matrices with Gaussian component of size at least $n^{-c \eps_1}$, for some small $c, C>0$. This yields the claim, after defining $\eps >0$ in terms of $\eps_1 >0$ appropriately. \qed

\section{Upper bound for general ensembles; comparison}

\subsection{Comparison}

In this section we will state a general comparison result for a certain regularization of the maximum of the characteristic polynomial. We fix $0 < r < 1$ and a parameter $\delta >0$. Let $P$ be a set of points of the disc $\{ z : |z| \leq r \}$ such that $| P | \leq n^2$. With this data, consider the function on the space of iid matrices given by,
\beq
Z_\delta (X) := \frac{1}{ n^{\delta}} \log \left( \sum_{ z \in P_\delta } \e^{ n^{\delta} \frac{1}{2} \Psi_n (z, n^{-1} ) } \right)
\eeq
Let $X$ and $Y$ be two i.i.d. matrices that match moments to order $3$ and their fourth moments differ by $\O ( T n^{-2} )$. The proof of the following lemma is deferred to Appendix \ref{app:GFTmax}. 
\bel \label{lem:upper-compare-1}
In the above set up, we have
\beq
\left| \ee[ F ( Z_\delta (X) ) ] - \ee[ F (Z_\delta (Y ) ] \right| \leq \| F \|_{C^5}(  T n^{10\delta} + n^{-1/4} )
\eeq
\eel

\subsection{Proof of upper bounds of Theorems \ref{theo:main} and \ref{theo:realmaxlog}}

It suffices to prove the upper bounds of \eqref{eq:desb} and \eqref{eqn:realmaxlog-1}, as the upper bound of \eqref{eqn:realmaxlog-2} is a consequence of \eqref{eqn:realmaxlog-1}. 

In the set-up of the previous section, we choose $P = P_\delta$, a set of $n^{1+\delta}$ well-spaced points of the disc of radius $r$. 
It is easy to see that, almost surely,
\beq \label{eqn:upper-reg-1}
\left| \max_{z \in P_\delta} \frac{1}{2} \Psi_n (z, n^{-1} ) - Z_\delta (X) \right| \leq \frac{ 2 \log n}{n^{\delta}}.
\eeq
Moreover, from Proposition \ref{prop:der-bd} we have that
\beq \label{eqn:upper-reg-2}
\left| \max_{ |z| < r } \Psi_n (z, n^{-1} ) - \max_{z \in P_\delta} \Psi_n (z, n^{-1} )  \right| \leq n^{-\delta/2}
\eeq
with overwhelming probability.

Note that,
\beq
\sum_i \log ( ( \lambda_i^z)^2 ) \leq \sum_i \log ( ( \lambda_i^z)^2 + \eta^2)
\eeq
and that 
\beq
\left| \del_\eta \int \log ( x^2 + \eta^2) \rho_z (x) \d x \right|  = 2 \Im [ m^z ( \i \eta ) ] \asymp 1
\eeq
by the last point in Lemma~\ref{lem:proprho}, and so almost surely we have
\beq
\Psi_n (z) \leq \Psi_n (z, n^{-1} ) + C .
\eeq
It therefore suffices to prove the estimates for the regularized $\Psi_n (z, n^{-1})$. 
Let $X$ be an i.i.d. matrix and $\eps >0$. Let $Y$ match $X$ to three moments and the fourth to $\O ( T n^{-2} )$ with $T = n^{-\eps}$, with $Y$ a GDE with component of size $T$. We let $\delta = \eps / 20$. From the discussion above it suffices to bound $Z_\delta (X)$. We see by Lemma \ref{lem:upper-compare-1} that
\beq
\pp\left[ Z_{\delta} (X) \geq \left( \frac{1}{\sqrt{2} } +C_1 \eps \right) \log (n) \right] \leq \pp\left[ Z_{\delta} (Y) \geq \left( \frac{1}{\sqrt{2} } +C_1 \eps \right) \log (n) + 1 \right] + n^{- c\eps}.
\eeq
On the other hand, the probability on the RHS is easily bounded by relating $Z_\delta (Y)$ back to the max of $\Psi_n (z, n^{-1}, Y)$ using \eqref{eqn:upper-reg-1}  and then applying Proposition \ref{prop:upper-gde-1} and Proposition \ref{prop:upper-real-gde} in the complex and real cases, respectively. \qed

\section{Second moment method; lower bound on mesoscopic scales via DBM in the complex case} \label{sec:smm-complex}

In this section we will find a lower bound for the log-characteristic polynomial on mesoscopic scales. We will apply a dynamical version of the second moment method; our treatment of the second moment method follows roughly that outlined in the expository notes \cite{arguin2016extrema}. Our dynamical set-up will be similar to Section \ref{sec:upper-1}.  Fix two exponents $\mfa, \mfb >0$, and define
\beq
\mfc := \min \{ \mfa, \mfb \}.
\eeq
We assume $\mfc < 10^{-3}$. 
Let $t_\mfb = n^{-\mfb}$ and set $\d X_t = \d B_t/\sqrt{n}$ where $B_t$ is a matrix whose entries are i.i.d. standard real or complex Brownian motions. Here, $X_0 $ is a matrix of the form $X_0 = (1- t_\mfb)^{1/2} Y$ where $Y$ is an real or complex i.i.d. matrix as in Definition \ref{def:model}. The limiting Stieltjes transform of the Hermitization of $X_t -z$ is given by $m_t^z(w)$ as in \eqref{eqn:mt-def}, with $c_*(t) = \sqrt{1+(t-t_\mfb)}$, and it satisfies \eqref{eq:evscfl} with characteristics given by \eqref{eq:char}. 

In this section we will only consider the $\beta=1$ case in Proposition \ref{prop:covariation-estimate} below (for later use in Section \ref{sec:smm-real}); in all other statements in this section we will assume $\beta=2$. 

With this definition of $X_t$ we introduce the field,
\beq
\Phi (z) := \Psi_n (z, t_\mfb , \eta_\mfa (z, t_\mfb )) - \Psi_n ( z, 0, \eta_\mfa (z, 0) )
\eeq
where $\eta_\mfa (z, s)$ is a characteristic such that $\eta_\mfa (z, t_\mfb) = n^{\mfa-1}$.  The main result of this section, proven by the second moment method, is the following:
\bet \label{thm:meso-smm}
Let $P$ be a grid of $n^{1-\mfa}$ well-spaced points of the disc $\{ |z-\frac{1}{2} \i | < \frac{1}{4}\}$. There is a $c>0$ so that the following holds.  For all sufficiently small $\eps_1 >0$ we have that
\beq
\max_{ z \in P } \Phi (z) \geq \sqrt{2} ( \sqrt{1 - \mfa - \mfb } \sqrt{ 1 - \mfb}  - \eps_1 ) \log n
\eeq
with probability at least $ 1 - n^{ - c \eps_1}$. 
\eet

Note that due to the proof of Lemma \ref{lem:charup} we have the decomposition,
\beq
\Phi (z) = \Re\left[ \frac{1}{\sqrt{2n}} \int_0^{t_\mfb} \sum_i \frac{ \d b_i^z (s) }{ \lambda_i^z (s) - \i \eta_\mfa (z, s) } \right] + \O ( n^{-\mfa/2} )
\eeq
with overwhelming probability. Specifically, this follows from \eqref{eqn:somestep-2} and \eqref{eqn:char-inter-1}.  Fix now an integer $K \geq 1$ and define,
\beq
\delta_K := \frac{ 1 - \mfa - \mfb}{K}.
\eeq
Define now $t_K = t_{\mfb} = n^{-\mfb}$ and $t_0 = 0$. For $1 \leq i \leq K-1$ we define,
\beq
t_i := t_{\mfb} - \frac{ n^{\mfa + (K-i) \delta_K}}{n}
\eeq
Then $t_0 < t_1 < \dots < t_K$ and 
\beq
\log ( \eta_\mfa (z, t_i ) / \eta_\mfa (z, t_{i+1} ) ) = \delta_K \log n + \O (1)
\eeq
for $0 \leq i < K$, and
\beq
\eta_\mfa (z, t_K) = \frac{ n^{\mfa}}{n}, \qquad \eta_\mfa (z, t_i ) \asymp \frac{ n^{\mfa + (K-i) \delta_K }}{n} , \quad 0 \leq i < K.
\eeq
Define,
\beq
Y_i (z) := \Re \left[ \frac{1}{ \sqrt{2n}} \int_{t_{i-1}}^{t_i} \sum_i \frac{ \d b_i^z (s) }{ \lambda_i^z (s) - \i \eta_\mfa (z, s) } \right]
\eeq
We now compute the covariation process of the $Y_i (z)$. Fix $z_1,z_2\in\C$, for any $A\in \C^{2n \times 2n}$, we recall that the deterministic approximation of $G_t^{z_1}(\ii\eta)AG_t^{z_2}(\ii\eta_2)$ is given by (see \eqref{eq:realdefm12})
\begin{equation}
\label{eq:defm12a}
M_t^{z_1,z_2}(\ii\eta_1,A,\ii\eta_2)=\big(1-c_*(t)^2M_t^{z_1}(\ii\eta_1)\mathcal{S}[\cdot]M_t^{z_2}(\ii\eta_2)\big)^{-1}\big[M_t^{z_1}(\ii\eta_1)A_1M_t^{z_2}(\ii\eta_2)\big].
\end{equation}
Then, for the covariation process we have the following:
\bep \label{prop:covariation-estimate}
Denoting $\eta_{i, t} = \eta_\mfa (z_i, t)$, with overwhelming probability, for any small $\xi>0$ we have
\begin{align}
    & \left[ \Re \frac{1}{ \sqrt{2n}}  \sum_i \frac{ \d b_i^{z_1} (t) }{ \lambda_i^{z_1} (t) - \i \eta_\mfa (z_1, t) } , \Re \frac{1}{ \sqrt{2n}} \sum_i \frac{ \d b_i^{z_2} (t) }{ \lambda_i^{z_2} (t) - \i \eta_\mfa (z_2, t) } \right] \\
    = & \left[2\tilsumij \langle M_t^{z_1,z_2}(\ii\eta_{1,t},E_i,\ii\eta_{2,t})E_j\rangle+\O\left(\frac{n^\xi}{n\eta_*(t)^2}\right)\right] \d t \\
    + & \1_{ \{ \beta =1 \}} \left[ 2\tilsumij \langle M_t^{z_1,\bar{z}_2}(\ii\eta_{1,t},E_i,\ii\eta_{2,t})E_j\rangle \right] \d t
\end{align}
where $\eta_{*,t} = \min \{ \eta_{1,t} , \eta_{2,t} \}$ and $M_t^{z_1,z_2}$ is defined as in \eqref{eq:defm12a}.
\eep
\proof \, We first consider the complex case. Then the covariation process is, by direct calculation, 
\begin{equation} \label{eqn:prop-covar-1}
\begin{split}
&\frac{1}{2n} \sum_{i,j} \frac{\lambda_i^{z_1}\lambda_j^{z_2}\dif [b_i^{z_1},b_j^{z_2}]}{|\lambda_i^{z_1}-\ii\eta_{1,t}|^2|\lambda_j^{z_2}-\ii\eta_{2,t}|^2} \d t\\
&=\frac{1}{2n}\sum_{i,j} \frac{4\lambda_i^{z_1}\lambda_j^{z_2}\Re[\langle \bm{w}_i^{z_1},E_1\bm{w}_j^{z_2}\rangle\langle \bm{w}_j^{z_2},E_2\bm{w}_i^{z_1}\rangle]}{|\lambda_i^{z_1}-\ii\eta_{1,t}|^2|\lambda_j^{z_2}-\ii\eta_{2,t}|^2}\,\dif t \\
&=2 \tilsumij \langle \Re G_t^{z_1}(\ii\eta_{1,t})E_i\Re G_t^{z_2}(\ii\eta_{2,t})E_j\rangle \, \d t \\
&=2 \tilsumij \langle G_t^{z_1}(\ii\eta_{1,t})E_i G_t^{z_2}(\ii\eta_{1,t})E_j \rangle \,  \dif t \\
&=\left[2\tilsumij \langle M_t^{z_1,z_2}(\ii\eta_{1,t},E_i,\ii\eta_{2,t})E_j\rangle+\O\left(\frac{n^\xi}{n\eta_*(t)^2}\right)\right]\,\d t.
\end{split}
\end{equation}
We point out that to go from the third to the fourth line we used that $\Im G_t^{z_i} ( \i \eta) $ is diagonal and that $\Re G_t^{z_i} ( \i \eta)$ is off--diagonal as a consequence of the symmetry of the spectrum of $H_t^z$; in particular this implies that $E_1\Im G_t^{z_i} (\i \eta) E_2=E_2\Im G_t^{z_i} ( \i \eta) E_1=0$. Additionally, in the last line we used the following estimate, which is a consequence of rescaling the entries of the matrix considered in \cite[Theorem 3.3]{cipolloni2023mesoscopic} (i.e. we now have entries with variance $c_*(t)^2/n$ instead of $1/n$ as in \cite{cipolloni2023mesoscopic}),
\begin{equation} \label{eqn:multi-resolvent}
\big|\langle (G_t^{z_1}(\ii\eta_1)A_1G_t^{z_2}(\ii\eta_2)-M_t^{z_1, z_2} (\i \eta_1, A_1, \i \eta_2 ))A_2\rangle\big|\lesssim \| A_1 \| \|A_2 \|\frac{n^\xi}{n\eta_*^2},
\end{equation}
with $\eta_*:=|\eta_1|\wedge |\eta_2|$, with overwhelming probability, for any $\xi>0$ and for any deterministic $A_1, A_2 \in\C^{2n\times 2n}$.  In the real i.i.d. case, the third line of \eqref{eqn:prop-covar-1} has an additional term
\beq
2\tilsumij \langle \Re G_t^{z_1}(\ii\eta_{1,t})E_i\Re G_t^{\bar{z}_2}(\ii\eta_{2,t})E_j\rangle \, \d t,
\eeq
whose deterministic approximation can be computed as in \eqref{eqn:prop-covar-1}, again using \eqref{eqn:multi-resolvent} but with $z_2$ replaced with $\overline{z_2}$. This concludes the proof.

\qed

\bel \label{lem:cov-estimates}
For the deterministic process
\beq \label{eqn:M-def}
M (z_1, z_2, t):=2\tilsumij \langle M_t^{z_1,z_2}(\ii\eta_{1,t},E_i,\ii\eta_{2,t})E_j\rangle,
\eeq
denoting $\eta_{i, t} = \eta_\mfa (z_i, t)$, we have the following estimates.
First, 
\beq
\label{eq:firstbM}
M (z, z, t) = \frac{ \Im[ m_t^z ( \eta_\mfa (z, t))]}{\eta_\mfa (z, t)} +\O(1).
\eeq
Additionally, if there is a $\sigma >0$ so that $|z_1-z_2|^2 \geq n^{\sigma} \eta_\mfa (z_1, t)$ then,
\beq
\label{eq:secondbM}
|M (z_1, z_2, t)| \lesssim \frac{1}{n^{\sigma} \eta_\mfa (z_1, t)},
\eeq
where we note that $\eta_\mfa (z_1, t) \asymp \eta_\mfa (z_2, t)$. Finally, if $\eta_\mfa (z, t)$ is such that $\eta_\mfa (z, t) \geq n^{\sigma} \Im[z]^2$ then,
\beq
\label{eq:thirdbM}
M (z, \bar{z}, t) = \frac{ \Im[ m_t^z ( \eta_\mfa (z, t))]}{\eta_\mfa (z, t)} \big(1 + \mathcal{O}(n^{- \sigma/10})\big) + \O (1)
\eeq
\eel
\proof \,  By \cite[Lemma 6.1]{cipolloni2023central}, we have
\begin{equation}
\label{eq:deb12mhop}
\lVert M^{z_1,z_2}(\ii\eta_1,A,\ii\eta_2)\rVert\lesssim \frac{1}{|z_1-z_2|^2+|\eta_1|+|\eta_2|},
\end{equation}
for any $\eta_i\ne 0$ and $\lVert A\rVert\le 1$, which readily implies \eqref{eq:secondbM}.

We now prove \eqref{eq:firstbM} and \eqref{eq:thirdbM}. We recall that by an explicit computation (see \eqref{eqn:real-det-1} and the shorthand notation defined directly before it) we have
\begin{equation}
M (z, \bar{z}, t)=2\frac{c_tu^2\Re[z^2]-c_t^2|z|^4u^4+c_t^2m^4}{1+c_t^2|z|^4u^4-c_t^2m^4-2c_tu^2\Re[z^2]}.
\end{equation}
Then, using \eqref{eqn:real-det-2}, and that
\[
c_tu^2\Re[z^2]-c_t^2|z|^4u^4+c_t^2m^4=1-|z|^2+\mathcal{O}(|z-\overline{z}|^2+\eta_\mathfrak{a}(z,t))
\]
by \eqref{eq:exapnsmu}, we conclude \eqref{eq:thirdbM}. We point out that here we also used that
\[
c_t=1+(t-t_\mathfrak{b})=1-\frac{\eta_\mathfrak{a}(z,t)-\eta_\mathfrak{a}(z,t_\mathfrak{b})}{\Im m}=1+\mathcal{O}(\eta_\mathfrak{a}(z,t)).
\]
The proof of \eqref{eq:firstbM} is completely analogous (in fact simpler) and so omitted.

\qed

We now define
\beq
V_j (z_1, z_2 ) = \int_{t_{j-1}}^{t_j} M (z_1, z_2, s) \d s
\eeq
to be the leading order deterministic approximation to the covariance of the $Y_k$'s. We note that by \eqref{eq:firstbM} we have that
\beq \label{eqn:diag-var}
V_j (z, z) = \delta_K \log n + \O (1)
\eeq

\bep \label{prop:coupling-better}
Let $\sigma >0$ be sufficiently small. Fix $0 \leq k_0 \leq K$ an integer. Let $z_1, z_2$ be two points such that (recall that $\eta_{\mfa} (z_1, t_k )\asymp \eta_{\mfa} (z_2, t_k )$)
\beq
|z_1-z_2|^2 \geq n^{\sigma} \eta_{\mfa} (z_1, t_{k_0} ).
\eeq
Then there is a coupling between the random variables $\{ Y_j(z_1)\}_{j=1}^K, \{ Y_j (z_2) \}_{j=k_0+1}^K$ and a vector of independent Gaussian random variables $\{ Z_j (z_1) \}_{j=1}^K, \{ Z_j(z_2) \}_{j=k_0+1}^K$ such that with overwhelming probability we have that,
\beq \label{eqn:coupling-est-1}
Y_j (z_1) = Z_j (z_1) + \O ( n^{-\sigma/10} + n^{-\mfa/10} ) , \quad Y_l (z_2) = Z_l (z_2) + \O ( n^{-\sigma/10} + n^{-\mfa/10} )
\eeq
for $1 \leq j \leq K$ and $k_0+1 \leq l \leq K$. The variance of the $Z_j(u)$ are given by
\beq
\Var( Z_j (u) ) = V_j (u, u) = \int_{t_{j-1}}^{t_j} M (u, u, s) \d s = \delta_K \log n + \O (1) ,
\eeq
for $(u, j) \in \{ (z_1, i ) : 1 \leq i \leq K \} \cup \{ (z_2, i) : k_0+1 \leq i \leq K \}$.
Note that if $k_0 = K$ then we are only producing a coupling for the process $\{ Y_j (z_1) \}_{j=1}^K$, and the estimates \eqref{eqn:coupling-est-1} holds without the $n^{-\sigma/10}$ error.
\eep
\proof Let $\d[Y_j (z ) ,Y_k (w  )]  = \CC_{jk} (z, w, t) \d t$ be the covariation process of the $Y_k$'s. Note that this is non-zero for $j \neq k$ and for $j \leq k_0$ we are only considering the single process $\d Y_j (z_1)$. For each time $t$, the covariation process of $\{ Y_j(z_1)\}_{j=1}^K, \{ Y_j (z_2) \}_{j=k_0+1}^K$ is a $(K + (K-k_0) ) \times (K + (K-k_0) )$ dimensional matrix which we will denote by $\CC(t)$. Construct now a deterministic diagonal matrix $\MM(t)$ of the same dimension as follows. For every possible choice of $z_i$ and $j$ such that $Y_j (z_i)$ is one of the elements of our process, if the $(k_1, k_1)$--th entry of $\CC(t)$ corresponds to the variation process of this $Y_j (z_i)$ then set
\beq
\MM_{k_1, k_1} (t) := M (z_i, z_i , t) \1_{ \{ t \in (t_{j-1}, t_j ) \}} .
\eeq
Set all other entries of $\MM$ to be $0$. Then, with overwhelming probability by Proposition \ref{prop:covariation-estimate} and \eqref{eq:secondbM} we have that
\beq \label{eqn:eee-1}
\max_{i,j} | \CC_{ij} (t)  - \MM_{ij} (t) | \leq C \left( \frac{1}{ n^{\sigma} \eta_{\mfa} (z_1, t)} + \frac{n^\xi}{n \eta_\mfa (z_1, t)} \right)
\eeq
for any $\xi>0$. By the martingale representation theorem, there is a probability space and a sequence of $K + (K - k_0)$ standard Brownian motions $\tilde{b}_t$ such that
\beq
\d Y_t = \sqrt{ \CC(t) } \d \tilde{b}_t
\eeq
where by an abuse of notation we let $Y_t$ to denote the $K + (K - k_0)$-dimensional vector of the processes $\{ Y_j(z_1)\}_{j=1}^K, \{ Y_j (z_2) \}_{j=k_0+1}^K$. We can define now $Z$ by $\d Z_t = \sqrt{ \MM(t) } \d \tilde{b}_t$.  Clearly $Z$ has the desired distribution (recall that $\MM(t)$ is diagonal and deterministic). To estimate the difference we compute the quadratic variation of $Y_j (u)- Z_j(u)$, for $u$ and $j$ as in the proposition statement. This is bounded by,
\begin{align}
[ \d (Y_j - Z_j ) , \d (Y_j - Z_j )] (t) &= 1_{ \{ t \in (t_{j-1}, t_j ) \} } \sum_{ \alpha} ( \sqrt{ \CC} - \sqrt{ \MM} )^2_{j, \alpha } \d t \notag \\
&= 1_{ \{ t \in (t_{j-1}, t_j ) \} } ( (\sqrt{ \CC} -  \sqrt{ \MM} )^2 )_{jj} \notag \\
&\leq 1_{ \{ t \in (t_{j-1}, t_j ) \} } \mathrm{Tr} \left[ ( (\sqrt{ \CC} -  \sqrt{ \MM} )^2 \right] \notag \\
&\leq 1_{ \{ t \in (t_{j-1}, t_j ) \} }  \mathrm{Tr} | \CC - \MM |  \notag \\
&\lesssim  1_{ \{ t \in (t_{j-1}, t_j ) \} }   \left( \frac{1}{ n^{\sigma} \eta_{\mfa} (z_1, t)} + \frac{n^\xi}{n \eta_\mfa (z_1, t)} \right)
\end{align}
The last inequality uses \eqref{eqn:eee-1} and the fact that $\mathrm{Tr} |T| \leq \sum_{ij} |T_{ij} |$, and the second last inequality uses the Powers-St{\o}rmer inequality (see \cite[Section 3]{powers1970free}). By integrating the final inequality we then obtain
\beq
\int_{t_{j-1}}^{t_j} [ \d (Y_j - Z_j ), \d (Y_j - Z_j ) ] \leq C (n^{-\sigma/2} + n^{-\mfa/2} ) ,
\eeq
with overwhelming probability. 
The claim now follows from the BDG inequality.

\qed

\

We now state the following asymptotic for Gaussian random variables (see e.g. \cite[Theorem 1.2.3]{durrett2019probability} \nc), which will be useful in the following. If $Z$ is a centered Gaussian with variance $\sigma^2$ we have,
\beq \label{eqn:basic-gaussian}
\frac{1}{ \sqrt{2 \pi}} \left( \frac{ \sigma}{x} - \frac{ \sigma^3}{x^3} \right) \e^{ - \frac{x^2}{2 \sigma^2}} \leq \pp\left[ Z > x \right] \leq \frac{1}{ \sqrt{2  \pi}} \frac{ \sigma}{x} \e^{ - \frac{x^2}{2 \sigma^2}}.
\eeq
Let us now define,
\beq
\EE_m (z) := \left\{Y_m (z) >  \frac{ \sqrt{2}}{K} (\sqrt{ 1  - \mfa - \mfb}\sqrt{1-\mfa} - \eps_1 ) \log n \right\}
\eeq
as well as,
\beq
\FF_m (z) := \left\{  \ZZ_m (z) >  \frac{ \sqrt{2}}{K} ( \sqrt{ 1  - \mfa - \mfb } \sqrt{1- \mfa} - \eps_1 ) \log n \right\}
\eeq
where $\{ \ZZ_m (w) \}_{m, w}$ is a family of independent Gaussian random variables with variance $V_m (w, w)$. 
Define the functions
\beq
p_m (z, x) := \pp\left[ \ZZ_m (z) > x \right], \qquad \hat{p}_m (z) := \pp\left[ \ZZ_m (z) >  \frac{ \sqrt{2}}{K} ( \sqrt{ 1  - \mfa - \mfb }\sqrt{1-\mfa}  - \eps_1 ) \log n \right],
\eeq
and the point
\beq \label{eqn:xhat-def}
\hat{x} :=  \frac{ \sqrt{2}}{K} ( \sqrt{ 1  - \mfa - \mfb }\sqrt{1-\mfa} - \eps_1 ) \log n.
\eeq
\bel \label{lem:first-moment}
We have that, 
\beq \label{eqn:first-moment-1}
\pp\left[ \bigcap_{m=1}^K \EE_m (z) \right] = \left( \prod_{m=1}^K \hat{p}_m (z) \right) (1 + \O ( n^{-\mfa/100} )) \asymp (\log n)^{-K/2} n^{- (1-\mfa) + \eps_1 [ 2(1-\mfa)^{1/2}(1-\mfa - \mfb )^{-1/2} - \eps_1 / (1- \mfa - \mfb ) ]}
\eeq
and so,
\beq \label{eqn:first-moment-2}
\ee\left[ \sum_{z \in P}  \prod_{m=1}^K 1_{ \{ \EE_m (z) \} } \right] \asymp (\log n)^{-K/2} n^{  \eps_1 [ 2(1-\mfa)^{1/2}(1-\mfa - \mfb )^{-1/2} - \eps_1 / (1- \mfa - \mfb ) ]}
\eeq
\eel
\proof By \eqref{eqn:basic-gaussian} and \eqref{eqn:diag-var} we have that,
\begin{align}
\hat{p}_m (z) &= \frac{1}{ \sqrt{2 \pi} } \frac{ K \sqrt{ V_m (z, z)}}{ \sqrt{2} ( \sqrt{ 1 - \mfa - \mfb} - \eps_1 ) \log n} \exp\left( - \frac{ (\sqrt{1- \mfa - \mfb}\sqrt{1-\mfa} - \eps_1 )^2 (\log n)^2}{K^2 V_m (z, z) } \right)\left( 1 + \O ( (\log n)^{-1} ) \right) \notag \\
&\asymp \frac{ K^{1/2}}{ (\log n)^{1/2}} n^{-(1-\mfa)/K} n^{ \frac{\eps_1}{K} [2 (1-\mfa)^{1/2} (1 - \mfa - \mfb)^{-1/2} - \eps_1 / (1- \mfa -\mfb ) ] } \label{eqn:ee-2}
\end{align}

By Proposition \ref{prop:coupling-better} (applied to the case of just a single $z_1$ or $k_0=K$), with $\hat{x}$ as in \eqref{eqn:xhat-def}, we have that
\beq
\prod_{m=1}^K p_m (z, \hat{x} + n^{-\mfa/10}  ) - n^{-1000} \leq \pp\left[ \bigcap_{m=1}^K \EE_m (z) \right] \leq \prod_{m=1}^K p_m (z, \hat{x} - n^{-\mfa/10} ) + n^{-1000}.
\eeq
It is straightforward to check, using \eqref{eqn:basic-gaussian} and the explicit form of the Gaussian density that for $|s| \leq 1$ we have
\beq \label{eqn:basic-gaussian-2}
p_m (z, \hat{x} + s ) = p_m (z, \hat{x})(1 + \O (|s|  ) )
\eeq
This now completes the proof.  \qed

\bep \label{prop:second-mom-1}
Suppose that $|z-w|^2 > n^{-\mfb/10}$. Then,
\beq
\pp\left[ \bigcap_{m=1}^K \EE_m (z) \cap \EE_m (w) \right] = \left( \prod_{m=1}^K \hat{p}_m (z) \hat{p}_m (w) \right) (1 + \O (n^{-\mfc / 200} ) ).
\eeq
\eep
\proof We just prove the upper bound, the lower bound being very similar. We first have by Proposition \ref{prop:coupling-better} with $\sigma = \mfb /2$ (recall that $\eta_\mfa (z, 0) \asymp n^{-\mfb}$ by definition)  that 
\beq
\pp\left[ \bigcap_{m=1}^K \EE_m (z) \cap \EE_m (w) \right] \leq \pp\left[ \bigcap_{m=1}^K \GG_m (z, \hat{x} - n^{-\mfc /100} ) \cap \GG_m (w, \hat{x} - n^{-\mfc /100} ) \right] + n^{-100},
\eeq
where 
\beq
\GG_m (z, x) := \{  \ZZ_m (z) > x \}.
\eeq
Now, by the independence of the $\ZZ_m$'s we have,
\beq
\pp\left[ \bigcap_{m=1}^K \GG_m (z, \hat{x} - n^{-\mfc /100} ) \cap \GG_m (w, \hat{x} - n^{-\mfc /100} ) \right] = \prod_{m=1}^K \pp\left[ \GG_m (w, \hat{x} - n^{-\mfc / 100} ) \right] \pp \left[ \GG_m (z, \hat{x} - n^{-\mfc/100} ) \right]
\eeq
The claim now follows from \eqref{eqn:basic-gaussian-2}. \qed

\bep \label{prop:second-mom-2}
Suppose that for some $\sigma >0$ and $0 \leq k \leq K$ we have,
\beq
|z-w|^2 \geq n^{\sigma} \frac{ n^{\mfa + (K- k ) \delta_K}}{n}.
\eeq
Then,
\beq
\pp\left[ \bigcap_{m=1}^K \EE_m (z) \cap \EE_m (w) \right] \leq \left( \prod_{m=k+1}^K \hat{p}_m (z) \right) \left( \prod_{m=1}^K \hat{p}_m (w) \right) (1 + \O ( n^{-\mfa/20} + n^{-\sigma/10} ) )
\eeq

\eep

\proof The proof is analogous to the proof of Proposition \ref{prop:second-mom-1}, and so omitted. \qed

\bep \label{prop:meso-smm-technical}
We have for $K > 10 / \eps_1$ and $K > 10^6 / \mfc$ that,
\beq
\ee \left[ \left( \sum_{ z_i \in P } \prod_{m=1}^K 1_{ \{ \EE_m (z_i ) \} } \right)^2 \right] \leq \ee \left[ \left( \sum_{ z_i \in P } \prod_{m=1}^K 1_{ \{ \EE_m (z_i ) \} } \right) \right]^2 (1 + \O (n^{- \mfc/200 } + n^{-\eps_1/10} ) )  
\eeq
\eep
\proof Fixing $\frac{ \mfb}{100} > \sigma >0$ we have, (all of the sums below are over $(z, w) \in P \times P$)
\begin{equation}
\begin{split}
  \ee \left[ \left( \sum_{ z_i \in P } \prod_{m=1}^K 1_{ \{ \EE_m (z_i ) \} } \right)^2 \right] &= \sum_{ |z -w |^2 > n^{-\mfb/10} } \pp\left[ \bigcap_{m=1}^K \EE_m(z) \cap \EE_m (w) \right]  \\
 &\quad+  \sum_{ n^{\sigma} n^{-\mfb} < |z-w|^2 < n^{-\mfb/10} }  \pp\left[ \bigcap_{m=1}^K \EE_m(z) \cap \EE_m (w) \right] \\
  &\quad+  \sum_{k=1}^K \sum_{ n^{\sigma} n^{-\mfb - k \delta_K } < |z-w|^2 < n^{\sigma} n^{-\mfb - (k-1) \delta_K } }  \pp\left[ \bigcap_{m=1}^K \EE_m(z) \cap \EE_m (w) \right] \\
  &\quad+  \sum_{ |z-w|^2 < n^{\sigma} n^{\mfa-1} }  \pp\left[ \bigcap_{m=1}^K \EE_m(z) \cap \EE_m (w) \right]
  \end{split}
\end{equation}
By Proposition \ref{prop:second-mom-1} and Lemma \ref{lem:first-moment} we see that,
\beq
 \sum_{ |z -w |^2 > n^{-\mfb/10} } \pp\left[ \bigcap_{m=1}^K \EE_m(z) \cap \EE_m (w) \right] \leq \ee \left[ \left( \sum_{ z_i \in P } \prod_{i=1}^m 1_{ \{ \EE_m (z_i ) \} } \right) \right]^2 (1 + \O (n^{- \mfc/200 } ) )  
\eeq
By Proposition \ref{prop:second-mom-2} with $k=0$ we see that
\begin{equation}
\begin{split}
 & \sum_{ n^{\sigma} n^{-\mfb} < |z-w|^2 < n^{-\mfb/10} }  \pp\left[ \bigcap_{m=1}^K \EE_m(z) \cap \EE_m (w) \right] \lesssim  \sum_{ n^{\sigma} n^{-\mfb} < |z-w|^2 < n^{-\mfb/10} } \left( \prod_{m=1}^K \hat{p}_m (z) \right) \left( \prod_{m=1}^K \hat{p}_m (w) \right)  \\
 \lesssim &  n^{-\mfb/10} ( n^{1-\mfa})^2 (\log n)^{-K} \left( n^{- (1-\mfa) + \eps_1 [ 2(1-\mfa)^{1/2}(1-\mfa - \mfb )^{-1/2} - \eps_1 / (1- \mfa - \mfb ) ]} \right)^2 \\
 \lesssim &  n^{-\mfb/10} \ee \left[ \left( \sum_{ z_i \in P } \prod_{i=1}^m 1_{ \{ \EE_m (z_i ) \} } \right) \right]^2 
\end{split}
\end{equation}
where we used the fact that there are at most $\O ((n^{1-\mfa})^2 n^{-\mfb/10} )$ pairs of points such that $|z-w|^2 < n^{-\mfb/10}$.  The last inequality uses the second part of \eqref{eqn:first-moment-1}. For $1 \leq k \leq K$ we have by Proposition \ref{prop:second-mom-2} that
\begin{equation}
\begin{split}      
  & \sum_{ n^{\sigma} n^{-\mfb - k \delta_K } < |z-w|^2 < n^{\sigma} n^{-\mfb - (k-1) \delta_K } }  \pp\left[ \bigcap_{m=1}^K \EE_m(z) \cap \EE_m (w) \right] \\
 \leq & C n^{\sigma} n^{-\mfb - (k-1) \delta_K} (n^{1-\mfa} )^2 \left( (\log n)^{-K/2}  n^{- (1-\mfa) + \eps_1 [ 2(1-\mfa)^{1/2}(1-\mfa - \mfb )^{-1/2} - \eps_1 / (1- \mfa - \mfb ) ]} \right)^{2 - \frac{k}{K} } \\
 \leq & C n^{2 \sigma} n^{-\mfb - (k-1) \delta_K  +k (1-\mfa)/K - \hat{c} \eps_1 k/K}  \ee \left[ \left( \sum_{ z_i \in P } \prod_{i=1}^m 1_{ \{ \EE_m (z_i ) \} } \right) \right]^2 
\end{split}
\end{equation}
where we denoted $\hat{c} := 2(1-\mfa)^{1/2} (1- \mfa - \mfb)^{-1/2} - \eps_1 / (1- \mfa - \mfb ) \geq 1$ for simplicity, assuming $\eps_1 < 10^{-3}$. The exponent of $n$ in the last line equals,
\beq
2 \sigma - \mfb (1 + K^{-1} ) + \frac{1 - \mfa}{K} + \frac{k}{K} ( \mfb - \hat{c} \eps_1 ) \leq \max \{ 2 \sigma + \frac{1- \mfa - \mfb}{K} - \hat{c} \eps_1 , 2 \sigma - \mfb + \frac{1-\mfa}{K} - \frac{ \hat{c}}{K} \eps_1 \} 
\eeq
Since $\hat{c} \geq 1$, we can take $\sigma < \eps_1 /10$ and $K > 10/\eps_1+ 10/\mfb$ and $\sigma < \mfb /10$ to show that the RHS is less than $- \min\{ \mfb/2, \eps_1/2 \}$. Finally,
\beq
 \sum_{ |z-w|^2 < n^{\sigma} n^{\mfa-1} }  \pp\left[ \bigcap_{m=1}^K \EE_m(z) \cap \EE_m (w) \right] \leq n^{\sigma}  \ee \left[ \left( \sum_{ z_i \in P } \prod_{i=1}^m 1_{ \{ \EE_m (z_i ) \} } \right) \right] \leq n^{-\eps_1/2}  \ee \left[ \left( \sum_{ z_i \in P } \prod_{i=1}^m 1_{ \{ \EE_m (z_i ) \} } \right) \right]^2
\eeq
if $\sigma < \eps_1/10$, as the RHS of  \eqref{eqn:first-moment-2} is at least $n^{9 \eps_1/10}$ if $\hat{c} \geq 1$. The claim follows. \qed

\subsection{Proof of Theorem \ref{thm:meso-smm}}

Define the random variable,
\beq
\Xi := \sum_{ z \in P} \prod_{m=1}^K 1_{ \EE_m (z) } .
\eeq
If $\Xi >0$ then there is a $z \in P$ such that
\beq
\Phi(z) = \sum_{i=1}^K Y_i (z) + \O ( n^{-\mfa/2} )  \geq \sqrt{2} ( \sqrt{1-\mfa-\mfb} \sqrt{1-\mfa} - \eps_1) \log n - n^{-\mfa/2}.
\eeq
where the second equality holds with overwhelming probability. The Paley-Zygmund inequality states that,
\beq
\pp\left[ \Xi > \theta \ee[ \Xi ] \right] \geq ( 1- \theta)^2 \frac{ \ee[ \Xi]^2}{\ee[ \Xi^2] }.
\eeq
The claim now follows from Proposition \ref{prop:meso-smm-technical} and the choice of $\theta = n^{- c\eps_1}$ for some small $c>0$, and the fact that $\ee[ \Xi] \geq n^{\eps_1/2}$ by Lemma \ref{lem:first-moment}. \qed

\section{Second moment method for real i.i.d. matrices} \label{sec:smm-real}

In this section we carry out the second moment method in the real i.i.d. case. Some parts will be similar to the complex case considered in Section \ref{sec:smm-complex}.

We fix here three exponents $\mfa, \mfb, \mfc>0$, let $\mfe := \min \{ \mfa, \mfb\}$. Fix $\alpha >0$ and assume $\mfe +\mfc< \frac{\alpha}{100}$. We consider the set $\mathfrak{P}:= \{ z \in \cc : n^{- \alpha} \leq \Im[z] \leq 2 n^{-\alpha} , | \Re[z] | \leq \frac{1}{2} \}$. We let $P_1$ be a set of $n^{1-\alpha-\mfa}$ well-spaced points of $\mathfrak{P}$, and set $t_\mfb = n^{-\mfb}$. 

The matrix $X_t$ we will consider is the following satisfies $\d X_t = \d B_t/\sqrt{n}$ where $B_t$ is a matrix whose entries are i.i.d. standard real Brownian motions. The initial data is $X_0 = \sqrt{1- t_\mfb} Y $ where $Y$ is a real i.i.d. matrix.

For every $z \in \mathfrak{P}$ we let $\eta_\mfa (z, t)$ be a characteristic such that $\eta_\mfa (z, t_\mfb) = n^{\mfa-1}$ where $t_\mfb = n^{-\mfb}$. Fix a large integer $K>0$. We let,
\beq
\delta^{(1)}_K := \frac{2 \alpha - \mfb - \mfc}{K}, \qquad \delta^{(2)}_K := \frac{1 - \mfa - \mfc -2 \alpha}{K}
\eeq
We let $t^{(2)}_K = t_\mfb  $ and, for $0 \leq i \leq K-1$, let
\beq
t^{(2)}_i = t_\mfb - \frac{ n^{\mfa + (K-i ) \delta^{(2)}_K}}{n}
\eeq
Similarly, for $0 \leq i \leq K$ let,
\beq
t^{(1)}_i = t_\mfb - n^{-2 \alpha + \mfc +(K-i) \delta_K^{(1)}} .
\eeq
Then $0 =: t_0^{(1)} < t_1^{(1)} \dots < t_K^{(1)} < t_0^{(2)} < \dots < t_K^{(2)} := t_\mfb$. Moreover, for $0 \leq j \leq K$,
\beq
\eta_\mfa (z, t^{(2)}_j ) \asymp \frac{ n^{\mfa + (K-j) \delta_K^{(2)}}}{n} = n^{-\mfc - 2 \alpha - j \delta_K^{(2)}}, \quad \eta_\mfa (z, t^{(1)}_j ) \asymp n^{-2 \alpha + \mfc +(K-j) \delta_K^{(1)}} = n^{-\mfb -j \delta_K^{(1)}}.
\eeq
For each $z$ we then define $2K$ random variables as follows:
\beq
\label{eq:defYreal}
Y_j ( z) := \Re \left[ \frac{1}{ \sqrt{2n}} \int_{t^{(1)}_{j-1}}^{t^{(1)}_j} \sum_i \frac{ \d b_i^z (s)}{ \lambda_i^z (s) - \i \eta_\mfa (z, s)} \right], \quad Y_{K+j} ( z) := \Re \left[ \frac{1}{ \sqrt{2n}} \int_{t^{(2)}_{j-1}}^{t^{(2)}_j} \sum_i \frac{ \d b_i^z (s)}{ \lambda_i^z (s) - \i \eta_\mfa (z, s)} \right]
\eeq
for $1 \leq j \leq K$.  Note that $Y_K$ involves an integral over $[t_{K-1}^{(1)}, t_K^{(1)}]$ and $Y_{K+1}$ over $[t_0^{(2)}, t_1^{(2)}]$ and $t_K^{(1)} < t_0^{(2)}$. I.e., we are throwing away a small increment where $\eta_{\mfa} (z, t) \approx n^{-2 \alpha}$ in order to make calculations simpler. 
Let $t_{a K + j } = t^{(1+a)}_{j}$ for $a=0, 1$ and $0 \leq j \leq K-1$, and let $t_{2K} = t_\mfb$. 
We point out that, unlike in the complex case, we introduced two different families of random variables in \eqref{eq:defYreal} to reflect the fact that in the real case the characteristic polynomial consists of the sum of two different fields living on different scales (see e.g. \eqref{eq:corrker} and \eqref{eq:martrepxis1s2}--\eqref{eq:Vtexplicit}).

\bep
\label{pro:secmomreal}
Let $1 \leq k_0 \leq 2 K$ and assume that $|z-w|^2 \geq n^{\sigma} \eta_\mfa (z, t_{k_0} ) $. Then there is a coupling between the random variables $\{ Y_i (z) \}_{i=1}^{2K}$, $\{ Y_i (w) \}_{i=k_0+1}^{2K}$ and a vector of independent Gaussians $\{ Z_i (z) \}_{i=1}^{2K}$, $\{ Z_i (w) \}_{i=k_0+1}^{2K}$ such that
\beq
Y_j (z) = Z_j (z) + \O ( n^{-\sigma/10} + n^{- \mfe /10} ) , \qquad Y_l (w) = Z_l (w) + \O ( n^{-\sigma/10} + n^{-\mfe/10} )
\eeq
for $ 1 \leq j \leq 2K$ and $k_0+1 \leq l \leq 2K$ with overwhelming probability. The variance of the $Z_j(u)$ are given by, 
\beq \label{eqn:real-gaussian-variance}
\Var (Z_{aK+j} (u) ) = \int_{t^{(1+a)}_{j-1}}^{t^{(1+a)}_j} \big[M (u, u, s) + M (u, \bar{u}, s)\big]  \d s =  (2-a) \delta_K^{(1+a)} \log n + \O  (1)
\eeq
for $a = 0, 1$ and $1 \leq j \leq K$, and $u = z$ or $w$ as appropriate. Here $M(z, w, s)$ is defined in \eqref{eqn:M-def}.
\eep
\proof The proof is almost identical to Proposition \ref{prop:coupling-better}, as the main inputs, Proposition \ref{prop:covariation-estimate} and  \eqref{eq:secondbM}, apply also in the real case. The only difference is then the computation of the variance of the Gaussian random variables. For $1 \leq j \leq K$, one uses \eqref{eq:firstbM} and \eqref{eq:thirdbM}. For $j >K$ one uses \eqref{eq:secondbM} instead of \eqref{eq:thirdbM}.
\qed

\

Define,
\beq
\hat{x} := \frac{\sqrt{2}}{K} \left( 2 \alpha - \mfb - \mfc \right) \log n, \qquad \hat{y}:= \frac{\sqrt{2}}{K} \left( 1 - \mfa - \mfc -2 \alpha \right) \log n.
\eeq
Then for $ a = 0, 1$ and $1 \leq m \leq K$, we define the events
\beq
\EE_{aK+m} (z) := \{ Y_{aK+m} (z) > \1_{ \{ a = 0\} } \hat{x} + \1_{ \{ a=1 \}} \hat{y} \} 
\eeq
and
\beq
\FF_{aK+m} (z) := \{ \ZZ_{aK+m} (z) > \1_{ \{ a = 0\} } \hat{x} + \1_{ \{ a=1 \}} \hat{y} \}  ,
\eeq
where $\ZZ_{m}$ are a family of independent Gaussians having variance as in \eqref{eqn:real-gaussian-variance}. 
For $1 \leq m \leq K$, we use the notation
\beq
\hat{p}_m (z) = \pp\left[ \FF_{m} (z) \right], \qquad \hat{q}_m (z) = \pp\left[ \FF_{K+m} (z) \right] .
\eeq
 By \eqref{eqn:basic-gaussian}, we have
\begin{align}
\label{eq:asympdurreal}
\hat{p}_m (z) & \asymp ( \log n)^{-1/2} n^{ - \delta_K^{(1)}/2} =: p, \qquad\quad \hat{q}_m (z)  \asymp ( \log n )^{-1/2} n^{-\delta_K^{(2)}} =: q.
\end{align}
Finally, we define,
\beq
\Xi := \sum_{ z \in P_1} \prod_{m=1}^{2K} \1_{ \EE_m(z) }.
\eeq
\bep
\label{pro:techproreal}
Let $\sigma >0$. For any $z, w$ such that $|z-w|^2 \geq n^{\sigma - \mfb}$ we have
\begin{equation}
\label{eqn:real-smm-1}
\pp\left[ \bigcap_{m=1}^{2K} \EE_m (z) \cap \EE_m (w) \right] = \left( \prod_{m=1}^K \hat{p}_m(z) \hat{p}_m (w) \hat{q}_m(z) \hat{q}_m (w) \right) ( 1 + \O ( n^{-\mfe/10} + n^{-\sigma/10} )).
\end{equation}
Further we have that,
\beq
\pp\left[ \bigcap_{m=1}^{2K} \EE_m (z) \right] = \left(  \prod_{m=1}^K \hat{p}_m (z) \hat{q}_m (z) \right) ( 1 + \O ( n^{-\mfe/10}) ).
\eeq
and so $\ee[ \Xi] \asymp n^{1-\mfa - \alpha } (pq)^{K}$
\eep
\proof The second estimate is proven similarly to \eqref{eqn:first-moment-1}. The first is similar to Proposition \ref{prop:second-mom-1}. For concreteness, we prove the upper bound of \eqref{eqn:real-smm-1}. By Proposition \ref{pro:secmomreal} and the indepence of the Gaussians we have,
\begin{align}
    \pp\left[ \bigcap_{m=1}^{2K} \EE_m (z) \cap \EE_m (w) \right] & \leq \prod_{a=0, 1} \prod_{m=1}^K \bigg\{ \pp\left[  \ZZ_{aK+m} (z) \geq \1_{ \{ a = 0 \} } \hat{x} + \1_{ \{ a=1\}} \hat{y} - n^{-\sigma/10} - n^{-\mfe/10} \right] \notag\\
    & \times \pp\left[ \ZZ_{aK+m} (w) \geq \1_{ \{ a = 0 \} } \hat{x} + \1_{ \{ a=1\}} \hat{y} - n^{-\sigma/10} - n^{-\mfe/10} \right] \bigg\} + n^{-1000} .
\end{align}
We conclude the upper bound using an estimate similar to \eqref{eqn:basic-gaussian-2}. The lower bound is similar. \qed

\

Similarly to the proof of Proposition~\ref{pro:techproreal} we obtain the following bounds. We omit the proof for brevity. 
\bep
For $|z-w|^2 > n^{\sigma} n^{-\mfb - j \delta_K^{(1)}}$ and $1 \leq j \leq K$ we have, 
\begin{align} \label{eqn:real-smm-2}
    \pp\left[ \bigcap_{m=1}^{2K} \EE_m (z) \cap \EE_m (w) \right] \lesssim (pq)^{2K} p^{-j}.
\end{align}
For $|z-w|^2 > n^{\sigma} n^{-\mfc - 2 \alpha - j \delta_K^{(2)}}$ and $0 \leq j \leq K$ we have,
\beq \label{eqn:real-smm-3}
 \pp\left[ \bigcap_{m=1}^{2K} \EE_m (z) \cap \EE_m (w) \right] \lesssim (pq)^{2K} p^{-K} q^{-j}. 
\eeq
The bound for $j=K$ holds for any choice of $z, w$.
\eep

 We are now ready to state and prove the main result of this section: 

\bep \label{prop:real-smm}
There exists a small $c>0$ such that
\beq
\ee[ \Xi^2] \leq \ee[\Xi]^2 ( 1 + n^{-\mfa/100} + n^{-\mfb/100}  ).
\eeq
\eep
\proof Fix a small $\sigma >0$. Write $f(z, w) := \pp\left[ \prod_{m=1}^{2K} \EE_m(z) \cap \EE_m (w) \right]$ and write,
\begin{align} \label{eqn:real-smm-4} 
\ee[ \Xi^2] = \sum_{ |z-w|^2 > n^{\sigma} n^{-\mfb} } f(z, w) + \sum_{j=1}^{2K} \sum_{ \eta_j \leq n^{-\sigma}  |z-w|^2 < \eta_{j-1} } f(z, w) + \sum_{ |z-w|^2 < n^{\sigma} \eta_{2K}} f(z, w)
\end{align}
where $\eta_j = n^{-\mfb - j \delta_K^{(1)}}$ for $0 \leq j \leq K-1$ and $\eta_{i+K} = n^{-\mfc - 2 \alpha - i \delta_K^{(2)}}$ for $0 \leq i \leq K$. By \eqref{eqn:real-smm-1} we have,
\beq
 \sum_{ |z-w|^2 > n^{\sigma} n^{-\mfb} } f(z, w)  \leq \left( \ee[ \Xi] \right)^2 ( 1 + n^{-\sigma/10} + n^{-\mfe/10} ).
\eeq
For $1 \leq j \leq K$ the number of pairs of points $z, w$ such that $|z-w|^2 < n^{\sigma} \eta_{j-1}$ is of order,
\beq
n^{\sigma/2} (n^{1-\mfa})^2 n^{- 2 \alpha}  \sqrt{ \eta_{j-1}}  =n^{\sigma/2}  (n^{1-\mfa  - \alpha} )^2 n^{ - \mfb/2 - (j-1) \delta_K^{(1)}/2}.
\eeq
Therefore, for $1 \leq j \leq K$ we have, by \eqref{eqn:real-smm-2},
\beq
\sum_{ \eta_j \leq n^{-\sigma}  |z-w|^2 < \eta_{j-1} } f(z, w) \lesssim  n^{\sigma/2} ( n^{1 - \mfa - \alpha  } (pq)^K )^2 n^{-\mfb/2}  n^{\delta_K^{(1)}/2} (p^{-1}  n^{-\delta_K^{(1)}/2} )^j \lesssim n^{-\mfb/20} \ee[ \Xi]^2
\eeq
as long as $ \sigma < \mfb / 10$ and $K > 100 / \mfb$.  For $1 \leq j \leq K$ the number of pairs points $z, w$ such that $|z-w|^2 < n^{\sigma} \eta_{K+j-1}$ is bounded by,
\beq
n^{\sigma} (n^{1-\mfa} )^2 n^{-\alpha } \eta_{K+j-1} = n^{\sigma}   (n^{1 - \mfa   - \alpha} )^2 n^{ - \mfc - \alpha} n^{- (j-1) \delta_K^{(2)}}.
\eeq
Therefore,  using \eqref{eqn:real-smm-3}, we have the bound
\begin{align} \label{eqn:real-smm-a1}
  & \sum_{ \eta_{j+K} \leq n^{-\sigma}  |z-w|^2 < \eta_{j+K-1} } f(z, w) \lesssim n^{\sigma}  ( (pq)^K n^{1-\mfa-\alpha } )^2 p^{-K} q^{-j} n^{- \mfc - \alpha} n^{-(j-1) \delta_K^{(2)}}  \notag\\
  \leq & n^{\sigma} ( \ee[ \Xi ] )^2 ( \log n )^{K/2} n^{  - \mfb/2 - 3 \mfc/2} n^{ \delta_K^{(2)}} (q^{-1} n^{- \delta_K^{(2)}} )^j \leq n^{ - \mfb/20} ( \ee[ \Xi] )^2. 
\end{align}
In the second estimate we used that $p^K \gtrsim ( \log n )^{-K/2} n^{ - (2\alpha - \mfb- \mfc )/2 }$ and in the last estimate that $q \geq n^{- \delta_K^{(2)}} ( \log  n)^{-1/2}$. The last term of \eqref{eqn:real-smm-4} is also bounded above by the RHS of \eqref{eqn:real-smm-a1} when $j=K$. This completes the proof, after choosing $\sigma = \mfe /10$.  \qed 

\bet \label{thm:meso-real-smm} There are constants, $c, C>0$ so that the following holds. 
For a real i.i.d. matrix, $P_1$ as above and
\beq
\Phi (z) := \Psi_n (z, t_\mfb , \eta_{\mfa} (z, t_\mfb) ) - \Psi_n (z, 0, \eta_{\mfa} (z, 0) )
\eeq
we have that,
\beq
\pp\left[ \max_{ z \in P_1} \Phi (z) \geq \sqrt{2} \left( 1- \mfb - 2 \mfc - \mfa - C \mfc^{1/3} \right) \log n \right] \geq 1 - n^{ - c \min \{ \mfa, \mfb \} } ,
\eeq
as long as $\mfc, \mfa, \mfb >0$ are sufficiently small and $\mfd < \frac{\mfb}{20}$. 
\eet
\proof By definition, we have
\beq
\Phi (z) = \sum_{m=1}^{2K} Y_m (z) + \left( \Psi_n ( z, \eta_\mfa (t_0^{(2)} ), t_0^{(2)} ) - \Psi_n (z, \eta_\mfa (t_K^{(1)} ) , t_K^{(1)} ) \right),
\eeq
and
\beq
\left| \log \big[ \eta_\mfa (z, t_0^{(2)} ) / \eta_\mfa (z, t_K^{(1)} ) \big] \right| \leq C \mfc \log n.
\eeq
Proposition \ref{prop:short-time-int} thus implies 
\beq
\pp\left[ \left| \Psi_n ( z, \eta_\mfa (t_0^{(2)} ), t_0^{(2)} ) - \Psi_n (z, \eta_\mfa (t_K^{(1)} ) , t_K^{(1)} )  \right| > C \mfc^{1/3} \log n \right] \leq n^{-10}
\eeq
for $\mfc >0$ sufficiently small. The claim now follows from Proposition \ref{prop:real-smm} and the Paley-Zygmund inequality. \qed

\section{Technical lower bound for GDE}

In this section we work towards Theorems \ref{thm:tech-GDE} and \ref{thm:tech-GDE-real} below. They are stronger versions of the lower bounds of Theorems \ref{theo:main} and \ref{theo:realmaxlog} in that they bound below the maximum of the characteristic polynomial over points $z$ where $\lambda_1^z$ is not too small, for i.i.d. ensembles with an $n^{-\eps}$ Gaussian component. The reason for this is that it is hard to apply the four moment method directly to the maximum of the characteristic polynomial with no regularization, i.e., $\Psi_n (z, \eta=0)$. One could compare the maximum at $\eta \approx n^{-1}$, but then this is hard to relate back to the characteristic polynomial with no regularization. Keeping around the condition that $\lambda_1^z$ is not too small allows us to do so; see Lemma \ref{lem:aa-1} in the next section.

\subsection{Regularization}

We need the following notion of regular sets of points.

\bed
We say that a set of point $\P$ in $\cc$ is $\eps_1$-regular if $\P$ can be written as a disjoint union $\P = \bigsqcup \P_i$ where $  \log n \leq | \P_i | \leq 10 \log n $ and for all $i$ and all $w , z \in \P_i$ with $w \neq z$ we have
\beq
\frac{ n^{\eps_1}}{n^{1/2}} \leq |z-w| \leq \frac{ n^{2 \eps_1}}{n^{1/2}}
\eeq
Furthermore, $\P \subseteq \{ z: |z| < 0.99  \}$ and $| \P | \leq N^2$. 
\eed
In this section we will consider $X_t$ to satisfy $\d X_t = \d B_t/\sqrt{n}$ where $B_t$ is a matrix of i.i.d. complex Brownian motions. We will consider a final time $T_c$ and initial data $X_0 = (1- T_c)^{1/2} Y$, where $Y_0$ is either a complex or real i.i.d. matrix. Note that in both cases, the dynamics will be complex. Furthermore, in the real case we will assume that $Y_0$ has a Gaussian component; its size will be specified in the assumptions below.

In this section we will ignore the additional deterministic correction to \eqref{eqn:psi-def}. To avoid confusion, we introduce
\beq \label{eqn:psi-hat-def}
\hat{\Psi}_n (z, t, \eta) = \Re \left( \sum_{i=-n}^n \log ( \lambda_i^z (t) - \i \eta ) - 2 n \int_\rr \log (x  - \i \eta ) \rho_t^z (x) \d x \right) .
\eeq
Here, $\rho_t^z$ is as in \eqref{eqn:mt-def} and $\lambda_i^z$ are the eigenvalues of $H^z (X_t)$ in \eqref{eq:herm} as usual.

The first main technical step of this section is the following proposition connecting the maximum of $\hat{\Psi}_n$ over points where $\lambda_1^z$ is not too small to the maximum of $\hat{\Psi}_n$ regularized on a scale $\eta\gg 1/n$, which can then be estimated using the results from Sections~\ref{sec:smm-complex}--\ref{sec:smm-real}, in the real and complex case respectively. The proof of this proposition is presented in Section~\ref{sec:proffreg}.

\bep \label{prop:gen-reg} There is $C_1 >0$ so that the following holds, with $X_t$ as above. Let $\eps_1, \eps_3$ be sufficiently small and satisfy $\eps_1 < \eps_3/10$. Let $\P$ be an $\eps_1$-regular set of points. Assume that $Y_0$ is either a complex i.i.d. matrix, or a real i.i.d. matrix with Gaussian component of size at least $n^{-\eps_1/2000}$.  Then, for all $\eps_3 >0$ sufficiently small, and $n$ large enough depending on $\eps_1, \eps_3$,
\beq
\pp\left[ \max_{ z \in \P : |\lambda_1^z| \geq (\log n)^{-10} n^{-1} } \hat{\Psi}_n (z, T_c, (\log n)^{-100} n^{-1} ) \geq \max_{z \in \P } \hat{\Psi}_n (z, 0, \hat{\eta} ) - C_1 ( \eps_3 )^{1/3} \log n  \right] \geq 1 - n^{ -50}
\eeq
for $T_c = n^{\eps_3}/ n$ and $\hat{\eta}= n^{\eps_3} / n$. 
\eep

We first require the following. 

\begin{proposition}  \label{prop:eta-inc}
Fix any small $\delta>0$. Fix any small $c_*>0$, let $C_1, C_2 >0$ and $c_* \leq a \leq 1$, and let $(\log n)^{-C_1} n^{-a} \leq \eta_1 \leq \eta_2 \leq ( \log  n )^{C_2} n^{-a}$. Then we have,
\begin{equation}
| \hat{\Psi} (z, t, \eta_1) - \hat{\Psi} (z, t,\eta_2) | \leq (\log n)^{1/2+\delta}
\end{equation}
with overwhelming probability.
\end{proposition}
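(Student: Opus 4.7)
The plan is to express the difference $\hat\Psi_n(z,t,\eta_2) - \hat\Psi_n(z,t,\eta_1)$ as an integral over $\eta$ of $\Im\langle G_t^z(\i\eta) - M_t^z(\i\eta)\rangle$ and then apply the refined averaged local law of Proposition~\ref{lem:precllaw}, which extends to the matrices of type M relevant in the real case via Lemma~\ref{prop:m-laws}. Since $\partial_\eta \Re\log(\lambda - \i\eta) = \eta/(\lambda^2+\eta^2) = \Im(\lambda - \i\eta)^{-1}$, differentiating \eqref{eqn:psi-hat-def} in $\eta$ and integrating yields
\begin{equation*}
\hat\Psi_n(z,t,\eta_2) - \hat\Psi_n(z,t,\eta_1) \;=\; 2n\int_{\eta_1}^{\eta_2}\Im\langle G_t^z(\i\eta) - M_t^z(\i\eta)\rangle\,\d\eta .
\end{equation*}

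Fix an auxiliary exponent $\delta' \ll \delta$ and set the cutoff scale $\eta_\ast := (\log n)^{1/2+10\delta'}/n$. The assumption $a \ge c_* > 0$ guarantees, for $\delta' < c_*/2$ and $n$ large, that $\eta_2 \le (\log n)^{C_2} n^{-c_*} \le n^{-\delta'}$, so Proposition~\ref{lem:precllaw} gives with overwhelming probability $|\langle G_t^z(\i\eta) - M_t^z(\i\eta)\rangle| \le (\log n)^{1/2+\delta'}/(n\eta)$ uniformly for $\eta \in [\max(\eta_1,\eta_\ast),\eta_2]$. The contribution of this range to the integral above is then at most $C(\log n)^{1/2+\delta'}\log(\eta_2/\eta_\ast) \le C(\log n)^{1/2+\delta'}\log\log n \le \tfrac12 (\log n)^{1/2+\delta}$ for $n$ sufficiently large.

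The one remaining case is when $\eta_1 < \eta_\ast$, which can occur only when $a$ is close to $1$ and where Proposition~\ref{lem:precllaw} does not apply on $[\eta_1,\eta_\ast]$; this is the main (mild) technical obstacle. I would bypass it using the monotonicity of $\eta \mapsto \eta\,\Im\langle G_t^z(\i\eta)\rangle$, which holds because this is of Stieltjes-transform type. Hence $|\Im\langle G_t^z(\i\eta)\rangle| \le (\eta_\ast/\eta)|\Im\langle G_t^z(\i\eta_\ast)\rangle| \le C\eta_\ast/\eta$ with overwhelming probability by the local law at $\eta_\ast$ and the boundedness of $\Im m_t^z$ in the bulk; the latter also yields the trivial bound $|\Im m_t^z(\i\eta)|\le C$. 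Integrating these bounds over $[\eta_1,\eta_\ast]$ gives a contribution of at most $2nC\eta_\ast(\log(\eta_\ast/\eta_1)+1) \le C(\log n)^{1/2+10\delta'}\log\log n$, which is again $\le \tfrac12(\log n)^{1/2+\delta}$ for $n$ large. Summing the two contributions yields the claim.
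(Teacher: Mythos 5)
Your proof is correct and follows essentially the same route as the paper's own argument: split the $\eta$-integral at a cutoff scale $\eta_\ast\asymp(\log n)^{1/2+O(\delta)}/n$, apply the refined local law (Proposition~\ref{lem:precllaw}, with Lemma~\ref{prop:m-laws} for type~M) above the cutoff, and below it use the monotonicity of $\eta\mapsto\eta\,\Im\langle G(\i\eta)\rangle$ together with the local law at the cutoff and the boundedness of $\Im m_t^z$. The paper's proof implements the identical split via $\eta_c:=(\log n)^{1/2+\delta}/n\vee\eta_1$ and then applies the same monotonicity bound and the same $\log\log n$ bookkeeping.
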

\proof 
By \eqref{eq:goodll} when $X_0$ is complex and Proposition \ref{prop:m-laws} when $X_0$ is real, for any $\delta >0$ we have
\begin{equation}
\label{eq:precllawlb}
|\langle G_t^z (\ii \eta) - M_t^z ( \ii\eta )\rangle | \leq \frac{ (\log n)^{1/2+\delta}}{n \eta}
\end{equation}
for all $n^{-c_*} \ge \eta \geq (\log n)^{1/2+\delta} / n$. Let $\eta_c := (\log n)^{1/2+\delta} / n \vee \eta_1$. We first consider the case $\eta_c > \eta_1$. It is easy to see that the deterministic part of $\Psi (z, t,\eta_1) - \Psi (z, t,\eta_c)$ contributes only $(\log n)^{1/2+\delta}$. For the random part we have,
\begin{equation}
\begin{split}
0  &\leq \sum_{i=1}^n \log ( (\lambda_i^z(t))^2 + \eta_c^2) - \log ( (\lambda^z_i(t))^2 + \eta_1^2) = n \int_{\eta_1}^{\eta_c} \Im \langle G_t^z (\ii \eta)\rangle \dif \eta  \\
& \leq n \eta_c \int_{\eta_1}^{\eta_c}  \eta^{-1} \Im \langle G_t^z (\ii \eta_c)\rangle \dif\eta \\
&= (n \eta_c) \int_{\eta_1}^{\eta_c}  \eta^{-1} \Im \langle G_t^z (\ii \eta_c)-M^z(\ii\eta_c)\rangle \dif \eta + \O ( (\log n)^{1/2+\delta} \log \log n ) \\
&\leq C \log \log n (\log n)^{1/2+\delta}
\end{split}
\end{equation}
where in the last step we used \eqref{eq:precllawlb} with $\eta = \eta_c$. Finally, we estimate
\begin{equation} \label{eqn:ccc-1}
| \hat{\Psi} (z, \eta_c) - \hat{\Psi} (z, \eta_2) | \le n \int_{\eta_c}^{\eta_2} | \langle G_t^z (\ii \eta)-M^z(\ii\eta)\rangle | \dif \eta \leq  C (\log n)^{1/2+\delta} \log \log n,
\end{equation}
where we used again \eqref{eq:precllawlb} to estimate the integral. This completes the proof in the case $\eta_c > \eta_1$. If $\eta_c = \eta_1$, then the estimate follows from \eqref{eqn:ccc-1}. \qed

\

We now show that the small singular values of $X-z$ are asymptotically independent for $z$'s sufficiently away from each other. This follows in a straightforward manner from \cite[Section 7]{cipolloni2023central} (see the proof in Appendix~\ref{a:approx-ind}).

\begin{lemma} \label{lem:ind-prod}
Fix $r<1$ and a small $\eps>0$, and let $J$ be a set of at most $ \O (\log n ) $ points, with $|z| \le r$, which are all at least $n^{-1/2+\eps}$ from each other. Let $H_t^z$ be the Hermitization of $X_t-z$, with $X_t$ as above, with $Y_0$ either a complex i.i.d. matrix or a real i.i.d. matrix with size $n^{-\eps/2000}$ Gaussian component.

Then for any $\frac{1}{2} > \eps_2 > 0$, let $\lambda_i ^z(t)$ be the positive eigenvalues of $H_t^z$, with $ t= n^{-1+\eps_2}$.  For any $C>0$ and for all $(\log n)^{-C} \leq s \leq 1$ it holds 
\begin{equation} \label{eqn:ind-prod}
\mathbb{P}\left[ \bigcap_{z \in J} \{ \lambda_1^z (t) \leq s n^{-1} \} \right] \lesssim \prod_{z \in J} \mathbb{P}\left[ \mu_1^z \leq 2 s n^{-1} \right] + n^{-100}. 
\end{equation}
where the $\mu_1^z$ are the singular values of the shifted complex Ginibre ensemble.
\end{lemma}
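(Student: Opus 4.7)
The plan is to reduce the joint probability bound to a product of single-point probabilities via the asymptotic independence methodology of \cite[Section 7]{cipolloni2023central}, and then compare each single-point probability to its complex Ginibre analog.

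First, I would exploit the Gaussian divisible structure. Since the dynamics driving $X_t$ are complex Brownian, the matrix $X_t$ contains an independent complex Gaussian component of size $t = n^{-1+\eps_2} \gg n^{-1}$. In the complex case, $X_t$ is then a complex i.i.d. GDE at scale $t$, while in the real case $X_t$ is a matrix of type M (Definition \ref{def:model}/\ref{def:type-M}) with a complex Gaussian component of size $t$ added to a real i.i.d. matrix with its own small Gaussian part. Either way, the multi-resolvent local laws extended to matrices of mixed symmetry (as used throughout Section 3 and Appendix \ref{a:small-complex-comp}) apply uniformly, with improvements in $|z_1-z_2|$ (cf.\ \eqref{eqn:multi-resolvent}).

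Second, I would invoke the asymptotic independence result of \cite[Section 7]{cipolloni2023central}. That argument interpolates the joint law of the smallest singular values at different spectral parameters $z$ to the Ginibre case through a DBM/characteristic flow, using multi-resolvent local laws whose error decays in $|z_i-z_j|^{-1}$ whenever $|z_i-z_j| \gg n^{-1/2}$. Applied to our family $J$ of $\O(\log n)$ points satisfying $|z_i-z_j|\geq n^{-1/2+\eps}$ for all $i\neq j$, this yields
\[
\PP\left[\bigcap_{z\in J}\{\lambda_1^z(t)\le s n^{-1}\}\right]\le \prod_{z\in J}\PP[\lambda_1^z(t)\le (1+o(1))s n^{-1}] + n^{-D},
\]
for any fixed $D>0$. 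The iteration from pairwise to $|J|$-fold decoupling is controlled because the correlations at each step contribute a factor $|z_i-z_j|^{-1}n^{-1/2}\le n^{-\eps}$, and $|J|\le C\log n$ such factors accumulate to at most a polylog correction absorbed in the constant; alternatively one can run the $|J|$-point analog of the Section 7 interpolation directly, as the multi-resolvent bounds hold jointly.

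Third, I would replace each single-point probability by its complex Ginibre counterpart. Since $X_t$ has a complex Gaussian component of size $t=n^{-1+\eps_2}\gg n^{-1}$, the local statistics of $H_t^z$ near zero are universal and coincide with those of the Hermitization of a shifted complex Ginibre matrix up to errors negligible on the scale $n^{-1}$; see the smallest singular value estimates \cite{cipolloni2020optimal} and the dynamical comparison of \cite[Section 7]{cipolloni2023central}. This gives $\PP[\lambda_1^z(t)\le (1+o(1))sn^{-1}]\le \PP[\mu_1^z\le 2sn^{-1}]$, completing the estimate.

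The main obstacle is the iteration from pairwise to joint independence for a growing family $J$; while the pairwise case is exactly what \cite[Section 7]{cipolloni2023central} establishes, the $\O(\log n)$-fold case requires care that the cumulative error in the characteristic flow interpolation does not blow up. I expect this to be manageable because each decoupling step only costs a power of $|z_i-z_j|^{-1}$ in the multi-resolvent local laws, and in the lower tail regime $s\ge (\log n)^{-C}$ there is enough room to absorb a $\mathrm{polylog}(n)$ multiplicative slack while still preserving the stated bound up to the additive $n^{-100}$ error.
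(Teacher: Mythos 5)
Your proposal correctly identifies the backbone of the argument (use the asymptotic-independence machinery of \cite[Section 7]{cipolloni2023central} and then compare to shifted Ginibre), but the mechanism you sketch and, more importantly, the bottleneck you flag are both off from what the paper actually needs.

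On the mechanism: the paper does not iterate a pairwise decoupling. Instead, following \cite[Section 7.2.1]{cipolloni2023central}, it directly couples the $\log n$ eigenvalue flows $\lambda_i^{z_l}(t)$ solving \eqref{eq:flowindepcompreal} to $\log n$ fully independent DBM flows $\mu_i^{(l)}(t)$ whose initial data are singular values of $\log n$ independent complex Ginibre matrices. The coupling gives $|\lambda_1^{z_l}(t)-\mu_1^{(l)}(t)|\le n^{-1-\omega}$ jointly with overwhelming probability, so the factorization is automatic (the $\mu^{(l)}$'s are independent by construction) and one just passes back and forth through the coupling, using $s\ge(\log n)^{-C}$ to absorb the $n^{-1-\omega}$ shifts into the factor $2$. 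Your second bullet does mention the ``$|J|$-point analog directly'' as an alternative, and that is closer to what is used; the iterative route is not what the paper pursues and it is unclear it would give the clean product bound with only an additive $n^{-100}$ error.

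On the bottleneck: you single out the growth of $|J|$ as the main obstacle. The paper explicitly remarks that inspecting the coupling of \cite[Section 7.2.1]{cipolloni2023central} shows it works for up to $n^c$ different $z$'s, so $\log n$ points is unproblematic. The genuine new technical ingredient --- which your proposal does not identify --- is the a~priori eigenvector overlap bound \eqref{eq:apriorievect} needed to verify Assumption 7.11(c) of \cite{cipolloni2023central} when the initial data is a real i.i.d.\ matrix or of type M. In the complex case that bound is already available from \cite[Lemma 7.9]{cipolloni2023central} together with \cite[Theorem 3.1]{cipolloni2023mesoscopic}, but in the real/type M case it must be re-established, and this is precisely what Corollary~\ref{cor:ovb} (and the supporting two-resolvent local law Proposition~\ref{pro:realllawpro} for mixed-symmetry matrices) provides. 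Your phrase about ``multi-resolvent local laws extended to matrices of mixed symmetry'' gestures in the right direction but does not isolate the overlap bound as the input to the coupling scheme. Finally, you should also note the small reduction the paper performs: the statement is for $t=n^{\eps_2-1}$ with a possibly larger $\eps_2$, but the coupling is run over a shorter window; one simply starts the flow later, at $t_0 = n^{\eps_2-1}-n^{\eps'-1}$, from a fresh type M initial condition.
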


And immediate corollary of the above is the following.

\bep \label{prop:g-reg-1}
Let $0 < r < 1$ and $\eps_1,\eps_3 >0$ such that $\eps_1 < \eps_3 / 10$. Let $\P = \bigsqcup_{i=1}^K P_i$ be $\eps_1$- regular. Let $ T_c= n^{\eps_3-1}$, and $X_t$ as above. Then if $Y_0$ is either a complex i.i.d. matrix or a real i.i.d. matrix with Gaussian component of size at least $n^{-\eps_1/2000}$, then we have
\beq
\pp\left[ \bigcap_{i=1}^K \left\{ \exists z \in P_i : \lambda_1^z (T_c) \geq (\log n)^{-10} n^{-1} \right\} \right] \geq 1- n^{-90}.
\eeq 
\eep
\proof For $1 \geq s \geq (\log n)^{-C}$ we have the estimate, for $\mu_i^z$ being the singular values of the shifted complex Ginibre ensemble,
\beq
\label{eq:desc}
\mathbb{P}\left[ \mu_1^z \leq 2 s n^{-1} \right] \leq C s,
\eeq
by \cite[Eq. (4a)]{cipolloni2020optimal}. Therefore, by Lemma \ref{lem:ind-prod} we have that 
\beq
\pp\left[ \bigcap_{ z_i \in P_i } \left\{ \lambda_1^{z_i} (T_c) < (\log n)^{-10} n^{-1} \right\} \right] \leq n^{-100} + (C/\log n )^{ \log n},
\eeq
and so the claim follows. \qed

\subsubsection{Proof of Proposition \ref{prop:gen-reg}}
\label{sec:proffreg}

First, we see from Proposition \ref{prop:g-reg-1} that for each $P_i$ there exists a $w_i \in P_i $ so that $|\lambda_1^{w_i}| \geq (\log n)^{-10} n^{-1}$, with probability at least $1 - n^{-90}$. So we bound
\beq
\max_{ z \in \P : |\lambda_1^z| \geq (\log n)^{-10} n^{-1} } \hat{\Psi}_n (z, T_c, (\log n)^{-100} n^{-1} ) \geq \max_{ \{ w_i \}_i } \hat{\Psi}_n (w_i, T_c, (\log n)^{-100} n^{-1} )
\eeq
with probability at least $ 1- n^{-90}$. Letting now $\eta_2 = (\log n)^{C_2}  / n$ for some large $C_2 >0$ we see from Proposition \ref{prop:eta-inc} that,
\beq
\max_{ \{ w_i \}_i } |\hat{\Psi}_n (w_i, T_c, (\log n)^{-100} n^{-1} ) - \hat{\Psi}_n (w_i, T_c, \eta_2 ) | \leq (\log n)^{3/4}
\eeq
with overwhelming probability. Taking $C_2 = 100$, we now see, from Proposition \ref{prop:short-time-int} and choosing characteristics $\eta^{(i)}_s=\eta(s,w_i)$ ending at $\eta^{(i)}_t = \eta_2$ for each $w_i$, that we have
\beq
\max_{ \{ w_i \}_i } | \Psi_n (w_i, t, \eta_2 ) - \Psi_n (w_i, 0, \eta^{(i)}_0 ) | \leq (\eps_3)^{1/3} \log n,
\eeq
with probability at least $ 1- n^{-100}$, if $\eps_3 >0$ is sufficiently small. Letting now $\hat{\eta} = n^{\eps_3} / n$, using $\eta^{(i)}_0 \asymp t$, by Proposition \ref{prop:eta-inc}, we have  that
\beq
\max_{ \{ w_i \}_i } | \Psi_n (w_i, 0, \hat{\eta} ) - \Psi_n (w_i, 0, \eta^{(i)}_0 ) |  \leq (\log n)^{3/4},
\eeq
with probability at least $ 1- n^{-100}$. Now for any fixed $P_i$ we have for all $z, w \in P_i$ by Proposition \ref{prop:der-bd} that with overwhelming probability,
\beq
| \Psi_n (w, 0, \eta_3 ) - \Psi_n (z, 0, \eta_3 ) | \leq \frac{ n^{4 \eps_1}}{n^{\eps_3/2}} \leq n^{-\eps_3/10}
\eeq
and so the desired estimate follows. \qed

\subsection{Technical lower bound for GDE}

In this section we will develop technical lower bounds for the log characteristic polynomial of Gaussian divisible ensembles. We first deal with the complex i.i.d. case. Fix now $\eps_1 >0$ and $\eps_3 >0$ with $\eps_1 = \frac{\eps_3}{10}$. We construct a specific $\eps_1$-regular set $\hat{\P}$ as follows. First, let $\P_1$ be a well-spaced set of $n^{1-\eps_3}$ points of the disc $\{ | z- \frac{1}{2} \i | < \frac{1}{4} \}$. In particular for all distinct $z, w \in \P_1$ we have $|z-w|^2 \geq c n^{\eps_3-1}$. Then, around each $z \in \P_1$ we add a set of $P_i$ points of size $|P_i | = \log n$, such that for all distinct $z_1, z_2 \in \P_1 \cup \{ z \}$ we have $n^{\eps_1-1/2} \leq |z_1 -z_2 | \leq n^{2 \eps_1 -1/2}$. We let $\hat{\P}$ be the union of all of the $P_i$ and $\P_1$. 

\bet \label{thm:tech-GDE}
There are $c, C>0$ so that the following holds. Let $X$ be  a matrix of the form $X= (1-T)^{-1/2}Y + T^{1/2} G$ where $T = n^{-\mfb} + n^{\eps_3-1}$, $G$ is a complex Ginibre matrix and $Y$ is a complex i.i.d. matrix. Then, we have
\beq \label{eqn:tech-1}
\pp\left[ \max_{ z \in \hat{\P} : | \lambda_1^z | \geq (\log n)^{-10} n^{-1} } \Psi_n (z,  (\log n)^{-100} n^{-1} ) \leq \sqrt{2} (1 - C ( \mfb + (\eps_3)^{1/3} )  ) \log n \right] \leq n^{-c \mfb} + n^{- c \eps_3 } . 
\eeq
\eet
\proof Let $T_3 = n^{\eps_3} /n$. We can consider $X$ as the solution at time $T_3$ of $\d X_s = \d B_s/\sqrt{n}$ with $B_s$ a matrix of i.i.d. complex Brownian motions and $X_0 = (1- T_3)^{1/2} Y_0$ with $Y_0$ an i.i.d. matrix with Gaussian component of size $n^{-\mfb}$.  Let $\hat{\Psi}_n (z, s, \eta)$ denote the characteristic polynomial of $X_s$, as in \eqref{eqn:psi-def}. Then the observable $\Psi_n$ in the probability in \eqref{eqn:tech-1} is given by $\hat{\Psi}_n (z, T_3, ( \log n)^{-100} n^{-1})$.  By Proposition \ref{prop:gen-reg} we have that,
\beq
\max_{ z \in \hat{\P} : | \lambda_1^z | \geq (\log n)^{-10} n^{-1} } \hat{\Psi}_n (z, T_3, (\log n)^{-100} n^{-1} )  \geq \max_{z \in \hat{\P} } \hat{\Psi}_n (z, 0, n^{\eps_3} / n )- C_1 \eps_3^{1/3} \log n,
\eeq
with probability at least $ 1- n^{-50}$. We then lower bound,
\beq
\label{eq:almostendlb}
\max_{z \in \hat{\P} } \hat{\Psi}_n (z, 0, n^{\eps_3} / n ) \geq \max_{z \in \P_1  } \hat{\Psi}_n (z, 0, n^{\eps_3} / n )
\eeq 
We want to apply Theorem \ref{thm:meso-smm} to the quantity on the RHS of \eqref{eq:almostendlb}. However, it is the log characteristic polynomial of an i.i.d. matrix with variance $(1-T_3) n^{-1}$ which is not exactly $n^{-1}$. Nonetheless, for any $a>0$ and matrix $M$, we have that,
\beq
\log \left| \det \left( \begin{matrix} - \i \eta  & a M - z \\ a M^* - z & -\i \eta \end{matrix} \right) \right| = 2n a  + \log \left| \det \left( \begin{matrix} - \i \eta a^{-1} &  M - z a^{-1} \\  M^* - z a^{-1} & -\i \eta a^{-1} \end{matrix} \right) \right|
\eeq 
and by the definition of $\rho_t^z(x)$ in \eqref{eqn:mt-def}, with $c_* = \sqrt{1 -T_3}$, 
\beq
n \int \log ( x^2 + \eta^2) \rho_0^z (x) \d x = 2n \log c_* + \int \log (x^2 + (\eta /c_*)^2 ) \rho^{z / c_*} (x) \d x 
\eeq
after a rescaling. Therefore, if $\tilde{\Psi}_n (z, \eta)$ is the log-characteristic polynomial of the matrix $Y_0$ we have that $\hat{ \Psi}_n (z, 0, \eta) = \tilde{\Psi}_n (z / c_*, \eta / c_*)$. Note that after the rescaling by $c_* \asymp 1$, the set $\P_1$ remains a well-spaced subset of the unit disc. We denote this set by $\tilde{\P}_1$. We let now $T_\mfb = n^{-\mfb}$ and assume that $Y_0$ is equal to $\tilde{X}_{T_\mfb}$ where $\d \tilde{X}_s= \frac{ \d \tilde{B}_s}{\sqrt{n}}$ where $\tilde{B}_s$ is a matrix of complex i.i.d. Brownian motions, and $\tilde{X}_0 = (1- T_\mfb )^{1/2} \tilde{Y}_0$, where $\tilde{Y}_0$ is an i.i.d. complex matrix. Denote the characteristic polynomial of $\tilde{X}_s$ by $\tilde{\Psi}_n (z, s, \eta)$. We write now,
\beq
 \tilde{\Psi}_n (z, T_\mfb, n^{\eps_3} / n ) = \left( \tilde{\Psi}_n (z, T_\mfb, n^{\eps_3} / n ) - \tilde{\Psi}_n (z, 0, \eta_z ) \right) + \tilde{\Psi}_n (z, 0, \eta_z )
\eeq
where $\eta_z$ is a characteristic that ends at $n^{\eps_3}/n$ at time $T_\mfb$. We have that $\eta_z \asymp n^{-\mfb}$. From Proposition \ref{prop:global} we see that
\beq
\max_{z \in \tilde{\P}_1} | \tilde{\Psi}_n (z, 0, \eta_z ) | \leq C \mfb \log n,
\eeq
with probability at least $1  - n^{-c\mfb}$ if $\mfb >0$ is sufficiently small. Therefore on this event,
\beq
\max_{z \in \tilde{\P}_1  } \tilde{\Psi}_n (z, t_1, n^{\eps_3} / n ) \geq \max_{z \in \P_1  } \left( \tilde{\Psi}_n (z, t_1, n^{\eps_3} / n ) - \tilde{\Psi}_n (z, 0, \eta_z ) \right)  - C \mfb  \log n.
\eeq
On the other hand, Theorem \ref{thm:meso-smm} then applies to the max on the right--hand--side. We see that for any sufficiently small $\eps_2 >0$ we have
\beq
\max_{z \in \tilde{\P}_1  }\left( \tilde{\Psi}_n (z, t_1, n^{\eps_3} / n ) - \tilde{\Psi}_n (z, 0, \eta_z ) \right)  \geq \sqrt{2} \big( 1 - C ( \eps_2 + \eps_3 + \mfb ) \big) \log n,
\eeq
with probability at least $ 1 - n^{-c\eps_2}$. The claim follows. \qed

We now deal with the real i.i.d. case.  We will develop a lower bound for points in the rectangle consisting of $z$ such that $\Im[z] \asymp n^{-\alpha}$.  Again, fix $\eps_1, \eps_3 >0$ with $\eps_1 = \frac{ \eps_3}{10}$. Assume also that $\eps_3 < \frac{ \alpha}{1000}$. Let $\P_1$ be a set of $n^{1-\alpha - \eps_3}$ well-spaced points of the rectangle $\{ z : |\Re (z) | \leq \frac{1}{2}, n^{-\alpha} \leq \Im[z] \leq 2 n^{-\alpha} \}$. Note for distinct $z, w \in \P_1$ we have that $|z-w|^2 \geq c n^{\eps_3-1}$. For each $z \in \P_1$ we add a set of $P_i$ of size $|P_i| =  \log n$ such that for distinct $z_1, z_2 \in P_1 \cup \{ z \}$ we have $n^{\eps_1-1/2} \leq |z_1 -z_2 | \leq n^{2 \eps_2 - 1/2}$.  We define $\hat{P}$ to be the union of all of the $P_i$ as well as $\P_1$. 

\bet \label{thm:tech-GDE-real}
There are $c, C>0$ so that the following holds. Let $T_\mfb = n^{-\mfb}$ and $T_3 = n^{\eps_3-1}$ with $\mfb >0$ sufficiently small, satisfying $\mfb < 10^{-6} \eps_3$. Let $X = (1-T_\mfb - T_3)^{1/2} Y + \sqrt{T_\mfb} G_r + \sqrt{T_3} G_c$ where $Y$ is a real i.i.d. matrix, and $G_r$ and $G_c$ are from the real and complex Ginibre ensembles, respectively. Then,
\beq \label{eqn:tech-2}
\pp\left[ \max_{ z \in \hat{\P} : | \lambda_1^z | \geq (\log n)^{-10} n^{-1} } \Psi_n (z,  (\log n)^{-100} n^{-1} ) \leq \sqrt{2} (1 - C \eps_3^{1/3}  ) \log n \right] \leq n^{-c \mfb}.
\eeq
Above, $\Psi_n (z, \eta)$ is as in \eqref{eqn:psi-def} with $\beta=1$. 
\eet
\proof The proof is similar to Theorem \ref{thm:tech-GDE} and so we focus on the differences. We first consider $X$ as the solution at time $T_3$ of $\d X_s =\d B_s/\sqrt{n}$ with $B_s$ a matrix of i.i.d. complex Brownian motions and $X_0 = (1- T_3)^{1/2} Y_0$ with $Y_0$ a real i.i.d. matrix with a real Gaussian component of size of order $T_\mfb$. 

Denote by $\hat{\Psi}_n (z, t, \eta)$ the log characteristic polynomial of $X_t$ as in \eqref{eqn:psi-hat-def} without the additional $\beta=1$ deterministic correction that appears in \eqref{eqn:psi-def}. Arguing as in the proof of Theorem \ref{thm:tech-GDE} we have by Proposition \ref{prop:gen-reg} that,
\beq
\max_{ z \in \hat{\P} : | \lambda_1^z | \geq (\log n)^{-10} n^{-1} } \Psi_n (z,  (\log n)^{-100} n^{-1} ) \geq \max_{ z \in \P_1} \left( \hat{ \Psi}_n (z, 0 , n^{\eps_3}/n) + \alpha \log n \right) - C_1  (\eps_3 )^{1/3} \log n .
\eeq
Let now $Y_0$ be $\tilde{X}_{T_\mfb}$ where $\d \tilde{X}_s = \frac{ \d \tilde{B}_s}{ \sqrt{n}}$ where $\tilde{B}$ is a matrix of i.i.d. real Brownian motions, with $\tilde{X}_0 = (1- T_\mfb)^{1/2} \tilde{Y}$ with $\tilde{Y}$ a real i.i.d. matrix. Denote by $\tilde{\Psi}_n (z, s, \eta)$ its log characteristic polynomial as in \eqref{eqn:psi-def}, including the $\beta=1$ correction term. By a rescaling (similar to in the proof of Theorem \ref{thm:tech-GDE}) we have,
\beq
\max_{ z \in \P_1} \left( \hat{ \Psi}_n (z, 0 , n^{\eps_3}/n) + \alpha \log n \right) = \max_{z \in \tilde{P}_1} \left( \tilde{ \Psi}_n (z, T_\mfb , c_*^{-1} n^{\eps_3} / n \right) + \O (1) 
\eeq
where $c_* \asymp 1$ is a constant and $\tilde{P}_1$ is a set of $n^{1- \alpha-\eps_3}$ well spaced points lying in the rectangle $\{ z : | \Re (z) | \leq \frac{3}{4}, n^{-\alpha} \leq  2 \Im[z] \leq 5 n^{-\alpha} \}$. We conclude the proof similarly to Theorem \ref{thm:tech-GDE}, using now Theorem \ref{thm:meso-real-smm} (taking the $\mfc>0$ in that theorem to be $\mfc = \mfb^3$). \qed

\section{Lower bound for $\Psi_n (z)$}

We now remove the Gaussian component in the lower bound in Theorem \ref{thm:tech-GDE}. For this purpose we use a comparison argument (see, e.g., Proposition~\ref{pro:GFTmax} below). The following deterministic lemma is a straightforward consequence of writing $\langle G^z ( \i \eta) \rangle$ in terms of eigenvalues and so the proof is omitted.
\begin{lemma} \label{lem:reg-compare} The following holds deterministically for any matrix of the form $\left( \begin{matrix} 0 & M -z\\ M^*-\bar{z} & 0 \end{matrix} \right)$  with resolvent $G^z ( \i \eta)$ and eigenvalues $\lambda_i^z$. First, for any $\eta>0$, we have
\beq
 \label{eqn:bb-1}
n \eta \Im [ \langle G^z (\ii \eta)\rangle ] < \frac{1}{10} \implies \lambda_1(z) > \eta
\eeq
Second, assume that the bound,
\beq
N \tilde{\eta} \Im[ \langle G^z ( \i \tilde{\eta} ) \rangle ] \leq (\log n)^4,
\eeq
holds for $\tilde{\eta} = ( \log n)^2/n$. Then if $\lambda_1(z) > n^{-1} (\log n)^{-10}$ we have that,
\beq
n \eta_1 \Im[ \langle G^z ( \i \eta_1 ) \rangle ] \leq (\log n)^{-1},
\eeq
if $\eta_1 = (\log n)^{-20} / n$.
\end{lemma}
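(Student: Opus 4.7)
The plan is to use the symmetry of the spectrum $\{\pm \lambda_i^z\}_{i=1}^n$ of the chiral block matrix to first rewrite
\[
n \eta \, \Im \langle G^z(\i\eta)\rangle \;=\; \sum_{i=1}^n \frac{\eta^2}{(\lambda_i^z)^2 + \eta^2},
\]
reducing both implications to an entirely elementary analysis of this sum of positive terms.

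For the first implication, since the sum is of non-negative terms, each individual term is bounded by the total. Applying this to the $i=1$ term (the one involving the smallest singular value) gives $\frac{\eta^2}{\lambda_1(z)^2+\eta^2} < \frac{1}{10}$, and rearranging yields $\lambda_1(z)^2 > 9\eta^2$, which implies $\lambda_1(z) > \eta$ with room to spare.

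For the second implication, I would split the sum at $\tilde\eta$. For indices with $\lambda_i^z \ge \tilde\eta$, I would use the elementary comparison
\[
\frac{\eta_1^2}{(\lambda_i^z)^2 + \eta_1^2} \;\le\; 2\frac{\eta_1^2}{\tilde\eta^2}\cdot \frac{\tilde\eta^2}{(\lambda_i^z)^2+\tilde\eta^2},
\]
so summing and using the hypothesis yields a contribution of at most $2(\eta_1/\tilde\eta)^2 (\log n)^4 = 2(\log n)^{-40}$ with the given scales. For indices with $\lambda_i^z < \tilde\eta$, the hypothesis already bounds their number by $O((\log n)^4)$ (since each such term in the $\tilde\eta$-sum is at least $1/2$), and for each of them the assumption $\lambda_i^z \ge \lambda_1(z) > (\log n)^{-10}/n$ gives
\[
\frac{\eta_1^2}{(\lambda_i^z)^2+\eta_1^2} \;\le\; \frac{\eta_1^2}{\lambda_1(z)^2} \;\le\; (\log n)^{-20},
\]
contributing at most $2(\log n)^{4-20} = 2(\log n)^{-16}$ in total. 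Adding the two contributions gives an overall bound much smaller than $(\log n)^{-1}$ for $n$ large.

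There is no serious obstacle here — the whole lemma is bookkeeping with positive terms once the spectral expansion is written down. The only mildly delicate point is the second implication, where one must verify that the choice of scales $\tilde\eta = (\log n)^2/n$, $\eta_1 = (\log n)^{-20}/n$ and the lower bound $\lambda_1(z) > (\log n)^{-10}/n$ are compatible so that both the bulk-of-spectrum contribution (controlled by the ratio $\eta_1^2/\tilde\eta^2$) and the small-eigenvalue contribution (controlled by $\eta_1^2/\lambda_1(z)^2$ multiplied by the counting bound $(\log n)^4$) land comfortably below $(\log n)^{-1}$. As shown above, both do so with polylogarithmic margin.
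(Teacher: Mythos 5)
Your proof is correct and is exactly the argument the paper has in mind: the paper omits the proof, calling it ``a straightforward consequence of writing $\langle G^z(\i\eta)\rangle$ in terms of eigenvalues,'' which is the spectral expansion $n\eta\,\Im\langle G^z(\i\eta)\rangle=\sum_{i=1}^n \eta^2/((\lambda_i^z)^2+\eta^2)$ you start from. Both the single-term bound for the first implication and the split of the sum at the scale $\tilde\eta$ for the second land comfortably within the stated margins, so there is nothing to add.
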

For this section we will denote by $Q$ be a smooth function which is equal to $1$ for $|x| < 1/20$ and equal to $0$ for $|x| >1 /10$. Let now,
\beq
\eta_1 := \frac{1}{n (\log n)^{20}} .
\eeq
For any i.i.d. ensemble we know that $N \eta_2 \Im[ \langle G^z ( \i \eta_2 ) \rangle ] \leq (\log n)^4$ with overwhelming probability with $\eta_2 = (\log n)^2 / n$ by \eqref{eq:goodll}. Therefore, by Lemma \ref{lem:reg-compare}, with overwhelming probability on the event $\lambda^z_1 \geq (\log n)^{-10} n^{-1}$, we have
\beq
Q ( n \eta_1 \Im  \langle G^z (\ii \eta_1)\rangle ) = 1.
\eeq
Let now $\hat{\eta} := (\log n)^{-100} n^{-1}$. We have the following. 
\bel \label{lem:reg-compare-2}
Let $X$ be an i.i.d. matrix and $\hat{P}$ a set of points. Let $\mfb >0$ and $\eps_3 >0$ be sufficiently small. There are constants $c, C>0$ so that,
\beq 
\label{eqn:bb-3}
\pp\left[ \max_{z \in \hat{P}} Q ( n\eta_1 \Im  \langle G^z (\ii \eta_1)\rangle) \left( \Psi_n (z, X,\hat{\eta}) - \Psi_n (z, X, n^{-\mfb} ) \right)  \leq  \sqrt{2} \left( 1 - C  \eps_3^{1/3}  \right) \log n\right] \leq n^{- c \mfb} 
\eeq
in the case that:
\begin{enumerate}
    \item $X$ is a complex i.i.d. matrix with Gaussian component of size at least $n^{-\mfb}$ and $\hat{P}$ is a certain set of at most $n$ points of the unit disc; in this case $\eps_3 = \mfb$. 
    \item $X = (1- n^{\eps_3-1} - n^{-\mfb} )^{1/2} Y + n^{\eps_3/2-1/2} G_c + n^{-\mfb/2} G_r$ where $G_r, G_c$ are real and complex Ginibire matrices and $Y$ is a real i.i.d. matrix, and $\hat{P}$ is a certain set of at most $n$ points of the strip $\{ z : n^{-\alpha} \leq  2\Im[z] \leq 5 n^{-\alpha}, |\Re[z] | \leq \frac{3}{4} \}$, with $\alpha\in (0,1/2)$, and $\mfb < 10^{-6} \eps_3$. In this case, $\Psi_n (z, X, \eta_3)$ is as in \eqref{eqn:psi-def} with the $\beta=1$ deterministic correction.
\end{enumerate}
\eel
\noindent{\bf Remark.} \emph{The second case holds also for  a set of points in the region $\Im[z] \geq c$, $|z| \leq 1- c$ for and small $c>0$, as can be seen from the proof. However, we will not need this as we will only prove our lower bounds near the real axis.}

\

\proof[Proof of Lemma~\ref{lem:reg-compare-2}]  By the discussion immediately preceding this lemma and Theorems\ref{thm:tech-GDE}--\ref{thm:tech-GDE-real} we see that the lemma holds if the term $\Psi_n (z, X, n^{-\mfb} )$ is not present in \eqref{eqn:bb-3}. So we need only to show that this term contributes $\O ( \eps_3^{1/3} \log n )$ to the max. For the complex case, this follows from Proposition \ref{prop:global} (recall that in that case $\eps_3 = \mfb$). For the real case, we first use Proposition \ref{prop:short-time-int} to remove the complex Ginibire contribution at the cost of $\O (\eps_3^{1/3} \log n)$, leaving us to control the max of the log characteristic polynomial of a real i.i.d. matrix at scale $\eta \asymp n^{-\mfb}$. This follows also from Proposition \ref{prop:global}. \qed

\

We are now ready to state our comparison argument for the quantity on the LHS of \eqref{eqn:bb-3}; the proof of this result is presented in Appendix~\ref{app:GFTmax}. We denote by $\Xi (X)$ the function in the probability on the LHS of \eqref{eqn:bb-3}, considered as a function on the space of matrices to $\rr$,
\beq
\Xi (X) := \max_{z \in \hat{P}} Q ( n\eta_1 \Im  \langle G^z (\ii \eta_1)\rangle) \left( \Psi_n (z, X,\eta_3) - \Psi_n (z, X, n^{-\mfb} ) \right) 
\eeq
We have the following moment matching result for $\Xi$. The proof is presented in Appendix~\ref{app:GFTmax}.
\begin{proposition}
\label{pro:GFTmax}
Let $F$ be a Schwarz function. Let $X_1$ and $X_2$ be two either real or complex i.i.d. matrices matching the moments up to third order and matching the fourth moment up to $n^{-2} t$. That is,
\beq
\left| \ee\left[ (X_1)_{ij}^a (\bar{X}_1)_{ij}^b \right] - \ee\left[ (X_2)_{ij}^a (\bar{X}_2)_{ij}^b \right] \right| \leq \1_{ a+b=4} t n^{-2}
\eeq
for all $0 \leq a + b \leq 4$ and $i,j$. Then for all $\eps >0$ we have
\beq
\left| \E[ F ( \Xi (X_1) ] - \E[ F ( \Xi (X_2) ) ] \right| \leq \| F \|_{C^5}( n^{-\eps} + n^{10\eps} t ).
\eeq
\end{proposition}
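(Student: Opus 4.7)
The proof follows the standard Lindeberg-style Green's function comparison (GFT) scheme, analogous in spirit to Proposition \ref{prop:ll-gfct} and the argument in Section \ref{sec:gde-removal}, but now applied to the maximum functional $\Xi(X)$.

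First, to work with a smooth function of the matrix entries I would replace the maximum by its log-sum-exp smoothing
\begin{equation}
\Xi_\delta(X) := \frac{1}{n^\delta}\log\sum_{z\in\hat{P}}\exp\!\Big(n^\delta\,Q\bigl(n\eta_1\Im\langle G^z(\ii\eta_1)\rangle\bigr)\bigl[\Psi_n(z,X,\eta_3)-\Psi_n(z,X,n^{-\mfb})\bigr]\Big),
\end{equation}
with $\delta = \eps/10$; this agrees with $\Xi(X)$ up to $O(n^{-\delta}\log n)$, which is absorbable into the $n^{-\eps}$ error. Enumerate the $n^2$ entries and define $W^{(k)}$ to be the matrix having the first $k$ entries taken from $X_2$ and the remaining ones from $X_1$. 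Telescoping
\begin{equation}
\E[F(\Xi_\delta(X_1))] - \E[F(\Xi_\delta(X_2))] = \sum_k \bigl(\E[F(\Xi_\delta(W^{(k-1)}))] - \E[F(\Xi_\delta(W^{(k)}))]\bigr),
\end{equation}
for each $k$ I would condition on all entries except the one being swapped at position $(i,j)$ and Taylor expand $F(\Xi_\delta(\tilde W + x\,e_{ij}))$ in $x$ to order four with fifth-order remainder. Since the first three moments of $X_1$ and $X_2$ match, the zeroth through third-order contributions cancel; the fourth-order term is bounded by $|\E x_1^4-\E x_2^4|\cdot\sup|\partial^4_{X_{ij}}F(\Xi_\delta)|\leq tn^{-2}\cdot(\text{derivative bound})$ and the fifth-order remainder by $C\E|x|^5\cdot\sup|\partial^5_{X_{ij}}F(\Xi_\delta)|\leq Cn^{-5/2}\cdot(\text{derivative bound})$.

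The main technical step is to establish derivative bounds $|\partial^k_{X_{ij}}F(\Xi_\delta)| \leq \|F\|_{C^k}\cdot n^{Ck\delta}$ for $1\leq k\leq 5$ with overwhelming probability. The essential role of the cutoff $Q$ is to permit this reduction: by Lemma \ref{lem:reg-compare}, on the support of $Q$ one has $\lambda_1^z\geq\eta_1 = (\log n)^{-20}/n$, and a Taylor expansion of $\log(\lambda_i^2+\eta_3^2)-\log(\lambda_i^2+\eta_1^2)$ in powers of $(\eta_1^2-\eta_3^2)/(\lambda_i^2+\eta_1^2)$, valid pointwise on that support and summed over $i$, yields
\begin{equation}
\Psi_n(z,X,\eta_3)-\Psi_n(z,X,\eta_1) = -\tfrac{1}{2}n\eta_1\Im\langle G^z(\ii\eta_1)\rangle + O(1).
\end{equation}
Thus, up to $O(1)$ errors on the support of $Q$, the $X$-dependence of $\Xi(X)$ is carried entirely by resolvent quantities $\langle G^z(\ii y)\rangle$ at scales $y\geq\eta_1\gg n^{-1}$. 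Derivatives of such mesoscopic observables with respect to matrix entries, obtained via the identity $\partial_{X_{ij}}G = -G(\partial_{X_{ij}}H^z)G$, are products of resolvent entries whose magnitude is controlled by the entrywise local law \eqref{eqn:entrywise-ll} by $n^\xi/\sqrt{n\eta_1} \leq n^{\xi+1/2}(\log n)^{10}$. Combining with the smoothing factor $n^\delta$ from the log-sum-exp regularization and the product rule applied through the $Q$-factor (whose derivatives are themselves mesoscopic resolvent quantities at scale $\eta_1$) then yields the claimed $n^{Ck\delta}$ derivative bound for $k\leq 5$.

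Summing the telescope over the $n^2$ entries, the fourth-order contribution is at most $n^2\cdot tn^{-2}\cdot n^{4C\delta}\leq t\,n^{10\eps}$ (for $\delta=\eps/10$), and the fifth-order remainder is at most $n^2\cdot n^{-5/2}\cdot n^{5C\delta}\leq n^{-1/2+C'\eps}\leq n^{-\eps}$ provided $\eps$ is sufficiently small. This gives the stated bound. The main obstacle I anticipate is the careful execution of the derivative bound step: one must verify that differentiating through both the $Q$-cutoff and the logarithmic factor $\Psi_n(z,X,\eta_3)$ (which would be singular without $Q$) always produces quantities reducible to the mesoscopic scale $\eta_1$, so that the entrywise local laws and standard resolvent expansions can be invoked uniformly.
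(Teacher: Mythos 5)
Your overall framework — log-sum-exp smoothing, telescoping Lindeberg replacement, cancellation of low-order terms by moment matching, a derivative bound to control the fourth/fifth-order contributions — is the same as the paper's, and the bookkeeping of powers of $n$ at the end is sound. The paper regularizes with $B = n^{\mfa}$, bounds the five-fold derivatives of $n^{\mfa}Q(z)\XX_n(z)$ by a single quantity $M_\theta$ satisfying $|M_\theta| \leq n^{\xi + \mfa}$ with overwhelming probability (and a crude deterministic bound $n^{10}$), and then invokes Lemma~\ref{lem:general-der-bd}. So the skeleton matches.

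However, the central derivative-bound step in your proposal has a genuine gap. You assert that on the support of $Q$ the relevant $X$-dependence is carried by resolvent quantities ``at scales $y\geq\eta_1\gg n^{-1}$'', but $\eta_1 = (\log n)^{-20}/n$ is \emph{below} $n^{-1}$, so the entrywise local law \eqref{eqn:entrywise-ll} you invoke (stated for $\Im w\geq n^{-1}$) does not directly apply. More fundamentally, the Taylor identity $\Psi_n(z,X,\eta_3)-\Psi_n(z,X,\eta_1) = -\tfrac12 n\eta_1\Im\langle G^z(\ii\eta_1)\rangle + O(1)$ is only a \emph{pointwise} statement on the support of $Q$; it does not control the matrix-entry derivatives of $\Psi_n(z,X,\eta_3)$, because a function that is $O(1)$ on a real region can have arbitrarily large derivatives there. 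What the argument actually needs, and what the paper supplies in \eqref{eqn:a-resolvent}, is the statement that the resolvent entries $|(H_\theta^z - \ii\eta)^{-1}_{ij}|\leq n^\xi$ with overwhelming probability for \emph{all} $\eta\geq(\log n)^{-C}/n$ — including both $\eta_1$ and $\eta_3$. This follows from eigenvector delocalization (from the isotropic local law at $\eta = n^{-1}$) and rigidity, not from the cutoff $Q$: even if $\lambda_1^z$ is tiny, the denominator $|\lambda_1^z - \ii\eta_3|\geq\eta_3$ saves the day. So the role you assign to $Q$ (making the derivatives controllable) is not its actual role in the paper — $Q$ is there to enable the removal of the $\eta$-regularization downstream (Lemma~\ref{lem:aa-1}), and you can and should bound derivatives of $\Psi_n(z,X,\eta_3)$ directly from \eqref{eqn:a-resolvent}, replacing your entire Taylor-expansion paragraph with that estimate.
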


In the real i.i.d. case we first need to remove the small complex Gaussian component. For this we use the following set up. We let $X_0$ be a real i.i.d. matrix with a size $n^{-\eps}$ Gaussian component, and let $\d X_s = \frac{ \d B_s}{ \sqrt{n}} - \frac{ X_s}{2} \d t$ where $B_s$ is a matrix of complex Brownian motions. The proof is presented in Appendix~\ref{a:small-complex-comp}. We point out that a proof similar in spirit to this one was performed in \cite{xu2024bulk} in a different setting.

\bep \label{prop:GFTmax-2}
Let $X_t$ and $\Xi$ be as above, and let $F$ be a Schwarz function. Assume that the points $\hat{P}$ satisfy $\Im[z] \geq n^{c_1 - 1/2}$ for some $c_1 >0$ and all $z \in \hat{P}$. As long as $\eps >0$ satisfies $\eps < \frac{ c_1}{2000}$ we have,
\beq
\left| \ee[ F( \Xi (X_t))] - \ee[ F(\Xi (X_0))] \right| \leq C \| F \|_{C^4} \left( (1 + nt) n^{-c_1/100} + n^{3/4} t \right).
\eeq
\eep

Finally, we rely on the following lemma to remove the last bit of regularization in $\Xi$.

\bel \label{lem:aa-1} Let $X$ be a real or complex i.i.d. matrix, and let $\Psi_n (z, \eta)$ denote its log characteristic polynomial in \eqref{eqn:psi-def}. For any $D>0$ and any $C_2 > 50$ we have
\beq
\pp\left[ \lambda_1^z \geq (\log n)^{-20} n^{-1} , | \Psi_n (z, n^{-1} (\log n)^{-C_2} ) - \Psi_n (z, 0 ) | > (\log n)^{-10} \right] \leq n^{-D}
\eeq
for $n$ large enough and all $|z| <r$.
\eel

\proof Let $\eta_2 := (\log n)^{-C_2} n^{-1}$ for notational simplicity. 
We have that,    
\beq
\label{eq:estdetpart}
\del_\eta \int \log (x^2 + \eta^2) \rho^w (x) \dif x = \Im[ m^w ( \ii \eta ) ] \le C,
\eeq
so the deterministic contribution to $\Psi_n (z, 0) - \Psi (z, \eta_2)$ is less than $C N \eta_2 \leq C (\log n)^{-50}$. Using the notation $\lambda_i = \lambda_i^z$, we have 
\begin{align}
\sum_i \left| \log ( \lambda_i^2 + \eta_2^2) - \log ( \lambda_i^2) \right| \leq  C \eta_2^2 \sum_i  \frac{1}{ \lambda_i^2}.
\end{align}
When $\lambda_1 \geq (\log n)^{-20} n^{-1}$ we can bound
\beq
\sum_{|i| < (\log n)^{10} } \frac{ \eta_2^2}{ \lambda_i^2} \leq (\log n)^{-50}.
\eeq
For $i < n^{1-1/100}$ we have that $| \lambda_i - \gamma_i | \leq \frac{ (\log n)^2} {n}$ with overwhelming probability, and since $\gamma_i \asymp i/n$ we have,
\beq
\sum_{ (\log n)^{10} < i < n/2} \frac{ \eta_2^2}{ \lambda_i^2} \leq \frac{C}{(\log n)^{100}} \sum_{i > 1 } \frac{1}{i} \leq C (\log n)^{-98}.
\eeq
Finally, if $i > n^{1-1/100}$ then $\lambda_i \geq c(i/n)$ so $\sum_{i > n/2} \frac{ \eta_2^2}{ \lambda_i^2} \leq n^{-1/2}$. The claim now follows. \qed

\subsection{Proof of lower bounds of Theorem \ref{theo:main} and \ref{theo:realmaxlog}}

We begin with the lower bound of Theorem \ref{theo:realmaxlog}. The lower bound of \eqref{eqn:realmaxlog-1} follows from that of \eqref{eqn:realmaxlog-2}, so we only prove the latter. Let $0  < \alpha < \frac{1}{2}$.  We will first prove that the lower bound holds for real i.i.d. matrices with sufficiently large Gaussian component. I.e., we will remove the complex Gaussian component from Lemma \ref{lem:reg-compare-2} by applying Proposition \ref{prop:GFTmax-2}.

Fix $\eps_3 >0$ and $\mfb >0$ with $\mfb = 10^{-7} \eps_3$. Let $X_0$ be a real i.i.d. matrix with Gaussian component of size at least $n^{-\mfb}$. Let $X_t = (1- \e^{-t} )^{1/2} X_0 + \e^{-t/2} G_c$ where $G_c$ is complex Ginibre matrix, and $ t= n^{\eps_3-1}$.  By Lemma \ref{lem:reg-compare-2} it holds that,
\beq \label{eqn:real-compare-1}
\max_{z \in \hat{P}} Q( n \eta_1 \Im \langle G( \i \eta_1 ) \rangle ) \left( \Psi_n (z, X_t, \hat{\eta} ) - \Psi_n (z, X_t, n^{-\mfb} ) \right) \geq \sqrt{2} (1 - C \eps_3^{1/3} ) \log n 
\eeq
with probability at least $ 1- n^{-c\eps_3}$. As long as $\eps_3 < 10^{-3} ( \frac{1}{2} - \alpha)$ (recalling $\mfb = 10^{-7} \eps_3 $), we see that the above holds for $X_0$ as well, by applying Proposition \ref{prop:GFTmax-2} (with $F$ being some appropriate smoothed indicator function). Hence, \eqref{eqn:real-compare-1} holds for any real i.i.d. ensemble with a sufficiently large Gaussian component, of size $n^{-c_* \eps_3}$. Now, given any real i.i.d. ensemble $Y$, we may find a Gaussian divisible $X$ whose first three moments match with $Y$ and the fourth moment matches up to order $n^{-2- c_*\eps_3}$. Hence, by Proposition \ref{pro:GFTmax}, we have that \eqref{eqn:real-compare-1} holds also for the matrix $Y$, with probability at least $1 - n^{- c_* \eps_3/2}$.  

Now, by applying Proposition \ref{prop:global}, we see that for the matrix $Y$, 
\beq
\label{eqn:bb-4}
\max_{z \in \hat{P}} Q ( n\eta_1 \Im  \langle G^z (\ii \eta_1)\rangle ) \Psi_n (z, Y, \hat{\eta}) \geq  \sqrt{2} ( 1 - C \eps_3^{1/3} ) \log n
\eeq
with probability at least $ 1 - n^{ -c \eps_3}$ after possibly adjusting the constants $C, c>0$. We may assume that $C (\eps_3)^{1/3} < \frac{1}{2}$, so the RHS is positive.

To conclude the proof we are thus left only with removing the $\hat{\eta}$--regularization above, which we now turn to. Since the RHS of \eqref{eqn:bb-4} is positive, some points on the LHS must be non-zero. Any point on the LHS which is non-zero must have that $\Psi_n (z, \hat{\eta} ) >0$ and that $n\eta_1 \Im  \langle G^z (\ii \eta_1)\rangle < \frac{1}{10}$. Then by \eqref{eqn:bb-1} we have that $\lambda_1^z > \eta_1$ for such $z$'s. Then by  Lemma \ref{lem:aa-1}, with overwhelming probability, for all such $z$'s such that $Q \Psi$ is non-zero we have
\beq
| \Psi_n (z , \hat{\eta} ) - \Psi_n (z, 0) | \leq \frac{1}{\log n}.
\eeq
Hence, if $z \in \hat{P}$ is a maximizing point in \eqref{eqn:bb-4} we have
\beq
Q ( n\eta_1 \Im  \langle G^z (\ii \eta_1)\rangle ) \Psi_n (z,  \hat{\eta})  \leq Q ( n\eta_1 \Im  \langle G^z (\ii \eta_1)\rangle ) \Psi_n (z,0) + ( \log n)^{-1} \leq  \Psi_n (z, 0) + ( \log n )^{-1},
\eeq
using that $0 \leq Q \leq 1$. We thus conclude the lower bound in \eqref{eqn:realmaxlog-2}.

The proof of the lower bound of Theorem \ref{theo:main} is similar but easier. We start from the first part of Lemma \ref{lem:reg-compare-2} and proceed as in the above proof, except that we do not need the intermediate Proposition \ref{prop:GFTmax-2}. Instead, we conclude directly that \eqref{eqn:bb-4} holds for any complex i.i.d. matrix using directly Proposition \ref{pro:GFTmax}. The rest is the same. \qed

\appendix

\section{Proof of Lemma~\ref{lem:ind-prod}} \label{a:approx-ind}

The proof of this lemma follows closely the proof of \cite[Section 7]{cipolloni2023central}, with a few very minor modifications. For this reason we only present a sketch of the proof and highlight the differences. The arguments in \cite[Section 7]{cipolloni2023central} considered the flow \eqref{eq:flowindepcompreal} with initial condition being the eigenvalues of a complex i.i.d. matrix. In the current case we will also consider the case when the initial data are given by eigenvalues of a matrix of type $M$ (see Definition~\ref{def:type-M}), where the real part has a large (almost order one) Gaussian component.

The proof in this case is analogous to \cite[Section 7]{cipolloni2023central} once the a priori estimate \eqref{eq:apriorievect} is proven. While in the complex case \eqref{eq:apriorievect} follows by directly \cite[Lemma 7.9]{cipolloni2023central} and \cite[Theorem 3.1]{cipolloni2023mesoscopic}, we need to prove this bound, in Corollary~\ref{cor:ovb} below, when the initial data is real or of type M. The second difference is that \cite[Section 7]{cipolloni2023central} considered \eqref{eq:flowindepcompreal}, and performed a coupling, only for finitely many different $z$'s; instead for the proof of Lemma~\ref{lem:ind-prod} we need to couple the flows \eqref{eq:flowindepcompreal} and \eqref{eq:flowtobecoupl} for a slightly diverging number of different $z$'s, in fact $\log n$ of them will suffice. With these changes in mind we now present the main steps of the proof.

In the argument below, we will couple the eigenvalue flows to some auxiliary processes and then derive the estimate \eqref{eqn:ind-prod} after a short time $t = n^{\eps'} / n$, for $\eps' >0$ being sufficiently small, where ``sufficiently small'' will depend on the $\eps >0$ in the hypotheses of Lemma \ref{lem:ind-prod}. However, the statement of the lemma is for $t = n^{\eps_2-1}$ for possibly larger $\eps_2 >0$. This case can be reduced to the one we derived below, simply by starting the coupling argument later in the flow of the eigenvalues $\lambda_i^z(s)$, (i.e., from $t_0 = n^{\eps_2-1} - n^{\eps'-1}$) starting from a different i.i.d. ensemble of type $M$. For notational simplicity we ignore this distinction in the proof below.

Recall that the eigenvalues of the Hermitization $H_t^z$ of $X_t-z$ are the solution of
\begin{equation}
\label{eq:flowindepcompreal}
\dif \lambda_i^z(t)=\frac{\dif b_i^z(t)}{\sqrt{2n}}+\frac{1}{2n}\sum_{j\ne i}\frac{1}{\lambda_j^z(t)-\lambda_i^z(t)}\dif t.
\end{equation}
Here $\{b_i^z(t)\}_{i\in [n]}$ is a family of standard i.i.d. real Brownian motions, and $b_{-i}^z(t)=-b_i^z(t)$; in particular, this ensures that $\lambda_{-i}^z(t)=-\lambda_i^z(t)$ for $i\in [n]$ and any $t\ge 0$. The rigidity of these eigenvalues close to zero follows by \eqref{eqn:usual-rigidity} in the complex case and by Lemma~\ref{prop:m-laws} in the real case.

We now consider the joint evolution of $\lambda_i^{z_l}(t)$ for all $z_l\in J$. In order to prove their asymptotical independence, we couple their flows with the following fully independent flows (see e.g. \cite[Section 7]{cipolloni2023central}): 
\begin{equation}
\label{eq:flowtobecoupl}
\dif \mu_i^{(l)}(t)=\frac{\dif \beta_i^{(l)}(t)}{\sqrt{2n}}+\frac{1}{2n}\sum_{j\ne i}\frac{1}{\mu_j^{(l)}(t)-\mu_i^{(l)}(t)}\dif t,
\end{equation}
with initial data $\{\mu_i^{(l)}(0)\}_{i\in[n]}$ being the singular values of $\log n$ independent complex Ginibre matrices $X^{(l)}$, and $\mu_{-i}^{(l)}(0)=-\mu_i^{(l)}(0)$. Here $\{\beta_i^{(l)}\}_{i\in [n], l\in [\log n]}$ is a family of standard real i.i.d. Brownian motions, which are then defined also for negative indices by symmetry. To make sure that the coupling argument in \cite{cipolloni2023central} can be applied we need the overlap bound (see \cite[Lemma 7.9]{cipolloni2023central}, which is needed to ensure \cite[Assumption 7.11 (c)]{cipolloni2023central}):
\begin{equation}
\label{eq:apriorievect}
\big|\langle {\bm u}_i^{z_1}(t), {\bm u}_j^{z_2}(t)\rangle\big|+\big|\langle {\bm v}_i^{z_1}(t), {\bm v}_j^{z_2}(t)\rangle\big|\le n^{-\omega_E},\qquad\quad 1\le i,j\le n^{\omega_B},
\end{equation}
for some constants $\omega_E,\omega_B>0$, uniformly in $t\le n^{100\eps}/n$. Here ${\bm w}_i^z(t)=({\bm w}_i^z(t), \pm{\bm v}_i^z(t))$ are the eigenvectors of $H_t^z$. This follows by \cite[Lemma 7.9]{cipolloni2023central} if the initial condition of $X_t$ is a complex i.i.d. matrix and it is proven in Corollary~\ref{cor:ovb} if the initial condition of $X_t$ is a type $M$ matrix.

Then, by the coupling argument in \cite[Section 7.2.1]{cipolloni2023central}, it follows that there exists a small $\omega>0$ such that
\begin{equation}
\big|\lambda_1^{z_l}(t)-\mu_1^{(l)}(t)\big|\le \frac{1}{n^{1+\omega}},
\end{equation}
with overwhelming probability in the joint probability space of all the $\{\lambda_1^{z_l}(t)\}_{z_l\in J}$ (recall that $J$ consists of $\log n$ points). We point out that the coupling in \cite[Section 7.2.1]{cipolloni2023central} was performed only for finitely many $z$'s, however, inspecting the proof, it is clear that it is actually possible to couple the flow for up to $n^c$ different $z$'s for some sufficiently small fixed $c>0$.

Then, we compute
\begin{equation}
\begin{split}
\mathbb{P}\left[ \bigcap_{l=1}^{\log n} \{ \lambda_1^z (t) \leq s n^{-1} \} \right] &\le \mathbb{P}\left[\bigcap_{l=1}^{\log n} \left\{ \mu_1^{(l)}(t)\leq s n^{-1}+|\mu_1^{(l)}(t)-\lambda_1^{z_l} (t)| \right \} \right] \\
&\le \mathbb{P}\left[ \bigcap_{l=1}^{\log n} \left\{ \mu_1^{(l)}(t)\leq s n^{-1}+n^{-1-\omega} \right \} \right]+n^{-100} \\
&=\bigcap_{l=1}^{\log n} \mathbb{P}\left[ \left\{ \mu_1^{(l)}(t)\leq s n^{-1}+n^{-1-\omega} \right \} \right]+n^{-100} \\
&\le \bigcap_{l=1}^{\log n} \mathbb{P}\left[ \left\{ \lambda_1^{z_l} (t)\leq s n^{-1}+n^{-1-\omega}+|\mu_1^{(l)}(t)-\lambda_1^{z_l} (t)| \right \} \right] +n^{-100} \\
&\le \bigcap_{l=1}^{\log n} \mathbb{P}\left[ \left\{ \lambda_1^{z_l} (t)\leq s n^{-1}+2n^{-1-\omega} \right \} \right] +n^{-100}.
\end{split}
\end{equation}
Noticing that $s\ge (\log n)^{-C}$, and so that $s n^{-1}+2n^{-1-\omega}\le 2sn^{-1}$, this concludes the proof.

\qed

\section{Removal of a small complex Gaussian divisible ensemble from a real GDE} \label{a:small-complex-comp}

The purpose of this section is to prove Proposition \ref{prop:GFTmax-2}. Recall that our set-up is that $X_t$ satisfies $\d X_t = \frac{\d B_t}{\sqrt{n}} - \frac{X_t}{2} \d t$ where $B_t$ is an i.i.d. matrix of complex Brownian motions and $X_0$ is a real i.i.d. matrix. Moreover, we are considering the observable, 
\beq
\Xi (X) := \max_{z \in \hat{P}} Q ( n \eta_1 \Im \langle G^z ( \i \eta_1 ) \rangle ) \left( \Psi_n (z, X, \eta_3 ) - \Psi_n (z, X, n^{-\mfb} ) \right).
\eeq
For $z_i \in \hat{P}$ let us denote 
\begin{align}
    \XX_i &=  Q ( n \eta_1 \Im \langle G^{z_i} ( \i \eta_1 ) \rangle ) Y_i \\
    Y_i &=\Psi_n (z, X, \eta_3 ) - \Psi_n (z, X, n^{-\mfb} )  =  \int_{\eta_3}^{n^{-\mfb}} n  \Im [ \langle G^{z_i} ( \i \eta ) - M^{z_i} ( \i \eta) \rangle ] \d \eta - c_{i}  
\end{align} 
for some $n$--dependent constant $c_i$ that is $\O ( \log n )$. Fixing a small $\mfa >0$ we see that, 
\beq \label{eqn:cc-1}
\left| \Xi(X) - Z_\mfa (X)\right| := \left| \Xi (X) - \frac{1}{n^{\mfa}} \log \left( \sum_{i \in \hat{P}} \e^{ n^{\mfa} \XX_i } \right) \right| \leq \frac{ \log n}{ n^{\mfa}} .
\eeq
(with the $Z_\mfa(X)$ defined implicitly in the obvious way). It suffices to prove the estimate for $Z_\mfa (X)$, after taking $\mfa >0$ sufficiently small. Define now,
\beq
F_1 (X) = F ( Z_\mfa (X) ) .
\eeq
In \eqref{eq:GFTcomplreal} below, the derivatives $\del_{ab}$ are with respect to the $(a, b)$ entries of the Hermitization of $X$, as $F_1$ depends on $X$ only through its Hermitization. Moreover, we recall the definition of $\sumab$ in \eqref{eqn:sumab-def}. 

\bel
Let $F_1$ be as above. Then, 
\beq
\label{eq:GFTcomplreal}
\frac{\d}{\d t} \ee[ F_1 (X_t) ] = - \frac{\e^{-t}}{2n}  \ee\left[ \sumab \del_{ab}^2 F_1 (X_t) \right] + \O ( n^{1/2+\eps+3 \mfa} ).
\eeq
for any $\eps >0$. 
\eel

\proof Let,
\beq
\frac{a_t}{n} := \ee[ \Re[X_{ab}(t)]^2] = \frac{1}{n} - \ee[ ( \Im[X_{ab}] (t)^2] = \frac{ 1 + \e^{-t}}{2n}
\eeq
Then, by  It\^{o}'s lemma, we have
\beq
\frac{\d}{ \d t} \ee[ F_1(X_t)] =  \sum_{a, b=1}^N \ee\left[ \frac{1}{4n} ( \del_{\Re[X_{ab}]}^2 + \del_{ \Im[X_{ab}]}^2 )F_1 (X_t) - \frac{\Re[X_{ab}]}{2} \del_{\Re[X]_{ab}} F_1  - \frac{ \Im[ X_{ab}]}{2} \del_{\Im[X]_{ab}} F \right].
\eeq
Due to a cumulant expansion, similar to e.g., \eqref{eq:writesum}, we have
\beq
\ee\left[\frac{\Re[X_{ab}]}{2} \del_{\Re[X]_{ab}} F_1  + \frac{ \Im[ X_{ab}]}{2} \del_{\Im[X]_{ab}} F_1 \right] = \ee\left[ \frac{a_t}{2n} \del_{\Re[X_{ab}]}^2 F_1 + \frac{1-a_t}{2n} \del_{\Im[X_{ab}]}^2 F_1 \right] + \O ( n^{\eps-3/2} )
\eeq
Here, the error in the cumulant expansion can be controlled by Lemma~\ref{lem:general-der-bd}, the derivatives of $F_1$ with respect to matrix elements are bounded above by some small powers of the derivatives of the $\XX_i$. However, by direct calculation,
\beq
\left| \del_{ij}^k \XX_i \right| \leq C_k \sup_{ \eta_1 \wedge \eta_3 \leq \eta \leq 1} |G_{ab} ( \i \eta) |^{2k+4} \leq n^{\eps}
\eeq
for any $\eps >0$ with overwhelming probability. Moreover, the derivatives are deterministically bounded by $n^{3k}$. The claim now follows once we note that we have shown,
\begin{align}
\frac{ \d}{ \d t} \ee[ F_1 (X_t) ] &= \frac{1-2a_t}{4n} \sum_{a, b=1}^n \ee\left[ (\del_{\Re[X_{ab}]}^2 - \del_{\Im[X_{ab}]}^2 ) F_1 (X_t) \right] + \O ( n^{\eps-3/2} ) \notag \\
&= \frac{1-2a_t}{2n} \sum_{a,b=1}^n \ee\left[ (\del_{X_{ab}}^2 + \del_{ \bar{X}_{ab}}^2) F_1 (X_t) \right] + \O ( n^{\eps-3/2} )  \notag \\
&= \frac{1-2a_t}{2n} \sumab \ee\left[ \del_{ab}^2 F_1 (X_t) \right] + \O ( n^{\eps-3/2} )
\end{align}
where $2 \del_{X_{ab}} = \del_{\Re[X_{ab}]} - \i \del_{ \Im[X_{ab}] }$ is the usual Wirtinger derivative. \qed

\

\begin{proof}[Proof of Proposition~\ref{prop:GFTmax-2}] \, By direct calculation, the derivative $\sumab \del_{ab}^2 F_1 (X)$ can be bounded above by a constant times $n^{2 \mfa}$ times the maximum of the absolute value of the following five terms:
\begin{align}
&\mathcal{E}_1:=n \eta_1 \sum_{a, b} \del_{ab}^2 \langle \Im G^{z_i} ( \i \eta_1 ) \rangle , \qquad \mathcal{E}_2:=(n \eta_1)^2 \sum_{a, b} ( \del_{ab} \langle \Im [ G^{z_i} (\i \eta_1 ) ] \rangle )  ( \del_{ab} \langle \Im [ G^{z_j} (\i \eta_1 ) ] \rangle ) \label{eqn:dd-1} \\
&\mathcal{E}_3:=\sum_{ab} \del_{ab}^2 \int_{\eta_2}^{n^{-c_2}} n \Im \langle G^{z_i} (\i u ) - M^{z_i} (\i u ) \rangle \d u, \\
&\mathcal{E}_4:=\sum_{ab} \left( \del_{ab} \int_{\eta_2}^{n^{-c_2}} n \Im \langle G^{z_i} (\i u ) - M^{z_i} (\i u ) \rangle \d u \right) \left( \del_{ab} \int_{\eta_2}^{n^{-c_2}} n \Im \langle G^{z_j} (\i u ) - M^{z_j} (\i u ) \rangle \d u \right) \\
& \mathcal{E}_5:= n\eta_1\sum_{ab}   ( \del_{ab} \langle \Im [ G^{z_i} (\i \eta_1 ) ] \rangle ) \left( \del_{ab} \int_{\eta_2}^{n^{-c_2}} n \Im \langle G^{z_j} (\i u ) - M^{z_j} (\i u ) \rangle \d u \right). \label{eqn:dd-4}
\end{align} 

Therefore, Proposition~\ref{prop:GFTmax-2} follows immediately from the estimates \eqref{eqn:cc-1} and \eqref{eq:GFTcomplreal} and the following, since we are assuming that $\Im[z_i] \geq n^{c_1-1/2}$ for all $z_i \in \hat{P}$ (we choose, e.g., $\mfa = \frac{c_1}{100}$). The proof of this lemma is presented below, after Corollary~\ref{cor:neededcor}.
\bel
\label{lem:estGFT}
Fix any small $\eps>0$. If $X$ is a matrix of type $M$, having real Gaussian component of size $n^{-\eps/100}$, then each of the terms in \eqref{eqn:dd-1}--\eqref{eqn:dd-4} is bounded in absolute value,with overwhelming probability, by
\beq
\label{eq:desiredbgftrc}
n^2\log n\left(\frac{n^{7\eps/3}}{n\min_i |\Im z_i|^2}+n^{-\eps/3}\right).
\eeq
\eel

\end{proof}

\qed

\subsection{Proof of the local law to estimate \eqref{eq:GFTcomplreal}}

The main result of this section is the following local law for real matrices with a large Gaussian component. The proof of this proposition is presented at the end of this section.

\begin{proposition}
\label{pro:realllawpro}
Fix any small $\xi,\omega>0$, and fix $|z_1-z_2|\le n^{-\omega}$. Let $X$ be a real i.i.d matrix having a Gaussian component of size $n^{-\xi}$. Then, with overwhelming probability, we have
\begin{equation}
\label{eq:desiredllawrealcase}
\big|\langle \big(G^{z_1}(\ii\eta_1)A_1 G^{z_2}(\ii\eta_2)-M^{z_1,z_2}(\ii\eta_1,A_1,\ii\eta_2)\big)A_2\rangle\big|\lesssim n^{10\xi}\left(\frac{1}{n\eta_*^{3/2}(|z_1-z_2|^2+|\eta_1|+|\eta_2|)^{1/2}}+\frac{1}{n^{3/2}\eta_*^{5/2}}\right),
\end{equation}
with $\eta_*:=|\eta_1|\wedge |\eta_2|$, uniformly in $\eta_*\ge n^{-1+100\xi}$, and matrices $\lVert A_i\rVert\lesssim 1$.

Furthermore, \eqref{eq:desiredllawrealcase} holds if $X$ is replaced with a matrix of type $M$, as defined in Definition~\ref{def:type-M}, such that its real component has a Gaussian component of size $n^{-\xi}$. In this case, we also have
\begin{equation}
\label{eq:newb}
\big|\langle \big(G^{z_1}(\ii\eta_1)A_1 (G^{z_2}(\ii\eta_2))^\mathfrak{t}-M_t^{z_1,\overline{z_2}}(\ii\eta_1,A_1,\ii\eta_2)\big)A_2\rangle\big|\lesssim n^{10\xi}\left(\frac{1}{n\eta_*^{3/2}(|z_1-\overline{z_2}|^2+|\eta_1|+|\eta_2|)^{1/2}}+\frac{1}{n^{3/2}\eta_*^{5/2}}\right),
\end{equation}
with $M_t^{z_1,\overline{z_2}}$ defined by $\partial_t\langle M_t^{z_1,\overline{z_2}}\rangle=\langle M_t^{z_1,\overline{z_2}}\rangle$ and $M_0^{z_1,\overline{z_2}}$
is from \eqref{eq:realdefm12} with $c_*(0)=1$.
Here $t$ denotes the size of the Gaussian component in Definition~\ref{def:type-M}.
\end{proposition}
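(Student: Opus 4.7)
The plan is to establish these multi-resolvent local laws by the method of characteristics, closely following the strategy of \cite{cipolloni2023mesoscopic} for the complex case, with the additional bookkeeping required by the transpose terms arising from the real symmetry class and the mixed-symmetry class. First I would realize $X$ as the time-$\tau$ output of the Ornstein--Uhlenbeck flow $\dif X_t=-\tfrac12 X_t\,\dif t+\dif B_t/\sqrt n$ with $\tau\asymp n^{-\xi}$, where $B_t$ is real (for the first statement) or mixed real-plus-complex (for the second statement), so that the required Gaussian component is generated dynamically. Associated to this flow and to each $z_i$ I would use characteristics $w_{i,t}$ solving $\partial_t w_{i,t}=-m_t^{z_i}(w_{i,t})-w_{i,t}/2$ with terminal conditions at time $\tau$ equal to the target $(w_1,w_2)$; these are the same characteristics used in Section~\ref{sec:dynamical-local-law}. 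The single-resolvent local law of Proposition~\ref{lem:precllaw} (valid for real i.i.d. matrices and extended in Lemma~\ref{prop:m-laws} to type-$M$) then provides the a priori input along the entire characteristic path.

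The core computation is the It\^o evolution of $X_t:=\langle(G_t^{z_1}(w_{1,t})A_1 G_t^{z_2}(w_{2,t})-M_t^{z_1,z_2}(w_{1,t},A_1,w_{2,t}))A_2\rangle$ along the characteristics. A direct calculation, together with the fact that $M_t^{z_1,z_2}$ satisfies a Sylvester-type equation obtained from \eqref{eq:bigMDE} by implicit differentiation, yields
\begin{equation*}
\dif X_t = \dif\mathcal{N}_t + \mathcal{L}_t[X_t]\,\dif t + \mathrm{Err}_t\,\dif t,
\end{equation*}
where $\mathcal{L}_t$ is a bounded linear operator with $\int_0^\tau \|\mathcal{L}_s\|\,\dif s=\O(1)$, $\mathcal{N}_t$ is a martingale whose quadratic variation involves four-resolvent products such as $\langle G_t^{z_1}A_1 G_t^{z_2}A_2 G_t^{z_1,*}\!\cdots\rangle$, and $\mathrm{Err}_t$ collects (i) the square of the single-resolvent error, (ii) terms of size $1/n$ produced by the third cumulant---which for \emph{real} Brownian motion contribute the extra transpose terms $\tfrac1n\tilsumij\langle G_t^{z_1}E_iG_t^{z_2}A_2(G_t^{z_1})^{\mft}E_j A_1\rangle$ and variants, and (iii) for the mixed type-$M$ case, rescaled versions of the same structure carrying the factor $c_*(t)^2=1-e^{-t}$ that modifies $M_t^{z_1,z_2}$ as in \eqref{eq:realdefm12}.

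The bounds on $\mathcal{N}_t$ and $\mathrm{Err}_t$ follow from Cauchy--Schwarz and the reduction inequality $\langle|G_t^z(w)|^4\rangle\lesssim \eta_t^{-3}\langle\Im G_t^z(w)\rangle$, plugged into the stability bound \eqref{eq:deb12mhop} for $M_t^{z_1,z_2}$. Concretely, the martingale contribution is of the target form $n^{-1}\eta_*^{-3/2}(|z_1-z_2|^2+|\eta_1|+|\eta_2|)^{-1/2}$, while the transpose correction term is bounded, after applying the entrywise local law \eqref{eqn:entrywise-ll} and orthogonal invariance of $E_i$, by $n^{-3/2}\eta_*^{-5/2}$, which is the second term on the right side of \eqref{eq:desiredllawrealcase}. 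I would implement this by defining a stopping time $\tau_*$ as the first time the target bound fails by a factor of $n^\xi$, integrating $\dif X_t$ up to $\tau_*$, and applying Gronwall together with the martingale representation estimate used in Lemmas~\ref{lem:ll-1}--\ref{lem:ll-2} to conclude $\tau_*=\tau$ with overwhelming probability.

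The main obstacle will be the closure of the system in the \emph{mixed symmetry} setting needed for \eqref{eq:newb}. There, the transpose observable $\langle G_t^{z_1}A_1(G_t^{z_2})^{\mft}A_2\rangle$ couples, via the real-component It\^o term, to its non-transposed cousin $\langle G_t^{z_1}A_1 G_t^{\overline{z_2}}A_2\rangle$ (since $(G_t^{z_2})^{\mft}=G_t^{\overline{z_2}}$ as matrices, so the natural deterministic approximation is $M_t^{z_1,\overline{z_2}}$ with coupling constant $c_*(t)^2$ reflecting only the real part of the variance). I would therefore set up a coupled ODE system for the pair of observables and their deterministic counterparts and close it simultaneously, exploiting the fact that the complex Ginibre part contributes only to the quadratic variation of the driving martingale and not to the drift equation for $M_t^{z_1,\overline{z_2}}$. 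The remaining details---estimating error terms, justifying the stability bound for the coupled Sylvester problem, and propagating rigidity through the characteristic flow---are routine modifications of \cite[Section 5]{cipolloni2023mesoscopic} once the algebraic structure of the It\^o expansion is correctly identified.
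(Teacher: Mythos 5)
Your overall strategy coincides with the paper's: realize the ensemble via the OU flow, propagate a two-resolvent local law along the $z$- and $w$-characteristics, and control the transpose-type drift terms introduced by the real (or mixed) symmetry class, concluding by Gronwall and a stopping time, all following \cite[Section 5]{cipolloni2023mesoscopic}. Two points, however, need repair before the plan could be executed as written.

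First, the assertion that the drift operator $\mathcal{L}_t$ satisfies $\int_0^\tau\lVert\mathcal{L}_s\rVert\,\dif s=\O(1)$ is not correct. Along the characteristics, $\lVert\mathcal{L}_s\rVert\asymp|\langle M^{z_{1,s},z_{2,s}}(\ii\eta_{1,s},I,\ii\eta_{2,s})\rangle|\asymp 1/(|z_1-z_2|^2+\eta_{*,s})$, so the integral is of order $\log n$ and the Gronwall exponential is not bounded but instead grows like
\[
\exp\left(\int_s^t 2|\langle M^{z_{1,u},z_{2,u}}(\ii\eta_{1,u},I,\ii\eta_{2,u})\rangle|\,\dif u\right)\;\lesssim\;\frac{\eta_{*,s}\sqrt{\eta_{1,s}\eta_{2,s}}}{\eta_{*,t}\sqrt{\eta_{1,t}\eta_{2,t}}}.
\]
The $\eta^{-3/2}$ scaling of the target bound is precisely designed so that this growth factor closes the Gronwall loop; a stopping-time bootstrap with a time-independent cap would not. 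You need to define the bootstrap observable $Y_t$ so that the target threshold carries the correct $\eta$-dependence (as in the quantity $Y_t$ used in Part~2 of the paper's argument and in \cite[Eq.~(5.34)]{cipolloni2023mesoscopic}).

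Second, the parenthetical $(G_t^{z_2})^{\mathfrak{t}}=G_t^{\overline{z_2}}$ is only true for \emph{real} $X$; it fails for a matrix of type~$M$, which has a complex Gaussian component. The deterministic approximation $M_t^{z_1,\overline{z_2}}$ with the modified coupling $c_*(t)^2=1-e^{-t}$ does not come from this identity; it comes from computing the It\^o drift of the transpose observable $\langle G_{1,t}A(G_{2,t})^{\mathfrak{t}}B\rangle$ and observing that only the real component of the variance enters the self-consistent equation. Relatedly, your proposed "coupled ODE system" is more than is needed: the paper organizes the type-$M$ case in two decoupled steps, first proving \eqref{eq:desiredllawrealcase} for type-$M$ by running a purely complex Brownian layer on top of a real GDE that already satisfies it (the $\beta=2$ specialization of the It\^o equation, with no new transpose terms), and then proving \eqref{eq:newb} by deriving the separate It\^o equation for the transpose observable, whose drift contains only transpose-type four-resolvent terms that can be bounded by Schwarz and the Ward identity in terms of the already-established non-transpose quantity. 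Re-organizing your argument this way removes the need to close a genuinely coupled system.
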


Additionally, we the following slightly weaker local law for general real i.i.d. matrices.
\begin{corollary} \label{cor:nozreal}
Let $X$ be a real i.i.d. matrix. Then, with overwhelming probability for any $\xi>0$, we have
\begin{equation}
\label{eq:hoplastb}
\big|\langle \big(G^{z_1}(\ii\eta_1)A_1 G^{z_2}(\ii\eta_2)-M^{z_1,z_2}(\ii\eta_1,A_1,\ii\eta_2)\big)A_2\rangle\big|\lesssim \frac{n^\xi}{n\eta_*^2},
\end{equation}
with $\eta_*:=|\eta_1|\wedge |\eta_2|$, uniformly in $\eta_*\ge n^{-1+10\xi}$, and matrices $\lVert A_i\rVert\lesssim 1$.
\end{corollary}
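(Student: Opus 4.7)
The plan is to deduce Corollary \ref{cor:nozreal} from Proposition \ref{pro:realllawpro} via a Green's function comparison (GFT) argument that removes the small Gaussian component required there. The key observation is that the bound in Corollary \ref{cor:nozreal} is strictly weaker than the one given by Proposition \ref{pro:realllawpro}: under the assumption $\eta_* \ge n^{-1+10\xi}$, the first term on the right-hand side of \eqref{eq:desiredllawrealcase} is bounded by $n^{10\xi'}/(n\eta_*^2)$ when $|z_1-z_2|^2 \le \eta_*$ (the worst case), and the second term satisfies $n^{10\xi'} n^{-3/2}\eta_*^{-5/2} \le n^{15\xi'}/(n\eta_*^2)$ since $\eta_*^{1/2} \ge n^{-1/2 + 5\xi}$. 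Applying Proposition \ref{pro:realllawpro} with $\xi' = \xi/100$ therefore already yields the bound $n^{\xi/5}/(n\eta_*^2)$ for any real i.i.d. matrix with a real Gaussian component of size at least $n^{-\xi/100}$, with a substantial gap to the target $n^\xi/(n\eta_*^2)$ that the GFT step can absorb.

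For the comparison step, I would use a standard moment-matching construction (cf.\ \cite[Lemma 3.4]{erdos2010universality}) to find, given any real i.i.d.\ matrix $X$, a real i.i.d.\ matrix $Y$ with Gaussian component of size $T = n^{-\xi/100}$ whose first three entrywise moments agree with those of $X$ and whose fourth moments differ by at most $O(Tn^{-2})$. Defining
\[
\mathcal{R}(W) := n\eta_*^2 \,\big|\langle \big(G^{z_1}_W(\ii\eta_1)A_1 G^{z_2}_W(\ii\eta_2)-M^{z_1,z_2}(\ii\eta_1,A_1,\ii\eta_2)\big)A_2\rangle\big|,
\]
where $G_W^z$ is the resolvent of the Hermitization of $W-z$, I would run a Lindeberg replacement argument of the same flavor as the one behind Proposition \ref{prop:ll-gfct} (see Appendix~\ref{a:ll-gfct}): interpolate between $Y$ and $X$ by swapping one entry at a time and expand via a cumulant expansion up to fourth order. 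The first three cumulants cancel by moment matching, the fourth-order remainder per swap is bounded by $Tn^{-2}$ times the sup of the fourth derivatives of $\mathcal{R}$, and these derivatives with respect to individual matrix entries are controlled deterministically by $\eta_*^{-C}$ and probabilistically by $n^{o(1)}$ using the single-resolvent isotropic local law \eqref{eqn:entrywise-ll}. Summing the per-swap errors over the $n^2$ entries gives a total GFT error of $Tn^{O(\xi/100)} \ll n^\xi$, and high moments of $\mathcal{R}$ are compared via $F(x)=x^p$, after which Markov's inequality with $p$ arbitrarily large upgrades the bound to overwhelming probability.

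To achieve uniformity in $(z_1,z_2,\eta_1,\eta_2,A_1,A_2)$, I would take a polynomial-cardinality net and extend by continuity: the observable $\langle (G^{z_1}A_1G^{z_2} - M^{z_1,z_2})A_2\rangle$ is Lipschitz in $(z,\eta)$ with constant polynomial in $\eta_*^{-1}$ (via resolvent identities and the Cauchy integral formula) and linear in $\|A_i\|$, so a union bound against the overwhelming-probability tails on a net of at most $n^{C_\xi}$ parameters suffices; for $A_i$ of operator norm at most one, decomposing via the singular value decomposition reduces matters to rank-one test matrices for which the net is straightforward. The main obstacle is the bookkeeping in the cumulant expansion: each derivative of $\mathcal{R}$ in a matrix entry produces a sum of products of resolvent matrix elements that must be controlled uniformly, and the number and structure of these products grows rapidly with the order of the derivative. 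However, this is essentially the same expansion already performed in Appendix~\ref{a:ll-gfct} for the single-resolvent setting, and the two-resolvent case introduces no new conceptual difficulty beyond a proliferation of Wick-like pairings that are all handled by the same isotropic local law.
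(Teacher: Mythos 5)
Your proposal is correct and follows essentially the same route as the paper: invoke Proposition~\ref{pro:realllawpro} to obtain the bound for real GDE matrices (noting that under $\eta_*\ge n^{-1+10\xi}$ the more refined estimate \eqref{eq:desiredllawrealcase} dominates the simpler target $n^\xi/(n\eta_*^2)$), then remove the Gaussian component by a Green's-function comparison in the style of Section~\ref{sec:gde-removal}. The paper's own proof is a two-line citation of precisely these steps, so your more detailed write-out of the Lindeberg/cumulant bookkeeping is a faithful expansion rather than a departure.

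One small remark worth being aware of: Proposition~\ref{pro:realllawpro} is stated under the side condition $|z_1-z_2|\le n^{-\omega}$, which the corollary does not carry; the complementary regime $|z_1-z_2|>n^{-\omega}$ (not discussed in your proposal, nor spelled out in the paper's one-line proof) must be handled separately, e.g.\ by \cite[Theorem 5.2]{cipolloni2023central} together with the simple observation that for large $|z_1-z_2|$ the target bound is far from tight. Also, for the GFT step the paper iterates the scheme behind Proposition~\ref{prop:ll-gfct} rather than comparing $p$-th moments directly, which sidesteps the growth of constants with $p$ that your high-moment route has to track; and the uniformity over $\|A_i\|\lesssim 1$ is most cleanly obtained from bilinearity in $(A_1,A_2)$ rather than an SVD-based net, since the singular values need not decay. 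None of these points are fatal to your argument, but they deserve a sentence each in a careful write-up.
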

\begin{proof}
For real matrices with a Gaussian component of size $n^{-\xi/10}$ \eqref{eq:hoplastb} follows by \eqref{eq:desiredllawrealcase}. Then, by a standard comparison argument (e.g. similar to the proof of Proposition~\ref{lem:precllaw} in Section~\ref{sec:gde-removal}) we can remove this Gaussian component at a price of a negligible error $n^{-\xi/10}/(n\eta_*^2)$. This concludes the proof.
\end{proof}
\qed

As an immediate corollary of Proposition~\ref{pro:realllawpro} we have the following bound for eigenvector overlaps.
\begin{corollary}

\label{cor:ovb}
Fix any small $\omega_d\ge 100\xi>0$, and let $X$ be a matrix of type $M$ as defined in Definition~\ref{def:type-M}, such that its real component has a Gaussian component of size $n^{-\xi}$. Pick $z_1,z_2$ such that $|z_1-z_2|\ge n^{-1/2+\omega_d}$, and let $H^{z_l}$, $l=1,2$, be the Hermitization of $X-z_l$. Let ${\bm w}_i^{z_l}=({\bm u}_i^{z_l}, \pm {\bm v}_i^{z_l})$ denote the eigenvectors of $H^{z_l}$. Then there exist $\omega_E\ge \omega_d/2$, $\omega_B>0$ such that, with overwhelming probability, we have
\begin{equation}
\label{eq:boundevectorsoverl}
\big|\langle {\bm u}_i^{z_1}, {\bm u}_j^{z_2}\rangle\big|+\big|\langle {\bm v}_i^{z_1}, {\bm v}_j^{z_2}\rangle\big|\le n^{5\xi-\omega_E},\qquad\quad 1\le i,j\le n^{\omega_B}.
\end{equation}
\end{corollary}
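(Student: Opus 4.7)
The plan is to adapt the proof of \cite[Lemma 7.9]{cipolloni2023central} from the complex i.i.d. setting to matrices of type $M$, using Proposition~\ref{pro:realllawpro} as the crucial new two-resolvent local law input. The strategy is to express each overlap $|\langle {\bm u}_i^{z_1}, {\bm u}_j^{z_2}\rangle|^2$ as a single nonnegative summand of a spectrally-expanded two-resolvent observable, then bound that observable by the local law.

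First, using the chiral block structure ${\bm w}_k^z = ({\bm u}_k^z, \pm {\bm v}_k^z)/\sqrt{2}$ of the eigenvectors of $H^z$, a direct computation of the spectral decomposition of $\Im G^z(\i \eta) = \sum_{k>0}\tfrac{\eta}{(\lambda_k^z)^2+\eta^2}\mathrm{diag}({\bm u}_k^z({\bm u}_k^z)^*,{\bm v}_k^z({\bm v}_k^z)^*)$ yields the identity
\[
\langle E_1\Im G^{z_1}(\i\eta_1)E_1\Im G^{z_2}(\i\eta_2)\rangle = \frac{1}{2n}\sum_{k,l>0}\frac{\eta_1\eta_2\,|\langle {\bm u}_k^{z_1},{\bm u}_l^{z_2}\rangle|^2}{((\lambda_k^{z_1})^2+\eta_1^2)((\lambda_l^{z_2})^2+\eta_2^2)},
\]
and an analogous identity for $\bm v$-overlaps with $E_1$ replaced by $E_2$. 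Choose $\eta_1=\eta_2=\eta$ slightly above $\max_{i\leq n^{\omega_B}}|\lambda_i^{z_l}|$, which by the type-$M$ rigidity in Lemma~\ref{prop:m-laws} is of order $n^{\xi+\omega_B-1}$. Then every summand on the right is nonnegative and the $(k,l)=(i,j)$ term alone contributes at least $|\langle {\bm u}_i^{z_1},{\bm u}_j^{z_2}\rangle|^2/(4\eta^2)$, so
\[
|\langle {\bm u}_i^{z_1},{\bm u}_j^{z_2}\rangle|^2 \lesssim n\eta^2\,\langle E_1\Im G^{z_1}(\i\eta)E_1\Im G^{z_2}(\i\eta)\rangle.
\]

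To bound the right-hand side, write $\Im G=(G-G^*)/(2\i)$ and apply Proposition~\ref{pro:realllawpro} with $A_1=A_2=E_1$. The stability estimate \eqref{eq:deb12mhop} gives $\|M^{z_1,z_2}(\i\eta,E_1,\i\eta)\|\lesssim (|z_1-z_2|^2+\eta)^{-1}\lesssim n^{1-2\omega_d}$, while the error from \eqref{eq:desiredllawrealcase} contributes $n^{10\xi}\bigl(n^{-1}\eta^{-3/2}|z_1-z_2|^{-1}+n^{-3/2}\eta^{-5/2}\bigr)$. Inserting these bounds,
\[
|\langle {\bm u}_i^{z_1},{\bm u}_j^{z_2}\rangle|^2 \lesssim \frac{n\eta^2}{|z_1-z_2|^2} + n^{10\xi}\Bigl(\frac{\eta^{1/2}}{|z_1-z_2|}+\frac{1}{n^{1/2}\eta^{1/2}}\Bigr),
\]
and choosing $\eta$ just above the admissible lower bound $n^{-1+100\xi}$ together with $\omega_B$ a small multiple of $\xi$, one may pick a value of $\omega_E$ in the range $[\omega_d/2,\omega_d-O(\xi)]$ (whose non-emptiness is guaranteed by the hypothesis $\omega_d\geq 100\xi$) such that all three terms on the right are dominated by $n^{10\xi-2\omega_E}$. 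The bound on $|\langle {\bm v}_i^{z_1},{\bm v}_j^{z_2}\rangle|$ follows identically by replacing $E_1$ with $E_2$ throughout.

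The main obstacle is the delicate balance of parameters: the two error terms in Proposition~\ref{pro:realllawpro} have opposite $\eta$-dependence so $\eta$ can only be tuned to a narrow window near $n^{-1+100\xi}$, while the deterministic leading term $n\eta^2/|z_1-z_2|^2$ imposes that $\omega_E$ cannot exceed $\omega_d-\mathrm{const}\cdot\xi$. Reconciling these constraints with the lower bound $\omega_E\geq\omega_d/2$ is the sole reason the hypothesis $\omega_d\geq 100\xi$ is needed (with a sufficiently generous constant), and verifying the arithmetic of these exponents is the only nontrivial step; the overall structure of the argument parallels exactly the complex case in \cite[Lemma 7.9]{cipolloni2023central}, with the new input being only the type-$M$ local law of Proposition~\ref{pro:realllawpro}.
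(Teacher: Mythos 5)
Your high-level strategy is in the spirit of what the paper does: the paper simply defers to the proof of \cite[Lemma~7.9]{cipolloni2023central} with Proposition~\ref{pro:realllawpro} as the new two--resolvent input, and that reference argument does indeed diagonalise $\Im G^{z_l}$, isolate the $(i,j)$ overlap as a nonnegative summand of $\langle E_1 \Im G^{z_1}(\i\eta)E_1\Im G^{z_2}(\i\eta)\rangle$, and then invoke the local law. Your spectral identity is correct, and so is the inequality $|\langle {\bm u}_i^{z_1},{\bm u}_j^{z_2}\rangle|^2 \lesssim n\eta^2\,\langle E_1 \Im G^{z_1}E_1\Im G^{z_2}\rangle$ once $\eta$ dominates the relevant eigenvalues. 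However, there are two gaps, the second of which is fatal.

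First, Proposition~\ref{pro:realllawpro} is stated only for $|z_1-z_2|\le n^{-\omega}$; it simply does not cover all separations $|z_1-z_2|\ge n^{-1/2+\omega_d}$. The paper's own proof explicitly splits into the regime $|z_1-z_2|\le n^{-\omega}$, where Proposition~\ref{pro:realllawpro} is used, and the complementary regime $|z_1-z_2|>n^{-\omega}$, where the bound is quoted as already established in \cite[Lemma~7.9]{cipolloni2023central}. Your proposal applies Proposition~\ref{pro:realllawpro} uniformly without noting this restriction, so it has no content in the second regime.

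Second, and more importantly, the exponent bookkeeping you describe as the ``only nontrivial step'' does not in fact close, and the asserted admissible range $\omega_E\in[\omega_d/2,\omega_d-\OO(\xi)]$ is unreachable from your own displayed inequality. Write $\eta=n^{-1+c}$ with $c\ge 100\xi$ (forced by the admissibility condition in Proposition~\ref{pro:realllawpro}) and $|z_1-z_2|=n^{-1/2+\omega_d}$ at the worst case. The three summands in your final display then have exponents $2c-2\omega_d$, $10\xi+c/2-\omega_d$ and $10\xi-c/2$. Requiring each to be at most $10\xi-2\omega_E$ yields $\omega_E\le 5\xi-c+\omega_d$, $\omega_E\le\omega_d/2-c/4$, and $\omega_E\le c/4$. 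Eliminating $c$ between the first and third gives $\omega_E\le \xi+\omega_d/5$, which is \emph{strictly below} $\omega_d/2$ whenever $\omega_d>10\xi/3$; your hypothesis $\omega_d\ge100\xi$ places you firmly in that regime. Concretely, the second error term $n^{-3/2}\eta_*^{-5/2}$ in Proposition~\ref{pro:realllawpro}, after multiplying by the $n\eta^2$ prefactor, forces a floor of order $n^{10\xi-c/2}$, and enlarging $c$ to suppress it makes the deterministic term $n^{2c-2\omega_d}$ too big. No choice of $\eta$ (nor of $\omega_B$, which only enters through the weaker condition $c\ge\omega_B$) produces $\omega_E\ge\omega_d/2$ from this bound alone. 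To match the stated rate you would need either a sharper error in the type-$M$ local law, or a different decomposition that avoids the $\eta_*^{-5/2}$ term; as written, your argument proves at best $\omega_E\lesssim \omega_d/5+\xi$.
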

\begin{proof}
Let $\omega>0$ from Proposition~\ref{pro:realllawpro}. In the regime $|z_1-z_2|\le n^{-\omega}$, given \eqref{eq:desiredllawrealcase}, the proof of this corollary is completely analogous to the proof of \cite[Lemma 7.9]{cipolloni2023central}. In the complementary regime $|z_1-z_2|> n^{-\omega}$ the bound \eqref{eq:boundevectorsoverl} was already proven in \cite[Lemma 7.9]{cipolloni2023central}.
\end{proof}
\qed

\

To prove Lemma~\ref{lem:estGFT}, we need a bound on $\langle G^{z_1}(\ii\eta_1)G^{z_2}(\ii\eta_2)\rangle$ also for $\eta_i$ below $1/n$:
\begin{corollary}
\label{cor:neededcor}
Fix any small $\epsilon>0$. Let $X$ be a matrix with real Gaussian component of size $n^{-\eps/100}$ such that its Hermitization satisfies \eqref{eq:newb}, then for any large $C>0$, for $l_1, l_2 \geq 1$, we have
\begin{equation}
\label{eq:bbelowscale}
\big|\langle G^{z_1}(\ii\eta_1)^{l_1} A_1 (G^{z_2}(\ii\eta_2)^{l_2})^\mathfrak{t} A_2\rangle\big|\lesssim \frac{1}{|\eta_1|^{l_1-1}|\eta_2|^{l_2-1}}\left(\frac{n^{7\eps/3}}{|z_1-\overline{z_2}|^2}+n^{1-\epsilon/3}\right),
\end{equation}
with overwhelming probability uniformly in $|\eta_i|\ge n^{-1}(\log n)^{-C}$, and matrices $\lVert A_i\rVert\lesssim 1$. Similarly, if $X$ satisfies \eqref{eq:desiredllawrealcase} then \eqref{eq:bbelowscale} holds with $(G^{z_2}(\ii\eta_2)^{l_2})^\mathfrak{t}$ replaced with $G^{z_2}(\ii\eta_2)^{l_2}$, and $\overline{z_2}$ in the RHS replaced with $z_2$.
\end{corollary}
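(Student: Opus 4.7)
\textbf{Proof plan for Corollary~\ref{cor:neededcor}.} I sketch the argument for case 1 (assuming \eqref{eq:newb}); case 2 is identical with $\overline{z_2}$ replaced by $z_2$ in the bound. Set $\tilde\eta := n^{-1+100\xi}$ with $\xi := \eps/720$, so that \eqref{eq:newb} applied at $\eta_1 = \eta_2 = \tilde\eta$ yields an error comfortably beneath both terms on the target right-hand side (using also $\|M^{z_1,\bar z_2}\|_{\mathrm{op}} \lesssim |z_1-\bar z_2|^{-2}$ from \eqref{eq:deb12mhop} for the deterministic contribution).

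The first step is to reduce to $l_1 = l_2 = 1$. Using $G^z(\i\eta)^l = \i^{-(l-1)}[(l-1)!]^{-1}\partial_\eta^{l-1} G^z(\i\eta)$ and Cauchy's integral formula on a circle of radius $\eta/2$ about $\i\eta$, each higher-power resolvent is expressed as a contour integral of $G^z(w)$ at $w$ with $|\Im w| \geq \eta/2$; the operator-norm bound $\|G^z(w)\|_{\mathrm{op}} \leq 2/\eta$ on this contour then produces the prefactor $\eta_j^{-(l_j-1)}$ and reduces the assertion to the $l_1 = l_2 = 1$ case at slightly perturbed scales of the same order.

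For $l_1=l_2=1$, I bridge from $\eta_i \geq n^{-1}(\log n)^{-C}$ up to $\tilde\eta$ by integrating the identity $\partial_s G^z(\i s) = \i G^z(\i s)^2$:
\begin{equation*}
\langle G^{z_1}(\i\eta_1) A_1 G^{z_2}(\i\eta_2)^{\mathfrak{t}} A_2\rangle - \langle G^{z_1}(\i\tilde\eta) A_1 G^{z_2}(\i\tilde\eta)^{\mathfrak{t}} A_2\rangle = \i \int_{\eta_1}^{\tilde\eta}\langle G^{z_1}(\i s)^2 A_1 G^{z_2}(\i\eta_2)^{\mathfrak{t}} A_2\rangle\, ds + \text{(symm.\ in $\eta_2$)}.
\end{equation*}
The boundary term at $(\tilde\eta,\tilde\eta)$ is controlled directly by the hypothesis \eqref{eq:newb}, producing the deterministic piece $\lesssim n^{7\eps/3}/|z_1-\bar z_2|^2$ and a fluctuation $\lesssim n^{1-\eps/3}$. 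Each integrand is estimated via Cauchy–Schwarz on the normalized trace inner product, combined with the Ward identity $|G(\i s)|^2 = s^{-1}\Im G(\i s)$ and the monotonicity that $s \mapsto s\Im\langle G(\i s)\rangle$ is nondecreasing in $s$: the averaged local law at $\tilde\eta$ then yields $\langle |G^{z_1}(\i s)|^2\rangle \lesssim \tilde\eta/s^2$ for $s \leq \tilde\eta$, with an isotropic analog after sandwiching with $A_1, A_2$.

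\emph{Main obstacle.} Directly applying Cauchy–Schwarz to the integrand gives a bound $\lesssim \tilde\eta/(\eta_1\eta_2) \asymp n^{1 + O(\xi)}$, which exceeds the target $n^{1-\eps/3}$. To circumvent this, I will split each interior resolvent as $G^{z_j}(\i s)^{\mathfrak{t}} = M^{\bar z_j}(\i s) + (G^{z_j}(\i s)^{\mathfrak{t}} - M^{\bar z_j}(\i s))$: the deterministic $M$-piece reduces the integrand to a single-resolvent observable whose estimate inherits the $|z_1-\bar z_2|^{-2}$-decay from \eqref{eq:deb12mhop}, while the fluctuation piece is small by the isotropic local law at scale $\tilde\eta$ combined with the monotonicity above. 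This splitting, together with verifying that the deterministic kernels $M^{z_1,\bar z_2}$ at scales $\tilde\eta$ and $\eta_i$ agree up to $\O(|z_1-\bar z_2|^{-2})$ (which follows from continuity of $M^{z_1,\bar z_2}$ in $\eta$ and \eqref{eq:deb12mhop}), is the technical core of the argument.
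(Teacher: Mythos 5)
Your overall scaffolding (bound at $\tilde\eta\asymp n^{-1+O(\eps)}$ via \eqref{eq:newb}, bridge downward by integrating $\partial_s G(\i s)=\i G(\i s)^2$, exploit monotonicity of $s\mapsto s\,\Im G(\i s)$) tracks the paper's proof correctly, and you correctly identify that a naive Cauchy--Schwarz on the integrand yields the too-large bound $\tilde\eta/(\eta_1\eta_2)$. However, your proposed repair fails, and this is the technical heart of the argument, so the gap is genuine.

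The problem is with the splitting $G^{z_j}(\i s)^{\mathfrak t}=M^{\bar z_j}(\i s)+(G^{z_j}(\i s)^{\mathfrak t}-M^{\bar z_j}(\i s))$ at a scale $s$ below $\tilde\eta$. First, the deterministic piece you insert is the \emph{single}-resolvent $M^{\bar z_j}(\i s)$, which has norm $O(1)$ and carries no $|z_1-\bar z_2|^{-2}$ decay whatsoever; that decay is a property of the \emph{two}-resolvent kernel $M^{z_1,\bar z_2}(\cdot,A,\cdot)$ via \eqref{eq:deb12mhop}, not of $M^z$. So replacing one resolvent by $M^{\bar z_j}$ reduces the integrand to a single-resolvent observable $\langle G^{z_1}(\i s)^2 A_1 M^{\bar z_j} A_2\rangle$ whose sharp bound is again $\lesssim\tilde\eta/s^2$ — no improvement. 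Second, and more fundamentally, the fluctuation $G^{z_j}(\i s)-M^{z_j}(\i s)$ for $s$ below the local law threshold $\tilde\eta\asymp n^{-1+O(\eps)}$ is \emph{not} small: once $s$ drops below the eigenvalue spacing, $\|G(\i s)\|$ can be of order $1/s$ while $\|M(\i s)\|=O(1)$. The isotropic law at scale $\tilde\eta$ says nothing about the fluctuation at scale $s<\tilde\eta$, and the monotonicity you invoke is an operator inequality for the \emph{positive} operator $\Im G(\i s)$; it gives no control over the signed, non-positive quantity $G-M$. So neither half of your splitting can be bounded as claimed.

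What makes the paper's proof work is that it never performs the Cauchy--Schwarz that separates $G^{z_1}$ from $G^{z_2}$. Instead, the spectral decomposition identity \eqref{eq:specdec12} rewrites the high-power integrand directly as $\frac{1}{\tau^{l_1}\eta_2^{l_2-1}}\langle \Im G^{z_1}(\i\tau)\,A\,|G^{z_2}(\i\eta_2)|\,A^*\rangle$, keeping $G^{z_1}$ and $G^{z_2}$ inside a single trace of positive operators. Because $\Im G^{z_1}$ and $|G^{z_2}|$ are both positive semidefinite, the operator monotonicity $\Im G^{z_1}(\i\tau)\le(\hat\eta/\tau)\,\Im G^{z_1}(\i\hat\eta)$ applies directly to the whole trace, lifting the first resolvent to the good scale $\hat\eta$ while retaining the cross-term structure. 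Then the $|z_1-\bar z_2|^{-2}$ decay is recovered from the two-resolvent law for $\Im G\,A\,|G|\,A^*$ at scale $\ge\hat\eta$, which is precisely why the paper first establishes the $|G|$-variant of \eqref{eq:newb} via the integral representation \eqref{eq:absintrep}. The argument is iterated once (first one $\eta_i$, then both) and general $A_1,A_2$ are handled by Cauchy--Schwarz only at the very last step, where both scales are already controlled. (Your Cauchy-contour reduction to $l_1=l_2=1$ also requires the two-resolvent laws at spectral parameters with nonzero real part, which \eqref{eq:newb} does not supply as stated; this is a secondary, likely fixable, issue, whereas the one above is not.)
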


\

\proof[Proof of Lemma~\ref{lem:estGFT}] \, We now show that all the terms in \eqref{eqn:dd-1}--\eqref{eqn:dd-4} are of a form so that the bound \eqref{eq:bbelowscale} can be applied. To keep the presentation short we neglect the fact that all the terms in \eqref{eqn:dd-1}--\eqref{eqn:dd-4} contain the imaginary part of the resolvent, as we can write
\[
2\i\Im G(\i\eta)=G(\i\eta)-G^*(\i\eta)=G(\i\eta)-G(-\i\eta),
\]
and the estimate \eqref{eq:bbelowscale} is not sensitive to $\eta_i$ being positive or negative. We thus write
\begin{equation}
\begin{split}
\mathcal{E}_1&=2n\eta_1\tilde{\sum}_{ij}\langle (G^{z_1}(\i\eta_1))^\mathfrak{t}E_i(G^{z_2}(\i\eta_1))^2E_j\rangle \\
\mathcal{E}_2&=\frac{n\eta_1^2}{2}\tilde{\sum}_{ij}\langle [(G^{z_1}(\i\eta_1))^2]^\mathfrak{t}E_i(G^{z_2}(\i\eta_1))^2E_j\rangle \\
\mathcal{E}_3&=2n\tilde{\sum}_{ij}\int_{\eta_2}^{n^{-c_2}}\langle (G^{z_1}(\i u))^\mathfrak{t}E_i(G^{z_1}(\i u))^2E_j\rangle\,\dif u  \\
\mathcal{E}_4&=\frac{n}{2}\tilde{\sum}_{ij}\int\int_{\eta_2}^{n^{-c_2}}\langle [(G^{z_1}(\i u))^2]^\mathfrak{t}E_i(G^{z_2}(\i v))^2E_j\rangle\,\dif u\dif v \\
\mathcal{E}_5&=\frac{n\eta_1}{2}\tilde{\sum}_{ij}\int_{\eta_2}^{n^{-c_2}}\langle [(G^{z_1}(\i\eta_1))^2]^\mathfrak{t}E_i(G^{z_2}(\i u))^2E_j\rangle\, \dif u.
\end{split}
\end{equation}
Using \eqref{eq:bbelowscale}, we immediately get \eqref{eq:desiredbgftrc}.

\qed

\

\begin{proof}[Proof of Corollary~\ref{cor:neededcor}] \, To keep the presentation simple we only present the proof that the estimate \eqref{eq:desiredllawrealcase} implies \eqref{eq:bbelowscale} but with $(G^{z_2} ( \i \eta_2 )^{l_2} )^\mft$  on the LHS replaced with $G^{z_2} ( \i \eta_2 )^{l_2}$, and with with $\overline{z_2}$ replaced by $z_2$ on the RHS. The proof of the fact that \eqref{eq:newb} implies \eqref{eq:bbelowscale} is completely analogous and so omitted. We also assume that $\eta_1, \eta_2 >0$, the other cases being identical.

First, we show that if \eqref{eq:desiredllawrealcase} holds then the same local law holds if one of the $G$'s is replaced by $|G|$'s, after possibly multiplying the RHS of \eqref{eq:desiredllawrealcase} by $\log n$. For this purpose we use the integral representation \cite[Eq. (5.4)]{cipolloni2022optimal}
\begin{equation}
\label{eq:absintrep}
\big|G^z(\ii\eta)\big|=\frac{2}{\pi}\int_0^\infty \Im G^z(\ii\sqrt{\eta^2+v^2})\,\frac{\dif v}{\sqrt{\eta^2+v^2}}.
\end{equation}
Defining $\eta_v:=\sqrt{\eta_1^2+v^2}$, by \cite[Eq. (5.6)]{cipolloni2022optimal}, we write the deterministic approximation $\widetilde{M}(\i\eta_1,A,\eta_2)$ of $ |G^{z_1}(\ii\eta_1)|A G^{z_2}(\ii\eta_2)$ as
\[
\widetilde{M}(\i\eta_1,A,\eta_2):=\frac{1}{\pi \ii}\int_0^\infty \frac{\widehat{M}^{z_1,z_2}(\ii \eta_v,A,\ii\eta_2)}{\eta_v}\,\dif v,
\]
with
\[
2\ii \widehat{M}^{z_1,z_2}(\ii\eta_v,A,\ii\eta_2):=M^{z_1,z_2}(\ii\eta_v,A,\ii\eta_2)-M^{z_1,z_2}(-\ii\eta_v,A,\ii\eta_2).
\]
Note that by \eqref{eq:deb12mhop}, we have (neglecting $\log n$--factors)
\begin{equation}
\label{eq:btildem}
\left\lVert\widetilde{M}(\i\eta_1,A,\eta_2)\right\rVert\lesssim \frac{\lVert A\rVert}{|z_1-z_2|^2+\eta_1+\eta_2}
\end{equation}

We thus have
\begin{equation}
\begin{split}
\label{eq:startcomp}
&\langle \big(|G^{z_1}(\ii\eta_1)|A_1 G^{z_2}(\ii\eta_2)-\widetilde{M}^{z_1,z_2}(\ii\eta_1,A_1,\ii\eta_2)\big)A_2\rangle \\
&\qquad\qquad\quad= \int_0^\infty \langle \big(\Im G^{z_1}(\ii\eta_v)A_1 G^{z_2}(\ii\eta_2)-\widehat{M}^{z_1,z_2}(\ii\eta_v,A_1,\ii\eta_2)\big)A_2\rangle \,\frac{\dif v}{\eta_v} \\
&\qquad\qquad\quad= \int_0^{n^{100}} \langle \big(\Im G^{z_1}(\ii\eta_v)A_1 G^{z_2}(\ii\eta_2)-\widehat{M}^{z_1,z_2}(\ii\eta_v,A_1,\ii\eta_2)\big)A_2\rangle \,\frac{\dif v}{\eta_v}+\mathcal{O}(n^{-10}). \\
& \qquad\qquad\quad = \O \left[ ( \log n) n^{\eps/10} \left( \frac{1}{ n \eta_*^{3/2} ( |z_1 - z_2|^2 + \eta_1 + \eta_2 )} + \frac{1}{n^{3/2} \eta_*^{5/2}} \right) \right]
\end{split}
\end{equation}
We point out that to remove the regime $\eta_v\ge n^{100}$ in \eqref{eq:startcomp} we used the norm bound $\lVert G\rVert\le 1/\eta$ for the resolvents, and \eqref{eq:deb12mhop} for the deterministic term. In the last inequality we used \eqref{eq:desiredllawrealcase} for the integrand in the third line of \eqref{eq:startcomp} and that $\int \dif v/\eta_v\lesssim \log n$.

We now show that given  \eqref{eq:startcomp} for $\eta_1,\eta_2\gg 1/n$, we can extend it below $1/n$. We will achieve this in two steps: we first prove that a bound of the form \eqref{eq:bbelowscale} holds when one $\eta_i\gg 1/n$ and the other one is smaller than $1/n$, and then, using this new bound as an input, that \eqref{eq:bbelowscale} holds when both $\eta_1$ and $\eta_2$ are smaller than $1/n$. We first prove this when $A_1=A$, $A_2=A^*$, and then we show that this easily implies the general case.

We now may assume that \eqref{eq:desiredllawrealcase} and \eqref{eq:startcomp} hold for $\eta_*\ge n^{-1+\eps}$. Assume that $(\log n)^{-C}n^{-1}\le \eta_1\le n^{-1+\eps}=:\hat{\eta}$ and that $\eta_2 \geq \hat{\eta}$. We have the general estimate,
\begin{equation}
\label{eq:specdec12}
\begin{split}
\big|\langle G^{z_1}(\ii\tau)^{l_1+1} A G^{z_2}(\ii\eta_2)^{l_2} A^* \rangle\big|&=\left|\frac{1}{2n}\sum_{i,j}\frac{|\langle {\bm w}_i^{z_1},A {\bm w}_j^{z_2}\rangle|^2}{(\lambda_i^{z_1}-\ii\tau)^{l_1+1}(\lambda_j^{z_2}-\ii\eta_2)^{l_2}}\right|  \\
&\lesssim\frac{1}{2n\tau^{l_1-1}\eta_2^{l_2-1}}\sum_{i,j}\frac{|\langle {\bm w}_i^{z_1},A {\bm w}_j^{z_2}\rangle|^2}{|\lambda_i^{z_1}-\ii\eta_1|^2|\lambda_j^{z_2}-\ii\eta_2|} \\
&= \frac{1}{\tau^{l_1}\eta_2^{l_2-1}}\langle \Im G^{z_1}(\ii\tau)A |G^{z_2}(\ii\eta_2)| A^* \rangle.
\end{split}
\end{equation}
Applying this with $\eta_1\le\tau\le \hat{\eta}$ to the integral on the RHS of the first line below we have,
\begin{align}
\label{eq:compalmthr}
\big|\langle G^{z_1}(\ii\eta_1)^{l_1} A G^{z_2}(\ii\eta_2)^{l_2} A^*\rangle- &\langle G^{z_1}(\ii\hat{\eta})^{l_1} A G^{z_2}(\ii\eta_2)^{l_2} A^* \rangle\big|=\left|\int_{\eta_1}^{\hat{\eta}} \langle G^{z_1}(\ii\tau)^{l_1+1} A G^{z_2}(\ii\eta_2)^{l_2} A^*\rangle\,\dif \tau\right| \notag \\
&\lesssim \frac{1}{\eta_2^{l_2-1}}\int_{\eta_1}^{\hat{\eta}} \frac{1}{\tau^{l_1}}\langle \Im G^{z_1}(\ii\tau)  A |G^{z_2}(\ii\eta_2)| A^* \rangle\,\dif \tau \notag \\
&\lesssim n^\eps(\log n)^{C+1}\frac{1}{\eta_1^{l_1-1}\eta_2^{l_2-1}}\langle \Im G^{z_1}(\ii \hat{\eta}) A |G^{z_2}(\ii\eta_2)| A^* \rangle \notag \\
&\lesssim \frac{n^\eps (\log n)^{C+1} n^{\eps/10}}{\eta_1^{l_1-1}\eta_2^{l_2-1}} \left(\frac{1}{|z_1-z_2|^2}+\frac{n^{1/2-3\eps/2}}{|z_1-z_2|}+n^{1-5\eps/2}\right) \notag \\
&\lesssim \frac{n^\eps (\log n)^{C+1} n^{\eps/10}}{\eta_1^{l_1-1}\eta_2^{l_2-1}} \left(\frac{1}{|z_1-z_2|^2}+n^{1-5\eps/2}\right),
\end{align}
where in the last line we used a Schwarz inequality. Here, in the first inequality we used \eqref{eq:specdec12}, in the second inequality we used that $\tau\mapsto \tau \Im G(\ii\tau)$ is increasing as an operator, and in the third inequality we used \eqref{eq:btildem} and  \eqref{eq:startcomp}. Now, note that the second term in the LHS of \eqref{eq:compalmthr} can be incorporated into the RHS by \eqref{eq:startcomp}, \eqref{eq:specdec12}, and \eqref{eq:btildem}. We therefore conclude that
\beq \label{eqn:below-scale-1}
\left| \langle G^{z_1} (\i \eta_1)^{l_1} A G^{z_2} ( \i \eta_2 )^{l_2} A^* \rangle \right| \lesssim \frac{n^\eps (\log n)^{C+1} n^{\eps/10}}{\eta_1^{l_1-1}\eta_2^{l_2-1}} \left(\frac{1}{|z_1-z_2|^2}+n^{1-5\eps/2}\right)
\eeq
holds when only $\eta_1$ is below the scale $\hat{\eta}$ but $\eta_2 \geq \hat{\eta}$. 

We now consider the case when both $\eta_1,\eta_2$ are below the scale $\hat{\eta}$. Proceeding as in \eqref{eq:compalmthr}, we find that
\begin{equation}
\label{eq:2belowscl}
\big|\langle G^{z_1}(\ii\eta_1)^{l_1}A G^{z_2}(\ii\eta_2)^{l_2}A^*\rangle- \langle G^{z_1}(\ii\hat{\eta})^{l_1}A G^{z_2}(\ii\eta_2)^{l_2} A^* \rangle\big| \lesssim n^\eps(\log n)^{C+1}\frac{1}{\eta_1^{l_1-1}\eta_2^{l_2-1}}\langle \Im G^{z_1}(\ii \hat{\eta}) A |G^{z_2}(\ii\eta_2)| A^* \rangle,
\end{equation}
where we used again the monotonicity of $\tau\mapsto \tau \Im G(\ii\tau)$ as an operator. Note that in the RHS of \eqref{eq:2belowscl} only $\eta_2$ is below the scale. Therefore, by applying \eqref{eq:absintrep} we can use the estimate \eqref{eqn:below-scale-1} to estimate the RHS of \eqref{eq:2belowscl}, finding,
\begin{equation} \label{eqn:below-scale-2}
\begin{split}
\big|\langle G^{z_1}(\ii\eta_1)^{l_1} A G^{z_2}(\ii\eta_2)^{l_2} A^* \rangle\big| & \lesssim \frac{ [n^\eps (\log n)^{C+1}]^2 n^{\eps/10} }{ \eta_1^{l_1-1} \eta_2^{l_2-1} }\left(\frac{1}{|z_1-z_2|^2}+n^{1-5\eps/2}\right) \\
& \le  \frac{1}{ \eta_1^{l_1-1} \eta_2^{l_2-1} }\left(\frac{n^{2\eps+\eps/9}}{|z_1-z_2|^2}+n^{1-\eps/2+\eps/9}\right).
\end{split}
\end{equation}
where we also used \eqref{eqn:below-scale-1} to estimate the second term on the LHS of \eqref{eq:2belowscl}. We re--iterate that the conclusion of all of the above argument is that \eqref{eqn:below-scale-2} holds for $| \eta_i| \geq n^{-1} ( \log n )^{-C}$. 

We now turn to the final part of the proof, concluding that \eqref{eq:bbelowscale} holds for general $A_1, A_2$. First, note that by applying \eqref{eq:absintrep} twice, we see that the estimate \eqref{eqn:below-scale-2} holds also in the case that $l_1=l_2 =1$ and $G^{z_i } ( \i \eta_i)$ are both replaced by $| G^{z_i} ( \i \eta_i ) |$ on the LHS. Applying this in the second inequality below, we find,
\[
\begin{split}
\big|\langle G^{z_1}(\ii\eta_1)^{l_1} A_1 G^{z_2}(\ii\eta_2)^{l_2} A_2\rangle\big|&\le \frac{\prod_{i=1}^2\langle |G^{z_1}(\ii\eta_1)| A_i |G^{z_2}(\ii\eta_2)| A_i^*\rangle^{1/2}}{\eta_1^{l_1-1}\eta_2^{l_2-1}} \\
&\lesssim \frac{(\log n)^2}{\eta_1^{l_1-1}\eta_2^{l_2-1}}\left(\frac{n^{2\eps+\eps/9}}{|z_1-z_2|^2}+n^{1-\eps/2+\eps/9}\right),
\end{split} .
\]
The first inequality followed from an argument similar to \eqref{eq:specdec12}. This concludes the proof.

\qed

\end{proof}

\

\begin{proof}[Proof of Proposition~\ref{pro:realllawpro}] \, We first prove \eqref{eq:desiredllawrealcase} for real i.i.d. matrices and then at the end of the proof we describe the very minor differences to obtain the same result for matrices of type $M$. The proof of this proposition follows very closely \cite[Section 5]{cipolloni2023mesoscopic}; in fact the only difference is that in \cite[Section 5]{cipolloni2023mesoscopic} it was considered the evolution of a certain initial matrix along complex Brownian dynamics, instead in the current case we will consider real Brownian dynamics. First of all we notice that if $1\gtrsim |\eta_i|\ge n^{-\xi}$, then \eqref{eq:desiredllawrealcase} holds by \cite[Theorem 5.2]{cipolloni2023central}. If instead $|\eta_i|\gtrsim 1$ this follows from computations analogous to \cite[Appendix B]{cipolloni2022optimal}. For this reason, to prove \eqref{eq:desiredllawrealcase}, in the reminder of the proof we use a dynamical argument to show that this bound can in fact be propagated down to $\eta_*\ge n^{-1+100\xi}$ (see \cite[Proposition 5.3]{cipolloni2023mesoscopic} for the complex case).

Consider the Ornstein--Uhlenbeck flow
\[
\dif X_t=-\frac{1}{2}X_t \dif t+\frac{\dif B_t}{\sqrt{n}}, \qquad X_0=X,
\]
with characteristics
\begin{equation}
\label{eq:charapp}
\partial_t \eta_t=-\Im m^{z_t}(\ii\eta_t)-\frac{\eta_t}{2}, \qquad\quad \partial_t z_t=-\frac{z_t}{2}.
\end{equation}
Here $(B_t)_{ij}$ are i.i.d. real Brownian motions and $X$ is an i.i.d. matrix (see Definition~\ref{def:model}). Define the resolvents $G_{i,t}:=(H^{z_{i,t}}-\ii\eta_{i,t})^{-1}$, and let $\mathfrak{B}_t$ be the Hermitization of $B_t$ defined in \eqref{eq:defbigBM}. Then, by It\^{o}'s formula, we have
\begin{equation}
\begin{split}
\label{eq:evg1g2real}
    \dif \langle G_{1,t}AG_{2,t}B\rangle &=\sum_{a,b=1}^{2n}\partial_{ab}\langle G_{1,t}AG_{2,t}B\rangle \frac{\dif (\mathfrak{B}_t)_{ab}}{\sqrt{n}}+\langle G_{1,t}AG_{2,t}B\rangle\dif t \\
&\quad+2\langle G_{1,t}AG_{2,t}E_1\rangle\langle G_{2,t}BG_{1,t}E_2\rangle\dif t+2\langle G_{1,t}AG_{2,t}E_2\rangle\langle G_{2,t}BG_{1,t}E_1\rangle\dif t \\
&\quad+\langle G_{1,t}-M_{1,t}\rangle\langle G_{1,t}AG_{2,t}BG_{1,t}\rangle\dif t +\langle G_{2,t}-M_{2,t}\rangle\langle G_{2,t}BG_{1,t}AG_{2,t}\rangle \dif t  \\
&\quad+\frac{\bm1_{\{\beta=1\}}}{n}\tilde{\sum}_{ij}\langle G_{1,t}^\mathfrak{t}E_i G_{1,t}AG_{2,t} BG_{1,t}E_j\rangle \dif t +\frac{2\bm1_{\{\beta=1\}}}{n}\tilde{\sum}_{ij}\langle [G_{1,t}AG_{2,t}]^\mathfrak{t}E_i G_{2,t}BG_{1,t}E_j\rangle\dif t \\
&\quad+\frac{\bm1_{\{\beta=1\}}}{n}\tilde{\sum}_{ij}\langle G_{2,t}^\mathfrak{t}E_i G_{2,t}BG_{1,t} AG_{2,t}E_j\rangle \dif t.
\end{split}
\end{equation}
Here $\tilde{\sum}_{ij}$ is defined below \eqref{eq:defE1E2}, and we recall that it denotes the sum over $(i,j)\in \{(1,2),(2,1)\}$.
We also point out that $M_{i,t}=M^{z_{i,t}}(\ii\eta_{i,t})$ depends on time only through the characteristics \eqref{eq:charapp}. In the following we use the short--hand notation $\sum_{ab}:=\sum_{a,b=1}^{2n}$.

Note that the only difference compared to \cite[Eq. (5.7)]{cipolloni2023mesoscopic} are the three new terms in the last two lines of \eqref{eq:evg1g2real}. For this reason we only explain how to estimate these terms and show that they do not change the proof of \cite[Proposition 5.3]{cipolloni2023mesoscopic} as they can be incorporated in the error terms that are already present in the proof of \cite[Proposition 5.3]{cipolloni2023mesoscopic}. In fact, even if the quadratic variation of the stochastic term in the RHS of \eqref{eq:evg1g2real} is slightly different compared to \cite[Eq. (5.14)]{cipolloni2023mesoscopic}, this does not imply any change in its estimate. The quadratic variation of the stochastic term is now given by
\[
\frac{1}{n}\sum_{ab}\big|\partial_{ab}\langle G_{1,t}AG_{2,t}B\rangle\big|^2+\frac{\bm1_{\{\beta=1\}}}{n}\sum_{ab}\big(\partial_{ab}\langle G_{1,t}AG_{2,t}B\rangle\big)^2.
\]
However, it is easy to see that the second term above is estimated in terms of the first one, which is equal to \cite[Eq. (5.14)]{cipolloni2023mesoscopic}; hence, no new estimate is needed for the stochastic term in \eqref{eq:evg1g2real}. 

The proof of \cite[Proposition 5.3]{cipolloni2023mesoscopic} is divided into two parts: in Part 1 it is proven a weaker local law with error $1/(n\eta_*\sqrt{\eta_1\eta_2})$, then in Part 2 this bound is improved to \eqref{eq:desiredllawrealcase}. For the purpose of Part 1 we estimate the three new terms in \eqref{eq:evg1g2real} by (we write only two for brevity)
\begin{equation}
\label{eq:newusbreal}
\frac{1}{n}\big|\langle G_{1,t}^\mathfrak{t}E_i G_{1,t}AG_{2,t} BG_{1,t}E_j\rangle\big|+\frac{1}{n}\big|\langle [G_{1,t}AG_{2,t}]^\mathfrak{t}E_i G_{2,t}BG_{1,t}E_j\rangle\big|\lesssim \frac{1}{n\eta_*\sqrt{\eta_1\eta_2}},
\end{equation}
with overwhelming probability. This bound follows by a simple Schwarz to separate the resolvents with their transposes followed by Ward identity. In Part 1 of \cite[Proposition 5.3]{cipolloni2023mesoscopic} it was considered only the special case $A=L_-', B=L_-$, with $L_-,L_-'$ being the left eigenvectors corresponding to the smallest eigenvalue of the operator $1-M_{1,t}\mathcal{S}[\cdot]M_{2,t}$ and the one with $1\to 2$ exchaged, respectively. This is a consequence of the fact that for any matrix orthogonal to this eigenvectors the desired result immediately follows from \cite[Lemma 5.4]{cipolloni2023mesoscopic}.  In particular, defining
\[
Y_t:=\big|\langle \big(G_{1,t}L_-G_{2,t}-M^{z_{1,t}z_{2,t}}(\ii\eta_{1,t},L_-',\ii\eta_{2,t} )\big)\rangle L_-\rangle\big|,
\]
and combining \eqref{eq:newusbreal} with \cite[Eq. (5.29)]{cipolloni2023mesoscopic} we obtain
\begin{equation}
Y_t =Y_0+2\int_0^t\langle M^{z_{1,s}z_{2,s}}(\ii\eta_{1,s},I,\ii\eta_{2,s})\rangle Y_s\,\dif s+\mathcal{O}\left(\frac{n^\xi}{n\eta_{*,t}\sqrt{|\eta_{1,t}\eta_{2,t}|}}\right).
\end{equation}
Finally, by the integral Gronwall inequality, using
\[
\exp\left(\int_s^t 2|\langle M^{z_{1,s}z_{2,s}}(\ii\eta_{1,s},I,\ii\eta_{2,s})\rangle|\,\dif s\right)\lesssim \frac{\eta_{*,s}\sqrt{\eta_{1,s}\eta_{2,s}}}{\eta_{*,t}\sqrt{\eta_{1,t}\eta_{2,t}}},
\]
which is \cite[Eq. (5.34)]{cipolloni2023mesoscopic}, we obtain the desired bound for $Y_t$.

For Part 2 we want to gain from $|z_1-z_2|$ being large, for this reason we do not want to separate $G_{1,t}G_{2,t}$ when they come close to each other. For this reason, defining
\[
\begin{split}
Y_t:&=\sup_{C_1,C_2\in \{A,B,E_1,E_2\}}|\langle \big(G_{1,t}(\ii\eta_{1,t})C_1G_{2,t}(\ii\eta_{2,t})-M^{z_{1,t},z_{2,t}}(\ii\eta_{1,t},C_1,\ii\eta_{2,t})\big)C_2\rangle|\\
&\quad+\sup_{C_1,C_2\in \{A,B,E_1,E_2\}}|\langle \big(G_{1,t}(\ii\eta_{1,t})C_1G_{2,t}(-\ii\eta_{2,t})-M^{z_{1,t},z_{2,t}}(\ii\eta_{1,t},C_1,-\ii\eta_{2,t})\big)C_2\rangle|,
\end{split}
\]
we estimate
\begin{equation}
\begin{split}
\frac{1}{n}\big|\langle G_{1,t}^\mathfrak{t}E_i G_{1,t}AG_{2,t} BG_{1,t}E_j\rangle\big| &\lesssim \langle G_{1,t}^\mathfrak{t}E_i G_{1,t}A(G_{1,t}^\mathfrak{t}E_i G_{1,t}A)^*\rangle^{1/2}\langle(G_{2,t} BG_{1,t}E_j)^* G_{2,t} BG_{1,t}E_j\rangle^{1/2} \\
&\lesssim \frac{\langle \Im G_{1,t}\Im G_{2,t}\rangle^{1/2}}{\eta_{*,t}\sqrt{\eta_{1,t}\eta_{2,t}}}\lesssim \frac{Y_t^{1/2}}{\eta_{*,t}\sqrt{\eta_{1,t}\eta_{2,t}}},
\end{split}
\end{equation}
with overwhelming probability. Similarly, we estimate
\begin{equation}
\big|\langle [G_{1,t}AG_{2,t}]^\mathfrak{t}E_i G_{2,t}BG_{1,t}E_j\rangle\big|\lesssim \frac{Y_t}{\eta_{1,t}\eta_{2,t}},
\end{equation}
with overwhelming probability. Combining this with \cite[Eq. (5.39)]{cipolloni2023mesoscopic}, and the display below it, we obtain
\begin{equation}
\begin{split}
\label{eq:endreal}
Y_t&\le  Y_0+C\int_0^t \left(\frac{1}{|z_{1,s}-z_{2,s}|^2}+ \frac{n^\xi}{\sqrt{n}\eta_{*,s}^{3/2}}\right)Y_s\,\dif s+\frac{n^\xi}{N\sqrt{\eta_{*,t}|\eta_{1,t}\eta_{2,t}|(|z_{1,t}-z_{2,t}|^2+\eta_t^*)}}\\
&\quad+\frac{1}{\sqrt{n\eta_{*,t}}}\cdot\frac{n^{2\xi}}{n\eta_{*,t}\sqrt{|\eta_{1,t}\eta_{2,t}|}},
\end{split}
\end{equation}
which, by the integral Gronwall inequality, gives \eqref{eq:desiredllawrealcase}.

To conclude the proof, we only need to show that \eqref{eq:desiredllawrealcase} holds for matrices of type $M$, and that the same bound holds if one of the resolvents is replaced by its transpose. As an input we rely on the following single resolvent local laws for matrices of type $M$, whose proof is postponed to Appendix~\ref{sec:m-laws-proof}.
\bel
\label{lem:Mllaw}
For matrices of type $M$ as in Definition~\ref{def:type-M}, we have that
\beq
\big|\langle G^z( \i \eta ) - M^z ( \i \eta) \rangle\big| \leq \frac{n^\xi}{n \eta} , \qquad\quad \left| \langle {\bm x}, G^z( \i \eta) - M^z ( \i \eta ){\bm y}\rangle \right| \leq \frac{ n^\xi}{\sqrt{ n \eta}}
\eeq
with overwhelming probability, for any $\xi>0$ and any unit vectors ${\bm x}, {\bm y}$.
\eel

We now consider the Ornstein--Uhlenbeck flow
\[
\dif \widehat{X}_t=-\frac{1}{2}\widehat{X}_t \dif t+\frac{\dif \widehat{B}_t}{\sqrt{n}}, \qquad \widehat{X}_0=\widehat{X},
\]
with $\widehat{X}$ being a real i.i.d matrix such that its Hermitization $\widehat{H}^{z_i}$ satisfies \eqref{eq:desiredllawrealcase}. Then, it is easy to see that the resolvents $\widehat{G}_{i,t}:=(\widehat{H}^{z_{i,t}}-\ii\eta_{i,t})^{-1}$ satisfy \eqref{eq:evg1g2real} with $\beta=2$. The proof of \eqref{eq:desiredllawrealcase} in this case is then immediate by \cite[Proposition 5.3]{cipolloni2023mesoscopic} together with \eqref{eq:desiredllawrealcase} for real i.i.d. matrices to estimate the initial condition, as \cite[Theorem 3.3 and Proposition 5.3]{cipolloni2023mesoscopic} used the matrix to be complex only to bound the initial condition. We are thus left only with the case when one of the resolvents is replaced by its transposes. By It\^{o}'s formula we obtain (cf. \eqref{eq:evg1g2real})
\begin{equation}
\begin{split}
\label{eq:evg1g2realcompl}
    \dif \langle G_{1,t}AG_{2,t}^\mathfrak{t}B\rangle &=\sum_{ab}\partial_{ab}\langle G_{1,t}AG_{2,t}^\mathfrak{t}B\rangle \frac{\dif \widehat{\mathcal{B}}_{ab,t}}{\sqrt{n}}+\langle G_{1,t}AG_{2,t}^\mathfrak{t}B\rangle\dif t \\
    &\quad+\langle G_{1,t}-M_{1,t}\rangle\langle G_{1,t}AG_{2,t}^\mathfrak{t}BG_{1,t}\rangle\dif t +\langle G_{2,t}-M_{2,t}\rangle\langle G_{2,t}^\mathfrak{t}BG_{1,t}AG_{2,t}^\mathfrak{t}\rangle \dif t  \\
&\quad
+\frac{2}{n}\tilde{\sum}_{ij}\langle [G_{1,t}AG_{2,t}]^\mathfrak{t}E_i G_{2,t}BG_{1,t}E_j\rangle\dif t \\
\end{split}
\end{equation}
Here $\widehat{\mathcal{B}}_t$ is the Hermitization of $\widehat{B}_t$. Additionally, for the deterministic approximation $M_t^{z_1,\overline{z_2}}$ of $ G_{1,t}AG_{2,t}^\mathfrak{t}$ we have the equation $\partial_t\langle M_t^{z_1,\overline{z_2}}\rangle=\langle M_t^{z_1,\overline{z_2}}\rangle$. Note that this evolution is analogous to \eqref{eq:evg1g2real} with the second and the first term of the fourth and fifth lines removed. For this reason, the estimate of the RHS of \eqref{eq:evg1g2realcompl} is completely analogous to \eqref{eq:evg1g2real}--\eqref{eq:endreal} and \cite[Eqs. (5.17) and (5.39)]{cipolloni2023mesoscopic}\normalcolor, and so omitted. This concludes the proof.

\qed

\end{proof}

\section{Local laws for matrices of mixed symmetry}
 \label{sec:m-laws-proof}

In this section we present the proof of the necessary averaged and isotropic single resolvent local laws for matrices of type $M$.

\

\begin{proof}[Proof of Lemma~\ref{prop:m-laws}]\, We first prove that \eqref{eqn:ll-1} holds, then we use this as an input to prove \eqref{eq:goodll} hold. We can consider $X$ to be the solution of \eqref{eq:OU} with initial data being a real i.i.d. matrix and $B_t$ being a matrix of complex Brownian motions. 

First, the proof of Lemma \ref{lem:ll-2} is easily modified to show that for all $\eps,\kappa>0$ we have that,
\beq \label{eqn:type-m-1} 
\left| \langle G_t^z (w) - M_t^z (w) \rangle \right| \leq \frac{ n^{\eps}}{n  \Im[w]}
\eeq
for all $n^{\eps-1} \leq \Im[w] \leq 10$ and $\Re[w]$ such that $\rho^z ( \Re [w] ) > \kappa$.  With this as input, all of the arguments in Section \ref{sec:dynamical-local-law} apply line-by-line, yielding the estimate \eqref{eq:goodll}. Corollary~\ref{lem:goodrig} and \eqref{eqn:usual-rigidity} are a direct consequence of these local laws. 

Next, \eqref{eqn:type-m-rig} is a consequence of \eqref{eqn:type-m-1} and the estimate for $\lambda_n^z$ follows from comparing the eigenvalues of the Hermitization of $(1-t)^{1/2} Y + t^{1/2} G$ to those of $Y$ using the Weyl bound $| \lambda_i (A) - \lambda_i (B) | \leq \| A - B \|$.

\end{proof}

\qed

\

\begin{proof}[Proof of Lemma~\ref{lem:Mllaw}] \, The averaged law follows by Lemma~\ref{prop:m-laws}. We now use the averaged law to prove the isotropic law. We only present a sketch of the proof for brevity, as it is very similar (actually simpler) to the proof of Lemma \ref{lem:ll-2}.

By It\^{o} formula, it is easy to see that along the characteristics \eqref{eq:charnew} we have (we use the notations $G_t:=(H_t^{z_t}-\i \eta_t)^{-1}$, $M_t:=M^{z_t}(\i\eta_t)$)
\begin{equation}
\label{eq:isoflow}
\dif (G_t-M_t)_{{\bm x}{\bm y}}=\frac{1}{\sqrt{n}}\sum_{ab=1}^{2n}\partial_{ab}(G_t)_{{\bm x}{\bm y}}\dif (\mathfrak{B}_t)_{ab}+\frac{1}{2}(G_t-M_t)_{{\bm x}{\bm y}}\dif t+\langle G_t-M_t\rangle (G_t^2)_{{\bm x}{\bm y}}\dif t,
\end{equation}
where we used the short--hand notation $(G_t)_{{\bm x}{\bm y}}:=\langle {\bm x}, G_t{\bm y}\rangle$. Here $\mathfrak{B}_t$ is the Hermitization of $B_t$ defined in \eqref{eq:defbigBM}; in particular, $(\mathfrak{B}_t)_{ab}=0$ for $a,b\le n$ and $a,b\ge n+1$. Notice  that the second term in the RHS of \eqref{eq:isoflow} can be removed by looking at the evolution of $e^{-t/2}(G_t-M_t)_{{\bm x}{\bm y}}$; we thus neglect this term. Define the stopping time
\[
\tau:=\inf\left\{t: \sup_{{\bm u},{\bm y}\in \{{\bm x},{\bm y}\}}\big|(G_t-M_t)_{{\bm u}{\bm v}}\big|=\frac{n^{2\xi}}{\sqrt{n\eta_t}}\right\}.
\]
Then, we estimate
\[
\left|\int_0^{t\wedge \tau} \langle G_s-M_s\rangle (G_s^2)_{{\bm x}{\bm y}}\,\dif s\right|\le \frac{n^{\xi}}{\sqrt{n\eta_{t\wedge\tau}}}.
\]
Finally, the quadratic variation process of the stochastic term in the RHS of \eqref{eq:isoflow} can be estimated by
\[
\frac{1}{n\eta_t^2}(\Im G_t)_{{\bm x}{\bm x}}(\Im G_t)_{{\bm y}{\bm y}}\lesssim \frac{1}{n\eta_{t\wedge\tau}^2}.
\]
Then, using the BDG inequality we see that the stochastic term is also bounded by $n^\xi/\sqrt{n\eta_{t\wedge\tau}}$. This concludes the proof. 
    
\end{proof}

\qed

\section{Miscellaneous results}

\subsection{Proof of Lemma~\ref{lem:proprho}}

This lemma follows by the analysis of the density of states from \cite[Proposition 3.1]{bourgade2014local}. However, these results concerns the limiting density of states of the $N\times N$ matrix $(X-z)(X-z)^*$ rather than the one of the $2N\times 2N$ matrix $H^z$. For this reason we first relate these two densities and then conclude the proof by \cite[Proposition 3.1]{bourgade2014local}.

For $x>0$, define
\[
\widetilde{\rho}^z(x):=\lim_{\eta\to 0^+}\frac{1}{\pi}\Im \widetilde{m}^z(x+\ii \eta),
\]
with $\widetilde{m}^z(w)$ being the unique solution of (see \cite[Eq. (11)]{cipolloni2020optimal}):
\[
-\frac{1}{\widetilde{m}^z(w)}=w(1+\widetilde{m}^z(w))-\frac{|z|^2}{1+\widetilde{m}^z(w)}, \qquad\quad \Im[w]\Im[\widetilde{m}^z]>0.
\]
Then, it is easy to check that $\rho^z(x)=x\widetilde{\rho}^z(x^2)$ for $x\in \R$ (see e.g. the sentence below \cite[Eq. (11)]{cipolloni2020optimal}), and so that $\rho^z$ is symmetric around the origin. Using this relation, the points (i)--(iv) immediately follow from \cite[Proposition 3.1]{bourgade2014local} (see also \cite[Eqs. (18a)--(18b)]{cipolloni2020optimal}). The last point follows by the display above Eq. (3.14) of \cite{cipolloni2024precise}, which can be derived by explicitly solving the equation \eqref{eq:mde} via Cardano's formula. \qed

\subsection{Proof of Proposition \ref{prop:der-bd}} \label{sec:der-bd}

\bel \label{lem:der-1}
Let $z_s := z + s w$, with $|z_s| < 1$ for some interval of $|s| < r$. Then,
\beq
\frac{ \d}{\d s} \Re  \int \log ( x - \i \eta ) \rho^{z_s} (x) \d x =  \Re[ \dot{z}_s \bar{z_s} ] \left( u^{z_s}(\i \eta ) \right),
\eeq
where $\dot{F}$ means $\frac{ \d }{ \d s} F$.
\eel
\proof We can write,
\beq
\Re \int \log ( x - \i \eta ) \rho^{z_s} (x) \d x = -\i \int_0^\eta  m^{z_s} ( \i u)  \d u +\frac{ -1+ |z_s|^2 }{2}
\eeq
using $m^z ( \i u ) = \i \Im [ m^z (\i u)]$. Here we used the fact that
\beq
\Re \int \log (x) \rho^{z} (x) \d x = \frac{1}{ \pi} \int_{ |u| <1 } \log |u-z| \d u \d \bar{u} = \frac{1}{2}  = \frac{ |z|^2-1}{2}
\eeq
for $|z| <1$. The first equality may be deduced from the fact that both sides are the almost sure limit of $ \log \det |X-z|$. 
Differentiating \eqref{eq:mde} wrt $s$ and re-arranging yields,
\beq
 \dot{m}^{z_s} (w) = \frac{ 2 \Re[ \dot{z}_s \bar{z}_s] (m^{z_s} (w) )^2}{2 ( m^{z_s} (w) )^2 (w+m^{z_s} (w) ) - w }.
\eeq
Similarly, differentiating \eqref{eq:mde} wrt $w$ and re-arranging yields,
\beq
\partial_w m^{z_s} (w) = - m^{z_s} (w) \frac{ 2 (w + m^{z_s} (w) ) m^{z_s} (w) +1}{2 ( m^{z_s} (w) )^2 (w+m^{z_s} (w) ) - w }.
\eeq
Therefore,
\begin{align}
\partial_w u^{z_s} (w) &= \partial_w \left( \frac{ m^{z_s} (w) }{ m^{z_s} (w) + w } \right) = \frac{ (\partial_w m^{z_s} (w) ) w - m^{z_s} (w) }{(m^{z_s} (w)  + w)^2} \\
&=  - \frac{ 2( m^{z_s} (w))^2}{ 2 ( m^{z_s} (w) )^2 (w+ m^{z_s} (w) ) - w} 
\end{align}
with the last line following by direct substitution. Therefore, $ \dot{m}^{z_s} (w) = -\Re[ \dot{z_s} \bar{z}_s ] \partial_w u^{z_s} (w)$, and so
\beq
-\i \int_0^\eta  \dot{m}^{z_s} ( \i u)= \Re[ \dot{z}_s \bar{z}_s ] \int_{0}^\eta \del_w u^{z_s} (\i u) \i \d u = \Re[ \dot{z}_s \bar{z}_s ] ( u^{z_s} ( \i \eta ) - 1 )
\eeq
The claim follows. \qed

\bel \label{lem:der-2}
Let $z_s := z + s w$, with $|z_s| < 1$ for some interval of $|s| < r$. Let $\eps >0$. Then with overwhelming probability we have,
\beq
\frac{ \d}{ \d s } \Re \log \det ( H^{z_t} - \i \eta )  = n 2 \Re[ \dot{z}_s \bar{z}_s] u^{z_s} ( \i \eta ) + \O ( n^{\eps} \eta^{-1/2} )
\eeq
for $n^{\eps} / n \leq \eta \leq \delta$ for some $\delta >0$.  
\eel
\proof We have that,
\beq
\frac{ \d}{ \d s }  \log \det ( H^{z_s} - \i \eta ) = \tr \left( \frac{1}{ H^{z_s} - \i \eta } ( -\dot{z}_s F - \dot{\bar{z}}_s F^* ) \right)
\eeq
where $ F = \left( \begin{matrix} 0 & 1 \\ 0 & 0 \end{matrix} \right)$. But by \cite[Theorem 4.5]{cipolloni2023optimal} we have
\beq \label{eqn:der-bd-1}
\tr (G ( \i \eta ) F) = - n\bar{z}_s u^{z_s} ( \i \eta) + \O ( \eta^{-1/2} n^{\eps} ), 
\eeq
and a similar estimate for $F^*$. Here we used the fact $F$ is regular in the sense of \cite[Definition 4.2]{cipolloni2023optimal}; see also the discussion in the proof of Theorem 2.4 near Eq. (3.22) of \cite{cipolloni2023optimal}. \qed

\bel \label{lem:der-3} 
Let $0 < r < 1$ and fix any small $\xi>0$. Uniformly in $\eta_1, \eta_2$ and $z$ satisfying $n^{-\eps/10} /n \leq \eta_1 \leq \eta_2 \leq 1$ and $|z|\le r$ we have with overwhelming probability,
\beq
\left| \langle G^z ( \i \eta_1) F \rangle - \langle G^z ( \i \eta_2 ) F \rangle \right| \leq ( \eta_2 - \eta_1 ) n^{1/2+\xi}
\eeq
\eel
\proof By the spectral theorem we have,
\beq
\tr G^z ( \i \eta) F = \sum_{i} \frac{ 1}{ \lambda_i^z - \i \eta } u_i^* v_i
\eeq
where $u_i, v_i$ are the normalized eigenvectors of $X^*X$ and $X X^*$, respectively. By \cite[Eq. (2.8c)]{cipolloni2023optimal} we see that,
\beq \label{eqn:der-bd-2}
\left| u_i^* v_i \right| \leq n^\xi \left( \frac{1}{ \sqrt{n}} + \frac{ |i|}{n} \right)
\eeq
for $|i| \leq c n$ for some $c>0$, with overwhelming probability. Therefore,
\beq
\left| \sum_{ |i| < c n } u_i^* v_i \left( \frac{1}{ \lambda_i^z - \i \eta_1} - \frac{1}{ \lambda_i^z - \i \eta_2 } \right) \right| \leq n^{3 \xi} (\eta_2 - \eta_1) \sum_{ |i| < c n } \frac{ n^2}{ i^2} \left( \frac{1}{ n^{1/2}} + \frac{ |i|}{n} \right) \leq n^{4\xi} (\eta_2-\eta_1)  n^{3/2} 
\eeq
Using $|u_i^* v_i | \leq 1 $ for all $i>0$ we easily see that
\beq
\left| \sum_{ |i| < c n } u_i^* v_i \left( \frac{1}{ \lambda_i^z - \i \eta_1} - \frac{1}{ \lambda_i^z - \i \eta_2 } \right) \right| \leq C n (\eta_2 - \eta_1),
\eeq
and the claim follows.  \qed

\

\noindent{\bf Proof of Proposition \ref{prop:der-bd}}.  We treat first the complex case $\beta=2$. The claim for $\eta \geq n^{\eps} / n$ follows immediately from differentiation and Lemmas \ref{lem:der-1} and \ref{lem:der-2}. For $1/n \leq \eta \leq n^{\eps} /n$ we apply Lemma \ref{lem:der-3} with $\eta_1 = \eta$ and $\eta_2 = n^{\eps}/n$ to approximate $\langle G( \i \eta_1) F \rangle = \langle G ( \i \eta_2 ) F \rangle + \O ( n^{\eps} n^{-1/2} )$. The claim then follows because $u^{z} ( \i \eta_1) = u^{z} ( \i \eta_2) + \O ( n^{-1+\eps})$. In the real case we need only to check that the additional deterministic term present in \eqref{eqn:psi-def} also obeys the same estimate. But this is clear because for $\eta \geq n^{-1}$ the difference between these terms is,
\beq
\left| \log ( 4 \Re[z_1]^2 + 2 \eta \sqrt{1-|z_1|^2} ) - \log ( 4 \Re[z_2]^2 + 2 \eta \sqrt{ 1 - |z_2|^2} ) \right| \leq C \frac{ |z_1-z_2|}{ \sqrt{\eta}}
\eeq
under the assumption $|z_1 -z_2 | \leq \sqrt{\eta}$. If this does not hold, then the deterministic term can be absorbed into the error on the RHS of \eqref{eqn:der-bd}. 
\qed

\subsection{Proof of Corollary \ref{lem:goodrig}} \label{a:goodrig-proof}

The proof will follow by the estimate \eqref{eq:goodll} together with an application of the well known Helffer-Sj\"ostrand formula (see \eqref{eq:HS} below). Let $f:\R\to\R$ be a smooth function, and define its almost analytic extension by
\begin{equation}
\label{eq:almostanext}
\tilf (x+\ii y):=\big[f(x)+\ii \partial_x f(x)\big]\chi_{\mathfrak{a}}(y).
\end{equation}
Here $\chi_\mfa(y)$ is a smooth cut--off function such that $\chi_\mfa(y)=1$ for $|y|\le \mathfrak{a}$ and $\chi_\mfa(y)=0$ for $|y|\ge 2\mathfrak{a}$, for some $n$--dependent $\mathfrak{a}$ satisfying $n^{-\delta} \geq \mfa \ge (\log n)^{1/2+10\delta}$, which we will choose later in the proof. We may also assume that $| \chi'_\mfa (y) | \leq C / \mfa$ and that $\chi_\mfa (y)$ is even. Then, by the Helffer-Sj\"ostrand formula, we have
\begin{equation}
\label{eq:HS}
f(\lambda)=\frac{1}{\pi}\int_\C \frac{\partial_{\overline{w}} \tilf (w)}{\lambda-w}\,\dif^2 w=\frac{1}{\pi}\int_{\R}\int_\R \frac{\partial_{\overline{w}}\tilf (w)}{\lambda-w}\,\dif x\dif y.
\end{equation}

We now choose $f(x)$ to be a smooth function such that $f(x)=1$ for $|x|\le B$ and $f(x)=0$ for $|x|\ge A+B$, for some $n$--dependent $A,B$, which we will choose shortly. We point out that this $f$ is a smooth version of the eigenvalue counting function as a consequence of the symmetry of the spectrum of $H^z$ around zero. Using \eqref{eq:HS}, we have
\begin{equation}
\langle f(H^z)\rangle-\int_\R f(x) \rho^z(x)\,\dif x =\frac{1}{\pi}\int_\C\partial_{\overline{w}} \tilf (w) \langle G^z(w)-M^z(w)\rangle\, \dif^2 w.
\end{equation}

Notice that by \eqref{eq:almostanext} we have
\begin{equation}
\label{eq:esttwoterm}
\partial_{\overline{w}}\tilf (x+\ii y)=\ii y f''(x)\chi_\mathfrak{a}(y)+\big[f(x)+\ii \partial_x f(x)\big]\chi_\mathfrak{a}'(y).
\end{equation}
We now estimate the two terms in the RHS of this equality one by one. Using Lemma~\ref{lem:precllaw}, we have, using that $\chi_\mfa' (y) \neq 0$ only for $\mfa \leq y \leq 2 \mfa$,
\begin{equation}
\left|\frac{1}{\pi}\int_{\R^2}\chi_\mfa'(y)[f(x)+\ii yf'(x)\big] \langle G^z(x+\ii y)-M^z(x+\ii y)\rangle\, \dif x\dif y\right|\lesssim \frac{(\log n)^{1/2+\delta}}{n} \left(1+\frac{B+A}{\mathfrak{a}}\right),
\end{equation}

We now estimate the first term in the RHS of \eqref{eq:esttwoterm}. Let $y_0:=(\log n)^{1/2+10\delta}$. Since $y \to \Im \langle G(\i y ) \rangle$ is monotonically increasing, and since $\langle M( \i y) \rangle$ is a bounded function, for $y < y_0$,
\begin{equation}
\label{eq:monoton}
\langle \Im[ G^z(\ii y)] \rangle \leq C \frac{y_0}{y}, \qquad \langle \Im[ M^z ( \ii y ) ] \rangle \leq C \leq  C \frac{y_0}{y}
\end{equation}
with overwhelming probability. 
By \eqref{eq:monoton}, we thus estimate
\begin{align}
\label{eq:smally}
& \left|\int_{|y|< y_0} \chi_\mfa(y)yf''(x) \langle G^z(x+\ii y)-M^z(x+\ii y)\rangle\, \dif x\dif y\right| \\
= & 2 \left|\int_{0< y< y_0} \chi_\mfa(y)yf''(x) \langle \Im[ G^z(x+\ii y)-M^z(x+\ii y)]\rangle\, \dif x\dif y\right| 
 \le \frac{y_0^2}{A}= \frac{(\log n)^{1/2+10\delta}}{n},
\end{align} 
with overwhelming probability, where in the last equality we chose $A=y_0$.  The first inequality follows because we assumed that $\chi_\mfa (y)$ is even. 

Before turning to the portion of the integral with $y \geq y_0$, we first remark that from \eqref{eq:goodll} and the Cauchy Integral formula we obtain that
\begin{equation}
\label{eq:goodllG2}
|\partial_w\langle G(w)-M^z(w)\rangle|\lesssim \frac{(\log n)^{1/2+\delta}}{n|\Im w|^2},
\end{equation}
with overwhelming probability, uniformly in $n^{-\delta}\ge|\Im w|\ge (\log n)^{1/2+\delta}/n$ and $\Re (w) \in I_z ( \kappa)$. 

Now for $y\ge y_0$, we estimate
\begin{equation}
\begin{split}
&\left|\frac{1}{\pi}\int_{|y|\ge y_0} \chi_\mfa(y)yf''(x) \langle G^z(x+\ii y)-M^z(x+\ii y)\rangle\, \dif x\dif y\right| \\
&\qquad\qquad\qquad\quad=\left|\frac{1}{\pi}\int_{|y|\ge y_0} \chi_\mfa(y)yf'(x) \partial_w\langle G^z(x+\ii y)-M^z(x+\ii y)\rangle\, \dif x\dif y\right|\\
&\qquad\qquad\qquad\quad\lesssim \frac{(\log n)^{1/2+\delta}}{n}\int_{\mathfrak{a}\ge |y|\ge y_0} \frac{1}{|y|}\,\dif y \\
&\qquad\qquad\qquad\quad\lesssim  \frac{(\log n)^{1/2+\delta}}{n} \big(1+\log(\mathfrak{a}/y_0)\big),
\end{split}
\end{equation}
where in the first equality we used integration by parts and used that $\partial_x F(w) = \partial_w F(w)$ for holomorphic $F$ by the Cauchy-Riemann equations. In the first inequality we used \eqref{eq:goodllG2}.   Now as long as $\mfa$ satisfies $(\log n)^{10\delta} / n \leq \mfa \leq (\log n)^{C_1} / n$ for any fixed $C_1 >0$ we see that,
\begin{equation}
\left|\frac{1}{\pi}\int_{\R^2}\chi_\mfa(y)yf''(x) \langle G^z(x+\ii y)-M^z(x+\ii y)\rangle\, \dif x\dif y\right|\lesssim  \frac{(\log n)^{1/2+10\delta}}{n}.
\end{equation}

For any $B \leq \mathfrak{a}$, we therefore conclude that with overwhelming probability,
\begin{equation}
\left|\langle f(H^z)\rangle-\int_\R f(x) \rho^z(x)\,\dif x\right|\lesssim \frac{(\log n)^{1/2+10\delta}}{n}.
\end{equation}
By choosing $B$ to be the location of the quantiles $\gamma_i^z$ for $0 < i < (\log n)^{C_1}$, for some fixed $C_1 >0$, and $\mfa = B \vee ((\log n)^{10\delta} / n)$, and using the symmetry $\lambda_{i}^z = \lambda_{-i}^z$, one therefore finds the estimate,
\beq
\left| | \{ j :  0 \leq \lambda_j^z \leq \gamma_i^z \} - i \right| \leq C (\log n)^{1/2+10\delta} 
\eeq
with overwhelming probability. 
From this, the estimate  \eqref{eq:vergoodrig}  follows immediately. For \eqref{eq:goodrig}, one can repeat the above argument, instead choosing $\mathfrak{a} = n^{-\eps}$ for some $\eps >0$. Then one finds that with overwhelming probability, for all $i < n^{1-\eps}$, we have
\beq
\left| | \{ j :  0 \leq \lambda_j^z \leq \gamma_i^z \} - i \right| \leq (\log n)^{3/2+20\delta},
\eeq
which implies \eqref{eq:goodrig}. \qed

\section{Proof of Proposition~\ref{prop:CLT-1}} \label{app:addtechres}

The proof of this proposition is divided into two part: using Stein's method (see \eqref{eq:diffeqpsi} and Lemma~\ref{pro:stein} below), we first show \eqref{eq:steinexpl} for general smooth test functions $f$, and then we specialize to $f(x)=\Re \log (x-\ii\eta)$ for which we explicitly compute the expectation and the variance $V(f)$ in \eqref{eq:explvarlog}. 

\

\begin{proof}[Proof of \eqref{eq:steinexpl}] \, The proof of \eqref{eq:steinexpl} is based on \cite[Section 10]{landon2022almost}. The main difference is that \cite{landon2022almost} considers Wigner (Hermitian) matrices, while now we consider the technically more complicated Hermitization $H^z$ of an i.i.d. matrix $X-z$ defined as in \eqref{eq:herm}. On the other hand \cite{landon2022almost} needed a higher precision in the estimate of the error term.

Let $f$ be a smooth function supported on $[-5,5]$, and for any $k\in \N$ we denote its almost analytic extension by
\[
\widetilde{f}(x+\ii y)=\widetilde{f}_k(x+\ii y):=\chi(y) \sum_{l=0}^k (\ii y)^kf^{(l)}(x),
\] 
with $\chi$ a smooth cut-off function which is equal to one for $|y|\le 1$ and equal to zero for $|y|\ge 2$. Here $f^{(l)}$ denotes the $l$--th derivative of $f$. In particular, it is easy to see that
\begin{equation}
\label{eq:boundftilde}
\big|\partial_{\overline{w}}\widetilde{f}(w)\big|\lesssim |\Im w|^{k-1}\lVert f\rVert_{C^k}\lesssim |\Im w|^{k-1} n^{\gamma k}.
\end{equation}
The precise value of $k$ will be chosen shortly. We recall that by Helffer-Sj\"ostrand formula we can write
\[
\langle f(H^z)\rangle =\frac{1}{\pi}\int_\C \partial_{\overline{w}}\widetilde{f}(w) \langle G^z(w)\rangle\,\dif^2 w.
\]
We thus define the characteristic function:
\begin{equation}
e(\lambda):=\exp\left[\ii \lambda \left(\frac{2n}{\pi}\int_\C\partial_{\overline{w}}\widetilde{f}(w) \big(\langle G^z(w)\rangle-\E\langle G^z(w)\rangle \big) \, \dif^2 w\right)\right],
\end{equation}
and its approximation
\begin{equation}
e_\mathfrak{a}(\lambda):=\exp\left[\ii \lambda \left(\frac{2n}{\pi}\int_{\Omega_a}\partial_{\overline{w}}\widetilde{f}(w) \big(\langle G^z(w)\rangle-\E\langle G^z(w)\rangle \big) \, \dif^2 w\right)\right].
\end{equation}
where
\begin{equation}
\label{eq:defomegafraka}
\Omega_{\mathfrak{a}}:=\left\{(x,y)\in\R^2: |y|\ge n^{-\mathfrak{a}}\right\},
\end{equation}
for some $\mathfrak{a}>0$ which we will choose shortly.

The goal of this section is to use Stein's method to compute $\psi_\mathfrak{a}(\lambda):=\E e_\mathfrak{a}(\lambda)$, and thus $\psi(\lambda):=\E e(\lambda)$. In particular, we use that $\psi_\mathfrak{a}(\lambda)$ is a very good approximation of $\psi(\lambda)$ as a consequence of
\begin{equation}
\label{eq:vergoodappr}
\left|\frac{2n}{\pi}\int_{\C\setminus\Omega_a}\partial_{\overline{w}}\widetilde{f}(w) \big(\langle G^z(w)\rangle-\E\langle G^z(w)\rangle \big) \, \dif^2 w \right|\lesssim n^{-(\gamma-\mathfrak{a})k} \lesssim n^{-1/2},
\end{equation}
choosing $\mathfrak{a}=1/100$ and $k$ large enough (in terms of $\mathfrak{a}$) in the last inequality, say $k=60$.  This follows from the local law \eqref{eq:goodll} and the bound \eqref{eq:boundftilde}. Using \eqref{eq:vergoodappr}, we then immediately conclude
\begin{equation}
\label{eq:closepsi}
\big|\psi_\mathfrak{a}(\lambda)-\psi(\lambda)\big|\lesssim  n^{-1/2+1/100},
\end{equation}
for $|\lambda|\le n^{1/100}$.

Given \eqref{eq:vergoodappr}, the main ingredient to prove \eqref{eq:steinexpl} is to compute $\E[e_\mathfrak{a}(\lambda) \langle G^z(w)-\E G^z(w)\rangle]$ as in the following lemma. We present its proof after the conclusion of the proof of \eqref{eq:steinexpl}.
\begin{lemma}
\label{pro:stein}
Fix any sufficiently small $\gamma>0$ as in Proposition~\ref{prop:CLT-1}, then we have
\begin{equation}
\begin{split}
\label{eq:explsteinvar}
&2n\E[e_\mathfrak{a}(\lambda) \langle G^z(w)-\E G^z(w)\rangle] \\
&=\frac{\ii\lambda\psi_\mathfrak{a}(\lambda)}{2\pi} \tilde{\sum}_{ij}\int_{\Omega_\mathfrak{a}}\partial_{\overline{w_1}}\widetilde{f}(w_1) \langle M^{z,z,z}(w_1,I,w_1,E_i,w) A(w)E_j\rangle   \,\dif^2 w_1  \\
&\quad+\frac{\ii\bm1_{\{\beta=1\}}\lambda\psi_\mathfrak{a}(\lambda)}{2\pi} \tilde{\sum}_{ij}\int_{\Omega_\mathfrak{a}}\partial_{\overline{w_1}}\widetilde{f}(w_1) \langle M^{z,z,\overline{z}}(w_1,I,w_1,E_iA(w)^{\mathfrak{t}},w) E_j\rangle   \,\dif^2 w_1 \\
&\quad+ \frac{\ii\lambda\kappa_4\psi_\mathfrak{a}(\lambda)}{2\pi}\int_{\Omega_\mathfrak{a}}\partial_{\overline{w_1}} \widetilde{f}(w_1) (m^z(w_1)^2)'  (m^z(w)^2)'\, \dif ^2 w_1+\mathcal{O}\left(\frac{n^{200\gamma+5\mathfrak{a}}}{n^{1/2}}\right) \\
&\qquad\qquad\qquad\qquad\qquad\qquad=: -\ii \int_{\Omega_\mathfrak{a}} \partial_{\overline{w_1}}\widetilde{f}(w_1) R(w_1,w)\, \dif w_1^2+\mathcal{O}\left(\frac{n^{200\gamma+5\mathfrak{a}}}{n^{1/2}}\right).
\end{split}
\end{equation}
Here we used the definitions $\kappa_4:=n^2\E|X_{ab}|^4-(2+\bm1_{\{\beta=1\}})$ for the fourth cumulant, 
\begin{equation}
\label{eq:defaw}
A(w):=\frac{M^z(w)}{1-\langle (M^z(w))^2\rangle},
\end{equation}
and for $z_i\in\C$, $B_i\in \C^{2n \times 2n}$,
\begin{equation}
\begin{split}
\label{eq:defm123}
M^{z_1,z_2}(w_1,B_1,w_2):&=\big[1-M^z(w_1)\mathcal{S}[\cdot]M^{z_2}(w_2)\big]^{-1}\big[M^{z_1}(w_1)B_1 M^{z_2}(w_2)\big] \\
M^{z_1,z_2,z_3}(w_1,B_1,w_2,B_2,w_3):&=\big[1-M^{z_1}(w_1)\mathcal{S}[\cdot]M^{z_3}(w_3)\big]^{-1}\bigg[M^{z_1}(w_1)B_1M^{z_2,z_3}(w_2,B_2,w_3) \\
&\qquad\qquad\qquad\quad+M^{z_1}(w_1)\mathcal{S}[M^{z_1,z_2}(w_1,B_1,w_2)]M^{z_2,z_3}(w_2,B_2,w_3)\bigg].
\end{split}
\end{equation}
\end{lemma}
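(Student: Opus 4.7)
The plan is to prove \eqref{eq:explsteinvar} by a cumulant expansion in the entries $X_{ab}$. The starting point is the resolvent identity $(H^z - w)G^z(w) = I$, which allows us to write, component-wise, $w G^z_{ii}(w) = -1 + \sum_j (H^z)_{ij} G^z_{ji}(w)$, and then average the diagonal to express $\langle G^z(w) - M^z(w)\rangle$ as a sum over $(a,b)$ of terms of the form $\E[X_{ab}\cdot(\text{functional of }G^z)]$ plus deterministic corrections absorbed by the matrix Dyson equation \eqref{eq:bigMDE}. I would then apply the standard cumulant expansion
\[
\E[X_{ab}\cdot F(X)\cdot e_\mathfrak{a}(\lambda)] = \sum_{k=1}^{K}\frac{\kappa_{k+1}(X_{ab})}{k!}\,\E\big[\partial_{X_{ab}}^k\big(F(X)\,e_\mathfrak{a}(\lambda)\big)\big] + (\text{remainder})
\]
to each such term, for a finite truncation $K$. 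Each derivative falls either on a resolvent factor (producing an additional $E_{ab}$-contraction, hence an extra resolvent sandwich) or on $e_\mathfrak{a}(\lambda)$ (producing a prefactor $\ii\lambda$ together with $\partial_{X_{ab}}\int_{\Omega_\mathfrak{a}}\partial_{\overline{w_1}}\tilde f(w_1)\langle G^z(w_1)\rangle\,\dif^2 w_1$).

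\textbf{Second-cumulant and real-symmetry contributions.} The leading term in \eqref{eq:explsteinvar} comes from the second cumulant $\kappa_2 = 1/n$ applied with the single derivative hitting $e_\mathfrak{a}(\lambda)$, which produces the prefactor $\ii\lambda\psi_\mathfrak{a}(\lambda)/\pi$ and an integral over $w_1$ against $\partial_{\overline{w_1}}\tilde f(w_1)$. After resolving the block pairings induced by the $E_{ab}$-contractions (which is the role of $\tilsumij$ and of $E_1, E_2$), the remaining expectation reduces to a three-resolvent average which, by the multi-resolvent local law \eqref{eqn:multi-resolvent} of \cite{cipolloni2023mesoscopic}, is approximated deterministically by $M^{z,z,z}$; the structure $A(w) = M^z(w)/(1 - \langle (M^z(w))^2\rangle)$ then emerges from inverting the linear operator $1 - M^z(w)\mathcal{S}[\cdot]M^z(w)$ acting on the averaged two-resolvent chains that appear after one of the two contractions has been resolved. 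In the real case ($\beta = 1$), the additional Wick pairing $\E[X_{ab}X_{cd}] = n^{-1}\delta_{ac}\delta_{bd}$ produces terms involving a transposed resolvent; their deterministic approximation yields $M^{z,z,\bar z}$, with the transpose-resolvent local law \eqref{eq:newb} (established in Appendix~\ref{a:small-complex-comp}) serving as the key input.

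\textbf{Fourth cumulant and error estimates.} The third cumulant vanishes by the moment assumption on $X$. The fourth-cumulant contribution with three derivatives acting on resolvents and one on $e_\mathfrak{a}(\lambda)$ factorizes, at leading order, into a product of two single-resolvent traces; the local law \eqref{eq:goodll} approximates each trace by $(m^z(w_1)^2)'$ or $(m^z(w)^2)'$, yielding the $\kappa_4$ term. All subleading contributions -- higher cumulants $k \geq 5$, second- and fourth-cumulant terms in which more than one derivative hits $e_\mathfrak{a}(\lambda)$, and the cumulant-expansion remainder -- are bounded using the norm bound $\|G^z(w)\| \leq n^\mathfrak{a}$ on $\Omega_\mathfrak{a}$, the local law \eqref{eq:goodll}, and the multi-resolvent bound \eqref{eqn:multi-resolvent}, combined with \eqref{eq:boundftilde} to absorb the $w_1$-integration. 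Choosing $\mathfrak{a} = 1/100$, a constant $K$, and $k$ a sufficiently large constant in the almost-analytic extension, these errors sum to the $\mathcal{O}(n^{200\gamma + 5\mathfrak{a}}/n^{1/2})$ remainder.

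\textbf{Main obstacle.} The principal technical difficulty is controlling precisely how the multi-resolvent local laws combine under the $w_1$-integration against $\partial_{\overline{w_1}}\tilde f(w_1)$: the bound \eqref{eqn:multi-resolvent} has a denominator scaling like $n|\Im w_1|^2$, which is singular near the real axis, and integrating it against $\partial_{\overline{w_1}}\tilde f$ requires using $|\partial_{\overline{w_1}}\tilde f(w_1)| \lesssim |\Im w_1|^{k-1}n^{\gamma k}$ with $k$ chosen large enough to yield an integrable expression on $\Omega_\mathfrak{a}$. In the real case this is compounded by the appearance of transpose-resolvent structures, whose deterministic approximation \eqref{eq:newb} in the mixed-symmetry setting is the main external input needed for the $\bm1_{\{\beta=1\}}$ term.
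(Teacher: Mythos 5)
Your overall strategy---a Stein-type calculation via cumulant expansion, with the second cumulant giving the first two terms, the fourth cumulant the $\kappa_4$ term, and multi-resolvent local laws controlling errors---matches the paper's. But the derivative count for the fourth-cumulant term is wrong, and this is the one place where the discrepancy would actually derail the computation. In the $k=3$ step of the cumulant expansion of $\E\big[W_{ba}\,e_\mathfrak{a}\langle\Delta^{ba}GA\rangle\big]$, there are three derivatives $\partial_{\bm\alpha}$ to distribute, and the leading contribution comes from \emph{one} acting on the resolvent chain (turning the off-diagonal $(GA)_{ba}$ into the diagonal product $G_{aa}(GA)_{bb}$) and \emph{two} acting on $e_\mathfrak{a}$: it is the $\ii\lambda\,\partial_{ab}\partial_{ba}Z$ piece that yields the diagonal products $G(w_1)_{aa}(G^2(w_1))_{bb}$, and summing over $a,b$ gives $m^z(w_1)(m^z(w_1))'\,m^z(w)(m^z(w))'=\tfrac14 (m^z(w_1)^2)'(m^z(w)^2)'$. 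Your allocation of ``three derivatives on resolvents and one on $e_\mathfrak{a}$'' instead leaves a single off-diagonal factor $\partial_{\alpha}Z\sim(G^2(w_1))_{ab}=O(n^{-1/2})$, which makes the whole contribution smaller by $n^{-1/2}$; under that split the factorization into diagonal traces you assert simply does not occur, and you would not recover the stated $\kappa_4$ term.

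Two smaller points. The deterministic approximation of $\langle G^z(w_1)^2 E_i G^z(w)A(w)E_j\rangle$ (and its transposed analogue for the $\1_{\{\beta=1\}}$ term) requires a \emph{three}-resolvent local law, which the paper derives separately as \eqref{eq:llawtd}; the two-resolvent bounds \eqref{eqn:multi-resolvent} and \eqref{eq:newb} that you cite are inputs to the proof of \eqref{eq:llawtd}, but do not by themselves give the approximation you need. Finally, the paper does not cumulant-expand the raw resolvent identity: it first rewrites $\langle G-\E G\rangle$ via the renormalized equation \eqref{eq:equation}, using $\underline{WG}=WG+\langle G\rangle G$ so that $\E\langle\underline{WG}A\rangle\approx 0$ from the outset, and the observable $A(w)$ of \eqref{eq:defaw} is produced directly by moving $\mathcal{B}^{-1}$ onto the test matrix. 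Your plan to recover $A(w)$ afterwards by ``inverting $1-M\mathcal{S}[\cdot]M$ on two-resolvent chains'' amounts to resumming the self-energy by hand; it can be made to work, but without the up-front renormalization the cumulant bookkeeping becomes considerably more delicate.
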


Using Lemma~\ref{pro:stein}, we then compute
\begin{equation}
\label{eq:diffeqpsi}
\frac{\dif}{\dif \lambda}\psi_\mathfrak{a}(\lambda)= \frac{2\ii n}{\pi}\int_{\Omega_a}\partial_{\overline{w}}\widetilde{f}(w) \E\big[e_\mathfrak{a}(\lambda)\langle G^z(w)\rangle-\E\langle G^z(w)\rangle \big] \, \dif^2 w=-\lambda V_\mathfrak{a}(f)\psi_\mathfrak{a}(\lambda)+\mathcal{O}\left(\frac{n^{200\gamma+5\mathfrak{a}}}{n^{1/2}}\right),
\end{equation}
where we defined
\begin{equation}
\label{eq:defvfusef}
V(f)=V_\mathfrak{a}(f):=\frac{1}{\pi^2}\int_{\Omega_\mathfrak{a}} \int_{\Omega_\mathfrak{a}} \partial_{\overline{w_1}}\widetilde{f}(w_1) \partial_{\overline{w}}\widetilde{f}(w) R(w_1,w)\, \dif w_1^2\dif^2 w.
\end{equation}
We now claim the lower bound (the proof is presented after the end of the proof of \eqref{eq:steinexpl})
\begin{equation}
\label{eq:deslobvaf}
V(f)\ge - n^{-1/5}.
\end{equation}
This ensures that $\exp(-\lambda^2V(f))\lesssim 1$ for $|\lambda|\le n^{1/100}$. From \eqref{eq:diffeqpsi} we thus conclude
\begin{equation}
\label{eq:comppsial}
\psi_\mathfrak{a}(\lambda)=\exp(-\lambda^2V(f)/2)+\mathcal{O}(n^{-1/2+200\gamma+5\mathfrak{a}}),
\end{equation}
for all $| \lambda | \leq n^{1/100}$. 
Combining \eqref{eq:comppsial} with \eqref{eq:closepsi} we conclude \eqref{eq:steinexpl}.

\qed

\end{proof}

We now present the proof of a few technical results that we used within the proof of \eqref{eq:steinexpl}.

\

\begin{proof}[Proof of \eqref{eq:deslobvaf}] \, Define
\begin{equation}
\label{eq:defrvz}
Z:= \frac{2n}{\pi}\int_{\Omega_a}\partial_{\overline{w}}\widetilde{f}(w) \big(\langle G^z(w)\rangle-\E\langle G^z(w)\rangle \big) \, \dif^2 w,
\end{equation}
then by the local law \eqref{eq:goodll} and the bound \eqref{eq:boundftilde} we have $|Z|\le n^{200\gamma}$. Choose $\lambda=n^{-1/4}$, then, proceeding similarly to the proof of \cite[Lemma 5.10]{landon2024single}, using $|Z|\le n^{200\gamma}$, we obtain
\begin{equation}
\label{eq:trick1}
\E\big[\ii Z e^{\ii\lambda Z}\big]=-\lambda \mathrm{Var}(Z)+\mathcal{O}(|\lambda|^2 n^{200\gamma})=-\lambda \mathrm{Var}(Z)\E\big[e^{\ii\lambda Z}\big]+\mathcal{O}(|\lambda|^2 n^{200\gamma}),
\end{equation}
where in the second equality we used $\E\big[e^{\ii\lambda Z}\big]=1+\mathcal{O}(|\lambda|n^{200\gamma})$. On the other hand, by \eqref{eq:diffeqpsi}, we have
\begin{equation}
\label{eq:trick2}
\E\big[\ii Z e^{\ii\lambda Z}\big]=\frac{\dif}{\dif \lambda}\E\big[e^{\ii\lambda Z}\big]=-\lambda V_\mathfrak{a}(f)\E\big[e^{\ii\lambda Z}\big]+\mathcal{O}(n^{-1/2+{200\gamma}}).
\end{equation}
Subtracting \eqref{eq:trick2} to \eqref{eq:trick1} and dividing by $\lambda$, we conclude
\begin{equation}
\label{eq:usvarrel}
V(f)=\mathrm{Var}(Z)+\mathcal{O}(n^{-1/4+200\gamma}),
\end{equation}
which implies the desired lower bound on $V(f)$, for $\gamma$ sufficiently small.

\qed
    
\end{proof}

\begin{proof}[Proof of Lemma~\ref{pro:stein}] \, The proof of this lemma is similar to the proof of \cite[Proposition 10.1]{landon2022almost}; for this reason we only present the main differences and omit some of the technical details which can be readily seen to adapt to the current case in an immediate way.

Within this proof we may often omit the $z,w$--dependence of the resolvent $G=G^z(w)$ and of its deterministic approximation $M=M^z(w)$, to keep the notation simpler. Let $W$ be the Hermitization of $X$ defined as in \eqref{eq:herm} with $X-z$ replaced with $X$. Then we have
\begin{equation}
\label{eq:equation}
\mathcal{B}[G-M]=-M\underline{WG}+M\langle G-M\rangle (G-M), \qquad\quad \mathcal{B}[\cdot]:=1-M\mathcal{S}[\cdot]M,
\end{equation}
with
\begin{equation}
\label{eq:defunderline}
\underline{WG}:=WG+\langle G\rangle G.
\end{equation}
We point out that the definition of the underline term in \eqref{eq:defunderline} is so that $\E \langle \underline{WG}A\rangle\approx 0$. Using \eqref{eq:equation}, for the (normalized) trace of $G-\E G$ we get
\begin{equation} \label{eqn:a-clt-1}
\langle G-\E G\rangle=-\langle \underline{WG}A\rangle+\E \langle \underline{WG}A\rangle+\langle G-M\rangle\langle (G-M)A\rangle-\E\langle G-M\rangle\langle (G-M)A\rangle,
\end{equation}
with
\[
A=A(w):=((\mathcal{B}^{-1})^*[1])^*M(w).
\]
We point out that using
\begin{equation}
\label{eq:inversestabop}
\big((\mathcal{B}^{-1})^*[B^*]\big)^*=B+\frac{\langle (M^z(w))^2 B\rangle}{1-\langle (M^z(w))^2\rangle},
\end{equation}
for any $B\in\C^{2n \times 2n}$, we obtain that $A(w)$ can be written as in \eqref{eq:defaw}.

To reflect the block structure of $W$, throughout this section we use the short--hand notation:
\begin{equation} \label{eqn:sumab-def}
\sum_{ab}:=\sum_{1\le a\le n, \atop n+1\le b\le 2n }+\sum_{n+1\le a\le 2n, \atop 1\le b\le n}.
\end{equation}

Next, using that (recall $|\Im w|\ge n^{-\mathfrak{a}}$)
\[
n\langle G-M\rangle\langle (G-M)A\rangle\lesssim \frac{n^\xi}{n|\Im w|^2}\le \frac{n^{\xi+2\mathfrak{a}}}{n}
\]
with overwhelming probability by \eqref{eqn:ll-1}, and performing cumulant expansion (which was first used in the random matrix context in \cite{khorunzhy1996asymptotic} and then revived in \cite{he2017mesoscopic, lee2015edge}; see these references for more details), we have
\begin{equation}
\label{eq:writesum}
\begin{split}
 2n\E \big[e_\mathfrak{a} \langle G-\E G\rangle\big] &= 2\sum_{ab}\E[\partial_{ba}e_\mathfrak{a} \langle \Delta^{ab}G A\rangle]+2\bm1_{\{\beta=1\}}\sum_{ab}\E[\partial_{ab}e_\mathfrak{a} \langle \Delta^{ab}G A\rangle] \\
 &\quad+2n\sum_{k= 2}^R \sum_{ab}\sum_{{\bm \alpha}\in\{ab,ba\}^k}\frac{\kappa(ba,{\bm \alpha})}{k!}\big(\E\partial_{\bm \alpha} [e_\mathfrak{a}\langle \Delta^{ba}GA\rangle]-\psi_\mathfrak{a}(\xi)\E\partial_{\bm \alpha} [\langle \Delta^{ba}GA\rangle]\big) \\
 &\quad+\Omega_R+\O\left(\frac{n^{\xi+2\mathfrak{a}}}{n}\right).
\end{split}
\end{equation}
Notice that in the second line of \eqref{eq:writesum} we truncated the cumulant expansion at $k=R$. In fact, it is easy to see that $\Omega_R=\mathcal{O}(N^{-2})$ for $R=12$ (see e.g. \cite[Proposition 3.2]{erdHos2019random}). Here $\partial_{ab}:=\partial_{W_{ab}}$ denotes the directional derivative in the direction $W_{ab}$, $\partial_{\bm \alpha}:=\partial_{\alpha_1}\dots\partial_{\alpha_k}$, with $\alpha_i\in \{ab,ba\}$, and $\kappa(ba,{\bm \alpha})$ denotes the $k+1$--th cumulant of the random variables $W_{ab}, W_{\alpha_1},\dots, W_{\alpha_k}$, with ${\bm \alpha}:=(\alpha_1,\dots,\alpha_k)$. We point out that in \eqref{eq:writesum} we also used that the term $\langle G\rangle\langle GA\rangle$ from \eqref{eq:defunderline} cancels after cumulant expansion as a consequence of $\partial_{ba}\langle \Delta^{ab}GA\rangle=-\langle G\rangle\langle GA\rangle$. By analogous computations to \cite[Section 10.1]{landon2022almost} one can see that the only order one terms are the ones in the first line of \eqref{eq:writesum} and a certain term which comes from $k=3$ in the second line of \eqref{eq:writesum}. We thus compute precisely these three terms and neglect the estimates of the other terms as their estimate it completely analogous to \cite{landon2022almost}.

We thus compute
\begin{equation}
\begin{split}
\label{eq:maintermvar}
&2\sum_{ab}\E[\partial_{ba} e_\mathfrak{a}\langle \Delta^{ab}G A\rangle] \\
&\qquad =\frac{2\ii\lambda}{\pi}\tilde{\sum}_{ij}\int_{\Omega_\mathfrak{a}}\partial_{\overline{w_1}}\widetilde{f}(w_1) \E\big[ e_\mathfrak{a}\langle G^z(w_1)^2E_iG^z(w)A(w)E_j\rangle\big]\, \dif^2 w_1 \\
&\qquad=\frac{2\ii\lambda\psi_\mathfrak{a}(\lambda)}{\pi}\tilde{\sum}_{ij}\int_{\Omega_\mathfrak{a}}\partial_{\overline{w_1}}\widetilde{f}(w_1) \left\langle M^{z,z,z}(w_1,I,w_1,E_i,w) A(w)E_j\rangle\right\rangle\, \dif^2 w_1 \\
&\qquad \quad+\frac{2\ii\lambda}{\pi}\tilde{\sum}_{ij}\int_{\Omega_\mathfrak{a}}\partial_{\overline{w_1}}\widetilde{f}(w_1) \E\big[ e_\mathfrak{a}\left\langle \left(G^z(w_1)^2E_iG^z(w)-M^{z,z,z}(w_1,I,w_1,E_i,w) A(w)E_j\rangle\right)A(w)E_j\right\rangle\big]\, \dif^2 w_1.
\end{split}
\end{equation}
Here $\tilde{\sum}_{ij}$ is defined below \eqref{eq:defE1E2}. The last line in \eqref{eq:maintermvar} can be easily seen to be lower order using the (almost) global law (recall the definition of $M^{z_1,z_2,z_3}$ from \eqref{eq:defm123}, cf. \cite[Proposition 4.1]{cipolloni2023optimal}) 
\begin{equation}
\label{eq:llawtd}
\E\big|\left\langle \big(G^{z_1}(w_1)B_1G^{z_2}(w_2)B_2G^{z_3}(w_3)-M^{z_1,z_2,z_3}(w_1,B_1,w_2,B_2,w_3)\big)B_3\right\rangle\big|^2\lesssim \frac{n^{5\mathfrak{a}}}{n}
\end{equation}
for deterministic $\lVert B_i\rVert\lesssim 1$. Note that we need \eqref{eq:llawtd} only in second moment sense. The proof of \eqref{eq:llawtd} is presented after the conclusion of the proof of this lemma. This concludes the computation of the first term in the RHS of \eqref{eq:explsteinvar}. Next, we compute
\[
2\sum_{ab}\E[\partial_{ab} e_\mathfrak{a}\langle \Delta^{ab}G A\rangle]=\frac{2\ii\lambda}{\pi}\tilde{\sum}_{ij}\int_{\Omega_\mathfrak{a}}\partial_{\overline{w_1}}\widetilde{f}(w_1) \E\big[e_\mathfrak{a}\langle G^z(w_1)^2E_iA(w)^{\mathfrak{t}}G^{\overline{z}}(w)E_j\rangle\big]\, \dif^2 w_1.
\]
Proceeding as in \eqref{eq:maintermvar} and using again \eqref{eq:llawtd}, we obtain the second term in the RHS of \eqref{eq:explsteinvar}.

Finally, we conclude the proof of this lemma by computing the order one term coming from $k=3$. In the second line of \eqref{eq:writesum}, we notice that for $k=3$ the only order one term is given by (all the other terms a lower order)
\begin{equation}
\begin{split}
&\frac{\kappa_4}{n^2}\sum_{ab}\E [(\partial_{ab}\partial_{ba} e_\mathfrak{a}) G_{aa}(GA)_{bb}] \\
&\qquad=\frac{\ii \kappa_4\lambda}{n^3\pi}\sum_{ab}\int_{\Omega_\mathfrak{a}}\partial_{\overline{w_1}} \widetilde{f}(w_1) \E\big[e_\mathfrak{a}(G^z(w_1))_{aa}(G^z(w_1)^2)_{bb} (G^z(w))_{aa}(G^z(w)A(w))_{bb}\big]\, \dif ^2 w_1 \\
&\qquad=\frac{2\ii\lambda\kappa_4 \psi_\mathfrak{a}(\lambda)}{\pi}\int_{\Omega_\mathfrak{a}}\partial_{\overline{w_1}} \widetilde{f}(w_1) m^z(w_1)(m^z(w_1))' m^z(w) (m^z(w))'\, \dif ^2 w_1+\mathcal{O}(n^{-1/2+\xi+k\gamma}).
\end{split}
\end{equation}
We also point out that in the last equality we used
\[
(M^z(w)A(w))_{bb}=\langle M^z(w)A(w)\rangle=\langle \mathcal{B}^{-1}[M^z(w)M^z(w)]\rangle=\langle \partial_w M^z(w)\rangle=(m^z(w))'.
\]

\qed

\end{proof}

\

\begin{proof}[Proof of \eqref{eq:llawtd}] \, By \cite[Theorem 5.2]{cipolloni2023central} we have
\begin{equation}
\label{eq:2gllaw}
\big|\left\langle \big(G^{z_1}(w_1)B_1G^{z_2}(w_2))-M^{z_1,z_2}(w_1,B_1,w_2)\big)B_2\right\rangle\big|\lesssim \frac{n^{3\mathfrak{a}}}{n}\lVert B_1\rVert \lVert B_2\rVert,
\end{equation}
with overwhelming probability, with $M^{z_1,z_2}$ being defined as in \eqref{eq:defm123}. We now show that, using \eqref{eq:goodll} and \eqref{eq:2gllaw} as an input, we can easily conclude \eqref{eq:llawtd}. Note that there is no assumption on the sign of $\Im w_1\Im w_2$ in \eqref{eq:2gllaw}.

In the reminder of the proof we use the short--hand notations $M_i:=M^{z_i}(w_i)$, $G_i:=G^{z_i}(w_i)$, $M_{ij}^B:=M^{z_i,z_j}(w_i,B,w_j)$, and $\mathcal{B}_{ij}[\cdot]:=1-M_i\mathcal{S}[\cdot]M_j$. Using \eqref{eq:equation} for $G_1$, we have
\begin{equation}
\begin{split} \label{eqn:a-clt-2}
\mathcal{B}_{13}[G_1B_1G_2B_2G_3]&=M_1B_1M_{23}^{B_2}+M_1\mathcal{S}[M_{12}^{B_1}]M_{23}^{B_2}-M_1\underline{WG_1B_1G_2B_2G_3}+M_1B_1(G_2B_2G_3-M_{23}^{B_2}) \\
&\quad+M_1\mathcal{S}[G_1B_1G_2-M_{12}^{B_1}]\big(M_{23}^{B_2}+G_2B_2G_3-M_{23}^{B_2}\big)+M_1\mathcal{S}[M_{12}^{B_1}]\big(G_2B_2G_3-M_{23}^{B_2}\big) \\
&\quad+M_1\mathcal{S}[G_1B_1G_2B_2G_3](G_3-M_3)+\langle G_1-M_1\rangle M_1G_1B_1G_2B_2G_3,
\end{split}
\end{equation}
where we defined
\begin{equation}
\label{eq:underline3}
\underline{WG_1B_1G_2B_2G_3}:=\underline{WG_1}B_1G_2B_2G_3+\mathcal{S}[G_1B_1G_2]G_2B_2G_3+\mathcal{S}[G_1B_1G_2B_2G_3]G_3.
\end{equation}

From now on we assume that $\lVert B_i\rVert\lesssim 1$. Note that all the random terms in the RHS of \eqref{eqn:a-clt-2}, except for the underline term, can be estimated relying on the single resolvent local law \eqref{eqn:ll-1} or on the two--resolvent local law \eqref{eq:2gllaw}. We now present the estimate of two representative terms, all other terms can be estimated analogously. We notice that for any matrices $R_1,R_2$ we have
\[
\langle \mathcal{B}_{13}^{-1}[R_1]R_2\rangle=\langle R_1 \big((\mathcal{B}_{13}^{-1})^*[R_2^*]\big)^*\rangle.
\]
Denote
\[
C:=\big[(\mathcal{B}_{13}^{-1})^*[B_3^*]\big]^*;
\]
and notice that by $\lVert \mathcal{B}_{13}^{-1}\rVert+\lVert (\mathcal{B}_{13}^{-1})^*\rVert\lesssim \min_i 1/|\Im w_i|\le n^\mathfrak{a}$ it follows $\lVert C\rVert\lesssim n^{\mathfrak{a}}$. Then, using \eqref{eqn:ll-1} together with a Schwarz inequality, we estimate
\begin{equation}
\big|\langle G_1-M_1\rangle\langle M_1G_1B_1G_2B_2G_3C\rangle\big|\lesssim \frac{n^\xi\lVert C\rVert}{n|\Im w_1\Im w_2\Im w_3|}\le \frac{n^{5\mathfrak{a}}}{n}.
\end{equation}
Additionally, using \eqref{eq:2gllaw} and \eqref{eq:deb12mhop}, we estimate
\begin{equation}
\big|\langle M_1\mathcal{S}[M_{12}^{B_1}](G_2B_2G_3-M_{23}^{B_2})C\rangle\big|=2\left|\tilde{\sum}_{ij}\langle M_{12}^{B_1}E_i\rangle\langle M_1 E_j(G_2B_2G_3-M_{23}^{B_2})C\rangle\right|\lesssim \frac{n^{5\mathfrak{a}}}{n}.
\end{equation}
Putting these estimates together, we thus conclude
\begin{equation}
\begin{split}
\label{eq:goodp3gs}
\langle G_1B_1G_2B_2G_3B_3\rangle&=\left\langle \mathcal{B}_{13}^{-1}\big[M_1B_1M_{23}^{B_2}+M_1\mathcal{S}[M_{12}^{B_1}]M_{23}^{B_2}\big]B_3\right\rangle \\
&\quad-\langle M_1\underline{WG_1B_1G_2B_2G_3}C\rangle +\mathcal{O}\left(\frac{n^{6\mathfrak{a}}}{n}\right),
\end{split}
\end{equation}
with overwhelming probability.

Finally, using cumulant expansion, proceeding similarly to \cite[Eq. (6.32)]{cipolloni2023central} (recall that $|\Im w_i|\ge n^{-\mathfrak{a}}$), we estimate
\[
\E\big|\langle M_1\underline{WG_1B_1G_2B_2G_3}C\rangle \big|^2\lesssim \frac{n^{4\mathfrak{a}}}{n},
\]
where we used that $\lVert C\rVert\lesssim n^\mathfrak{a}$. Recalling the definition \eqref{eq:defm123}, this concludes the proof. 

\qed

\end{proof}

\

\begin{proof}[Proof of \eqref{eq:explvarlog}] \, We start with the computation for the variance $V(f)$. Recall the definition of $Z$ from \eqref{eq:defrvz}. By \eqref{eq:usvarrel}, to prove  \eqref{eq:explvarlog}, it is enough to compute $\mathrm{Var}(Z)$ when $f(x)=\Re\log (x-\ii\eta)$, with $\eta=n^{-\gamma}$.

Using \eqref{eq:boundftilde} and the local law \eqref{eq:goodll}, it is easy to see that
\begin{equation}
Z=\mathrm{Tr} f(H^z)-\E \mathrm{Tr} f(H^z)+\mathcal{O}(n^{-1/2})=\frac{1}{2}\sum_i \log\big[(\lambda_i^z)^2+\eta^2\big]-\E(\dots)+\mathcal{O}(n^{-1/2}).
\end{equation}
We now write $Z=Z_1+Z_2$ (neglecting the negligible error term $n^{-1/2}$), with
\begin{equation}
\begin{split}
Z_1:&=-\int_{\eta}^1 \mathrm{Tr}[\Im G^z(\ii \tau)-\E \Im G^z(\ii \tau)]\, \dif \tau \\
Z_2:&=\frac{1}{2}\sum_i\log\big[(\lambda_i^z)^2+1\big]-\E \frac{1}{2}\sum_i\log\big[(\lambda_i^z)^2+1\big].
\end{split}
\end{equation}
We use the following lemma (whose proof is presented at the end of this section) to estimate $\mathrm{Var}(Z_2)$:
\begin{lemma}
\label{lem:estvarO1}
There exists $C>0$ such that 
\begin{equation}
\label{eq:bvarO1}
\mathrm{Var}(Z_2)\le C.
\end{equation}
\end{lemma}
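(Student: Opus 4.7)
The random variable $Z_2$ is a centered linear statistic of the eigenvalues of $H^z$ applied to the smooth bounded function $g(x)=\frac12\log(x^2+1)=\Re\log(x-\ii)$, which satisfies $\|g\|_{C^k}\le C_k$ uniformly in $n$ for every $k$. The overall strategy is to rerun the Helffer--Sj\"ostrand / Stein's method argument of Lemma~\ref{pro:stein} and of the proof of \eqref{eq:deslobvaf} for this particular test function at the \emph{macroscopic} scale, which turns every scale-dependent error term into an $O(1)$ constant.

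First, I would choose an almost analytic extension $\widetilde g(x+\ii y):=\chi_1(x)\chi_2(y)\sum_{l=0}^{K}(\ii y)^l g^{(l)}(x)/l!$ with $\chi_1$ a smooth cutoff equal to $1$ on $[-10,10]$ and supported in $[-20,20]$, $\chi_2$ equal to $1$ on $[-1,1]$ and supported in $[-2,2]$, and $K$ a large integer. Since $\|X\|\le 3+o(1)$ with overwhelming probability, all eigenvalues of $H^z$ lie in $[-10,10]$ with overwhelming probability, so the Helffer--Sj\"ostrand formula yields
\[
Z_2 = \frac{2n}{\pi}\int_{\mathbb C}\partial_{\overline{w}}\widetilde g(w)\,\langle G^z(w)-\E G^z(w)\rangle\,\dif^2 w
\]
up to a contribution from the complementary event of probability at most $n^{-D}$, on which $Z_2$ is polynomially bounded; this contributes $O(n^{-D/2})$ to $\mathrm{Var}(Z_2)$ and is negligible. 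The construction of $\widetilde g$ guarantees $|\partial_{\overline{w}}\widetilde g(x+\ii y)|\lesssim |y|^{K}$ in the region $|y|\le 1$, and $|\partial_{\overline{w}}\widetilde g|\lesssim 1$ (supported in the compact annulus $1\le |y|\le 2$, $|x|\le 20$).

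Next, I would repeat the Stein's method computation from Lemma~\ref{pro:stein} applied to $g$ rather than to $f(x)=\Re\log(x-\ii\eta)$. Because $\|g\|_{C^k}\lesssim 1$ (i.e.\ the parameter $\gamma$ from Proposition~\ref{prop:CLT-1} is effectively $0$), the analogue of equation~\eqref{eq:explsteinvar} holds with the error term replaced by $O(n^{-1/2+5\mathfrak a})$. Combining this with the Fourier-duality argument \eqref{eq:trick1}--\eqref{eq:usvarrel} (applied at $\lambda=n^{-1/4}$, which is allowed because $|Z_2|\lesssim n$ deterministically on the good event above and the cumulant expansion errors are now dimensionless) yields
\[
\mathrm{Var}(Z_2) \;=\; V(g) \;+\; \O(n^{-1/4}),
\]
where $V(g)$ is the analogue of $V(f)$ given by
\[
V(g)=\frac{1}{\pi^2}\int\!\!\int\partial_{\overline{w_1}}\widetilde g(w_1)\,\partial_{\overline{w_2}}\widetilde g(w_2)\,R(w_1,w_2)\,\dif^2 w_1\,\dif^2 w_2,
\]
with the same kernel $R(w_1,w_2)$ appearing in Lemma~\ref{pro:stein}. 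The final step is to bound $V(g)$ by a constant. On the support of the integrand, $w_1,w_2$ lie in a fixed compact set with $|\Im w_i|\le 2$; moreover the region $|\Im w_i|<1$ contributes negligibly thanks to the $|y|^K$ decay of $\partial_{\overline{w}}\widetilde g$. For $|\Im w_i|\asymp 1$, each building block $M^z(w_i)$, $A(w_i)$, and $M^{z,z,z}(w_1,I,w_1,E_i,w_2)$ appearing in the definition of $R(w_1,w_2)$ is uniformly bounded, so $|R(w_1,w_2)|\lesssim 1$ there. Hence $V(g)=\O(1)$, which yields the desired bound $\mathrm{Var}(Z_2)\le C$.

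The only point requiring care is that the Stein's method / cumulant expansion argument of Lemma~\ref{pro:stein} was written for a mesoscopic cutoff $\Omega_{\mathfrak a}=\{|y|\ge n^{-\mathfrak a}\}$, while we only need an effectively macroscopic one; in particular the truncation error \eqref{eq:vergoodappr} is trivially negligible for our $\widetilde g$, and the numerous intermediate bounds on multi-resolvent expressions (which constitute the main obstacle in the original proof) simplify drastically when $|\Im w_i|$ is bounded below by a constant, so no new estimates beyond those already proven in Lemma~\ref{pro:stein} and equation~\eqref{eq:llawtd} are needed.
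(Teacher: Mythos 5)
Your proposal follows the same route as the paper: apply the general Stein's-method variance formula from Proposition~\ref{prop:CLT-1} (equivalently \eqref{eq:usvarrel} together with the kernel formula \eqref{eq:defvfusef}) to the test function $f(x)=\log(x^2+1)$, which has $\|f\|_{C^k}\lesssim 1$ (i.e.\ $\gamma=0$), so that $\mathrm{Var}(Z_2)=V(f)+\O(n^{-1/4})$, and then bound $V(f)$ by showing the kernel integral is $\O(1)$. The only point you glossed over, which the paper makes explicit, is the uniform bound $|R(z,w)|\lesssim(|\Im z|+|\Im w|)^{-3}$ coming from \eqref{eq:deb12mhop}; this is the quantitative input needed to handle the mixed region where one of $|\Im w_1|,|\Im w_2|$ is small and the other is of order one, after which the rapid decay $|\partial_{\overline{w}}\widetilde f(w)|\lesssim|\Im w|^{k-1}$ makes the double integral converge to a constant, exactly as you indicate.
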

Then, using \eqref{eq:bvarO1}, we have
\begin{equation}
\label{eq:concleq}
V(f)=\mathrm{Var}(Z_1)+\mathcal{O}\left(\mathrm{Var}(Z_1)^{1/2}+n^{-1/5}\right).
\end{equation}
We are thus left only with the computation of $\mathrm{Var}(Z_1)$. Using \cite[Proposition 3.3]{cipolloni2023central}, \cite[Proposition 3.3]{cipolloni2021fluctuation} for the complex and real case, respectively, we get
\begin{equation}
\mathrm{Var}(Z_1)=4\int_\eta^1\int_\eta^1 \big[\widehat{V}(z,\tau_1,\tau_2)+\kappa_4 U(z,\tau_1)U(z,\tau_2)\big] \,\dif \tau_1\tau_2+\mathcal{O}\left(\frac{n^{6\mathfrak{a}}}{\sqrt{n}}\right).
\end{equation}
Here, using the notations $m_i:=m^{z_i}(\ii\tau_1)$, $u_i:=u^{z_i}(\ii\tau_i)$, we defined  $U(z_i,\tau_i):=\ii \partial_{\tau_i}m_i/\sqrt{2}$, and $\widehat{V}(z,\tau_1,\tau_2)=V(z,z,\tau_1,\tau_2)+\bm1_{\{\beta=1\}}V(z,\overline{z},\tau_1,\tau_2)$, with
\begin{equation}
V(z_1,z_2,\tau_1,\tau_2):=-\frac{1}{2}\partial_{\tau_1}\partial_{\tau_2}\log\big[1+|z_1z_2|^2u_1^2u_2^2-m_1^2m_2^2-2u_1u_2\Re[z_1\overline{z_2}]\big].
\end{equation}
We point out that in \cite[Proposition 3.3]{cipolloni2023central} and \cite[Proposition 3.3]{cipolloni2021fluctuation} it was assumed that $z_1\ne z_2$, $z_1\ne \overline{z_2}$, however, inspecting the proof of these propositions it is clear that this assumption is not needed for $\eta_i\ge n^{-\mathfrak{a}}$ (see also \cite[Remark 3.4]{cipolloni2023central}).

Then, using that both $U$ and $\widehat V$ are complete derivatives, we then compute (for simplicity we only consider the case $\beta=2$)
\begin{equation}
\begin{split}
\mathrm{Var}(Z_1)&=-\log\big[1+|z|^4(u^z(\ii\eta))^4-(m^z(\ii\eta))^4-2(u^z(\ii\eta))^2|z|^2\big] \\
&\quad+2\log\big[1+|z|^4(u^z(\ii\eta))^2(u^z(\ii)^2-(m^z(\ii\eta))^2(m^z(\ii))^2-2u^z(\ii\eta)u^z(\ii)|z|^2\big]+\mathcal{O}(1).
\end{split}
\end{equation}
Finally, using the analog of \eqref{eq:exapnsmu} for $m^z(\ii\eta), u^z(\ii\eta)$, we conclude
\begin{equation}
\mathrm{Var}(Z_1)=-\log \eta+\mathcal{O}(1).
\end{equation}
Similarly, in the real case we get
\[
\mathrm{Var}(Z_1)=-\log \eta-\log[|z-\overline{z}|^2+\eta]+\mathcal{O}(1)
\]
Combining this with \eqref{eq:concleq} we conclude the computation of $V(f)$.

We now turn to the computation of $\E \mathrm{Tr} f(H^z)$. We write (recall that $\eta=n^{-\gamma}$)
\begin{equation}
\begin{split}
\label{eq:precisecompexp}
\E \mathrm{Tr} f(H^z)&=\frac{1}{2}\E\sum_i\log\big[(\lambda_i^z)^2+\eta^2] \\
&=\frac{1}{2}\E\sum_i\log\big[(\lambda_i^z)^2+1]-\int_\eta^1 \E \mathrm{Tr}[\Im G^z(\ii\tau)]\, \dif \tau.
\end{split}
\end{equation}
Denote $m:=m^z(\ii\tau)$ and $u:=u^z(\ii\tau)$. In order to compute the expectation of the second term in the RHS of \eqref{eq:precisecompexp} we rely on \cite[Eqs. (3.11)--(3.12)]{cipolloni2023central}:
\begin{equation}
\label{eq:explexp}
\E\mathrm{Tr}[\Im G(\ii\eta)]=-2\ii n m-\frac{\kappa_4}{2}\partial_\tau m^4+\frac{\bm1_{\{\beta=1\}}}{2}\partial_\tau\log\big(1-u^2+2u^3|z|^2-u^2(z^2+\overline{z}^2)\big)+\mathcal{O}\left(\frac{n^{2\gamma}}{\sqrt{n}}\right).
\end{equation}
We point out that in \cite[Eqs. (3.11)--(3.12)]{cipolloni2023central} the error term deteriorates with $|\Im z|^2$; however, inspecting its proof, it is clear that every instance of $1/|\Im z|^2$ can be replaced by $1/\eta$, giving \eqref{eq:explexp}.

We now use
\begin{equation}
\label{eq:globlogllaw}
\frac{1}{2}\E\sum_i\log\big[(\lambda_i^z)^2+1]=n\int \log(x^2+1)\rho^z(x)\,\dif x+\O(1).
\end{equation}
This easily follows by an application of Helffer--Sj\"ostrand formula together with \eqref{eq:explexp} for $\gamma=0$ (see the proof of Lemma~\ref{lem:estvarO1} for similar computations). Plugging \eqref{eq:globlogllaw} into \eqref{eq:precisecompexp} and using \eqref{eq:explexp}, we obtain
\begin{equation}
\label{eq:fincomp}
\begin{split}
\E \mathrm{Tr} f(H^z)&= n\int \log(x^2+1)\rho^z(x)\,\dif x-2\ii n\int_\eta^1 m^z(\ii\tau)\, \dif \tau\\
&\quad+\frac{\bm1_{\{\beta=1\}}}{2}\log\big(1-u^z(\ii\eta)^2+2u^z(\ii\eta)^3|z|^2-u^z(\ii\eta)^2(z^2+\overline{z}^2)\big)+\O(1).
\end{split}
\end{equation}
Finally, using that
\[
\partial_\tau \int\log(x^2+\tau^2)\rho^z= -2\ii m^z(\ii\tau),
\]
and the analog of \eqref{eq:exapnsmu} for $m^z(\ii\eta), u^z(\ii\eta)$ in \eqref{eq:explexp}, \eqref{eq:fincomp} concludes the computation of the expectation of $f(H^z)$.

\qed

\end{proof}

\

\begin{proof}[Proof of Lemma~\ref{lem:estvarO1}] \, Consider $f(x):=\log(x^2+1)$, then, using \eqref{eq:defvfusef} and \eqref{eq:usvarrel} (applied to this function), we write 
\begin{equation}
\label{eq:explvarz2d}
\mathrm{Var}(Z_2)=V(f)=\frac{1}{\pi^2}\int_{\Omega_\mathfrak{a}} \int_{\Omega_\mathfrak{a}} \partial_{\overline{z}}\widetilde{f}(z) \partial_{\overline{w}}\widetilde{f}(w) R(z,w)\, \dif z^2\dif^2 w+\mathcal{O}(n^{-1/4+200\gamma}),
\end{equation}
with $\mathfrak{a}>0$ arbitrary small, $\Omega_\mathfrak{a}$ from \eqref{eq:defomegafraka}, and $R(z,w)$ being defined in \eqref{eq:explsteinvar}. Next, by \eqref{eq:boundftilde} for $\gamma=0$, we have
\begin{equation}
\label{eq:newbtildef}
\big|\partial_{\overline{z}}\widetilde{f}(z)\big|\lesssim |\Im z|^{k-1},
\end{equation}
for any $k\in \N$. Additionally, by the definition of $R(z,w)$ in \eqref{eq:explsteinvar} (see also \eqref{eq:defaw}--\eqref{eq:defm123}) together with \eqref{eq:deb12mhop} for $z_1=z_2=z$, we also have
\begin{equation}
\label{eq:boundR}
\big|R(z,w)\big|\lesssim \frac{1}{(|\Im z|+|\Im w|)^3}.
\end{equation}
Plugging \eqref{eq:newbtildef}--\eqref{eq:boundR} into \eqref{eq:explvarz2d} we conclude the desired bound.

\qed

\end{proof}

\section{Green's function comparison}

\subsection{Proof of Proposition \ref{prop:ll-gfct}} \label{a:ll-gfct}

The proof given here is similar in spirit to \cite[Section 2.3]{erdHos2023small}. 
Suppose first that $X$ and $Y$ differ only in one matrix entry, say the $(1, 1)$--th. Let $W(\theta)$ be the matrix with this entry set to $\theta$ so that $X = W(X_{11})$ and $Y = W(Y_{11})$. Let $F : \cc \to \rr$ be a smooth function so that 
\beq
F(z) =1, \quad |z| > (k+3/4) (\log n)^{1/2+\delta}, \qquad F(z) = 0, \quad |z| < (k+1/2) (\log n)^{1/2+\delta}
\eeq
Using the local law of Theorem \ref{thm:ll} and a resolvent expansion it is not hard to check that for any $\frac{1}{10} > \eps >0$ we have, with overwhelming probability
\beq
\left| \sup_{ |\theta| \leq n^{\eps/2-1/2} } \partial_{\theta}^a \partial_{\bar{\theta}}^b Z(W(\theta) )\right| \leq n^{ \eps}.
\eeq
for all $0 \leq a + b \leq 5$, $a, b \geq 0$. Specifically, the derivatives appearing above will involve products of resolvent entries of $W$. If $\theta$ were a random variable, the bounds for these entries would follow from \eqref{eqn:entrywise-ll}. To deal with the $\sup$ over $\theta$, one can apply a resolvent expansion similar to (16.8) of \cite{erdHos2017dynamical}.

By Taylor expansion to fifth order we then see that,
\beq
\left| \ee[ F ( X)] - \ee[ F (Y) ] \right| \leq (T n^{-2} + n^{-5/2+\eps} ) \ee[ \sup_{ |\theta| \leq n^{\eps/2-1/2}, 1 \leq a + b \leq 5 } \partial_{\theta}^a \partial_{\theta}^b F( Z ( W (\theta ) ) ) ] + n^{-D}.
\eeq
The derivatives of $F(z)$ are non-zero only if $|z| > (k+1/2) (\log n)^{1/2+\delta}$. Moreover, one can check by resolvent expansion that
\beq
\sup_{ | \theta| \leq n^{\eps-1/2} } \left| Z( W ( \theta) ) - Z(W(X_{11} )) \right| \leq n^{-1/4}
\eeq
with overwhelming probability. Therefore,
\beq
\left| \ee[ F ( X)] - \ee[ F (Y) ] \right| \leq (T n^{-2} + n^{-5/2+\eps} ) n^{\eps} p(k) + n^{-D}.
\eeq
In the general case where $X$ and $Y$ differ in all entries but the moments match, we follow the Lindeberg replacement strategy by replacing the matrix elements of $X$ by those of $Y$ one at a time; the above illustrates one step of the procedure (see e.g. the fourn moment method of Tao--Vu \cite{tao2011random} or \cite[Chapter 16]{erdHos2017dynamical} for a pedagogical introduction).  At each of the $n^2$ steps we apply the above inequality to conclude that
\beq
\left| \ee[ F(X)] - \ee[ F(Y) ] \right| \leq T^{1/2} p(k) + n^{-D}.
\eeq
We may also apply the above estimate with $X = W^{(ab)}$, i.e., at one of the intermediate steps. The claim now follows. \qed

\subsection{ Proof of Lemma \ref{lem:upper-compare-1} and Proposition~\ref{pro:GFTmax}}
\label{app:GFTmax}

We prove only Proposition \ref{pro:GFTmax}, as the proof of Lemma \ref{lem:upper-compare-1} is easier. 
The proof of this proposition will go through the standard Lindeberg replacement strategy of replacing the matrix elements of one matrix by another one by one and estimating the difference at each step (see above). First, we will need to regularize the max, using a similar strategy to \cite{landon2020comparison}. We notice that for any $B >0$ we have
\beq
\max_{ z \in \mathcal{P}_2} Q(z) \XX_n(z) \leq \frac{1}{B} \log\left( \sum_{z \in \mathcal{P}_2} e^{ B Q(z) \XX_n(z) } \right) \leq \max_{ z \in \mathcal{P}_2} Q(z) \XX_n(z) + \frac{\log | \mathcal{P}_2 |}{B},
\eeq
where we abbreviated $Q(z) \XX_n(z) :=Q ( n \eta_4 \Im \langle G^z( \ii \eta_4 )\rangle ) ( \Psi_n (z, \eta_3 ) - \Psi_n (z, n^{-\mfb} ) ) $. 
We take $B= n^\mfa$ for some fixed $\mfa >0$. Let
\beq
\hat{\Xi} :=  \frac{1}{B} \log\left( \sum_{z \in \mathcal{P}_2} e^{ B \beta Q(z) \XX_n(z) } \right),
\eeq
so that
\beq
| \E[ F ( \Xi ) ] - \E[ F ( \hat{\Xi} )  ] | \leq C \| F \|_{C^1} n^{-\mfa} \log n
\eeq
Let $\del_{ij}$ be directional derivatives with respect to the matrix elements. Let $W (\theta)$ be a real or complex i.i.d. matrix with the $(a, b)$--th entry set to $\theta$. It is not hard to check that for any sufficiently small $\eps, \xi>0$
\beq \label{eqn:a-resolvent}
\sup_{i, j, |\theta| \leq n^{1/2+\eps}, (\log n)^{-C} n^{-1} \leq \eta } \left| (H^z_\theta - \i \eta )^{-1}_{ij} \right| \leq n^{\xi}
\eeq
with overwhelming probability. Here, $H^z_\theta$ is the Hermitization of $W(\theta)$. Define,
\beq
M_\theta := \max_{ z \in \mathcal{P}_2, 0 \leq a+b \leq 5 } | \del_{ij}^a \del_{ji}^{b} (n^{\mfa} Q(z) \XX_n(z)) | \bigg\vert_{W(\theta)}
\eeq
The derivatives of the above quantity can be written in terms of the resolvent entries of $W(\theta)$. Therefore, using \eqref{eqn:a-resolvent} one finds that
\beq
\sup_{ | \theta | \leq n^{-1/2+\eps}} |M_\theta | \leq n^{\xi +  \mfa}
\eeq
with overwhelming probability and that $|M| \leq n^{10}$ almost surely. By direct calculation (see Lemma \ref{lem:general-der-bd} below), we find that
\beq
| \del_{ij}^{a} \del_{ij}^b\hat{\Xi} | \leq C_k  M_\theta^k
\eeq
for $a+ b \leq k$. With this as input, the proof of the proposition follows from applying the standard Lindeberg replacement strategy as indicated above, after taking $\mfa >0$ sufficiently small. Again, we refer the reader to \cite{erdHos2017dynamical} for a pedagogical introduction. \qed

\bel \label{lem:general-der-bd}
Let $Z = B^{-1} \log \left( \sum_z e^{ B \XX_z } \right)$ where $\XX_z$ are some real valued functions on the space of matrices. Then for any $k$ there is a constant $C_k>0$ so that,
\beq
\left| \del_{ij}^k Z (X) \right| \leq C_k B^{k-1} \max_{ z, 0 \leq a \leq k } | \del_{ij}^a \XX_z |^k
\eeq
\eel
\proof This follows by a straightforward and direct calculation, which is outlined in, e.g.,  the proof of Lemma 3.4 of \cite{landon2020comparison}.  For reader convenience, we present the proof for the first derivative, as the general case is not much harder:
\begin{align}
| \del_{ij} Z |& =  B \left| \frac{ \sum_{z} e^{ B \XX_z } \del_{ij} \XX_z }{ \sum_z e^{ B \XX_z } } \right| \leq  \sup_z | \del_{ij} \XX_z |
\end{align}
where we used that $\XX_z$ is real-valued so that $e^{ B \XX_z}$ is positive. \qed

\bibliography{char_bib}
\bibliographystyle{abbrv}

\end{document}